\newtheorem{cor}[subsubsection]{Corollary}
\newtheorem{lem}[subsubsection]{Lemma}
\newtheorem{prop}[subsubsection]{Proposition}
\newtheorem{conj}[subsubsection]{Conjecture}
\newtheorem{thm}[subsubsection]{Theorem}
\newtheorem{mainthm}[subsubsection]{Main Theorem}
\theoremstyle{definition}
\theoremstyle{remark}
\newtheorem{rem}[subsubsection]{Remark}
\newcommand{\thmref}[1]{Theorem~\ref{#1}}
\newcommand{\secref}[1]{Sect.~\ref{#1}}
\newcommand{\lemref}[1]{Lemma~\ref{#1}}
\newcommand{\propref}[1]{Proposition~\ref{#1}}
\newcommand{\corref}[1]{Corollary~\ref{#1}}
\newcommand{\conjref}[1]{Conjecture~\ref{#1}}
\newcommand{\remref}[1]{Remark~\ref{#1}}
\numberwithin{equation}{section}
\newcommand{\nc}{\newcommand}
\nc{\renc}{\renewcommand}
\nc{\ssec}{\subsection}
\nc{\sssec}{\subsubsection}
\nc{\on}{\operatorname}
\nc\ol{\overline}
\nc\wt{\widetilde}
\nc\tboxtimes{\wt{\boxtimes}}
\nc\tstar{\wt{\star}}
\nc{\alp}{\alpha}
\nc{\ZZ}{{\mathbb Z}}
\nc{\NN}{{\mathbb N}}
\nc{\OO}{{\mathbb O}}
\renc{\SS}{{\mathbb S}}
\nc{\DD}{{\mathbb D}}
\nc{\GG}{{\mathbb G}}
\nc{\Fq}{{\mathbb F}_q}
\nc{\Fqb}{\ol{{\mathbb F}_q}}
\nc{\Ql}{\ol{{\mathbb Q}_\ell}}
\nc{\id}{\text{id}}
\nc\X{\mathcal X}
\nc{\Hom}{\on{Hom}}
\nc{\Lie}{\on{Lie}}
\nc{\Loc}{\on{Loc}}
\nc{\Pic}{\on{Pic}}
\nc{\Bun}{\on{Bun}}
\nc{\IC}{\on{IC}}
\nc{\Aut}{\on{Aut}}
\nc{\rk}{\on{rk}}
\nc{\Sh}{\on{Sh}}
\nc{\Perv}{\on{Perv}}
\nc{\pos}{{\on{pos}}}
\nc{\Conv}{\on{Conv}}
\nc{\Sph}{\on{Sph}}
\nc{\Sym}{\on{Sym}}
\nc{\BunBb}{\overline{\Bun}_B}
\nc{\BunNb}{\overline{\Bun}_N}
\nc{\BunTb}{\overline{\Bun}_T}
\nc{\BunBbm}{\overline{\Bun}_{B^-}}
\nc{\BunBbel}{\overline{\Bun}_{B,el}}
\nc{\BunBbmel}{\overline{\Bun}_{B^-,el}}
\nc{\Buno}{\overset{o}{\Bun}}
\nc{\BunPb}{{\overline{\Bun}_P}}
\nc{\BunPtm}{\wt{\Bun}_{P^-}}
\nc{\BunBM}{\Bun_{B(M)}}
\nc{\BunBMb}{\overline{\Bun}_{B(M)}}
\nc{\BunPbw}{{\widetilde{\Bun}_P}}
\nc{\BunBP}{\widetilde{\Bun}_{B,P}}
\nc{\GUb}{\overline{G/U}}
\nc{\GUPb}{\overline{G/U(P)}}
\nc{\Hhom}{\underline{\on{Hom}}}
\nc\syminfty{\on{Sym}^{\infty}}
\nc\lal{\ol{\lambda}}
\nc\xl{\ol{x}}
\nc\thl{\ol{\theta}}
\nc\nul{\ol{\nu}}
\nc\mul{\ol{\mu}}
\nc{\oX}{\overset{\circ}{X}{}}
\nc{\hl}{\overset{\leftarrow}h{}}
\nc{\hr}{\overset{\rightarrow}h{}}
\nc{\M}{{\mathcal M}}
\nc{\N}{{\mathcal N}}
\nc{\F}{{\mathcal F}}
\nc{\D}{{\mathcal D}}
\nc{\Q}{{\mathcal Q}}
\nc{\Y}{{\mathcal Y}}
\nc{\G}{{\mathcal G}}
\nc{\E}{{\mathcal E}}
\nc{\CalC}{{\mathcal C}}
\nc\Dh{\widehat{\D}}
\nc{\C}{{\mathcal C}}
\nc{\K}{{\mathcal K}}
\renewcommand{\H}{{\mathcal H}}
\nc{\T}{{\mathcal T}}
\nc{\V}{{\mathcal V}}
\renc{\P}{{\mathcal P}}
\nc{\A}{{\mathcal A}}
\nc{\B}{{\mathcal B}}
\nc{\U}{{\mathcal U}}
\nc{\Gr}{{\on{Gr}}}
\nc{\frn}{{\check{\mathfrak u}(P)}}
\nc{\fC}{\mathfrak C}
\nc{\p}{\mathfrak p}
\nc{\q}{\mathfrak q}
\nc\f{{\mathfrak f}}
\nc{\qo}{{\mathfrak q}}
\nc{\po}{{\mathfrak p}}
\nc{\s}{{\mathfrak s}}
\nc\w{\text{w}}
\renewcommand{\mod}{{\on{-mod}}}
\nc\Spec{\on{Spec}}
\nc\Mod{\on{Mod}}
\nc{\tw}{\widetilde{\mathfrak t}}
\nc{\pw}{\widetilde{\mathfrak p}}
\nc{\qw}{\widetilde{\mathfrak q}}
\nc{\jw}{\widetilde j}
\nc{\grb}{\overline{\Gr}}
\nc{\I}{\mathcal I}
\nc{\lambdach}{{\check\lambda}}
\nc{\Lambdach}{{\check\Lambda}{}}
\nc{\much}{{\check\mu}}
\nc{\omegach}{{\check\omega}}
\nc{\nuch}{{\check\nu}}
\nc{\etach}{{\check\eta}}
\nc{\alphach}{{\check\alpha}}
\nc{\oblvtach}{{\check\oblvta}}
\nc{\rhoch}{{\check\rho}}
\nc{\ch}{{\check h}}
\nc{\Hb}{\overline{\H}}
\nc{\BA}{{\mathbb{A}}}
\nc{\BC}{{\mathbb{C}}}
\nc{\BE}{{\mathbb{E}}}
\nc{\BF}{{\mathbb{F}}}
\nc{\BG}{{\mathbb{G}}}
\nc{\BM}{{\mathbb{M}}}
\nc{\BO}{{\mathbb{O}}}
\nc{\BD}{{\mathbb{D}}}
\nc{\BL}{{\mathbb{L}}}
\nc{\Bl}{{\mathbb{l}}}
\nc{\BN}{{\mathbb{N}}}
\nc{\BP}{{\mathbb{P}}}
\nc{\BQ}{{\mathbb{Q}}}
\nc{\BR}{{\mathbb{R}}}
\nc{\BZ}{{\mathbb{Z}}}
\nc{\BS}{{\mathbb{S}}}
\nc{\CA}{{\mathcal{A}}}
\nc{\CB}{{\mathcal{B}}}
\nc{\CE}{{\mathcal{E}}}
\nc{\CF}{{\mathcal{F}}}
\nc{\CH}{{\mathcal{H}}}
\nc{\CL}{{\mathcal{L}}}
\nc{\CC}{{\mathcal{C}}}
\nc{\CG}{{\mathcal{G}}}
\nc{\CM}{{\mathcal{M}}}
\nc{\CN}{{\mathcal{N}}}
\nc{\CK}{{\mathcal{K}}}
\nc{\CO}{{\mathcal{O}}}
\nc{\CP}{{\mathcal{P}}}
\nc{\CQ}{{\mathcal{Q}}}
\nc{\CR}{{\mathcal{R}}}
\nc{\CS}{{\mathcal{S}}}
\nc{\CT}{{\mathcal{T}}}
\nc{\CU}{{\mathcal{U}}}
\nc{\CV}{{\mathcal{V}}}
\nc{\CW}{{\mathcal{W}}}
\nc{\CX}{{\mathcal{X}}}
\nc{\CY}{{\mathcal{Y}}}
\nc{\CZ}{{\mathcal{Z}}}
\nc{\CI}{{\mathcal{I}}}
\nc{\cD}{{\mathcal{D}}}
\nc{\ocD}{\overset{\circ}{\mathcal{D}}}
\nc{\csM}{{\check{\mathcal A}}{}}
\nc{\oM}{{\overset{\circ}{\mathcal M}}{}}
\nc{\obM}{{\overset{\circ}{\mathbf M}}{}}
\nc{\oCA}{{\overset{\circ}{\mathcal A}}{}}
\nc{\obA}{{\overset{\circ}{\mathbf A}}{}}
\nc{\ooM}{{\overset{\circ}{M}}{}}
\nc{\osM}{{\overset{\circ}{\mathsf M}}{}}
\nc{\vM}{{\overset{\bullet}{\mathcal M}}{}}
\nc{\nM}{{\underset{\bullet}{\mathcal M}}{}}
\nc{\oD}{{\overset{\circ}{\mathcal D}}{}}
\nc{\obD}{{\overset{\circ}{\mathbf D}}{}}
\nc{\oA}{{\overset{\circ}{\mathbb A}}{}}
\nc{\op}{{\overset{\bullet}{\mathbf p}}{}}
\nc{\cp}{{\overset{\circ}{\mathbf p}}{}}
\nc{\oU}{{\overset{\bullet}{\mathcal U}}{}}
\nc{\oZ}{{\overset{\circ}{\mathcal Z}}{}}
\nc{\ofZ}{{\overset{\circ}{\mathfrak Z}}{}}
\nc{\oF}{{\overset{\circ}{\fF}}}
\nc{\oC}{\overset{\circ}{C}}
\nc{\fa}{{\mathfrak{a}}}
\nc{\fb}{{\mathfrak{b}}}
\nc{\fc}{{\mathfrak{c}}}
\nc{\fch}{{\mathfrak{ch}}}
\nc{\fd}{{\mathfrak{d}}}
\nc{\ff}{{\mathfrak{f}}}
\nc{\fg}{{\mathfrak{g}}}
\nc{\fgl}{{\mathfrak{gl}}}
\nc{\fh}{{\mathfrak{h}}}
\nc{\fj}{{\mathfrak{j}}}
\nc{\fl}{{\mathfrak{l}}}
\nc{\fm}{{\mathfrak{m}}}
\nc{\fn}{{\mathfrak{n}}}
\nc{\fu}{{\mathfrak{u}}}
\nc{\fp}{{\mathfrak{p}}}
\nc{\fr}{{\mathfrak{r}}}
\nc{\fs}{{\mathfrak{s}}}
\nc{\ft}{{\mathfrak{t}}}
\nc{\fT}{{\mathfrak{T}}}
\nc{\fz}{{\mathfrak{z}}}
\nc{\fsl}{{\mathfrak{sl}}}
\nc{\hsl}{{\widehat{\mathfrak{sl}}}}
\nc{\hgl}{{\widehat{\mathfrak{gl}}}}
\nc{\hg}{{\widehat{\mathfrak{g}}}}
\nc{\htt}{{\widehat{\mathfrak{t}}}}
\nc{\chg}{{\widehat{\mathfrak{g}}}{}^\vee}
\nc{\hn}{{\widehat{\mathfrak{n}}}}
\nc{\chn}{{\widehat{\mathfrak{n}}}{}^\vee}
\nc{\fA}{{\mathfrak{A}}}
\nc{\fB}{{\mathfrak{B}}}
\nc{\fD}{{\mathfrak{D}}}
\nc{\fE}{{\mathfrak{E}}}
\nc{\fF}{{\mathfrak{F}}}
\nc{\fG}{{\mathfrak{G}}}
\nc{\fK}{{\mathfrak{K}}}
\nc{\fL}{{\mathfrak{L}}}
\nc{\fM}{{\mathfrak{M}}}
\nc{\fN}{{\mathfrak{N}}}
\nc{\fP}{{\mathfrak{P}}}
\nc{\fU}{{\mathfrak{U}}}
\nc{\fV}{{\mathfrak{V}}}
\nc{\fZ}{{\mathfrak{Z}}}
\nc{\bb}{{\mathbf{b}}}
\nc{\bc}{{\mathbf{c}}}
\nc{\bd}{{\mathbf{d}}}
\nc{\bbf}{{\mathbf{f}}}
\nc{\be}{{\mathbf{e}}}
\nc{\bg}{{\mathbf{g}}}
\nc{\bi}{{\mathbf{i}}}
\nc{\bj}{{\mathbf{j}}}
\nc{\bn}{{\mathbf{n}}}
\nc{\bp}{{\mathbf{p}}}
\nc{\bq}{{\mathbf{q}}}
\nc{\bu}{{\mathbf{u}}}
\nc{\bv}{{\mathbf{v}}}
\nc{\bx}{{\mathbf{x}}}
\nc{\bs}{{\mathbf{s}}}
\nc{\by}{{\mathbf{y}}}
\nc{\bw}{{\mathbf{w}}}
\nc{\bA}{{\mathbf{A}}}
\nc{\bK}{{\mathbf{K}}}
\nc{\bB}{{\mathbf{B}}}
\nc{\bC}{{\mathbf{C}}}
\nc{\bG}{{\mathbf{G}}}
\nc{\bD}{{\mathbf{D}}}
\nc{\bH}{{\mathbf{He}}}
\nc{\bM}{{\mathbf{M}}}
\nc{\bN}{{\mathbf{N}}}
\nc{\bO}{{\mathbf{O}}}
\nc{\bV}{{\mathbf{V}}}
\nc{\bW}{{\mathbf{Wh}}}
\nc{\bX}{{\mathbf{X}}}
\nc{\bY}{{\mathbf{Y}}}
\nc{\bZ}{{\mathbf{Z}}}
\nc{\bS}{{\mathbf{S}}}
\nc{\bT}{{\mathbf{T}}}
\nc{\sA}{{\mathsf{A}}}
\nc{\sB}{{\mathsf{B}}}
\nc{\sC}{{\mathsf{C}}}
\nc{\sD}{{\mathsf{D}}}
\nc{\sF}{{\mathsf{F}}}
\nc{\sG}{{\mathsf{G}}}
\nc{\sK}{{\mathsf{K}}}
\nc{\sM}{{\mathsf{M}}}
\nc{\sO}{{\mathsf{O}}}
\nc{\sR}{{\mathsf{R}}}
\nc{\sU}{{\mathsf{U}}}
\nc{\sW}{{\mathsf{W}}}
\nc{\sQ}{{\mathsf{Q}}}
\nc{\sP}{{\mathsf{P}}}
\nc{\sY}{{\mathsf{Y}}}
\nc{\sZ}{{\mathsf{Z}}}
\nc{\sfp}{{\mathsf{p}}}
\nc{\sfq}{{\mathsf{q}}}
\nc{\sr}{{\mathsf{r}}}
\nc{\sk}{{\mathsf{k}}}
\nc{\su}{{\mathsf{u}}}
\nc{\sv}{{\mathsf{v}}}
\nc{\sg}{{\mathsf{g}}}
\nc{\sff}{{\mathsf{f}}}
\nc{\sfb}{{\mathsf{b}}}
\nc{\sfc}{{\mathsf{c}}}
\nc{\sd}{{\mathsf{d}}}
\nc{\BK}{{\bar{K}}}
\nc{\tA}{{\widetilde{\mathbf{A}}}}
\nc{\tB}{{\widetilde{\mathcal{B}}}}
\nc{\tg}{{\widetilde{\mathfrak{g}}}}
\nc{\tG}{{\widetilde{G}}}
\nc{\TM}{{\widetilde{\mathbb{M}}}{}}
\nc{\tO}{{\widetilde{\mathsf{O}}}{}}
\nc{\tU}{{\widetilde{\mathfrak{U}}}{}}
\nc{\TZ}{{\tilde{Z}}}
\nc{\tx}{{\tilde{x}}}
\nc{\tbv}{{\tilde{\bv}}}
\nc{\tfP}{{\widetilde{\mathfrak{P}}}{}}
\nc{\tz}{{\tilde{\zeta}}}
\nc{\tmu}{{\tilde{\mu}}}
\nc{\urho}{\underline{\rho}}
\nc{\uB}{\underline{B}}
\nc{\uC}{{\underline{\mathbb{C}}}}
\nc{\ui}{\underline{i}}
\nc{\uj}{\underline{j}}
\nc{\ofP}{{\overline{\mathfrak{P}}}}
\nc{\oB}{{\overline{\mathcal{B}}}}
\nc{\og}{{\overline{\mathfrak{g}}}}
\nc{\oI}{{\overline{I}}}
\nc{\eps}{\varepsilon}
\nc{\hrho}{{\hat{\rho}}}
\nc{\one}{{\mathbf{1}}}
\nc{\two}{{\mathbf{t}}}
\nc{\Rep}{{\mathop{\operatorname{\rm Rep}}}}
\nc{\Tot}{{\mathop{\operatorname{\rm Tot}}}}
\nc{\Ker}{{\mathop{\operatorname{\rm Ker}}}}
\nc{\Hilb}{{\mathop{\operatorname{\rm Hilb}}}}
\nc{\End}{{\mathop{\operatorname{\rm End}}}}
\nc{\Ext}{{\mathop{\operatorname{\rm Ext}}}}
\nc{\CHom}{{\mathop{\operatorname{{\mathcal{H}}\it om}}}}
\nc{\GL}{{\mathop{\operatorname{\rm GL}}}}
\nc{\gr}{{\mathop{\operatorname{\rm gr}}}}
\nc{\Id}{{\mathop{\operatorname{\rm Id}}}}
\nc{\de}{{\mathop{\operatorname{\rm def}}}}
\nc{\length}{{\mathop{\operatorname{\rm length}}}}
\nc{\supp}{{\mathop{\operatorname{\rm supp}}}}
\nc{\Cliff}{{\mathsf{Cliff}}}
\nc{\Fl}{\on{Fl}}
\nc{\Fib}{{\mathsf{Fib}}}
\nc{\Coh}{{\on{Coh}}}
\nc{\QCoh}{{\on{QCoh}}}
\nc{\IndCoh}{{\on{IndCoh}}}
\nc{\FCoh}{{\mathsf{FCoh}}}
\nc{\reg}{{\text{\rm reg}}}
\nc{\cplus}{{\mathbf{C}_+}}
\nc{\cminus}{{\mathbf{C}_-}}
\nc{\cthree}{{\mathbf{C}_*}}
\nc{\Qbar}{{\bar{Q}}}
\nc\Eis{{\on{Eis}}}
\nc\Eisb{\ol\Eis{}}
\nc\Eisr{\on{Eis}^{rat}{}}
\nc\wh{\widehat}
\nc{\Def}{\on{Def_{\check{\fb}}(E)}}
\nc{\barZ}{\overline{Z}{}}
\nc{\barbarZ}{\overline{\barZ}{}}
\nc{\barpi}{\overline\pi}
\nc{\barbarpi}{\overline\barpi}
\nc{\barpip}{\overline\pi{}^+}
\nc{\barpim}{\overline\pi{}^-}
\nc{\fq}{\mathfrak q}
\nc{\fqb}{\ol{\fq}{}}
\nc{\fpb}{\ol{\fp}{}}
\nc{\fpr}{{\fp^{rat}}{}}
\nc{\fqr}{{\fq^{rat}}{}}
\nc{\hattimes}{\wh\otimes}
\nc{\bh}{{{\mathbf h}}}
\nc{\bk}{{{\mathbf k}}}
\nc{\bOmega}{{\overline{\Omega(\check \fn)}}}
\nc{\seq}[1]{\stackrel{#1}{\sim}}
\nc{\cT}{{\check{T}}}
\nc{\cG}{{\check{G}}}
\nc{\cM}{{\check{M}}}
\nc{\cB}{{\check{B}}}
\nc{\cP}{{\check{P}}}
\nc{\ct}{{\check{\mathfrak t}}}
\nc{\cg}{{\check{\fg}}}
\nc{\cb}{{\check{\fb}}}
\nc{\cn}{{\check{\fn}}}
\nc{\cLambda}{{\check\Lambda}}
\nc{\cla}{{\check\lambda}}
\nc{\cmu}{{\check\mu}}
\nc{\cnu}{{\check\nu}}
\nc{\ceta}{{\check\eta}}
\nc{\DefbE}{{\on{Def}_{\cB}(E_\cT)}}
\nc{\imathb}{{\ol{\imath}}}
\nc{\rlr}{\overset{\longrightarrow}{\underset{\longrightarrow}\longleftarrow}}
\nc{\oBun}{\overset{\circ}\Bun}
\nc{\oSht}{\overset{\circ}\Sht}
\nc{\LocSys}{\on{LocSys}}
\nc{\BunBbb}{\ol{\ol{Bun}}_B}
\nc{\BunBr}{\Bun_B^{rat}}
\nc{\BunBrp}{\Bun_B^{rat,polar}}
\nc{\BunTrp}{\Bun_T^{rat,polar}}
\nc{\BunNr}{\Bun_N^{rat}}
\nc{\BunNre}{\Bun_N^{enh,rat}}
\nc{\BunTr}{\Bun_T^{rat}}
\nc{\Vect}{\on{Vect}}
\nc{\Whit}{\on{Whit}}
\nc{\CTb}{\ol{\on{CT}}}
\nc{\Ran}{{\on{Ran}}}
\nc{\CTr}{\on{CT}^{rat}{}}
\nc\jmathr{\jmath^{rat}{}}
\nc{\ux}{\underline{x}}
\nc{\clambda}{{\check\lambda}}
\nc{\calpha}{{\check\alpha}}
\nc{\ind}{{\mathbf{ind}}}
\nc{\oblv}{{\mathbf{oblv}}}
\nc{\coeff}{\on{W-coeff}}
\nc{\Poinc}{\on{Poinc}}
\nc{\Dmod}{\on{D-mod}}
\nc{\dr}{\on{dR}}
\nc{\oCZ}{\overset{\circ}\CZ}
\nc{\KL}{\on{KL}}
\nc{\triv}{{\mathbf{triv}}}
\nc{\dgSch}{\on{DGSch}}
\nc{\pSch}{\on{pSch}}
\nc{\fSch}{\on{fSch}}
\nc{\Sch}{\on{Sch}}
\nc{\affdgSch}{\on{DGSch}^{\on{aff}}}
\nc{\affSch}{\on{Sch}^{\on{aff}}}
\nc{\paffSch}{\on{pSch}^{\on{aff}}}
\nc{\Sing}{\on{Sing}}
\nc{\inftygroup}{\infty\on{-Grpd}}
\renc{\dr}{{\on{dr}}}
\nc\Maps{\on{Maps}}
\nc\Res{\on{Res}}
\nc\bMaps{\mathbf{Maps}}
\nc{\ul}{\underline}
\nc{\bNP}{\mathbf{N(P)}}
\nc{\ofc}{\overset{\circ}\fch}
\nc{\ppart}{(\!(t)\!)}
\nc{\qqart}{[\![t]\!]}
\nc{\crit}{\on{crit}}
\nc{\DGCat}{\on{DGCat}}
\nc{\Shv}{\on{Shv}}
\nc{\bDelta}{\mathbf{\Delta}}
\nc{\genB}{{\overset{\on{gen}}\to B}}
\nc{\genP}{{\underset{\on{gen}}\longrightarrow P}}
\nc{\genN}{{\underset{\on{gen}}\longrightarrow N}}
\nc{\semiinf}{{\frac{\infty}{2}+\bullet}}
\nc{\mmod}{\on{-}\mathbf{mod}}
\nc{\AdFr}{\on{Ad}_{\on{Frob}}}
\nc{\Frob}{{\on{Frob}}}
\nc{\Tr}{\on{Tr}}
\nc{\Sht}{\on{Sht}}
\nc{\sfe}{\mathsf{e}}
\nc{\tCat}{{2\on{-Cat}}}
\nc{\tIndCoh}{{2\on{-IndCoh}}}
\nc{\tQCoh}{{2\on{-QCoh}}}
\nc{\uShv}{\underline{\Shv}}
\nc{\qLisse}{\on{QLisse}}
\nc{\Nilp}{{\on{Nilp}}}
\nc{\sotimes}{\overset{!}\otimes} 
\nc{\LS}{\on{LS}}
\nc{\Sat}{\on{Sat}}
\nc{\inftyCat}{{\infty\on{-Cat}}}
\begin{document}

\title[Geometric Langlands in positive characteristic]{Geometric Langlands in positive characteristic \\ from characteristic zero}

\author{Dennis Gaitsgory and Sam Raskin}

\dedicatory{To G\'erard Laumon, with gratitude for the math he brought into existence.} 

\begin{abstract}

We establish part of the statement of the geometric Langlands conjecture for $\ell$-adic sheaves
over a field of positive characteristic. Namely, we show that the category of automorphic sheaves
with nilpotent singular support is equivalent to the appropriately defined category of ind-coherent
sheaves on the \emph{union of some of the connected components of the} stack of Langlands 
parameters. 

\end{abstract}

\maketitle 

\tableofcontents

\section*{Introduction}

\ssec{What is done in this paper?}

\sssec{}

This paper can be considered as a sequel to both the [AGKRRV] and [GLC] series. Namely, 
in \cite[Conjecture 21.2.7]{AGKRRV1} we proposed a version of the geometric Langlands conjecture
(GLC) that makes sense in the context of $\ell$-adic sheaves (for curves over a field of any characteristic). 

\medskip

Namely, it says that the (derived) category 
\begin{equation} \label{e:geom side}
\Shv_{\Nilp}(\Bun_G)
\end{equation} 
of $\ell$-adic sheaves on the moduli stack $\Bun_G$ of principal $G$-bundles on a (smooth and complete)
curve $X$, \emph{with nilpotent singular support}, is equivalent to the category
$$\IndCoh_\Nilp(\LS^{\on{restr}}_\cG),$$
where:

\begin{itemize}

\item $\LS^{\on{restr}}_\cG$ is the prestack of $\cG$-local systems on $X$ \emph{with restricted variation}, 
introduced in \cite[Sect. 1.4]{AGKRRV1};

\medskip

\item The subscript ``Nilp" stands for the restriction on the singular support (as coherent sheaves), introduced in \cite[Sect. 11.1]{AG1}.

\end{itemize}

\begin{rem}

The fact that the subcategory \eqref{e:geom side} inside the ambient $\Shv(\Bun_G)$
is the ``right object to consider" as far as automorphic sheaves
are concerned was a discovery of G.~Laumon in his seminal paper \cite{Laum}. 

\medskip

There he conjectured that all Hecke eigensheaves must belong to this subcategory. This conjecture
was settled in \cite[Theorem 14.4.3]{AGKRRV1}. In fact, more is true: in {\it loc. cit.}, Theorem 14.4.4 
it was shown that ``any object of $\Shv(\Bun_G)$ that remotely looks like a Hecke eigensheaf" belongs to
$\Shv_{\Nilp}(\Bun_G)$. 

\end{rem} 

\sssec{}

Unfortunately, we still cannot prove the full GLC over a field of positive characteristic. Rather, in this paper we establish a partial result.
We construct a functor
\begin{equation} \label{e:Langlands functor Intro}
\BL^{\on{restr}}_\cG:\Shv_{\Nilp}(\Bun_G)\to \IndCoh_\Nilp(\LS^{\on{restr}}_\cG),
\end{equation} 
and we prove.

\begin{thm} \label{t:main(i) Intro}
The functor $\BL_\cG^{\on{restr}}$ factors via an equivalence
$$\Shv_{\Nilp}(\Bun_G)\overset{\sim}\to  \IndCoh_\Nilp({}'\!\LS^{\on{restr}}_\cG)\subset \IndCoh_\Nilp(\LS^{\on{restr}}_\cG),$$
where $'\!\LS^{\on{restr}}_\cG$ is the union of some of the connected components of $\LS^{\on{restr}}_\cG$.
\end{thm}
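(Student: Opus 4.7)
My plan has three parts: construct $\BL_\cG^{\on{restr}}$ via the standard Whittaker/Hecke route, decompose the problem along connected components of $\LS^{\on{restr}}_\cG$, and reduce each component to the characteristic-zero geometric Langlands equivalence established in the [GLC] series.

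For the construction, the Hecke action of the spherical Hecke category on $\Shv_\Nilp(\Bun_G)$, combined with geometric Satake and factorization, produces an action of $\QCoh(\LS^{\on{restr}}_\cG)$ on the automorphic side. Pairing the Poincar\'e/Whittaker functor with this spectral action, and extending to $\IndCoh_\Nilp$ by renormalization (exactly as in characteristic zero), yields $\BL_\cG^{\on{restr}}$. Using the same spectral action, $\Shv_\Nilp(\Bun_G)$ decomposes into blocks indexed by $\pi_0(\LS^{\on{restr}}_\cG)$, and I would \emph{define} $'\!\LS^{\on{restr}}_\cG$ as the union of those components whose corresponding block is nonzero. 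The theorem then reduces to the statement that $\BL_\cG^{\on{restr}}$ is an equivalence on each nonempty block.

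The title of the paper suggests the strategy for this blockwise reduction: given a semisimple local system $\sigma$ representing a relevant component, lift the pair $(X,\sigma)$ from characteristic $p$ to characteristic zero over a mixed-characteristic base (e.g.\ using Witt vectors of the residue field, or more generally a suitable arithmetic base). The construction of $\BL_\cG^{\on{restr}}$ deforms over such a base; on the characteristic-zero generic fiber, the main result of the [GLC] series gives an equivalence; a categorical nearby-cycles/specialization formalism then transfers the equivalence to the characteristic-$p$ special fiber. The principal obstacle is precisely this specialization step: it requires a robust theory of families of $\ell$-adic categories over mixed-characteristic bases, compatibility of the Nilp singular-support condition with specialization, and matching of the Hecke actions on both sides of the lift. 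Establishing these compatibilities---particularly the behavior of nilpotent singular support under nearby cycles and the resulting control on which components can be reached from characteristic zero---would form the technical core of the argument, and also explains why one cannot expect to hit \emph{all} components of $\LS^{\on{restr}}_\cG$ by this method.
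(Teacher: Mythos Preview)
Your high-level architecture---construct the functor via the Whittaker/spectral-action route, lift the curve to characteristic zero, and compare via nearby cycles---matches the paper. But two key steps are either missing or misdiagnosed.

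First, the phrase ``transfers the equivalence to the special fiber'' conceals the entire difficulty. The specialization functor $\on{Sp}_{\sK\to\sk}$ is not an equivalence and need not even be conservative; nearby cycles does not transport equivalences. The paper's mechanism is instead: prove that $\on{Sp}_{\sK\to\sk}$ (restricted to the relevant $\Shv_\Nilp$ subcategories) is a \emph{Verdier quotient}, and then identify the composite $\on{Sp}_{\sK\to\sk}\circ(\BL^{\on{restr}}_{G,\sK,\sk})^{-1}$ with the left adjoint of $\BL^{\on{restr}}_{G,\sk}$. Full faithfulness of $\BL^{\on{restr}}_{G,\sk}$ then follows formally. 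The Verdier-quotient property is the technical heart: via a general criterion for functors between dualizable categories, it reduces to showing that $(\Delta_{\Bun_{G,\sR_0}})_!(\ul\sfe_{\Bun_{G,\sR_0}})$ is ULA over $\Spec(\sR_0)$, which is proved using the Drinfeld--Lafforgue--Vinberg compactification of the diagonal and a contraction principle. Your list of anticipated obstacles (families of $\ell$-adic categories, singular support under nearby cycles) does not include this; and the $\Nilp$-compatibility you worry about is actually handled cheaply, via the spectral action rather than via any direct singular-support estimate.

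Second, your explanation of why $'\!\LS^{\on{restr}}_\cG$ may be proper is incorrect. Every characteristic-$p$ local system lifts: $\LS^{\on{restr}}_{\cG,\sk}$ embeds as a union of connected components of $\LS^{\on{restr}}_{\cG,\sK}$, so there is no obstruction to ``reaching'' components from characteristic zero. After full faithfulness, one must still show that on each component the functor is either an equivalence or the automorphic block is zero; for reducible components this uses compatibility with Eisenstein series and induction on the semisimple rank, and for irreducible components a monad argument in $\QCoh$ of the component. You do not mention Eisenstein series at all, but the compatibility of $\BL^{\on{restr}}_G$ with geometric and spectral Eisenstein functors---and the fact that $\on{Sp}_{\sK\to\sk}$ commutes with $\Eis^-_!$---is indispensable, both for this step and for establishing that $\on{Sp}_{\sK\to\sk}$ preserves compactness. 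The genuine reason some components may be missed is that for an irreducible $\sigma$ in positive characteristic one does not know whether a nonzero Hecke eigensheaf exists; this has nothing to do with liftability of $\sigma$.
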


This is \thmref{t:ff GLC}(i) in the main body of the paper. 

\begin{rem}

We remind the reader that the connected components of $\LS^{\on{restr}}_\cG$ correspond bijectively to
\emph{semi-simple} $\cG$-local systems (two local systems lie in the same connected component if
and only if they have isomorphic semi-simplifications). 

\medskip

In particular, every irreducible $\cG$-local system lies in its own connected component (which is, however,
stacky and non-reduced). 

\end{rem}

\sssec{}

In addition, we prove: 

\begin{thm} \label{t:main(ii) Intro}
If $G=GL_n$, then the inclusion 
$$'\!\LS^{\on{restr}}_\cG\subset \LS^{\on{restr}}_\cG$$
is an equality.
\end{thm}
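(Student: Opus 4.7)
The plan is to show that every connected component of $\LS^{\on{restr}}_\cG$ meets the essential image of $\BL^{\on{restr}}_\cG$ for $\cG = GL_n$. Since connected components are indexed by isomorphism classes of semi-simple $\cG$-local systems (as noted in the remark above), it suffices to exhibit, for every semi-simple $E$ of rank $n$, an object of $\Shv_{\Nilp}(\Bun_{GL_n})$ whose image under $\BL^{\on{restr}}_\cG$ is non-zero on the component of $E$. I would induct on $n$, the case $n=1$ being geometric class field theory.

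For the inductive step, split into two cases according to whether $E$ is irreducible. Suppose first that $E = E_1 \oplus E_2$ is reducible, with $\rk(E_i) = n_i \geq 1$ and $n_1 + n_2 = n$; then $(E_1, E_2)$ defines a component of $\LS^{\on{restr}}_{\cM}$ for the Levi $\cM = GL_{n_1} \times GL_{n_2}$ of a parabolic $\cP \subset \cG$. The key structural input is the compatibility of the Langlands functor with Eisenstein series, expressed by a commutative square
\[
\begin{CD}
\Shv_{\Nilp}(\Bun_M) @>{\BL^{\on{restr}}_M}>> \IndCoh_{\Nilp}(\LS^{\on{restr}}_{\cM}) \\
@V{\Eis}VV @VV{\Eis^{\on{spec}}}V \\
\Shv_{\Nilp}(\Bun_G) @>{\BL^{\on{restr}}_\cG}>> \IndCoh_{\Nilp}(\LS^{\on{restr}}_{\cG})
\end{CD}
\]
in which $\Eis^{\on{spec}}$ is built from the parabolic induction correspondence $\LS^{\on{restr}}_{\cM} \leftarrow \LS^{\on{restr}}_{\cP} \rightarrow \LS^{\on{restr}}_{\cG}$. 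By the inductive hypothesis applied to each factor of $\cM$, the functor $\BL^{\on{restr}}_M$ hits the component of $(E_1, E_2)$; since the map $\LS^{\on{restr}}_{\cP} \to \LS^{\on{restr}}_{\cG}$ sends the preimage of this component non-trivially into the component containing $E = E_1 \oplus E_2$ (the $\cP$-reduction of $E$ being tautologically available), one obtains an object of $\Shv_{\Nilp}(\Bun_{GL_n})$ whose spectral image is supported on, and non-zero along, the component of $E$.

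The base case of the induction is when $E$ is an \emph{irreducible} $GL_n$-local system with restricted variation. Here one invokes the classical existence of a cuspidal Hecke eigensheaf $\F_E \in \Shv(\Bun_{GL_n})$ with eigenvalue $E$; for $GL_n$ in the function-field setting such eigensheaves were constructed by Drinfeld ($n=2$) and by Frenkel--Gaitsgory--Vilonen in general (building on L.~Lafforgue's arithmetic theorem producing the Galois-to-automorphic direction). By \cite[Theorem 14.4.4]{AGKRRV1}, any such eigensheaf automatically lies in $\Shv_{\Nilp}(\Bun_{GL_n})$, and compatibility of $\BL^{\on{restr}}_\cG$ with the Hecke action forces $\BL^{\on{restr}}_\cG(\F_E)$ to be (set-theoretically) supported at $E$ and hence non-zero on the corresponding component.

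The main obstacle is the Eisenstein compatibility square above: its construction in the restricted $\ell$-adic setting of positive characteristic is delicate, requiring a careful treatment of the singular support condition on both sides and of the geometric vs.\ spectral compactified Eisenstein functors. A secondary, but genuine, issue is ensuring that $\Eis^{\on{spec}}$ does not kill the inductively constructed class---i.e.\ that the sheaf-theoretic image in the component of $E$ is non-zero; this reduces to a non-vanishing of a naturally defined section of $\Eis^{\on{spec}}$ restricted to a connected component, which should be verifiable by a local computation at (a formal neighborhood of) the point $E \in \LS^{\on{restr}}_\cG$.
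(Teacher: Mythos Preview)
Your proposal is correct and follows the same approach as the paper: induction on $n$, with the irreducible case handled by the Frenkel--Gaitsgory--Vilonen eigensheaf construction \cite{FGV,Ga1} and the reducible case by the Eisenstein compatibility square (\thmref{t:L and Eis}) together with the inductive hypothesis for proper Levi subgroups. The paper packages the reducible step slightly differently---it invokes the case analysis from the proof of part (i), where the generation result \cite[Theorem 13.3.6]{AG1} for $\Eis^{-,\on{spec}}$ over reducible components sidesteps your concern about $\Eis^{\on{spec}}$ annihilating the inductively constructed class---but the substance is the same.
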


So at least for $G=GL_n$, Laumon's vision for the structure of what he called ``geometric Langlands correspondence" 
has been fully realized.

\begin{rem} \label{r:no sheaf}

One can say that for an arbitrary group $G$, we have not solved the most mysterious part of the
Langlands conjecture: we do not know that to an irreducible $\cG$-local system there corresponds
a non-zero Hecke eigensheaf.

\medskip

Over fields of characteristic $0$ we know this thanks to the Beilinson-Drinfeld construction
of eigensheaves for D-modules, using localization of modules over the affine Kac-Moody algebra at
the critical level\footnote{Formally speaking, this is not how the proof of GLC for D-modules given in 
\cite{GLC5} proceeds; however, it does crucially rely on the localization of KM-modules, albeit slightly
differently.}. 

\end{rem} 

\sssec{Assumptions on the characteristic}

The above results rely on the validity of \cite{AGKRRV1}, for which certain assumptions on the
characteristic of the ground field are needed, see Sects. 14.4.1 and D.1.1 in {\it loc. cit.}

\ssec{Function-theoretic applications}

The equivalence established in \thmref{t:main(i) Intro} allows us to deduce information about the classical theory
of automorphic functions (in the unramified case over function fields). 

\sssec{}

Namely, we let our ground field $k$ be the algebraic closure of a finite field $\BF_q$, and we assume that both
$X$ and $G$ are defined over $\BF_q$. In this case, the geometric objects involved in \eqref{e:Langlands functor Intro}
carry an automorphism, given by the action of Frobenius.

\medskip

Taking its categorical trace and applying some existing calculations (namely, ones from \cite{AGKRRV3} and \cite{BLR}), 
from \thmref{t:main(i) Intro} we obtain:

\begin{thm} \label{t:classical Intro}
There exists an identification of vector spaces
\begin{equation} \label{e:classical}
\on{Funct}_c(\Bun_G(\BF_q),\ol\BQ_\ell)\simeq \Gamma({}'\!\LS^{\on{arithm}}_\cG,\omega),
\end{equation}
where:

\begin{itemize}

\item $\on{Funct}_c(-,\ol\BQ_\ell)$ stands for the space of compactly supported functions;

\medskip

\item $\LS^{\on{arithm}}_\cG:=(\LS^{\on{restr}}_\cG)^\Frob$ is the moduli space of Weil local systems on $X$ with respect to $\cG$,
and ${}'\!\LS^{\on{arithm}}_\cG:=({}'\!\LS^{\on{restr}}_\cG)^\Frob$;

\smallskip

\item $\omega$ is the dualizing sheaf.

\end{itemize}

\end{thm}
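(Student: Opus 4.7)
The plan is to deduce \thmref{t:classical Intro} by applying the categorical trace of Frobenius to the equivalence furnished by \thmref{t:main(i) Intro}. Since $X$ and $G$ are defined over $\BF_q$, the Frobenius endomorphism $\Frob$ acts on both sides of
$$\BL^{\on{restr}}_\cG : \Shv_{\Nilp}(\Bun_G) \overset{\sim}{\to} \IndCoh_\Nilp({}'\!\LS^{\on{restr}}_\cG).$$
The first step is to verify that $\BL^{\on{restr}}_\cG$ is $\Frob$-equivariant, i.e.\ intertwines the Frobenius endofunctors on both sides. This should follow from the fact that the functor is canonically constructed out of geometric data (Hecke functors, spectral action, etc.) all of which are defined over $\BF_q$; the naturality of the construction of $\BL^{\on{restr}}_\cG$ with respect to the Frobenius twist should make the equivariance automatic. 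Granting this, we obtain an isomorphism of categorical traces
$$\Tr(\Frob, \Shv_{\Nilp}(\Bun_G)) \simeq \Tr(\Frob, \IndCoh_\Nilp({}'\!\LS^{\on{restr}}_\cG)).$$

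Next, I would identify each side. For the automorphic side, the calculation of $\Tr(\Frob, \Shv_{\Nilp}(\Bun_G))$ is precisely the content of the results from \cite{AGKRRV3}: the categorical trace of Frobenius on the category of sheaves with nilpotent singular support on $\Bun_G$ is identified, via a sheaf-to-function-style procedure, with $\on{Funct}_c(\Bun_G(\BF_q), \ol\BQ_\ell)$. A key input here is that $\Bun_G$ has only finitely many isomorphism classes of $\BF_q$-points contributing, together with the nilpotent singular support constraint which controls finiteness of the trace. For the spectral side, I would invoke the computation of \cite{BLR}, which gives
$$\Tr(\Frob, \IndCoh_\Nilp({}'\!\LS^{\on{restr}}_\cG)) \simeq \Gamma({}'\!\LS^{\on{arithm}}_\cG, \omega),$$
using that $({}'\!\LS^{\on{restr}}_\cG)^\Frob = {}'\!\LS^{\on{arithm}}_\cG$ by definition, and that taking the trace of Frobenius on $\IndCoh_\Nilp$ of a (union of components of a) prestack computes global sections of the dualizing complex on its fixed-point locus.

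Combining these two identifications through the equivariant equivalence yields the desired isomorphism
$$\on{Funct}_c(\Bun_G(\BF_q),\ol\BQ_\ell) \simeq \Gamma({}'\!\LS^{\on{arithm}}_\cG, \omega).$$
I expect the main technical obstacle to be the first step: establishing (or correctly formulating) the Frobenius equivariance of $\BL^{\on{restr}}_\cG$. The construction of the Langlands functor goes through many intermediate categorical constructions (spectral decomposition, Whittaker coefficients, Kac--Moody localization at the critical level, etc.), and tracking Frobenius structures uniformly through all of them is delicate; one must ensure that all the isomorphisms implicit in the construction of $\BL^{\on{restr}}_\cG$ respect the Frobenius structures. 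A secondary subtlety is verifying the restriction to the components ${}'\!\LS^{\on{restr}}_\cG$ behaves correctly under Frobenius fixed points, since Frobenius can permute connected components; one needs the collection of components cut out by ${}'\!\LS^{\on{restr}}_\cG$ to be Frobenius-stable, which should follow from the intrinsic characterization of these components recorded in \thmref{t:main(i) Intro}.
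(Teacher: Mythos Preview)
Your approach is essentially the same as the paper's: take the categorical trace of Frobenius on both sides of the equivalence from \thmref{t:main(i) Intro}, then identify the automorphic side via \cite{AGKRRV3} and the spectral side via \cite{BLR} together with the standard fixed-point computation for $\Tr(\Frob,\IndCoh)$. One small correction: the construction of $\BL^{\on{restr}}_\cG$ in this paper does \emph{not} go through Kac--Moody localization at the critical level (that is a de Rham ingredient); it is built purely from the spectral action and the vacuum Whittaker coefficient, so the Frobenius equivariance is more transparent than you feared and the paper treats it as automatic.
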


This is stated as \corref{c:main} in the main body of the paper. 

\medskip

Moreover, it follows from the construction of the isomorphism
\eqref{e:classical} that it is compatible with the action of the excursion algebra
$$\CA_G:=\Gamma(\LS^{\on{arithm}}_\cG,\CO)$$
on both sides, and in particular, with the action of the Hecke operators. 

\begin{rem} 

Parallel to Remark \ref{r:no sheaf}, \thmref{t:classical Intro} does not settle the main mystery in classical Langlands:
outside the case of $G=GL_n$, we do not yet know that to an irreducible Langlands parameter there
corresponds a non-zero eigenfunction (if it existed, it would automatically be cuspidal, by the nature of
the isomorphism \eqref{e:classical}).

\end{rem} 

\begin{rem}

The stack $\LS^{\on{arithm}}_\cG$ is Calabi-Yau\footnote{In the sense that the determinant of the cotangent complex 
is the trivial line bundle.}, but is also highly derived in that its structure sheaf has 
non-trivial cohomology in infinitely many negative cohomological degrees. This implies that although there
is a canonical map
\begin{equation} \label{e:temp funct}
\CO_{\LS^{\on{arithm}}_\cG}\to \omega_{\LS^{\on{arithm}}_\cG},
\end{equation}
it is very far from being an isomorphism (although it is such over the \emph{quasi-smooth} locus).

\medskip

Ultimately, this phenomenon is responsible for the presence of the Arthur $SL_2$ in the
classification of automorphic functions. 

\medskip

Yet, we expect that in the map 
\begin{equation} \label{e:temp funct sec}
\Gamma(\LS^{\on{arithm}}_\cG,\CO_{\LS^{\on{arithm}}_\cG})\to \Gamma(\LS^{\on{arithm}}_\cG,\omega_{\LS^{\on{arithm}}_\cG}),
\end{equation}
induced by \eqref{e:temp funct}, both sides are classical vector spaces (i.e., are concentrated in cohomological degree $0$)
and the map \eqref{e:temp funct sec} itself is injective. Moreover, we expect that 
the intersection of the image of \eqref{e:temp funct sec} with the subspace of cuspidal
functions is equal to the space of \emph{tempered} cuspidal functions
(with respect to any isomorphism $\ol\BQ_\ell\simeq \BC$). We hope to take this up in a future work. 

\end{rem} 

\begin{rem}

The paper \cite{Ra2} proves an arithmetic result \textemdash{}
the non-existence of cusp forms with certain Langlands parameters \textemdash{}
conditional on GLC in 
characteristic $p$.
Although we only obtain partial results on GLC in 
this paper, our results suffice for the 
applications in \cite{Ra2}.

\end{rem}

\ssec{The methods}

\sssec{}

The first step is the construction of the functor \eqref{e:Langlands functor Intro}. 
It is here that the significance of the subcategory
$$\Shv_{\Nilp}(\Bun_G)\subset \Shv(\Bun_G)$$
comes to the fore.

\medskip

Namely, it turns out to be the maximal subcategory of $\Shv(\Bun_G)$ on which the action
of the Hecke functors factors through a monoidal action of the category
$$\QCoh(\LS^{\on{restr}}_\cG);$$
we refer to it as the ``spectral action".

\medskip

We recover the functor \eqref{e:Langlands functor Intro}
by requiring that it intertwines the actions of $\QCoh(\LS^{\on{restr}}_\cG)$ on the two sides 
and makes the following diagram commute:
$$
\CD
\Vect @>{\on{Id}}>> \Vect \\
@A{\on{coeff}^{\on{Vac}}}AA @AA{\Gamma^{\IndCoh}(\LS^{\on{restr}}_\cG,-)}A \\
\Shv_{\Nilp}(\Bun_G) @>{\BL^{\on{restr}}}>> \IndCoh_\Nilp(\LS^{\on{restr}}_\cG),
\endCD
$$
where the left vertical arrow is the functor of \emph{vacuum} Whittaker coefficient,
see \secref{sss:coeff expl}. 

\medskip

This follows verbatim the construction of the Langlands functor for D-modules and Betti sheaves
in \cite[Sect. 1]{GLC1}.

\sssec{}

As a next step, we show that the functor \eqref{e:Langlands functor Intro} is an equivalence 
for $\ell$-adic sheaves as long as the ground field over which we work has characteristic $0$.

\medskip

To do so, by the Lefschetz principle, we can replace the initial ground field by the field $\BC$ of complex
numbers. In the latter case, we compare the functor \eqref{e:Langlands functor Intro} for $\ell$-adic
sheaves with its counterpart for Betti sheaves, and we show that if the latter is an equivalence, then
so is the $\ell$-adic version. 

\medskip

Finally, we quote \cite{GLC1}, which says that the Betti version of \eqref{e:Langlands functor Intro}
\emph{is} an equivalence. This is obtained by combining the fact that the D-module version of 
\eqref{e:Langlands functor Intro} is an equivalence (which is the outcome of the [GLC] series)
and the Riemann-Hilbert correspondence. 

\sssec{}

Thus, our task is to deduce \thmref{t:main(i) Intro} for a field of positive characteristic 
from its validity for a field of characteristic $0$.  We achieve this by the following procedure. 

\medskip

Let $\sk$ be our ground field of positive characteristic (assumed algebraically closed). 
Let $\sR_0:=\on{Witt}(\sk)$ be the ring of Witt vectors of $\sk$, let $\sK_0$ denote the field of fractions of $\sR_0$ and let
$\sK$ denote the algebraic closure of $\sK_0$. Let $\sR$ denote the integral closure of $\sR_0$
in $\sK$. 

\medskip

Given a (smooth complete) curve $X_\sk$ over $\sk$, we \emph{choose} its extension to a (smooth complete) curve
$X_{\sR_0}$ over $\Spec(\sR_0)$. (Such an extension exists 
by a standard deformation theory argument.) Let $X_\sK$ be the base change of $X_{\sR_0}$ to $K$.

\medskip

Note that we can identify $\LS^{\on{restr}}_{\cG,\sk}$ with the union of some of the connected components 
of $\LS^{\on{restr}}_{\cG,\sK}$, see \secref{sss:Sp LS}.

\sssec{}

In \secref{s:Sp} we introduce a specialization functor
\begin{equation} \label{e:Sp init Intro}
\on{Sp}:\Shv(\Bun_{G,\sK})\to \Shv(\Bun_{G,\sk}),
\end{equation}
which essentially amounts to the functor of nearby cycles.

\medskip

We establish the following properties of this functor:

\begin{itemize}

\item It commutes with the Hecke functors;

\item It commutes with the functors of Eisenstein series;

\item It sends the vacuum Poincar\'e object\footnote{This is the object that corepresents the functor $\on{coeff}^{\on{Vac}}$.}  
$\on{Poinc}^{\on{Vac}}_{!,\sK}$ to the vacuum Poincar\'e object $\on{Poinc}^{\on{Vac}}_{!,\sk}$.

\end{itemize} 

The first of these properties implies that the functor $\on{Sp}$ sends the direct summand 
$$\Shv_\Nilp(\Bun_{G,\sK,\sk})\subset \Shv_\Nilp(\Bun_{G,\sK})$$
that has to do with $\LS^{\on{restr}}_{\cG,\sk}\subset \LS^{\on{restr}}_{\cG,\sK}$ to 
$\Shv_\Nilp(\Bun_{G,\sk})$, i.e., we obtain a functor
\begin{equation} \label{e:Sp Nilp Intro}
\on{Sp}:\Shv_\Nilp(\Bun_{G,\sK,\sk})\to \Shv_\Nilp(\Bun_{G,\sk}).
\end{equation} 

Combined with the other properties, one shows that the functor \eqref{e:Sp Nilp Intro} preserves compactness 
and makes the following diagram commute:
$$
\CD
\Shv_{\Nilp}(\Bun_{G,\sK,\sk}) @>{\BL^{\on{restr}}_\sK}>{\sim}> \IndCoh_\Nilp(\LS^{\on{restr}}_{\cG.\sk}) \\
@V{\on{Sp}}VV @VV{\on{Id}}V \\
\Shv_{\Nilp}(\Bun_{G,\sk}) @>>{\BL^{\on{restr}}_\sk}> \IndCoh_\Nilp(\LS^{\on{restr}}_{\cG.\sk}). 
\endCD
$$

\sssec{}

However, this is not quite enough to deduce \thmref{t:main(i) Intro}. What we need is another 
crucial property of the functor \eqref{e:Sp Nilp Intro}, which says that this functor is a \emph{Verdier quotient}.

\medskip

It turns out that there is a simple criterion for when a functor between dualizable categories 
is a Verdier quotient, see \lemref{l:criter for loc}. This is a general categorical assertion, but
it turns out that one can apply and check it in our situation; this is due to some rather
special properties of the category $\Shv_{\Nilp}(\Bun_G)$ established in  \cite{AGKRRV2},
most notably, the categorical K\"unneth formula.

\medskip

The fact that this criterion is satisfied for us follows from a certain geometric property of the initial functor 
\eqref{e:Sp init Intro}. Namely, this property says that the natural map 
\begin{equation} \label{e:diag Intro}
(\Delta_{\Bun_{G,\sk}})_!(\ul\sfe_{\Bun_{G,\sk}})\to \on{Sp}((\Delta_{\Bun_{G,\sK}})_!(\ul\sfe_{\Bun_{G,\sK}})),
\end{equation} 
is an isomorphism, where:

\begin{itemize}

\item $\Delta_\CY$ denotes the diagonal morphism of a stack $\CY$;

\item $\ul\sfe_\CY$ denotes the constant sheaf on $\CY$.

\end{itemize} 

In its turn, the fact that \eqref{e:diag Intro} is an isomorphism is equivalent to the ULA property of
$(\Delta_{\Bun_{G,\sR_0}})_!(\ul\sfe_{\Bun_{G,\sR_0}})$ with respect to the projection
$\Bun_{G,\sR_0}\to \Spec(\sR_0)$. 

\medskip

We verify the required ULA property using the Drinfeld-Lafforgue-Vinberg compactification $\ol{\Bun}_G$ of the diagonal
map of $\Bun_G$.

\sssec{Another technical result}

We now mention another result, \thmref{t:compactness}, established in this paper,
which is a crucial technical component for many other theorems that we 
prove.

\medskip

Namely, \thmref{t:compactness} says that the category $\Shv_{\Nilp}(\Bun_G)$ is
generated by objects that are compact in the ambient category $\Shv(\Bun_G)$.

\medskip

This result was conjectured in \cite{AGKRRV1}\footnote{This was used also as a hypothesis in \cite{AGKRRV3}
to show that the trace isomorphism $\Tr(\Frob,\Shv_\Nilp(\Bun_G))\simeq \on{Funct}_c(\Bun_G(\BF_q),\ol\BQ_\ell)$
reproduces the usual pointwise Frobenius map for constructible Weil sheaves.}, and it was proved in {\it loc. cit.}
when the ground field has characteristic $0$. In this paper we deduce it
in the positive characteristic case using the functor \eqref{e:Sp Nilp Intro}.

\ssec{Structure of the paper}

\sssec{}

In \secref{s:l-adic} we construct the Langlands functor \eqref{e:Langlands functor Intro}
and derive function-theoretic applications.

\sssec{}

In \secref{s:char 0} we prove that the functor \eqref{e:Langlands functor Intro}
is an equivalence over a ground field of characteristic $0$. Namely, we deduce
this from the validity of the Betti version of GLC. 

\sssec{}

In \secref{s:Sp} we stipulate the existence of the functor \eqref{e:Sp init Intro} with
some specified properties and deduce \thmref{t:main(i) Intro}.

\sssec{}

In \secref{s:construct funct} we construct the functor \eqref{e:Sp init Intro} and establish some 
of its expected properties. 

\sssec{}

In \secref{s:unit} we state \thmref{t:diag acycl}, which says that $(\Delta_{\Bun_{G,\sR_0}})_!(\ul\sfe_{\Bun_{G,\sR_0}})$
is ULA and show how this implies that the functor \eqref{e:Sp Nilp Intro} is a Verdier quotient.

\sssec{}

In \secref{s:ULA} we prove the local acyclicity theorems responsible for the required
properties of the functor \eqref{e:Sp init Intro}. There are four such theorems:

\medskip

\noindent{(i)} Acyclicity of kernels defining Hecke functors. This is proved 
using (what essentially is) the Bott-Samelson resolution;

\medskip

\noindent{(ii)} Acyclicity of kernels defining Eisenstein functors. This is proved
using local models, called \emph{Zastava} spaces.

\medskip

\noindent{(iii)} Acyclicity of $(\Delta_{\Bun_{G,\sR_0}})_!(\ul\sfe_{\Bun_{G,\sR_0}})$.
This is proved using local models for $\ol{\Bun}_G$, developed in \cite{Sch}. 

\medskip

\noindent{(iv)} Acyclicity of the vacuum Poincar\'e object $\on{Poinc}^{\on{Vac}}_{!,\sR_0}$. 

\medskip

In the present section we prove the first three of these theorems. The proofs are based on the 
\emph{contraction principle}, formulated in \propref{p:preserve ULA}. 

\sssec{}

In \secref{s:proof Poinc} we prove the acyclicity of the vacuum Poincar\'e object. We give two proofs. 
One is shorter, but it uses an additional assumption on the interaction of the characteristic of the field with $g(X)$ and $G$.

\medskip

The second proof uses a comparison between the !-Poincar\'e object $\on{Poinc}^{\on{Vac}}_!$ 
with its *-counterpart $\on{Poinc}^{\on{Vac}}_*$, expressed by \thmref{t:Poinc ! to *}. 

\sssec{}

In \secref{s:asymptotics} we prove \thmref{t:Poinc ! to *}, which says that the cone of the natural map
$$\on{Poinc}^{\on{Vac}}_!\to \on{Poinc}^{\on{Vac}}_*$$
belongs to the full subcategory generated by the essential images of the Eisenstein functors for
proper parabolic subgroups. 

\medskip

\thmref{t:Poinc ! to *} is of independent interest, and the proof that we give is useful as well:
it consists of studying what one may call the asymptotic behavior of the Whttaker sheaf
as we degenerate the character. 

\sssec{}

Finally, in \secref{s:L and Eis} we revisit the topic of the interaction of the Langlands functor 
with Eisenstein series. 

\medskip

So far we have not mentioned Eisenstein series in the introduction, but not surprisingly,
they form an integral part of the theory, and our ability to prove statements about the
Langlands correspondence often relies on having good control of the Eisenstein 
functor. 

\medskip

One particular aspect of this is the computation of the Whittaker coefficient
of Eisenstein series, which is performed in \thmref{t:Whit of Eis}.  

\sssec{Notations and conventions}

Notations and conventions in this paper are identical to those in \cite{GLC1}.

\ssec{Acknowledgements}

We dedicate this paper to G\'erard Laumon, who initiated the study of automorphic sheaves. 
In addition to that, the influence of his ideas in this area is all-pervasive: geometric Eisenstein
series, geometric Fourier transform, local Fourier-Mukai equivalence, to name just a few. 

\medskip

We are grateful to our collaborators on the [AGKRRV] and [GLC] projects:
D.~Arinkin, D.~Beraldo, J.~Campbell, L.~Chen, J.~Faergeman, D.~Kazhdan, K.~Lin, N.~Rozenblyum
and Y.~Varshavsky.

\medskip 

We are also grateful to D.~Ben-Zvi, T.~Feng, D.~Hansen, M.~Harris, V.~Lafforgue, W.~Sawin, P.~Scholze and X.~Zhu
for productive discussions. 

\medskip

This paper was written while the first author was visiting IHES and the second author was visiting the 
Max Planck Institute in Bonn. We wish to thank these institutions for providing excellent working conditions.

\medskip 

The work of the second author was supported by the Sloan Research Fellowship, as well as 
NSF grant DMS-2401526.

\section{The Langlands functor for \texorpdfstring{$\ell$}{ell}-adic sheaves} \label{s:l-adic}

In this section we will work over a ground field $k$, assumed algebraically closed,
but it may have either positive characteristic or characteristic $0$. Our sheaf
theory $\Shv(-)$ (see \cite[Sect. 1.1]{AGKRRV1} for what we mean by that)
will be that of (ind-)constructible $\ol\BQ_\ell$-adic \'etale sheaves;
so our field of coefficients $\sfe$ is $\ol\BQ_\ell$.

\medskip

On the geometric side, we consider the category $\Shv_\Nilp(\Bun_G)$, as defined
in \cite[Sect. 14.1]{AGKRRV1}.  

\medskip

On the spectral side we, we consider the category $\IndCoh_{\Nilp}(\LS^{\on{restr}}_\cG)$, 
as defined in \cite[Sect. 4.1]{AGKRRV1}.

\medskip

The goal of this section is to construct a functor
$$\BL_G^{\on{restr}}:\Shv_\Nilp(\Bun_G)\to \IndCoh_{\Nilp}(\LS^{\on{restr}}_\cG)$$
and state some results and conjectures pertaining to its properties. 

\ssec{Coarse version of the functor} \label{ss:coarse}

\sssec{}

We start by considering the object
$$\on{Poinc}_!^{\on{Vac}}\in \Shv(\Bun_G)^c.$$

It is constructed by the procedure of \cite[Sect. 3.3]{GLC1} (see \secref{sss:recall Poinc} below). 

\begin{rem}
Note that when $k$ has positive characteristic, $\on{Poinc}_!^{\on{Vac}}$ can be equivalently 
constructed by the procedure of \cite[Sect. 1.3.7]{GLC1}, replacing the exponential D-module by the 
Artin-Schreier sheaf, see \cite[Remark 3.3.6]{GLC1}.
\end{rem}

\sssec{}

Recall the functor
$$\sP:\Shv(\Bun_G)\to \Shv_{\Nilp}(\Bun_G)$$
of \cite[Sect. 15.4.5]{AGKRRV1}.

\medskip

Denote:
$$\on{Poinc}_{!,\Nilp}^{\on{Vac}}:=\sP(\on{Poinc}_!^{\on{Vac}})\in \Shv_{\Nilp}(\Bun_G).$$

\sssec{}

Recall now that, according to \cite[Theorem 14.3.2]{AGKRRV1}, the category $\Shv_{\Nilp}(\Bun_G)$ is
acted on by $\QCoh(\LS^{\on{restr}}_\cG)$. 

\medskip

We define the functor
$$\BL^{\on{restr},L}_{G,\on{temp}}:\QCoh(\LS^{\on{restr}}_\cG)\to \Shv_{\Nilp}(\Bun_G)$$
to be given by the action of $\QCoh(\LS^{\on{restr}}_\cG)$ on $\on{Poinc}_{!,\Nilp}^{\on{Vac}}$.

\medskip

We will prove:

\begin{prop} \label{p:L coarse L comp}
The functor $\BL^{\on{restr},L}_{G,\on{temp}}$ preserves compactness. 
\end{prop}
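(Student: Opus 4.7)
The plan is to reduce the statement to two ingredients: (a) that $\on{Poinc}_{!,\Nilp}^{\on{Vac}}$ is itself compact in $\Shv_{\Nilp}(\Bun_G)$, and (b) that the action of a chosen set of compact generators of $\QCoh(\LS^{\on{restr}}_\cG)$ on $\on{Poinc}_{!,\Nilp}^{\on{Vac}}$ produces compact objects. Since $\QCoh(\LS^{\on{restr}}_\cG)$ is compactly generated and $\BL^{\on{restr},L}_{G,\on{temp}}$ is defined to be continuous, this suffices: preservation of compactness by a continuous functor between presentable stable $\infty$-categories can be tested on a set of compact generators of the source.

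For (a), $\on{Poinc}_!^{\on{Vac}}\in \Shv(\Bun_G)$ is compact by its very construction in \cite[Sect. 3.3]{GLC1}: it is a !-pushforward of a compact (constructible) sheaf from a finite-type scheme mapping into a quasi-compact open substack of $\Bun_G$. To transfer compactness to $\Shv_{\Nilp}(\Bun_G)$, I would invoke the fact, established in \cite{AGKRRV1, AGKRRV2}, that the inclusion $\Shv_{\Nilp}(\Bun_G)\hookrightarrow \Shv(\Bun_G)$ preserves compactness and that $\Shv_{\Nilp}(\Bun_G)$ is closed under retracts in the ambient category; equivalently, the projector $\sP$ is continuous on compact objects. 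Applied to $\on{Poinc}_!^{\on{Vac}}$, this yields compactness of $\on{Poinc}_{!,\Nilp}^{\on{Vac}}=\sP(\on{Poinc}_!^{\on{Vac}})$. (In positive characteristic, \thmref{t:compactness} mentioned in the introduction guarantees that this type of argument is available.)

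For (b), I would use the construction of the $\QCoh(\LS^{\on{restr}}_\cG)$-action of \cite[Theorem 14.3.2]{AGKRRV1}: it is assembled from the Hecke action of a category built out of $\Rep(\cG)^{\otimes X}$ via a factorization/spectral-Satake mechanism. This gives rise to compact generators of $\QCoh(\LS^{\on{restr}}_\cG)$ whose action on $\Shv_{\Nilp}(\Bun_G)$ is described by finite compositions of Hecke functors $H^{\bar V}_{\bar x}$ at finite tuples $\bar x\subset X$ and $\bar V\in\Rep(\cG)^{\otimes|\bar x|}$. Each such Hecke functor is a !-convolution with the IC-sheaf of a finite-dimensional closed Schubert subvariety of the affine Grassmannian and is thus continuous and compactness-preserving; moreover, Hecke operators preserve the property of being set-theoretically supported on a quasi-compact open locus of $\Bun_G$ (uniformly on the compact object being acted on, for a fixed Hecke kernel), so the output stays constructible and compact in $\Shv(\Bun_G)$, hence also in $\Shv_{\Nilp}(\Bun_G)$ by part (a).

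The main obstacle is the identification of a convenient set of compact generators of $\QCoh(\LS^{\on{restr}}_\cG)$ whose action is explicitly of the above Hecke form; this identification is essentially the content of the spectral decomposition in \cite{AGKRRV1}, together with the factorization-theoretic description of how representations of $\cG$ at points of $X$ generate $\QCoh(\LS^{\on{restr}}_\cG)$. Once this identification is in hand, the proposition follows from the combination of (a), (b), and the stability of the class of compact objects in $\Shv_{\Nilp}(\Bun_G)$ under finite Hecke iterations.
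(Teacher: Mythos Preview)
Your argument has a genuine gap in step (b). The objects that act on $\Shv_\Nilp(\Bun_G)$ via Hecke functors are the \emph{dualizable} objects $\on{ev}_x^*(V)\in\QCoh(\LS^{\on{restr}}_\cG)$, and these are \emph{not} compact. The prestack $\LS^{\on{restr}}_\cG$ is a disjoint union of formal algebraic stacks $\CZ_\alpha$, each written as a colimit of quasi-smooth algebraic stacks $Z_{\alpha,n}$ along regular nil-immersions $i_{\alpha,n}$; in particular $\CO_{\LS^{\on{restr}}_\cG}$ itself is not compact. The compact generators of $\QCoh(\LS^{\on{restr}}_\cG)$ are the objects $(i_{\alpha,n})_*(\CO_{Z_{\alpha,n}})\otimes\CE$ with $\CE$ dualizable, and the action of $(i_{\alpha,n})_*(\CO_{Z_{\alpha,n}})$ on $\Shv_\Nilp(\Bun_G)$ is \emph{not} a finite composition of Hecke functors. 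So the reduction you propose in (b) does not go through: you are testing the functor on the wrong class of objects.

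The paper's proof addresses exactly this point. For each $Z_{\alpha,n}$ there is an enhanced projector $\sP^{\on{enh}}_{Z_{\alpha,n}}:\Shv(\Bun_G)\to \QCoh(Z_{\alpha,n})\underset{\QCoh(\LS^{\on{restr}}_\cG)}\otimes \Shv_\Nilp(\Bun_G)$, which is a genuine left adjoint (this is \cite[Corollary 13.5.4]{AGKRRV1}) and hence preserves compactness. One then identifies $\BL^{\on{restr},L}_{G,\on{temp}}((i_{\alpha,n})_*(\CO_{Z_{\alpha,n}}))\simeq ((i_{\alpha,n})_*\otimes\on{Id})(\sP^{\on{enh}}_{Z_{\alpha,n}}(\on{Poinc}_!^{\on{Vac}}))$, and the functor $(i_{\alpha,n})_*\otimes\on{Id}$ preserves compactness because it has a continuous right adjoint. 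Note that this bypasses step (a) entirely: one never needs to know that $\on{Poinc}_{!,\Nilp}^{\on{Vac}}$ is compact in $\Shv_\Nilp(\Bun_G)$, only that $\on{Poinc}_!^{\on{Vac}}$ is compact in $\Shv(\Bun_G)$. This matters, because your route through step (a) invokes \thmref{t:compactness}, which in positive characteristic is proved \emph{after} the present proposition (the construction of the Langlands functor, and hence \thmref{t:ff GLC}(ii) over $\sK$, feeds into the proof of \thmref{t:compactness} over $\sk$), so using it here would be circular.
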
 

The proof will be given in \secref{ss:L coarse L comp}.

\sssec{}

As stated, \propref{p:L coarse L comp} says that the functor $\BL^{\on{restr},L}_{G,\on{temp}}$ sends compacts
to compacts, when viewed as a functor with values in $\Shv_{\Nilp}(\Bun_G)$. The following assertion was 
stated as \cite[Conjecture 14.1.8]{AGKRRV1}; we will prove it in this paper (see \secref{sss:proof of compactness}): 

\begin{thm} \label{t:compactness}
The embedding 
$$\on{emb.Nilp}:\Shv_{\Nilp}(\Bun_G)\hookrightarrow \Shv(\Bun_G)$$
preserves compactness. 
\end{thm}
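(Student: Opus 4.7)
The plan is to reduce the positive characteristic case to the characteristic zero case \textemdash{} proved in \cite{AGKRRV1} \textemdash{} by means of the specialization functor of \secref{s:Sp}. First I would fix a smooth lift $X_{\sR_0}$ of $X_\sk$ to the Witt ring $\sR_0=\on{Witt}(\sk)$, and denote by $X_\sK$ its generic fiber over the algebraic closure $\sK$ of $\on{Frac}(\sR_0)$. The identification of connected components recalled in \secref{sss:Sp LS} realizes $\Shv_\Nilp(\Bun_{G,\sK,\sk})$ as a direct summand of $\Shv_\Nilp(\Bun_{G,\sK})$, and \secref{s:unit} supplies the specialization functor
$$\on{Sp}: \Shv_\Nilp(\Bun_{G,\sK,\sk})\to \Shv_\Nilp(\Bun_{G,\sk})$$
as a Verdier quotient of dualizable DG categories.

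Next, I would exploit this Verdier quotient property to lift compacts. Since a Verdier quotient between dualizable categories is essentially surjective on compact objects up to direct summands, every $\CF_\sk \in \Shv_\Nilp(\Bun_{G,\sk})^c$ is a direct summand of $\on{Sp}(\CF_\sK)$ for some compact $\CF_\sK \in \Shv_\Nilp(\Bun_{G,\sK,\sk})^c$. Because $\Shv_\Nilp(\Bun_{G,\sK,\sk})$ is a direct summand of $\Shv_\Nilp(\Bun_{G,\sK})$, the object $\CF_\sK$ is also compact in the ambient $\Shv_\Nilp(\Bun_{G,\sK})$. By the characteristic zero case of the theorem, already proved in \cite{AGKRRV1}, $\CF_\sK$ is compact in the ambient $\Shv(\Bun_{G,\sK})$.

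The remaining step is to transfer compactness across the ambient specialization $\on{Sp}: \Shv(\Bun_{G,\sK})\to \Shv(\Bun_{G,\sk})$. Since this functor arises from nearby cycles for the smooth, locally finite-type family $\Bun_{G,\sR_0}\to \Spec(\sR_0)$, it preserves constructibility and commutes with \mbox{!-restriction} to quasi-compact open substacks. Consequently it carries compact objects of $\Shv(\Bun_{G,\sK})$ (namely, constructible objects supported on a quasi-compact open) to compact objects of $\Shv(\Bun_{G,\sk})$. Applying this to $\CF_\sK$ yields that $\on{Sp}(\CF_\sK)$, and hence its direct summand $\CF_\sk$, is compact in $\Shv(\Bun_{G,\sk})$, as desired.

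The main obstacle I anticipate is justifying that ambient specialization preserves compactness when the target is the non-quasi-compact stack $\Bun_G$. Rather than invoking a general preservation statement, one argues concretely: using that $\on{Sp}$ commutes with Hecke and Eisenstein functors and sends $\Poinc^{\on{Vac}}_{!,\sK}$ to $\Poinc^{\on{Vac}}_{!,\sk}$, the compact generators of $\Shv_\Nilp(\Bun_{G,\sK,\sk})$ as a $\QCoh(\LS^{\on{restr}}_\cG)$-module specialize to objects built from $\Poinc^{\on{Vac}}_{!,\sk}$ by the same functors. Since $\Poinc^{\on{Vac}}_{!,\sk}$ is compact in $\Shv(\Bun_{G,\sk})$ by construction, and the Hecke/Eisenstein functors preserve compactness in the ambient category, compactness is preserved on a generating family, and hence on all compacts via the Karoubi summand argument.
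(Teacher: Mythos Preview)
Your overall strategy is close to the paper's, but the final paragraph contains a genuine gap that undermines the argument. The compact generators of $\Shv_\Nilp(\Bun_{G,\sK,\sk})$ are of the form $\CE\star \on{Poinc}^{\on{Vac}}_{!,\Nilp,\sK,\sk}$ (plus Eisenstein objects), where $\on{Poinc}^{\on{Vac}}_{!,\Nilp}=\sP(\on{Poinc}^{\on{Vac}}_!)$ is the \emph{Beilinson-projected} Poincar\'e object and $\CE\star(-)$ is the spectral action of $\QCoh(\LS^{\on{restr}}_\cG)$, which is only defined on $\Shv_\Nilp$, not on the ambient $\Shv(\Bun_G)$. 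These are not ``built from $\on{Poinc}^{\on{Vac}}_{!,\sk}$ by Hecke functors'' in the ambient category: even if $\CE=\on{ev}_x^*(V)$ so that the spectral action agrees with an ambient Hecke functor, you still have $\CE\star \on{Poinc}^{\on{Vac}}_{!,\Nilp}=\sP(\on{H}_{V,x}(\on{Poinc}^{\on{Vac}}_!))$, and the ambient compactness of $\sP(-)$ applied to a compact object is precisely the assertion of the theorem. So the argument is circular at this point. The Eisenstein part of your argument is fine (by induction on semi-simple rank), but that does not help with the generators of type (i).

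The paper closes this gap via a different mechanism: it invokes \cite[Proposition 16.4.7]{AGKRRV1}, which says that $\on{emb.Nilp}$ preserves compactness if and only if the compact generators of $\Shv_\Nilp(\Bun_G)$ are eventually coconnective. Over $\sK$ the theorem is known, so $\CE\star \on{Poinc}^{\on{Vac}}_{!,\Nilp,\sK,\sk}$ is eventually coconnective; then Property~(E), the t-exactness of $\on{Sp}_{\sK\to\sk}$, transports eventual coconnectivity to $\sk$. This is the missing ingredient in your approach: you never use Property~(E), and without converting the problem to a t-structure statement there is no evident way to push ambient compactness through $\on{Sp}$ for the non-Eisenstein generators.
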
 

\medskip

Thus, \propref{p:L coarse L comp}, combined with \thmref{t:compactness}, say that the functor $\BL^{\on{restr},L}_{G,\on{temp}}$,
when viewed as taking values in $\Shv(\Bun_G)$, also preserves compactness. 

\sssec{}

By \propref{p:L coarse L comp}, the functor $\BL^{\on{restr},L}_{G,\on{temp}}$ admits a continuous
right adjoint, which we will denote by $\BL^{\on{restr}}_{G,\on{coarse}}$.

\medskip

The functor $\BL^{\on{restr},L}_{G,\on{temp}}$ is $\QCoh(\LS^{\on{restr}}_\cG)$-linear by construction.
Hence, the functor $\BL^{\on{restr}}_{G,\on{coarse}}$ acquires a structure of right-lax linearity with respect 
to $\QCoh(\LS^{\on{restr}}_\cG)$. 

\medskip

Note, however, since $\QCoh(\LS^{\on{restr}}_\cG)$ is \emph{semi-rigid} as a symmetric monoidal category (see \cite[Appendix C]{AGKRRV1}
for what this means), we obtain that the right-lax linear structure on $\BL^{\on{restr}}_{G,\on{coarse}}$ is actually strict. 

\sssec{}

We will prove:

\begin{thm} \label{t:L sends comp to even coconn}
The functor $\BL^{\on{restr}}_{G,\on{coarse}}$ sends compact objects in $\Shv_{\Nilp}(\Bun_G)$
to bounded below (a.k.a. eventually coconnective) objects in $\QCoh(\LS^{\on{restr}}_\cG)$.
\end{thm}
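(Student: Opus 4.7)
The plan is to test the target of $\BL^{\on{restr}}_{G,\on{coarse}}$ against perfect generators of $\QCoh(\LS^{\on{restr}}_\cG)$, and to rewrite the resulting pairing as a Whittaker coefficient of a spectrally twisted compact object. Fix a compact $\CF \in \Shv_\Nilp(\Bun_G)$ and set $\CE := \BL^{\on{restr}}_{G,\on{coarse}}(\CF)$. Since $\QCoh(\LS^{\on{restr}}_\cG)$ is compactly generated by perfect (equivalently, dualizable) objects, $\CE$ is eventually coconnective iff for every perfect $V \in \QCoh(\LS^{\on{restr}}_\cG)$, the mapping spectrum $\Hom(V, \CE)$ is cohomologically bounded below. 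By adjunction and the construction of $\BL^{\on{restr},L}_{G,\on{temp}}$, we compute
$$\Hom(V, \CE) \simeq \Hom_{\Shv_\Nilp(\Bun_G)}(V \cdot \on{Poinc}^{\on{Vac}}_{!,\Nilp},\, \CF) \simeq \Hom(\on{Poinc}^{\on{Vac}}_{!,\Nilp},\, V^\vee \cdot \CF) = \on{coeff}^{\on{Vac}}(V^\vee \cdot \CF),$$
where the middle equivalence uses that dualizability of $V$ yields the continuous adjunction $(V\cdot -,\, V^\vee \cdot -)$ for the spectral action, and the last equality is the defining property of $\on{Poinc}^{\on{Vac}}_{!,\Nilp}$.

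It therefore suffices to show that $\on{coeff}^{\on{Vac}}(V^\vee \cdot \CF)$ is bounded below in cohomological degree for every perfect $V$ and every compact $\CF$. I would organize this into three steps. First, since $V^\vee$ is dualizable, the endofunctor $V^\vee \cdot -$ of $\Shv_\Nilp(\Bun_G)$ admits the continuous right adjoint $V \cdot -$, and hence preserves compactness; so $V^\vee \cdot \CF$ is compact in $\Shv_\Nilp(\Bun_G)$. Second, by \thmref{t:compactness}, the embedding $\Shv_\Nilp(\Bun_G) \hookrightarrow \Shv(\Bun_G)$ preserves compactness, so $V^\vee \cdot \CF$ is in particular a bounded constructible sheaf on $\Bun_G$ supported on a finite-type open substack. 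Third, one checks that $\on{coeff}^{\on{Vac}}$ carries bounded constructible sheaves supported on a finite-type substack to cohomologically bounded-below vector spaces, by unwinding the construction of $\on{coeff}^{\on{Vac}}$ as a $!$-pushforward from a Drinfeld--Whittaker-type moduli space paired against the appropriate character sheaf (Artin--Schreier in positive characteristic, exponential in characteristic zero), and using that such $!$-pushforwards along a finite-type morphism shift lower bounds by at most twice the relative dimension.

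The main obstacle is the third step. The Whittaker coefficient functor is defined in \cite[Sect.~3.3]{GLC1} as a colimit along a tower, and one must verify that this colimit preserves lower boundedness once applied to a compact input supported on a finite-type substack; this requires tracking the shifts arising from each stage of the tower. The first two steps are formal: they use only the strict $\QCoh(\LS^{\on{restr}}_\cG)$-linearity of $\BL^{\on{restr}}_{G,\on{coarse}}$ (from semi-rigidity), the continuous adjunction furnished by dualizability of $V^\vee$, and \thmref{t:compactness}, so the nontrivial geometric content of the proof is concentrated in the cohomological bound on $\on{coeff}^{\on{Vac}}$. Notably, one does not need any further control of the tempered Langlands functor itself — the spectral action enters only through the continuous adjunction $(V\cdot -,\, V^\vee \cdot -)$.
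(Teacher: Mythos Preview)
Your reduction in the first paragraph contains a genuine gap: the claimed equivalence ``$\CE$ is eventually coconnective iff $\Hom(V,\CE)$ is bounded below for every perfect $V$'' does not hold as stated. The forward direction is fine, but the converse fails on $\LS^{\on{restr}}_\cG$, which is a disjoint union of infinitely many connected formal (non-affine) stacks: one can have $\Hom(V,\CE)$ bounded below for each individual perfect $V$ while $\CE$ is unbounded below, because the bound may depend on $V$ and no single perfect object detects all components or all infinitesimal depths. What you would need is a \emph{uniform} lower bound as $V$ ranges over a generating family in the heart, and nothing in your steps 1--3 supplies this. Concretely, your steps 2--3 show that each $V^\vee \cdot \CF$ is compact and hence bounded below, but the bound depends on $V$: for $V = \on{ev}_x^*(V^\lambda)$ the spectral action is the Hecke functor $\on{H}_{V^\lambda,x}$, whose cohomological amplitude grows with $\langle \lambda, 2\check\rho\rangle$, so the lower bound of $V^\vee \cdot \CF$ tends to $-\infty$ as $\lambda$ moves deep into the dominant chamber.

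The paper's proof is organized precisely around this uniformity issue, and it is the whole content of the argument. One first reduces (via \thmref{t:compactness} and the rigidified space $\LS^{\on{restr},\on{rigid}_x}_\cG$, a disjoint union of formal affine schemes on which $\Gamma_!$ is t-exact and conservative) to showing that the functors $\on{coeff}^{\on{Vac}} \circ \on{H}_{V,x}$, for $V \in \Rep(\cG)^\heartsuit$, have \emph{uniformly} bounded left amplitude on all of $\Shv(\Bun_G)$. That uniformity is obtained by passing to the tempered quotient $\Shv(\Bun_G)_{\on{temp}}$: the Hecke functors $\on{H}_{V,x}$ are t-exact there for $V$ in the heart, $\on{Poinc}^{\on{Vac}}_!$ is tempered so $\on{coeff}^{\on{Vac}}$ factors through $\bu^R$, and $\on{coeff}^{\on{Vac}}_{\on{temp}}$ has bounded left amplitude independent of $V$. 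Your step 3, which you flag as the main obstacle, is in fact the easy part (the paper calls it ``obvious''); the nontrivial input is the t-exactness of Hecke on the tempered category, which is exactly the control you claim in your final sentence is not needed.
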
 

The proof of this theorem will be given in \secref{ss:L sends comp to even coconn}. 

\ssec{The Whittaker coefficient functor}

\sssec{}

Let 
$$\on{coeff}^{\on{Vac}}:\Shv(\Bun_G)\to \Vect$$
be the functor co-represented by $\on{Poinc}_!^{\on{Vac}}$. 

\medskip

When $\on{char}(k)$ is positive, it is given by \eqref{e:coeff formula}.
A variant of this holds when $\on{char}(k)=0$ using the material in \cite[Sect. 3.3]{GLC1}.

\sssec{}

Recall the functor
$$\Gamma_!(\LS^{\on{restr}}_\cG,-):\QCoh(\LS^{\on{restr}}_\cG)\to \Vect,$$
see \cite[Sect. 7.7]{AGKRRV1}. 

\begin{rem} 

Explicitly, the functor $\Gamma_!(\LS^{\on{restr}}_\cG,-)$ fits into the commutative diagram 
$$
\CD
\IndCoh(\LS^{\on{restr}}_\cG) @>{(\Upsilon_{\LS^{\on{restr}}_\cG})^\vee}>> \QCoh(\LS^{\on{restr}}_\cG) \\
@V{\Gamma^\IndCoh(\LS^{\on{restr}}_\cG,-)}VV @VV{\Gamma_!(\LS^{\on{restr}}_\cG,-)}V \\
\Vect @>{\on{Id}}>> \Vect.
\endCD
$$

This diagram is valid for any laft formal algebraic stack. Note, however, that for quasi-smooth\footnote{In fact, 
an appropriate eventual connectivity assumption suffices.}
formal algebraic stacks (such as $\LS^{\on{restr}}_\cG$), the top horizontal arrow is a Verdier 
quotient. 

\end{rem}

\sssec{}

We will prove:

\begin{prop}  \label{p:coeff}
The composition
$$\Shv_{\Nilp}(\Bun_G)\overset{\BL^{\on{restr}}_{G,\on{coarse}}}\longrightarrow \QCoh(\LS^{\on{restr}}_\cG)
\overset{\Gamma_!(\LS^{\on{restr}}_\cG,-)}\longrightarrow \Vect$$
identifies canonically with
\begin{equation} \label{e:coeff}
\Shv_{\Nilp}(\Bun_G)\overset{\on{emb.Nilp}}\hookrightarrow \Shv(\Bun_G) \overset{\on{coeff}^{\on{Vac}}}\longrightarrow \Vect.
\end{equation} 
\end{prop}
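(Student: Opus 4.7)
The proof is a formal computation using the adjunction $(\BL^{\on{restr},L}_{G,\on{temp}},\BL^{\on{restr}}_{G,\on{coarse}})$ together with the defining property of $\Gamma_!$ recalled in the Remark above. The strategy has two stages: first, identify the ordinary global sections of $\BL^{\on{restr}}_{G,\on{coarse}}(\G)$ with $\on{coeff}^{\on{Vac}}(\G)$ via adjunction; second, use the eventual coconnectivity output by \thmref{t:L sends comp to even coconn} to upgrade this to a statement about $\Gamma_!$.

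The first stage is essentially formal. By construction, the functor $\BL^{\on{restr},L}_{G,\on{temp}}$ is $\QCoh(\LS^{\on{restr}}_\cG)$-linear and sends the unit $\CO_{\LS^{\on{restr}}_\cG}$ to $\on{Poinc}^{\on{Vac}}_{!,\Nilp}$. Adjunction therefore gives, for any $\G\in\Shv_\Nilp(\Bun_G)$, a natural identification
\[
\Hom_{\QCoh(\LS^{\on{restr}}_\cG)}(\CO_{\LS^{\on{restr}}_\cG},\BL^{\on{restr}}_{G,\on{coarse}}(\G))\simeq \Hom_{\Shv_\Nilp(\Bun_G)}(\on{Poinc}^{\on{Vac}}_{!,\Nilp},\G).
\]
Since $\sP$ is left adjoint to $\on{emb.Nilp}$ and $\on{Poinc}^{\on{Vac}}_{!,\Nilp}=\sP(\on{Poinc}^{\on{Vac}}_!)$, the right-hand side further identifies with $\Hom_{\Shv(\Bun_G)}(\on{Poinc}^{\on{Vac}}_!,\on{emb.Nilp}(\G))=\on{coeff}^{\on{Vac}}(\G)$. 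Thus the ordinary global sections $\Gamma(\LS^{\on{restr}}_\cG,\BL^{\on{restr}}_{G,\on{coarse}}(\G))$ canonically agree with $\on{coeff}^{\on{Vac}}(\G)$.

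The second stage is to replace $\Gamma$ by $\Gamma_!$. By the defining diagram of $\Gamma_!$ in the Remark, this identification follows once one produces a canonical lift $\wt{\BL}(\G)\in\IndCoh(\LS^{\on{restr}}_\cG)$ satisfying $(\Upsilon_{\LS^{\on{restr}}_\cG})^\vee(\wt{\BL}(\G))=\BL^{\on{restr}}_{G,\on{coarse}}(\G)$, together with a natural isomorphism $\Gamma^\IndCoh(\LS^{\on{restr}}_\cG,\wt{\BL}(\G))\simeq\Hom_{\QCoh}(\CO_{\LS^{\on{restr}}_\cG},\BL^{\on{restr}}_{G,\on{coarse}}(\G))$ compatible with the first stage. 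Here I would reduce, by continuity of both sides (using that $\Shv_\Nilp(\Bun_G)$ is compactly generated by \thmref{t:compactness}), to the case where $\G$ is compact. Then \thmref{t:L sends comp to even coconn} provides the input: $\BL^{\on{restr}}_{G,\on{coarse}}(\G)$ is eventually coconnective, so it has a tautological lift to $\IndCoh(\LS^{\on{restr}}_\cG)$ under which $(\Upsilon)^\vee$ recovers the original object and $\Gamma^\IndCoh$ computes the ordinary global sections.

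\textbf{Main obstacle.} The essential difficulty is the second stage: distinguishing $\Gamma$ from $\Gamma_!$ for general $\QCoh$ objects on the (highly derived, quasi-smooth, non-smooth) stack $\LS^{\on{restr}}_\cG$ — where the Verdier quotient $(\Upsilon_{\LS^{\on{restr}}_\cG})^\vee$ is genuinely lossy — and checking that the canonical $\IndCoh$-lift provided by eventual coconnectivity is indeed compatible with the $\QCoh(\LS^{\on{restr}}_\cG)$-module structure used in the adjunction of the first stage. This is where \thmref{t:L sends comp to even coconn} enters crucially; without the eventual coconnectivity, the passage from $\QCoh$ to $\IndCoh$ would not be canonical and the two computations could disagree.
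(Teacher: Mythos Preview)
Your Stage 1 is correct and shares its core adjunction step with the paper's argument. The problem is Stage 2, in two ways.

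First, a logical circularity: you invoke \thmref{t:L sends comp to even coconn}, but in the paper its proof (\secref{ss:L sends comp to even coconn}) explicitly uses \propref{p:coeff} to rewrite $\Gamma_!(\LS^{\on{restr}}_\cG,\on{ev}_x^*(V)\otimes\BL^{\on{restr}}_{G,\on{coarse}}(-))$ as $\on{coeff}^{\on{Vac}}\circ\on{H}_{V,x}$. So within the paper's dependency order your strategy loops.

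Second, even granting eventual coconnectivity, the passage from $\Gamma$ to $\Gamma_!$ is a genuine gap. On a quasi-smooth \emph{algebraic} stack your ``tautological lift via $\Xi$'' heuristic is fine, but $\LS^{\on{restr}}_\cG$ is a \emph{formal} algebraic stack, and there $\CO_{\LS^{\on{restr}}_\cG}$ is not compact, so $\Gamma=\CHom(\CO,-)$ is discontinuous and genuinely differs from $\Gamma_!$. The claim that they agree on eventually coconnective objects would itself require a non-trivial argument specific to $\LS^{\on{restr}}_\cG$; you have not supplied one, and your identification of this as the ``main obstacle'' is accurate.

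The paper's proof sidesteps both issues by never writing down $\Gamma$ or working with $\CO$ directly. It expresses $\Gamma_!(\CZ_\alpha,-)$ as $\underset{n}{\on{colim}}\,\CHom_{\QCoh}((i_{\alpha,n})_*\CO_{Z_{\alpha,n}},-)$, applies the adjunction $(\BL^{\on{restr},L}_{G,\on{temp}},\BL^{\on{restr}}_{G,\on{coarse}})$ term-by-term (each $(i_{\alpha,n})_*\CO_{Z_{\alpha,n}}$ \emph{is} compact), unwinds the image via the enhanced projector $\sP^{\on{enh}}_{Z_{\alpha,n}}$ to land back in $\Shv(\Bun_G)$, and then uses compactness of $\on{Poinc}_!^{\on{Vac}}$ in $\Shv(\Bun_G)$ to commute the colimit past $\CHom$. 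This uses neither \thmref{t:L sends comp to even coconn} nor \thmref{t:compactness}.
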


The proposition will be proved in \secref{sss:proof coeff}.

\begin{rem}
Recall (see \cite[Sect. 7.6.1]{AGKRRV1}) that $\QCoh(\LS^{\on{restr}}_\cG)$ is canonically self-dual, so that under this sel-fduality the object
$\CO_{\LS^{\on{restr}}_\cG}\in \QCoh(\LS^{\on{restr}}_\cG)$ corresponds to the functor $\Gamma_!(\LS^{\on{restr}}_\cG,-)$.
In particular, a $\CO_{\LS^{\on{restr}}_\cG}$-linear functor
$$\bC\to \QCoh(\LS^{\on{restr}}_\cG)$$
(for a $\QCoh(\LS^{\on{restr}}_\cG)$-linear category $\bC$) is uniquely recovered from the composition 
$$\bC\to \QCoh(\LS^{\on{restr}}_\cG) \overset{\Gamma_!(\LS^{\on{restr}}_\cG,-)}\longrightarrow \Vect.$$

\medskip

From here here we obtain that \propref{p:coeff} gives rise to the following characterization of the functor $\BL^{\on{restr}}_{G,\on{coarse}}$:
it is the unique functor
$$\Shv_{\Nilp}(\Bun_G)\to \QCoh(\LS^{\on{restr}}_\cG)$$
that satisfies:

\begin{itemize}

\item It is $\QCoh(\LS^{\on{restr}}_\cG)$-linear;

\item Its composition with $\Gamma_!(\LS^{\on{restr}}_\cG,-)$ is isomorphic to \eqref{e:coeff}. 

\end{itemize} 

\end{rem} 

\ssec{Construction of the Langlands functor} \label{ss:actual functor}

\sssec{} \label{sss:emb temp spec}

Let $\bu^{\on{spec}}$ denote the functor
\begin{multline} \label{e:u spec}
\QCoh(\LS^{\on{restr}}_\cG)\overset{\Upsilon_{\LS^{\on{restr}}_\cG}}\simeq \IndCoh_{\{0\}}(\LS^{\on{restr}}_\cG)
\overset{-\otimes \fl_{\LS^{\on{restr}}_\cG}}\simeq 
\IndCoh_{\{0\}}(\LS^{\on{restr}}_\cG)\hookrightarrow \\
\hookrightarrow \IndCoh_{\Nilp}(\LS^{\on{restr}}_\cG),
\end{multline}
where:

\begin{itemize}

\item The first arrow is given by tensoring by the dualizing sheaf $\omega_{\LS^{\on{restr}}_\cG}$;
of $\LS^{\on{restr}}_\cG$;

\item $\fl_{\LS^{\on{restr}}_\cG}$ is the graded line bundle\footnote{In fact, this line bundle is constant, i.e., 
is essentially a graded line over $\sfe$, see Remark \ref{r:consant line} below.} $\det(T^*(\LS^{\on{restr}}_\cG)^{-1})[-2(g-1)\dim(G)]$.

\end{itemize} 

\medskip

In what follows we will denote the composition of the first two arrows in \eqref{e:u spec} by $\Xi_{\LS^{\on{restr}}_\cG}$.
This is a functor that makes sense for any quasi-smooth formal scheme (resp., algebrac stack) $\CZ$. 
If $\CZ$ is an
actual scheme (resp., algebraic stack), then $\Xi_\CZ$ is the tautological functor 
$$\QCoh(\CZ)\hookrightarrow \IndCoh(\CZ),$$
whose essential image is $\IndCoh_{\{0\}}(\CZ)$.

\begin{rem}

We use the notation $\bu^{\on{spec},R}$, rather than the more common one, namely, 
$\Psi_{\Nilp,\{0\}}$, in order to avoid the clash with the symbol for the nearby cycles functor. 

\end{rem}

\begin{rem} \label{r:consant line}

The second arrow in \eqref{e:u spec} is introduced in order to make this functor compatible with the one in the de Rham and Betti versions. 
Note also that $\LS^{\on{restr}}_\cG$ is symplectic (the symplectic structure is constructed using a choice of an invariant form on $\cg$)
of dimension $[2(g-1)\dim(G)]$. Hence, the line bundle $\det(T^*(\LS^{\on{restr}}_\cG))$ is canonically constant.

\end{rem} 

\sssec{}

The functor $\bu^{\on{spec}}$ preserves compactness and is fully faithful. Let $\bu^{\on{spec},R}$
denote its right adjoint. 

\medskip

The functor $\bu^{\on{spec}}$ is $\QCoh(\LS^{\on{restr}}_\cG)$-linear by construction. Hence, the functor
$\bu^{\on{spec},R}$ acquires a right-lax linear structure. By the semi-rigidity of $\QCoh(\LS^{\on{restr}}_\cG)$, this right-lax structure is actually
strict. 

\sssec{}

From \thmref{t:L sends comp to even coconn}, as in \cite[Corollary 1.6.5]{GLC1}, we obtain:

\begin{cor} \label{c:existence of Langlands functor}
There exists a continuous functor 
$$\BL^{\on{restr}}_G: \Shv_{\Nilp}(\Bun_G)\to \IndCoh_{\Nilp}(\LS^{\on{restr}}_\cG),$$
uniquely characterized by the following properties:

\medskip

\noindent{\em(i)} The functor $\BL^{\on{restr}}_G$ sends compact objects in $\Shv_{\Nilp}(\Bun_G)$
to eventually coconnective objects in $\IndCoh_\Nilp(\LS^{\on{restr}}_\cG)$, i.e., to $\IndCoh_\Nilp(\LS^{\on{restr}}_\cG)^{>-\infty}$;

\smallskip

\noindent{\em(ii)} $(\bu^{\on{spec}})^R\circ \BL^{\on{restr}}_G\simeq \BL^{\on{restr}}_{G,\on{coarse}}$.

\end{cor}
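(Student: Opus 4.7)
The plan is to reduce the statement to a general fact about the functor $\bu^{\on{spec}}$ on eventually coconnective subcategories, together with \thmref{t:L sends comp to even coconn} as input. The key observation is that, for a quasi-smooth formal algebraic stack $\CZ$ (and in particular for $\LS^{\on{restr}}_\cG$), singular support is invisible on eventually coconnective objects: the tautological inclusion $\IndCoh(\CZ)^{>-\infty}\hookrightarrow\IndCoh_\Nilp(\CZ)$ is an equivalence onto $\IndCoh_\Nilp(\CZ)^{>-\infty}$, and the $\Xi_\CZ$ functor restricts to an equivalence $\QCoh(\CZ)^{>-\infty}\xrightarrow{\sim}\IndCoh(\CZ)^{>-\infty}$. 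Combining these yields an equivalence
$$\QCoh(\LS^{\on{restr}}_\cG)^{>-\infty}\xrightarrow{\sim}\IndCoh_\Nilp(\LS^{\on{restr}}_\cG)^{>-\infty},$$
under which $\bu^{\on{spec},R}$ corresponds, up to the $\fl_{\LS^{\on{restr}}_\cG}$-twist, to the identity.

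Using this, I would define $\BL^{\on{restr}}_G$ first on the subcategory $\Shv_\Nilp(\Bun_G)^c$ of compact objects. By \thmref{t:L sends comp to even coconn}, $\BL^{\on{restr}}_{G,\on{coarse}}$ sends such objects into $\QCoh(\LS^{\on{restr}}_\cG)^{>-\infty}$; transporting along the equivalence above produces a well-defined functor $\Shv_\Nilp(\Bun_G)^c\to\IndCoh_\Nilp(\LS^{\on{restr}}_\cG)^{>-\infty}$. Since $\Shv_\Nilp(\Bun_G)$ is compactly generated, ind-extension yields a continuous functor $\BL^{\on{restr}}_G$ on the whole category. Condition (i) is immediate from the construction.

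For condition (ii), I would compare the two continuous functors $\bu^{\on{spec},R}\circ\BL^{\on{restr}}_G$ and $\BL^{\on{restr}}_{G,\on{coarse}}$ on compact objects: both take values in $\QCoh(\LS^{\on{restr}}_\cG)^{>-\infty}$, and their agreement there follows tautologically from how $\BL^{\on{restr}}_G$ was built using the inverse of the equivalence. Continuity then extends the isomorphism to all of $\Shv_\Nilp(\Bun_G)$. Uniqueness follows by the same mechanism: any continuous $\BL$ satisfying (i) and (ii) has, for compact $F$, its value $\BL(F)\in\IndCoh_\Nilp(\LS^{\on{restr}}_\cG)^{>-\infty}$ determined by $\bu^{\on{spec},R}(\BL(F))\simeq\BL^{\on{restr}}_{G,\on{coarse}}(F)$ via the equivalence, and continuity on a compactly generated category pins $\BL$ down.

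The main conceptual input is the equivalence of eventually coconnective subcategories and the identification of $\bu^{\on{spec},R}$ with (a twist of) the identity there; these are standard consequences of quasi-smoothness of $\LS^{\on{restr}}_\cG$ within the Arinkin--Gaitsgory singular support formalism, so no genuine new difficulty arises beyond \thmref{t:L sends comp to even coconn} itself. This is the same procedure as the one carried out in \cite[Corollary 1.6.5]{GLC1} for the de Rham and Betti settings.
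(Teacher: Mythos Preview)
Your proposal is correct and follows essentially the same approach as the paper: the paper derives the corollary directly from \thmref{t:L sends comp to even coconn} by invoking \cite[Corollary 1.6.5]{GLC1}, and what you have written is precisely an unpacking of that argument. One minor point of phrasing: the inclusion goes $\IndCoh_\Nilp(\CZ)^{>-\infty}\hookrightarrow\IndCoh(\CZ)^{>-\infty}$ rather than the direction you wrote, but the intended equivalence of eventually coconnective subcategories (via $\Xi_\CZ$ landing in $\IndCoh_{\{0\}}\subset\IndCoh_\Nilp$) is exactly the mechanism used.
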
 

Furthermore, as in \cite[Proposition 1.7.2]{GLC1}, we obtain:

\begin{lem}
The functor $\BL^{\on{restr}}_G$ carries a unique $\QCoh(\LS^{\on{restr}}_\cG)$-linear structure, so that
the induced $\QCoh(\LS^{\on{restr}}_\cG)$-linear structure on $(\bu^{\on{spec}})^R\circ \BL^{\on{restr}}_G$ is
the natural $\QCoh(\LS^{\on{restr}}_\cG)$-linear structure on $\BL^{\on{restr}}_{G,\on{coarse}}$.
\end{lem}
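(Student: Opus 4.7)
The plan is to adapt the argument of \cite[Proposition 1.7.2]{GLC1}, leveraging three ingredients already in place. By Sect.~\ref{sss:emb temp spec}, the functor $\bu^{\on{spec}}$ is $\QCoh(\LS^{\on{restr}}_\cG)$-linear, fully faithful, and preserves compactness, and its right adjoint $(\bu^{\on{spec}})^R$ is strictly $\QCoh(\LS^{\on{restr}}_\cG)$-linear by the semi-rigidity argument invoked there. The same semi-rigidity reasoning, applied to the $\QCoh$-linear left adjoint $\BL^{\on{restr},L}_{G,\on{temp}}$, endows $\BL^{\on{restr}}_{G,\on{coarse}}$ with a canonical strict $\QCoh(\LS^{\on{restr}}_\cG)$-linear structure. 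Finally, Corollary~\ref{c:existence of Langlands functor} pins $\BL^{\on{restr}}_G$ down up to unique isomorphism by (i) sending compacts to eventually coconnective objects, and (ii) recovering $\BL^{\on{restr}}_{G,\on{coarse}}$ after postcomposition with $(\bu^{\on{spec}})^R$.

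First I would reformulate Corollary~\ref{c:existence of Langlands functor} as an equivalence of $\infty$-categories: postcomposition with $(\bu^{\on{spec}})^R$ is an equivalence between continuous functors $\Shv_{\Nilp}(\Bun_G) \to \IndCoh_{\Nilp}(\LS^{\on{restr}}_\cG)$ satisfying (i), and continuous functors $\Shv_{\Nilp}(\Bun_G) \to \QCoh(\LS^{\on{restr}}_\cG)$ which likewise send compacts to eventually coconnective objects. Essential surjectivity is the existence part of that corollary; fully-faithfulness is its uniqueness, applied to mapping spaces. I would then upgrade this equivalence to the 2-category of $\QCoh(\LS^{\on{restr}}_\cG)$-modules: postcomposition with the $\QCoh$-linear $(\bu^{\on{spec}})^R$ restricts to a functor between the corresponding categories of $\QCoh(\LS^{\on{restr}}_\cG)$-linear functors, and the claim is that this refined functor is still an equivalence. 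Applied to the $\QCoh$-linear $\BL^{\on{restr}}_{G,\on{coarse}}$, this yields both the existence of the required $\QCoh(\LS^{\on{restr}}_\cG)$-linear structure on $\BL^{\on{restr}}_G$ and its uniqueness.

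The main technical step, and the one I expect to be the principal obstacle, is verifying the upgrade to the $\QCoh$-linear setting. Concretely, given the plain lift $F$ of a $\QCoh$-linear $G$, for each $M \in \QCoh(\LS^{\on{restr}}_\cG)$ both $M \otimes F(-)$ and $F(M \otimes -)$ satisfy condition (i), and both are sent by $(\bu^{\on{spec}})^R$ to $M \otimes G(-)$ (using $\QCoh$-linearity of $(\bu^{\on{spec}})^R$ and of $G$); hence they are canonically isomorphic by the uniqueness clause of Corollary~\ref{c:existence of Langlands functor}. The subtlety is to organize these pointwise isomorphisms into a homotopy-coherent $\QCoh(\LS^{\on{restr}}_\cG)$-action on $F$, rather than a mere action at the object level. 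This is carried out by running the uniqueness argument inside the $(\infty,2)$-category of $\QCoh(\LS^{\on{restr}}_\cG)$-modules, invoking Corollary~\ref{c:existence of Langlands functor} at each level of coherence to rigidify the structure.
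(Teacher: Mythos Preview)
Your approach is exactly the paper's: both defer to \cite[Proposition 1.7.2]{GLC1}, and your expansion correctly identifies the mechanism (the uniqueness in Corollary~\ref{c:existence of Langlands functor}, upgraded to the $\QCoh$-linear $2$-category). One small imprecision worth flagging: the assertion that $M \otimes F(-)$ and $F(M \otimes -)$ satisfy condition (i) fails for \emph{arbitrary} $M \in \QCoh(\LS^{\on{restr}}_\cG)$, since tensoring with an unbounded $M$ need not preserve compactness on the source or eventual coconnectivity on the target; the fix is to run the argument for dualizable $M$ (which preserve both conditions and generate $\QCoh(\LS^{\on{restr}}_\cG)$ as a monoidal category acting on the relevant module categories), or equivalently to observe that the equivalence of functor categories furnished by Corollary~\ref{c:existence of Langlands functor} is itself compatible with the $\QCoh$-action and hence passes to totalizations computing $\QCoh$-linear functors.
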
 

\sssec{}

We are now ready to state the main result of this paper: 

\begin{mainthm} \label{t:ff GLC} \hfill

\smallskip

\noindent{\em(i)} 
The functor $\BL^{\on{restr}}_G$ factors via an equivalence 
$$\Shv_{\Nilp}(\Bun_G)\overset{\sim}\to \IndCoh_\Nilp({}'\!\LS^{\on{restr}}_\cG)\hookrightarrow \IndCoh_\Nilp({}'\!\LS^{\on{restr}}_\cG)$$
where $'\!\LS^{\on{restr}}_\cG$ is the union of some of the connected components of 
$\LS^{\on{restr}}_\cG$.

\smallskip

\noindent{\em(ii)} If $\on{char}(k)=0$, then the inclusion $'\!\LS^{\on{restr}}_\cG\subset \LS^{\on{restr}}_\cG$ is an equality. 

\smallskip

\noindent{\em(iii)} For any $k$ and $G=GL_n$, the inclusion $'\!\LS^{\on{restr}}_\cG\subset \LS^{\on{restr}}_\cG$ is an equality. 

\end{mainthm} 

Of course, we believe that the statement of \thmref{t:ff GLC}(i) can be strengthened:

\begin{conj} \label{c:GLC}
The inclusion $'\!\LS^{\on{restr}}_\cG\subset \LS^{\on{restr}}_\cG$ is always as equality.
\end{conj}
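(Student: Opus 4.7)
The plan is to extend the proof of \thmref{t:ff GLC}(i) by showing that every connected component of $\LS^{\on{restr}}_\cG$ is contained in $'\!\LS^{\on{restr}}_\cG$. Since $'\!\LS^{\on{restr}}_\cG$ is automatically closed under the $\QCoh(\LS^{\on{restr}}_\cG)$-action, and components of $\LS^{\on{restr}}_\cG$ are indexed by isomorphism classes of semi-simple $\cG$-local systems, the task reduces to exhibiting, for every semi-simple $\sigma$, a non-zero object $\CF_\sigma \in \Shv_\Nilp(\Bun_G)$ whose spectral support meets the component $[\sigma]$.

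I would proceed by induction on the semi-simple rank of $G$, using Eisenstein series for the reducible case and the specialization machinery of \secref{s:Sp} for the irreducible case. For reducible semi-simple $\sigma$, there is a proper Levi $\cM \subset \cG$ through which $\sigma$ factors; applying the inductive hypothesis to $M$ produces an $M$-Hecke eigensheaf $\CF_{\sigma_M}$, and Eisenstein induction, via compatibility of $\BL^{\on{restr}}_G$ with $\Eis_P$ (\secref{s:L and Eis}, together with \thmref{t:Whit of Eis}), yields an object of $\Shv_\Nilp(\Bun_G)$ with the required spectral support, subject to a non-vanishing verification for parabolic induction at the relevant parameter. For irreducible $\sigma$, lift it to an irreducible local system $\tilde\sigma$ on $X_\sK$ via the identification of $\LS^{\on{restr}}_{\cG,\sk}$ with a union of components of $\LS^{\on{restr}}_{\cG,\sK}$. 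By \thmref{t:ff GLC}(ii) there is a non-zero Hecke eigensheaf $\CF_{\tilde\sigma} \in \Shv_\Nilp(\Bun_{G,\sK})$, and since $\on{Sp}$ intertwines the Hecke and spectral actions, $\on{Sp}(\CF_{\tilde\sigma})$ is a candidate $\CF_\sigma$ provided it is non-zero.

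The crux, and the main obstacle, is establishing this non-vanishing: that $\on{Sp}$ does not annihilate the Hecke eigensheaf attached to a given irreducible component. Although $\on{Sp}$ is a Verdier quotient between the ambient categories, this does not a priori preclude it from killing objects supported on an individual component. A promising line of attack is to exhibit, for each irreducible $\tilde\sigma$, a compact object on the component $[\tilde\sigma]$ of $\Shv_\Nilp(\Bun_{G,\sK,\sk})$ whose nearby cycles are explicitly controlled, for instance via a critical-level localization construction of Beilinson-Drinfeld type carried out in families over $\Spec(\sR)$, so that the specialization is visibly non-zero. As emphasized in \remref{r:no sheaf}, such a construction is precisely the fundamental mystery that remains unresolved in positive characteristic for arbitrary $G$; for $G = GL_n$ it is available through shtuka-type arguments, which underlies the stronger \thmref{t:ff GLC}(iii). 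Thus, absent a new direct construction of Hecke eigensheaves in characteristic $p$, \conjref{c:GLC} reduces to this non-vanishing problem, transferred across the characteristic-$0$ equivalence via specialization.
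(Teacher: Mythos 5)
The statement you are asked about is \conjref{c:GLC}, which the paper explicitly leaves as a \emph{conjecture}: it has no proof in the paper, and your proposal does not supply one either. What you have written is a (correct) reduction of the conjecture to an open problem, and this reduction is in fact the same one the paper already carries out in the proof of \thmref{t:ff GLC}(i) and (iii): by $\QCoh(\LS^{\on{restr}}_\cG)$-linearity one works one connected component at a time; for a component whose closed point is a reducible semi-simple $\sigma$, induction on the semi-simple rank plus the compatibility of $\BL^{\on{restr}}_G$ with $\Eis^-_!$ (\thmref{t:L and Eis}) shows the component is hit, \emph{provided} the corresponding component for the Levi is hit; and the argument in \secref{sss:proof main start} establishes precisely the dichotomy that for an irreducible component either the functor is an equivalence or the entire category over that component vanishes. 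So everything reduces to ruling out the vanishing case for irreducible $\sigma$, i.e.\ to producing a non-zero Hecke eigensheaf in $\Shv_\Nilp(\Bun_{G,\sk})$ for each irreducible $\cG$-local system. You acknowledge this, and the paper acknowledges it too (Remark \ref{r:no sheaf}); for $G=GL_n$ it is supplied by \cite{FGV,Ga1} (the Whittaker/vanishing-conjecture construction, not shtukas), which is why \thmref{t:ff GLC}(iii) holds.

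The genuine gap is therefore the final non-vanishing step, and your suggested route to close it does not work as stated. A ``critical-level localization of Beilinson--Drinfeld type carried out in families over $\Spec(\sR)$'' is not available: the Beilinson--Drinfeld construction lives in the world of D-modules and Kac--Moody representations at the critical level, which exists over the generic fibre (characteristic $0$) but has no counterpart over the special fibre of characteristic $p$; and knowing the eigensheaf $\CF_{\tilde\sigma}$ over $\sK$ gives no control on whether $\on{Sp}_{\sK\to\sk}(\CF_{\tilde\sigma})$ vanishes, since (as you correctly note, and as the paper's own dichotomy makes precise) a Verdier quotient may kill an entire direct summand. Absent a new idea here, your text is a restatement of the conjecture together with the reduction the paper already contains, not a proof of it.
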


\ssec{Langlands functor and Eisenstein series} \label{ss:Eis}

\sssec{}

Let $P^-$ be a standard (negative) parabolic in $G$ and let $M$ be its Levi quotient. Consider the Eisenstein
functor
$$\Eis^-_!:\Shv(\Bun_M)\to \Shv(\Bun_G),$$
see \cite[Sect. 8.1]{GLC3}. 

\medskip

Note that according to the conventions of \cite[Sect. 8.1.3]{GLC3}, the definition of $\Eis^-_!$
includes a cohomological shift, see \eqref{e:Eis formula}.

\medskip

We claim:

\begin{prop} \label{p:Eis preserve Nilp}
The functor $\Eis^-_!$ sends $\Shv_\Nilp(\Bun_M)$ to $\Shv_\Nilp(\Bun_G)$.
\end{prop}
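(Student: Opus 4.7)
The plan is a direct singular-support computation on the Eisenstein correspondence
$$\Bun_M \xleftarrow{\mathfrak{q}} \Bun_{P^-} \xrightarrow{\mathfrak{p}} \Bun_G,$$
along which $\Eis^-_!$ is realized, up to a cohomological shift, as $\mathfrak{p}_! \circ \mathfrak{q}^*$. The shift does not affect singular support. The map $\mathfrak{q}$ is smooth (with affine-space fibers), so $\on{SS}(\mathfrak{q}^*\F)$ is computed as the pullback of $\on{SS}(\F)$ under the induced closed embedding $T^*\Bun_M \times_{\Bun_M} \Bun_{P^-} \hookrightarrow T^*\Bun_{P^-}$. The behavior of singular support under $\mathfrak{p}_!$ is then governed by the corresponding cotangent correspondence.

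The crucial geometric input is that this cotangent correspondence carries $\Nilp_M$ into $\Nilp_G$. Using Serre duality together with an invariant form on $\mathfrak{g}$ to identify $\mathfrak{g}^*\simeq \mathfrak{g}$, the cotangent spaces to $\Bun_G$ and $\Bun_M$ at points $\F_G$ and $\F_M$ become $H^0(X,\mathfrak{g}_{\F_G}\otimes \omega_X)$ and $H^0(X,\mathfrak{m}_{\F_M}\otimes \omega_X)$ respectively, and the correspondence reads: $\xi$ at $\F_G$ is related to $\eta$ at $\F_M$ iff there is a reduction $\F_{P^-}$ along which $\xi$, viewed pointwise as a section of $\mathfrak{g}\otimes\omega_X$, takes values in the subbundle $\mathfrak{p}^-_{\F_{P^-}}$ and projects to $\eta$ under $\mathfrak{p}^-\twoheadrightarrow \mathfrak{m}$. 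The required implication $\eta\in \Nilp_M \Rightarrow \xi\in\Nilp_G$ then reduces, pointwise on $X$, to the standard Lie-theoretic fact that an element of $\mathfrak{p}^-$ is $\mathfrak{g}$-nilpotent if and only if its image in the Levi $\mathfrak{m}$ is $\mathfrak{m}$-nilpotent (the conormal direction lies in $\mathfrak{u}^-$, which is already nilpotent, so adding it cannot create a non-zero semisimple part).

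The main obstacle I anticipate is that $\mathfrak{p}: \Bun_{P^-}\to\Bun_G$ is not proper, so the naive Beilinson-type bound for the singular support of a $!$-pushforward does not directly apply. I would circumvent this by passing to a Drinfeld-type compactification $\widetilde{\Bun}_{P^-}\to \Bun_G$ on which the projection is proper, and verifying that the $!$-extension of $\mathfrak{q}^*\F$ to this compactification retains singular support in the preimage of $\Nilp_M$ under the cotangent correspondence. Alternatively, one can bootstrap the ULA properties of Eisenstein kernels that are established later in Section \ref{s:ULA}, which encode precisely the required microlocal information and upgrade the preservation of $\Shv_\Nilp$ from a delicate singular-support question into a structural statement about the kernel representing $\Eis^-_!$.
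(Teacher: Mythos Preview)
Your approach is genuinely different from the paper's. The paper does \emph{not} compute singular support directly; instead it uses the spectral characterization of $\Shv_\Nilp$. Concretely, it identifies $\Shv_\Nilp(\Bun_M)$ with the category $\on{Hecke}(\LS^{\on{restr}}_\cM,\Shv(\Bun_M))$ via \cite[Proposition 15.5.3(a)]{AGKRRV1}, then invokes the factorization of $\Eis^-_!$ through Hecke categories established in \cite{BG2} and \cite[Theorem 1.6.5.2]{FH}: the functor $\Eis^-_!$ on Hecke eigensheaves lands in $\on{Hecke}(\LS^{\on{restr}}_{\cP^-}\times_{\LS^{\on{restr}}_\cM}\CZ,\Shv(\Bun_G))$, and any such Hecke category for $G$ is automatically contained in $\Shv_\Nilp(\Bun_G)$ (again by \cite[Proposition 15.5.3(a)]{AGKRRV1}). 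This argument is entirely black-box from the microlocal point of view; what it buys is that one never has to deal with the compactification or the boundary strata.

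Your geometric route is explicitly acknowledged as viable in the remark immediately following the proposition. However, your formulation of the compactification step is slightly off. The issue is not to bound $\on{SS}$ of the $!$-extension of $\fq^*\CF$ itself; rather, one writes $\Eis^-_!$ via the proper map $\wt\sfp^-:\wt\Bun_{P^-}\to\Bun_G$ as pushforward of $(\wt\sfq^-)^!(-)\sotimes j_!(\ul\sfe_{\Bun_{P^-}})$ (cf.\ \eqref{e:Eis via compact}), and the key microlocal input is a bound on $\on{SS}(j_!(\ul\sfe_{\Bun_{P^-}}))$ alone: it must lie in the union of conormals to the strata of $\wt\Bun_{P^-}$. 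Granting that, the cotangent-correspondence argument you outline (combined with your Lie-theoretic observation, applied stratum by stratum) goes through. The paper notes that this singular-support bound on the kernel can be proved via Zastava-space local models, by the same contraction mechanism used for \thmref{t:IC acycl}. Your ``alternative'' via the ULA results of \secref{s:ULA} is morally the same input in different packaging.
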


\begin{rem}

The proof given below uses a spectral description of the subcategory $\Shv_\Nilp(\Bun_G)$.
One can, however, give a purely geometric argument proving \propref{p:Eis preserve Nilp}:
Namely, one can estimate the singular support of objects of the form $\Eis^-_!(\CF)$ using 
the following assertion:

\medskip

\noindent{\it The singular support of $\jmath_!(\ul\sfe_{\Bun_{P^-}})\in \Shv(\ol\Bun_{P^-})$
is contained in the union of the conormal of the strata (for the natural stratification of 
$\ol\Bun_{P^-}$).}

\medskip

This assertion can be proved using Zastava spaces in a way similar to the manipulation
involved in the proof of \thmref{t:IC acycl} below.

\end{rem}

\begin{proof}

Let $\CZ$ be a prestack over $\sfe$ mapping to $\LS^{\on{restr}}_\cM$. Let
$$\on{Hecke}(\CZ,\Shv(\Bun_M))$$
be the corresponding category of Hecke eigensheaves, see \cite[Sect. 15.2]{AGKRRV1}.

\medskip

Recall that according to \cite{BG2} (the case of $P=B)$ and its generalization to an arbitrary parabolic in
\cite[Theorem 1.6.5.2]{FH}, the functor
\begin{equation} \label{e:Eis on Hecke one}
\on{Hecke}(\CZ,\Shv(\Bun_M))\overset{\oblv_{\on{Hecke}}}\longrightarrow
\QCoh(\CZ)\otimes \Shv(\Bun_M) \overset{\on{Id}\otimes \Eis^-_!}\longrightarrow 
\QCoh(\CZ)\otimes \Shv(\Bun_G)
\end{equation} 
factors canonically as
\begin{multline} \label{e:Eis on Hecke two}
\on{Hecke}(\CZ,\Shv(\Bun_M))\overset{(\sfq^{-,\on{spec}})^*\otimes \on{Id}}\longrightarrow 
\on{Hecke}(\LS^{\on{restr}}_{\cP^-}\underset{\LS^{\on{restr}}_\cM}\times \CZ,\Shv(\Bun_M))
\overset{\on{Hecke}(\CZ,\Eis^-_!)}\longrightarrow \\
\to \on{Hecke}(\LS^{\on{restr}}_{\cP^-}\underset{\LS^{\on{restr}}_\cM}\times \CZ,\Shv(\Bun_G)) 
\overset{\oblv_{\on{Hecke}}}\longrightarrow  
\QCoh(\LS^{\on{restr}}_{\cP^-}\underset{\LS^{\on{restr}}_\cM}\times \CZ)\otimes \Shv(\Bun_G)
\overset{(\sfq^{-,\on{spec}})_*\otimes \on{Id}}\longrightarrow \\
\to \QCoh(\CZ)\otimes \Shv(\Bun_G).
\end{multline} 

Take $\CZ=\LS^{\on{restr}}_\cM$, and recall that in this case the composition 
\begin{equation} \label{e:Nilp as Hecke}
\on{Hecke}(\LS^{\on{restr}}_\cM,\Shv(\Bun_M))\overset{\oblv_{\on{Hecke}}}\longrightarrow
\QCoh(\LS^{\on{restr}}_\cM)\otimes \Shv(\Bun_M) \overset{\Gamma_!(\LS^{\on{restr}}_\cM,-)\otimes \on{Id}}\longrightarrow 
\Shv(\Bun_M)
\end{equation}
is fully faithful with essential image $\Shv_\Nilp(\Bun_M)$ (see \cite[Proposition 15.5.3(a)]{AGKRRV1}). 

\medskip 

Under the identification \eqref{e:Nilp as Hecke}, the original functor $\Eis^-_!$ identifies with the 
composition of \eqref{e:Eis on Hecke two} with
$$\QCoh(\LS^{\on{restr}}_\cM)\otimes \Shv(\Bun_G) \overset{\Gamma_!(\LS^{\on{restr}}_\cM,-)\otimes \on{Id}}
\longrightarrow \Shv(\Bun_G).$$

\medskip

The desired assertion follows now from the fact that for any $\CZ'$, the inclusion
$$\on{Hecke}(\CZ',\Shv_\Nilp(\Bun_G))\hookrightarrow \on{Hecke}(\CZ',\Shv(\Bun_G))$$
is an equality, see again \cite[Proposition 15.5.3(a)]{AGKRRV1}.

\end{proof} 

\sssec{}

Consider the diagram
$$\LS^{\on{restr}}_\cG  \overset{\sfp^{-,\on{spec}}}\longleftarrow 
\LS^{\on{restr}}_{\cP^-} \overset{\sfq^{-,\on{spec}}}\longrightarrow \LS^{\on{restr}}_{\cM},$$
where we note that the morphism $\sfq^{-,\on{spec}}$ is a relative algebraic stack and is quasi-smooth. 

\medskip

The spectral Eisenstein functor 
$$\Eis^{-,\on{spec}}:\IndCoh(\LS^{\on{restr}}_\cM)\to \IndCoh(\LS^{\on{restr}}_\cG).$$
is defined to be 
$$(\sfp^{-,\on{spec}})^\IndCoh_*\circ (\sfq^{-,\on{spec}})^{\IndCoh,*}.$$

\medskip

As in \cite[Proposition 13.2.6]{AG1}, one shows that the functor $\Eis^{-,\on{spec}}$ sends 
$$\IndCoh_\Nilp(\LS^{\on{restr}}_\cM)\to \IndCoh_\Nilp(\LS^{\on{restr}}_\cG).$$

\sssec{}

In \secref{s:L and Eis} we will prove:

\begin{thm} \label{t:L and Eis}
The diagram
\begin{equation} \label{e:L and Eis}
\CD
\Shv_\Nilp(\Bun_M) @>{\BL^{\on{restr}}_M}>> \IndCoh_\Nilp(\LS^{\on{restr}}_\cM) \\
@V{\Eis^-_{!,\rho_P(\omega_X)}[\delta_{(N^-_P)_{\rho_P(\omega_X)}}]}VV  @VV{\Eis^{-,\on{spec}}}V \\
 \Shv_\Nilp(\Bun_G) @>>{\BL^{\on{restr}}_G}> \IndCoh_\Nilp(\LS^{\on{restr}}_\cG)
\endCD
\end{equation} 
commutes, where:

\begin{itemize}

\item The functor $\Eis^-_{!,\rho_P(\omega_X)}$ is the precomposition of $\Eis^-_!$ with the translation functor 
by $\rho_P(\omega_X)\in \Bun_{Z_M}$;

\item $\delta_{(N^-_P)_{\rho_P(\omega_X)}}=\dim(\Bun_{(N^-_P)_{\rho_P(\omega_X)}})$, see \cite[Theorem 10.1.2]{GLC3}.

\end{itemize}

\end{thm}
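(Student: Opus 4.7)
The plan is to reduce the commutation in \eqref{e:L and Eis} to \thmref{t:Whit of Eis} (the computation of the Whittaker coefficient of $\Eis^-_!$), using the characterization of the Langlands functor by \propref{p:coeff} and the $\QCoh$-linearities that both compositions enjoy.

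\textbf{Step 1 (linearity).} Both functors in \eqref{e:L and Eis} are linear with respect to the monoidal action of $\QCoh(\LS^{\on{restr}}_{\cP^-})$ on $\Shv_\Nilp(\Bun_M)$ (through $(\sfq^{-,\on{spec}})^*$) and on $\IndCoh_\Nilp(\LS^{\on{restr}}_\cG)$ (through $(\sfp^{-,\on{spec}})^*$). For $\Eis^{-,\on{spec}}\circ\BL^{\on{restr}}_M$ this is built into the spectral construction together with the projection formula. For $\BL^{\on{restr}}_G\circ\Eis^-_{!,\rho_P(\omega_X)}[\delta_{(N^-_P)_{\rho_P(\omega_X)}}]$ it follows from the $\QCoh(\LS^{\on{restr}}_\cG)$-linearity of $\BL^{\on{restr}}_G$ combined with the parabolic Hecke-compatibility of $\Eis^-_!$ expressed by the factorization \eqref{e:Eis on Hecke two}, which was already invoked in the proof of \propref{p:Eis preserve Nilp}.

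\textbf{Step 2 (reduction to Whittaker of Eisenstein).} By semi-rigidity, applied to $\QCoh(\LS^{\on{restr}}_{\cP^-})$, a $\QCoh(\LS^{\on{restr}}_{\cP^-})$-linear functor (and a natural transformation between two such) is determined by its composition with $\Gamma_!(\LS^{\on{restr}}_{\cP^-},-)$. It therefore suffices to match the two sides of \eqref{e:L and Eis} after applying $\Gamma_!(\LS^{\on{restr}}_{\cP^-},-)\circ(\bu^{\on{spec}})^R\circ(\sfp^{-,\on{spec}})^*$. By \propref{p:coeff}, the geometric side rewrites as $\on{coeff}^{\on{Vac}}_G\circ\Eis^-_{!,\rho_P(\omega_X)}[\delta_{(N^-_P)_{\rho_P(\omega_X)}}]$. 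On the spectral side, base change along the stack correspondence
$$\LS^{\on{restr}}_\cG\overset{\sfp^{-,\on{spec}}}\longleftarrow\LS^{\on{restr}}_{\cP^-}\overset{\sfq^{-,\on{spec}}}\longrightarrow\LS^{\on{restr}}_\cM$$
and a further application of \propref{p:coeff} on the Levi rewrite the test composition as $\on{coeff}^{\on{Vac}}_M$, precomposed with a cohomological shift produced by the relative dualizing complex of $\sfq^{-,\on{spec}}$. A standard dimension count identifies this shift (after the $\rho_P(\omega_X)$-translation) with $\delta_{(N^-_P)_{\rho_P(\omega_X)}}$, compare \cite[Theorem 10.1.2]{GLC3}. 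The resulting equivalence is precisely the statement of \thmref{t:Whit of Eis}.

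\textbf{Step 3 (what is hard).} The genuine input is \thmref{t:Whit of Eis} itself, proved in \secref{s:asymptotics} via analysis of the asymptotic behavior of the Whittaker sheaf as the character degenerates; granted this, Steps 1 and 2 are essentially formal exercises in $\QCoh$-linearity and base change. The most delicate bookkeeping within the proof of \thmref{t:L and Eis} is matching the geometric shift $\delta_{(N^-_P)_{\rho_P(\omega_X)}}$ and the translation by $\rho_P(\omega_X)\in\Bun_{Z_M}$ with the cohomological shift produced on the spectral side by the relative dualizing complex of $\sfq^{-,\on{spec}}$.
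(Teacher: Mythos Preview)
Your overall strategy—reduce to Whittaker coefficients via $\QCoh$-linearity and \propref{p:coeff}—is indeed the paper's approach. But Step~2 contains a genuine error: you claim the spectral side rewrites as $\on{coeff}^{\on{Vac}}_M$ up to ``a cohomological shift produced by the relative dualizing complex of $\sfq^{-,\on{spec}}$.'' This is false. After the reduction, the spectral side becomes $\on{coeff}^{\on{Vac}}_M\circ(\Omega^{\on{glob}}\star -)$, where $\Omega^{\on{glob}}:=(\sfq^{-,\on{spec}})_*(\CO_{\LS^{\on{restr}}_{\cP^-}})\simeq\Loc_\cM^{\on{spec,restr}}(\on{Fact}(\Omega^{\on{loc}}))$ with $\Omega^{\on{loc}}=\on{C}^\cdot_{\on{Chev}}(\cn^-_P)$. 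The fibers of $\sfq^{-,\on{spec}}$ parametrize extensions by $\cn^-_P$ and are not contractible; their cohomology is a genuine algebra, not a line. This is exactly why \thmref{t:Whit of Eis} carries the factor $\on{Fact}(\Omega^{\on{loc}})\star(-)$ on the right-hand side, and why one must further check that the resulting isomorphism \eqref{e:Whit of Eis} respects the $\Omega^{\on{glob}}$-monad action (Remark~\ref{r:action on Omega}). Your version would be asserting $\on{coeff}^{\on{Vac}}_G\circ\Eis^-_{!,\rho_P(\omega_X)}\simeq\on{coeff}^{\on{Vac}}_M[\text{shift}]$, which is simply wrong.

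There are also two structural gaps. You never perform the reduction from $\BL^{\on{restr}}_G$ to $\BL^{\on{restr}}_{G,\on{coarse}}$: the paper first shows both circuits in \eqref{e:L and Eis} send compacts to eventually coconnective objects, so that composing with $(\bu^{\on{spec}})^R$ loses no information; this is not automatic and requires \thmref{t:L sends comp to even coconn} together with a separate bound on the amplitude of $\Eis^{-,\on{spec}}$. And your Step~1 setup is malformed: $(\sfp^{-,\on{spec}})^*$ goes from $\LS^{\on{restr}}_\cG$ to $\LS^{\on{restr}}_{\cP^-}$, so does not furnish a $\QCoh(\LS^{\on{restr}}_{\cP^-})$-action on $\IndCoh_\Nilp(\LS^{\on{restr}}_\cG)$. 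The paper instead keeps $\QCoh(\LS^{\on{restr}}_\cG)$-linearity on the target and encodes the $\cP^-$-enhancement in the \emph{source}, replacing $\Shv_\Nilp(\Bun_M)$ by $\QCoh(\LS^{\on{restr}}_{\cP^-})\underset{\QCoh(\LS^{\on{restr}}_\cM)}\otimes\Shv_\Nilp(\Bun_M)$ via the functor $\Eis^{-,\on{part.enh}}_!$ (diagrams \eqref{e:L and Eis enh} and \eqref{e:L and Eis enh Sect}). Finally, \thmref{t:Whit of Eis} is proved in the present section via the Zastava computation and \propref{p:Omega}, not in \secref{s:asymptotics} (which treats \thmref{t:Poinc ! to *}).
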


\ssec{Consequences for the classical theory of automorphic functions}

\sssec{}

We now specialize to the case when $k=\ol\BF_q$, but $X$ and $G$ (and hence also $\Bun_G$)
are defined over $\BF_q$. The stack $\LS_\cG^{\on{restr}}$ carries an automorphism induced by
the geometric Frobenius on $X$.

\medskip

The following assertion was stated in \cite{AGKRRV1} as a corollary of {\it loc. cit.}, Conjecture 24.6.9, and 
the latter was proved in \cite[Sect. 6.4.13]{BLR}: 

\begin{thm} \label{t:Tr Frob spec}
The inclusion
$$\IndCoh_\Nilp(\LS^{\on{restr}}_\cG)\hookrightarrow \IndCoh(\LS^{\on{restr}}_\cG)$$
induces an \emph{isomorphism}
$$\Tr(\Frob,\IndCoh_\Nilp(\LS^{\on{restr}}_\cG)) \overset{\sim}\to \Tr(\Frob,\IndCoh(\LS^{\on{restr}}_\cG)).$$
\end{thm}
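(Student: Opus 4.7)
The plan is to compute both categorical traces geometrically and then show that the inclusion induces an isomorphism by exploiting the Frobenius dynamics on the singular codirections of $\LS^{\on{restr}}_\cG$. Using the general identification of traces of $\IndCoh$-categories with sections of the dualizing sheaf on fixed-point loci, one expects
$$\Tr(\Frob,\IndCoh(\LS^{\on{restr}}_\cG)) \simeq \Gamma^\IndCoh(\LS^{\on{arithm}}_\cG,\omega_{\LS^{\on{arithm}}_\cG}),$$
and analogously for $\IndCoh_\Nilp$, except that the right-hand side is supported on (or controlled by) the fixed-point locus of $\Frob$ acting on the nilpotent part $\Nilp \subset \Sing(\LS^{\on{restr}}_\cG)$. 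The inclusion in question then corresponds to a map from this more restrictive trace to the full one, and the task is to show it is an equivalence.

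The first step would be to set up the formalism rigorously: for each component, identify $\Tr(\Frob,\IndCoh(\LS^{\on{restr}}_\cG))$ with a geometric pairing involving the graph of $\Frob$ and the diagonal inside $\LS^{\on{restr}}_\cG \times \LS^{\on{restr}}_\cG$. Since $\LS^{\on{restr}}_\cG$ is quasi-smooth, its singular support theory is well-behaved, and the classical result identifying the trace of a dualizable $\IndCoh$-category with the appropriate "Borel--Moore homology" of the inertia stack applies. The $\Nilp$-version is the analogous construction but with the singular codirections constrained.

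The key geometric input would then be the Frobenius-contraction principle on singular codirections. At a closed point of $\LS^{\on{arithm}}_\cG$, the induced action of $\Frob$ on $\Sing(\LS^{\on{restr}}_\cG)$ (which, infinitesimally, is governed by the coadjoint action on $\cg^*$ at the underlying local system) acts on non-nilpotent semisimple codirections by genuine scaling by $q$ (or a power thereof), while nilpotent codirections can be fixed. Consequently, in the Frobenius-fixed locus on $\Sing$, only nilpotent codirections can occur. Formally, one makes this precise by showing that the composition of $(\bu^{\on{spec}})^R$ with the trace map kills the image of non-nilpotent codirections, so the inclusion of traces is forced to be an equivalence.

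The main obstacle, in my view, is not the scalar-contraction heuristic itself, which is morally clear, but handling the fact that $\LS^{\on{restr}}_\cG$ is a formal algebraic stack whose structure sheaf has unbounded negative cohomology, and its individual connected components are themselves highly non-reduced. One needs to ensure that the Frobenius-dilation argument interacts correctly with the $\IndCoh$-structure, with the formal thickenings, and with the passage from a single component to the (possibly infinite) disjoint union. A secondary technical difficulty is making the identification of the trace with sections over the inertia-stack work uniformly across the ind-structure, which is why one ultimately must invoke the careful machinery developed in \cite{BLR}.
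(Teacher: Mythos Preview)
The paper does not prove this theorem: immediately before the statement it says the assertion was stated in \cite{AGKRRV1} as a corollary of Conjecture 24.6.9 there, and that the latter was proved in \cite[Sect.~6.4.13]{BLR}. So the paper's ``proof'' is a pure citation, and your proposal, which also ends by invoking \cite{BLR}, lands in the same place.

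Your heuristic sketch is in the right spirit \textemdash{} the difference between $\IndCoh_\Nilp$ and $\IndCoh$ lives on the singular support, and Frobenius dynamics there are what make the two traces coincide \textemdash{} but the specific mechanism you describe is not correct. At a Frobenius-fixed point $\sigma\in \LS^{\on{arithm}}_\cG$, the fiber of $\Sing(\LS^{\on{restr}}_\cG)$ is $H^0(X,\cg_\sigma^*)$, and Frobenius acts on it via its natural action on \'etale cohomology, not by ``scaling by $q$ on non-nilpotent semisimple codirections''. A Frobenius-invariant global section of $\cg_\sigma^*$ can perfectly well be semisimple and nonzero; nothing forces it to be nilpotent at the level of the fixed locus of $\Sing$. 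The actual argument in \cite{BLR} goes through the reformulation as Conjecture 24.6.9 of \cite{AGKRRV1} and uses the sheared/Koszul-dual description of $\IndCoh$ with singular support conditions: the weight that Frobenius sees is the cohomological grading on $\Sym(\cg[-2])$ (or its relative version), and it is \emph{that} shearing weight, not a geometric contraction of the singular cone, that collapses the discrepancy between the two traces. So your paragraph on ``Frobenius-contraction'' identifies the right actor but the wrong stage; the obstacle you flag at the end (the formal, infinitely-thickened nature of $\LS^{\on{restr}}_\cG$ and the ind-structure) is genuine, and is exactly why the paper defers to \cite{BLR} rather than attempting a direct argument.
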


\sssec{}

A standard computation implies that 
$$\Tr(\Frob,\IndCoh(\LS^{\on{restr}}_\cG)) \simeq \Gamma^\IndCoh((\LS^{\on{restr}}_\cG)^\Frob,\omega_{(\LS^{\on{restr}}_\cG)^\Frob}).$$

Recall the notation
$$\LS^{\on{arithm}}_\cG:=(\LS^{\on{restr}}_\cG)^\Frob.$$

According to \cite[Theorem 24.1.4]{AGKRRV1}, $\LS^{\on{arithm}}_\cG$ is a quasi-compact algebraic stack locally
almost of finite type. 

\medskip

Hence, \thmref{t:Tr Frob spec} implies that we have a canonical isomorphism
\begin{equation} \label{e:Tr Frob spec}
\Tr(\Frob,\IndCoh_\Nilp(\LS^{\on{restr}}_\cG)) \simeq \Gamma(\LS^{\on{arithm}}_\cG,\omega_{\LS^{\on{arithm}}_\cG}),
\end{equation}
where by a slight abuse of notation, we denote by the same symbol 
$$\omega_{\LS^{\on{arithm}}_\cG}\in \QCoh(\LS^{\on{arithm}}_\cG)$$
the image of
$$\omega_{\LS^{\on{arithm}}_\cG}\in \IndCoh(\LS^{\on{arithm}}_\cG)$$
under the functor
$$(\Upsilon_{\LS^{\on{arithm}}_\cG})^\vee:\IndCoh(\LS^{\on{arithm}}_\cG)\to \QCoh(\LS^{\on{arithm}}_\cG).$$

\sssec{}

Let
$$'\!\LS^{\on{restr}}_\cG\subset \LS^{\on{restr}}_\cG$$
be as in \thmref{t:ff GLC}(i). This is a $\Frob$-invariant substack. Set
$$'\!\LS^{\on{arithm}}_\cG:=({}'\!\LS^{\on{restr}}_\cG)^\Frob.$$

It formally follows from \thmref{t:Tr Frob spec} that we have
\begin{equation} \label{e:Tr Frob spec prime}
\Tr(\Frob,\IndCoh_\Nilp({}'\!\LS^{\on{restr}}_\cG)) \simeq \Gamma({}'\!\LS^{\on{arithm}}_\cG,\omega_{'\!\LS^{\on{arithm}}_\cG}).
\end{equation}

\sssec{}

Combining isomorphism \eqref{e:Tr Frob spec prime} with \thmref{t:ff GLC}(i) and \cite[Theorem 0.2.6]{AGKRRV3}, we obtain:

\begin{cor} \label{c:main}
There exists a canonical isomorphism
$$\on{Funct}_c(\Bun_G(\BF_q),\ol\BQ_\ell)\simeq \Gamma({}'\!\LS^{\on{arithm}}_\cG,\omega_{'\!\LS^{\on{arithm}}_\cG}).$$
\end{cor}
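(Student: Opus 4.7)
The plan is to obtain the desired isomorphism by taking the categorical trace of the geometric Frobenius endomorphism on both sides of the equivalence
$$\Shv_{\Nilp}(\Bun_G) \overset{\sim}\longrightarrow \IndCoh_\Nilp({}'\!\LS^{\on{restr}}_\cG)$$
provided by \thmref{t:ff GLC}(i), and then identifying these traces with the two sides of the claimed isomorphism using already-established computations.

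On the geometric side, I would invoke \cite[Theorem 0.2.6]{AGKRRV3} to identify
$$\Tr(\Frob, \Shv_\Nilp(\Bun_G)) \simeq \on{Funct}_c(\Bun_G(\BF_q),\ol\BQ_\ell).$$
Note that the applicability of that theorem requires $\Shv_\Nilp(\Bun_G)$ to be generated by objects that are compact in the ambient category $\Shv(\Bun_G)$, i.e., it requires precisely \thmref{t:compactness}, which is one of the main outputs of this paper. On the spectral side, the isomorphism \eqref{e:Tr Frob spec prime} directly yields
$$\Tr(\Frob,\IndCoh_\Nilp({}'\!\LS^{\on{restr}}_\cG)) \simeq \Gamma({}'\!\LS^{\on{arithm}}_\cG,\omega_{'\!\LS^{\on{arithm}}_\cG}).$$

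The main obstacle, which is the one nontrivial point, is to verify that the equivalence $\BL^{\on{restr}}_G$ of \thmref{t:ff GLC}(i) intertwines the Frobenius endomorphisms on the two sides, i.e., promotes canonically to an isomorphism in the 2-category of pairs $(\bC,\Frob_\bC)$; once this is done, applying $\Tr(\Frob,-)$ yields the desired isomorphism. To check the equivariance, I would use the uniqueness statement in \corref{c:existence of Langlands functor}: the functor $\BL^{\on{restr}}_G$ is uniquely characterized by being $\QCoh(\LS^{\on{restr}}_\cG)$-linear, sending compacts to eventually coconnective objects, and being compatible with the Whittaker coefficient functor through \propref{p:coeff}. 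Each of the input data in this characterization, namely the Hecke/spectral action of \cite[Theorem 14.3.2]{AGKRRV1}, the Poincaré object $\on{Poinc}_!^{\on{Vac}}$ (defined over $\BF_q$ via the Artin-Schreier sheaf), and the line bundle $\fl_{\LS^{\on{restr}}_\cG}$, is manifestly Frobenius-equivariant since $X$, $G$ and the whole construction descend to $\BF_q$. By uniqueness, $\BL^{\on{restr}}_G$ acquires a canonical Frobenius-equivariant structure, and the same applies to its inverse, giving the equivariant equivalence needed for the trace formalism.

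Finally, I would note that by tracing through the construction, the resulting isomorphism is automatically compatible with the actions of the excursion algebra $\CA_G=\Gamma(\LS^{\on{arithm}}_\cG,\CO)$ on both sides, since the $\QCoh(\LS^{\on{restr}}_\cG)$-linear structure on $\BL^{\on{restr}}_G$ descends under $\Tr(\Frob,-)$ to a module structure over $\CA_G$; this refinement, while not part of the statement of \corref{c:main} per se, is what underlies the compatibility with Hecke operators mentioned in the introduction.
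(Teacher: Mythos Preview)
Your proposal is correct and follows exactly the same route as the paper: the corollary is stated as an immediate consequence of combining \thmref{t:ff GLC}(i), \cite[Theorem 0.2.6]{AGKRRV3}, and the isomorphism \eqref{e:Tr Frob spec prime}. Your additional discussion of the Frobenius-equivariance of $\BL^{\on{restr}}_G$ via the uniqueness clause of \corref{c:existence of Langlands functor} makes explicit a point the paper leaves implicit (everything being defined over $\BF_q$), but this is elaboration rather than a different argument.
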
 

\sssec{}

From \corref{c:main} we obtain the following:

\begin{cor} \label{c:mult one}
Let $G$ be semi-simple, and let $\sigma$ be an irreducible Weil $\cG$-local system. Then the space of automorphic
functions on which the algebra of excursion operators acts by the character corresponding
to $\sigma$ is at most one-dimensional (and in the latter case is spanned by a cuspidal 
function). 
\end{cor}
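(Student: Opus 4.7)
The plan is to translate the question via \corref{c:main} into a local computation on $\LS^{\on{arithm}}_\cG$ near the point $\sigma$, and then to use \thmref{t:L and Eis} to handle cuspidality.

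First, I would identify the eigenspace explicitly. Let $\fm_\sigma \subset \CA_G = \Gamma(\LS^{\on{arithm}}_\cG, \CO)$ be the maximal ideal corresponding to the character of $\sigma$. Because the isomorphism of \corref{c:main} intertwines the actions of $\CA_G$ on both sides (which ultimately follows from the $\QCoh(\LS^{\on{restr}}_\cG)$-linearity of $\BL^{\on{restr}}_G$ after passing to Frobenius traces), the $\sigma$-eigenspace on the automorphic side is isomorphic to the $\fm_\sigma$-torsion subspace of $\Gamma({}'\!\LS^{\on{arithm}}_\cG, \omega_{'\!\LS^{\on{arithm}}_\cG})$.

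Next, I use that an irreducible $\sigma$ constitutes its own connected component $\CZ^{\on{restr}}_\sigma \subset \LS^{\on{restr}}_\cG$ (cf.~the Remark following \thmref{t:main(i) Intro}). If $\CZ^{\on{restr}}_\sigma \not\subset {}'\!\LS^{\on{restr}}_\cG$, the eigenspace is zero; otherwise, by Frobenius-equivariance, $\sigma$ lies in a connected component $\CZ^{\on{arithm}}_\sigma \subset {}'\!\LS^{\on{arithm}}_\cG$, and the $\fm_\sigma$-torsion is concentrated in $\Gamma(\CZ^{\on{arithm}}_\sigma, \omega)$. For $G$ semi-simple and $\sigma$ irreducible, Schur's lemma gives $H^0(X, \on{ad}(\sigma)) = 0$ and Poincar\'e duality yields $H^2(X, \on{ad}(\sigma)) = 0$, so $\LS^{\on{restr}}_\cG$ is derived smooth at $\sigma$. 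A standard computation of the fiber of the dualizing sheaf at a smooth point of the appropriate dimension, together with the Frobenius fixed-point formalism, then shows that the $\fm_\sigma$-torsion in $\Gamma(\CZ^{\on{arithm}}_\sigma, \omega)$ is at most $1$-dimensional.

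For the cuspidality clause, I would invoke \thmref{t:L and Eis}. Since $\sigma$ is irreducible and $G$ is semi-simple, $\sigma$ does not factor through any proper parabolic, so $\CZ^{\on{restr}}_\sigma$ is disjoint from the image of $\sfp^{-,\on{spec}}\colon \LS^{\on{restr}}_{\cP^-} \to \LS^{\on{restr}}_\cG$ for every proper $P^- \subsetneq G$. Using the commutative diagram \eqref{e:L and Eis} together with the adjunction between Eisenstein and constant term functors, this implies that any object of $\Shv_\Nilp(\Bun_G)$ corresponding under the Langlands equivalence to a section supported on $\CZ^{\on{arithm}}_\sigma$ has vanishing constant term along every proper parabolic; passing to the trace of Frobenius, any function in the $\sigma$-eigenspace is therefore cuspidal.

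The main obstacle is the local analysis in the second paragraph: one must pin down the derived local structure of $\LS^{\on{arithm}}_\cG$ at $\sigma$ precisely enough to compute the $\fm_\sigma$-torsion of $\omega$. The vanishing $H^0 = H^2 = 0$ gives classical smoothness of $\LS^{\on{restr}}_\cG$ at $\sigma$, and combined with the Frobenius fixed-point formalism this reduces the computation to a single line-bundle fiber; the hypothesis that $G$ is semi-simple is essential, as it is what forces $\on{ad}(\sigma)$ to have no invariants.
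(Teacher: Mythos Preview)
Your approach is correct in spirit and arrives at the same conclusion, but it takes a longer route than the paper's proof and leaves the hardest step as a ``standard computation.''

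The paper's argument is much shorter: it quotes \cite[Theorem 24.1.6]{AGKRRV1}, which says directly that for an irreducible Weil parameter $\sigma$ the corresponding connected component $\LS^{\on{arithm}}_{\cG,\sigma}$ is isomorphic to $\on{pt}/\on{Aut}(\sigma)$ with $\on{Aut}(\sigma)$ finite. Once you know this, $\omega \simeq \CO$ on that component, the map to the coarse moduli $\Spec(\CA_G)$ collapses to $\on{pt}/\on{Aut}(\sigma) \to \on{pt}$, and the at-most-one-dimensionality is immediate. Your smoothness argument via $H^0(X,\on{ad}(\sigma)) = H^2(X,\on{ad}(\sigma)) = 0$ together with the ``Frobenius fixed-point formalism'' is essentially a sketch of how one would \emph{prove} that cited theorem; invoking it as a black box is both cleaner and safer, since the passage from smoothness of $\LS^{\on{restr}}_\cG$ at the geometric point to the explicit description of the derived Frobenius fixed locus is exactly where the content lies.

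Your treatment of cuspidality via \thmref{t:L and Eis} and the disjointness of $\CZ^{\on{restr}}_\sigma$ from the images of the spectral Eisenstein maps is more explicit than what the paper writes down (the paper essentially takes cuspidality as implicit in the nature of the isomorphism, cf.\ the Remark following \thmref{t:classical Intro}). This is a fine and arguably more transparent way to justify that clause.
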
 

\begin{proof}

Recall that by \cite[Theorem 24.1.6]{AGKRRV1}, an irreducible local system $\sigma$
gives rise to a connected component $\LS^{\on{arithm}}_{\cG,\sigma}$ isomorphic to
$\on{pt}/\on{Aut}(\sigma)$, where $\on{Aut}(\sigma)$ is a finite group. In particular\footnote{Note also that $\LS^{\on{arithm}}_\cG$ is canonically Calabi-Yau, 
i.e., the determinant line bundle of its cotangent bundle is trivialized. Hence, the restriction of $\omega_{\LS^{\on{arithm}}_\cG}$
to the quasi-smooth locus of $\LS^{\on{arithm}}_\cG$ is canonically isomorphic to the restriction of 
$\CO_{\LS^{\on{arithm}}_\cG}$.},
$\omega_{\LS^{\on{arithm}}_{\cG,\sigma}}\simeq \CO_{\LS^{\on{arithm}}_{\cG,\sigma}}$. 

\medskip

Recall now that the excursion algebra identifies with
$$H^0(\Gamma(\LS^{\on{arithm}}_\cG,\CO_{\LS^{\on{arithm}}_\cG})).$$

\medskip

The map 
$$\LS^{\on{arithm}}_{\cG,\sigma}\to \Spec(\Gamma(\LS^{\on{arithm}}_\cG,\CO_{\LS^{\on{arithm}}_{\cG,\sigma}})),$$
induced by 
$$\LS^{\on{arithm}}_\cG\to \Spec(\Gamma(\LS^{\on{arithm}}_\cG,\CO_{\LS^{\on{arithm}}_\cG})),$$
identifies with 
$$\on{pt}/\on{Aut}(\sigma)\to \on{pt}.$$

\medskip

This makes the assertion obvious. 

\end{proof}

\begin{rem}
If \conjref{c:GLC} holds, then $'\!\LS^{\on{arithm}}_\cG$ is all of $\LS^{\on{arithm}}_\cG$. 

\medskip

In particular, in this case in \corref{c:mult one}, the corresponding eigenspace is
exactly one-dimensional. 

\end{rem}

\ssec{Proof of Propositions \ref{p:L coarse L comp} and \ref{p:coeff}} \label{ss:L coarse L comp}

\sssec{}

Write $\LS^{\on{restr}}_\cG$ as a union of its connected components
$$\LS^{\on{restr}}_\cG=\underset{\alpha}\sqcup\, \CZ_\alpha.$$

As in \cite[Sect. 21.1]{AGKRRV1}, we can write each $\CZ_\alpha$ as a (countable) colimit
$$\CZ_\alpha\simeq \underset{n}{\on{colim}}\, Z_{\alpha,n},$$
where:

\begin{itemize}

\item Each $Z_{\alpha,n}$ is a quasi-smooth algebraic stack;

\smallskip

\item Each $i_{\alpha,n}:Z_{\alpha,n}\to \CZ_\alpha$ is a \emph{regular} closed embedding that 
induces an isomorphism at the reduced level.

\end{itemize}

\sssec{}

The category $\QCoh(\CZ_\alpha)$ is compactly generated by objects of the form
$$(i_{\alpha,n})_*(\CO_{Z_{\alpha,n}})\otimes \CE,$$
where $\CE$ is a dualizable object in $\QCoh(\LS^{\on{restr}}_\cG)$. 

\medskip

In fact, we can take $\CE$ to be the pullback of a vector bundle under the evaluation map
$$\on{ev}_x:\LS^{\on{restr}}_\cG\to \on{pt}/\cG$$
corresponding to some chosen point $x\in X$. 

\sssec{}

We now begin the proof of \propref{p:L coarse L comp}. 

\medskip

In the above notations, it suffices to show that the functor
$\BL^{\on{restr},L}_{G,\on{temp}}$ sends each $(i_{\alpha,n})_*(\CO_{Z_{\alpha,n}})$ to
a compact object of $\Shv_\Nilp(\Bun_G)$.

\sssec{}

Consider the category
$$\QCoh(Z_{\alpha,n})\underset{\QCoh(\LS^{\on{restr}}_\cG)}\otimes \Shv_\Nilp(\Bun_G).$$

We have a pair of $\QCoh(\LS^{\on{restr}}_\cG)$-linear functors
$$((i_{\alpha,n})_*\otimes \on{Id}):
\QCoh(Z_{\alpha,n})\underset{\QCoh(\LS^{\on{restr}}_\cG)}\otimes \Shv_\Nilp(\Bun_G)\rightleftarrows \Shv_\Nilp(\Bun_G):
((i_{\alpha,n})^!\otimes \on{Id}).$$

\sssec{} \label{sss:n-projector}

Let 
$$\sP^{\on{enh}}_{Z_{\alpha,n}}:\Shv(\Bun_G)\to \QCoh(Z_{\alpha,n})\underset{\QCoh(\LS^{\on{restr}}_\cG)}\otimes \Shv_\Nilp(\Bun_G).$$
be as in \cite[Sect. 15.3.2]{AGKRRV1}. I.e., this is the left adjoint to forgetful functor
$$\QCoh(Z_{\alpha,n})\underset{\QCoh(\LS^{\on{restr}}_\cG)}\otimes \Shv_\Nilp(\Bun_G)
\overset{((i_{\alpha,n})_*\otimes \on{Id})}\longrightarrow \Shv_\Nilp(\Bun_G)\overset{\on{emb.Nilp}}\hookrightarrow \Shv(\Bun_G),$$
see \cite[Corollary 13.5.4]{AGKRRV1}.

\medskip

Unwinding the construction (see \cite[Sect. 15]{AGKRRV1}), we obtain that
$$\BL^{\on{restr},L}_{G,\on{temp}}((i_{\alpha,n})_*(\CO_{Z_{\alpha,n}}))\simeq
((i_{\alpha,n})_*\otimes \on{Id})(\sP^{\on{enh}}_{Z_{\alpha,n}}(\on{Poinc}_!^{\on{Vac}})).$$

\sssec{}

Now the object $\sP^{\on{enh}}_{Z_{\alpha,n}}(\on{Poinc}_!^{\on{Vac}})$ is compact in 
$\QCoh(Z_{\alpha,n})\underset{\QCoh(\LS^{\on{restr}}_\cG)}\otimes \Shv_\Nilp(\Bun_G)$
(being the value of a left adjoint on a compact object). 

\medskip

Finally, the functor $(i_{\alpha,n})_*\otimes \on{Id}$ preserves compactness since it admits a continuous right 
adjoint.

\qed[\propref{p:L coarse L comp}]

\sssec{Proof of \propref{p:coeff}} \label{sss:proof coeff} 

Note that 
$$\Gamma_!(\QCoh(\LS^{\on{restr}}_\cG,-)\simeq \underset{\alpha}\oplus\, \Gamma_!(\CZ_\alpha,(-)|_{\CZ_\alpha}),$$
while the functor 
$$\Gamma_!(\CZ_\alpha,-):\QCoh(Z_\alpha)\to \Vect$$
can be written as
$$\underset{n}{\on{colim}}\, \CHom_{\QCoh(\LS^{\on{restr}}_\cG)}((i_{\alpha,n})_*(\CO_{Z_{\alpha,n}}),-).$$

Thus, we can rewrite the composition 
$$\Shv_{\Nilp}(\Bun_G)\overset{\BL^{\on{restr}}_{G,\on{coarse}}}\longrightarrow \QCoh(\LS^{\on{restr}}_\cG)
\overset{\Gamma_!(\CZ_\alpha,-)}\longrightarrow \Vect$$
as
$$\underset{n}{\on{colim}}\, \CHom_{\Shv_\Nilp(\Bun_G)}((i_{\alpha,n})_*(\CO_{Z_{\alpha,n}}),\BL^{\on{restr}}_{G,\on{coarse}}(-)),$$
and hence by adjunction as 
$$\underset{n}{\on{colim}}\, \CHom_{\Shv_\Nilp(\Bun_G)}(\BL^{\on{restr},L}_{G,\on{temp}}\circ (i_{\alpha,n})_*(\CO_{Z_{\alpha,n}}),-).$$

By \secref{sss:n-projector}, we can rewrite the latter expression as
$$\underset{n}{\on{colim}}\, \CHom_{\Shv_\Nilp(\Bun_G)}\left(((i_{\alpha,n})_*\otimes \on{Id})(\sP^{\on{enh}}_{Z_{\alpha,n}}(\on{Poinc}_!^{\on{Vac}})),-\right),$$
and further as 
$$\underset{n}{\on{colim}}\, \CHom_{\QCoh(Z_{\alpha,n})\underset{\QCoh(\LS^{\on{restr}}_\cG)}\otimes\Shv_\Nilp(\Bun_G)}
\left(\sP^{\on{enh}}_{Z_{\alpha,n}}(\on{Poinc}_!^{\on{Vac}}),((i_{\alpha,n})^!\otimes \on{Id})(-)\right),$$
and again by adjunction
$$\underset{n}{\on{colim}}\, \CHom_{\Shv(\Bun_G)}\left(\on{Poinc}_!^{\on{Vac}},
\on{emb.Nilp}\circ ((i_{\alpha,n})_*\otimes \on{Id})\circ ((i_{\alpha,n})^!\otimes \on{Id})(-)\right).$$

Since $\on{Poinc}_!^{\on{Vac}}$ is compact, the latter expression identifies with
$$\CHom_{\Shv(\Bun_G)}\left(\on{Poinc}_!^{\on{Vac}},\on{emb.Nilp}\left(\underset{n}{\on{colim}}\, 
((i_{\alpha,n})_*\otimes \on{Id})\circ ((i_{\alpha,n})^!\otimes \on{Id})(-)\right)\right).$$

The required assertion follows now from the fact that the natural transformation 
$$\underset{\alpha}\oplus\, \underset{n}{\on{colim}}\, 
((i_{\alpha,n})_*\otimes \on{Id})\circ ((i_{\alpha,n})^!\otimes \on{Id})\to \on{Id}$$
on $\Shv_\Nilp(\Bun_G)$ is an isomorphism.

\qed[\propref{p:coeff}]

\ssec{Proof of \thmref{t:L sends comp to even coconn}} \label{ss:L sends comp to even coconn}

The proof will largely follow \cite[Sect. 2]{GLC1}. 

\sssec{}

For the proof we will assume the validity of \thmref{t:compactness}, which will be proved independently. 

\medskip

Assuming this theorem and using \cite[Sect. 2.2]{GLC2},  we obtain that if $\CM\in \Shv_\Nilp(\Bun_G)$ is compact, then
$$\on{emb.Nilp}(\CM)\in \Shv(\Bun_G)$$
is bounded below.

\medskip

Hence, it is enough to show that the functor $\BL^{\on{restr}}_{G,\on{coarse}}$ has a cohomological amplitude bounded 
on the left, i.e., there exists an integer $d$ such that $\BL^{\on{restr}}_{G,\on{coarse}}[-d]$ is left t-exact.

\sssec{}

Choose a point $x\in X$, and set
$$\LS_\cG^{\on{restr},\on{rigid}_x}:=\LS_\cG^{\on{restr}}\underset{\on{pt}/\cG}\times \on{pt},$$
where $\LS_\cG^{\on{restr}}\to \on{pt}/\cG$ is the evaluation map $\on{ev}_x$. 

\medskip

According to \cite[Theorem 1.6.3]{AGKRRV1}, the prestack $\LS_\cG^{\on{restr},\on{rigid}_x}$ is a disjoint union 
of formal affine schemes. In particular, the functor
$$\Gamma_!(\LS_\cG^{\on{restr},\on{rigid}_x},-):\QCoh(\LS_\cG^{\on{restr},\on{rigid}_x})\to \Vect$$
is t-exact and conservative. 

\sssec{}

Note also that we have a canonical isomorphism 
$$\Gamma_!(\LS_\cG^{\on{restr},\on{rigid}_x},\pi^*(-))\simeq
\Gamma_!(\LS_\cG^{\on{restr}},\on{ev}_x^*(R_\cG)\otimes (-)),$$
where:

\begin{itemize}

\item $\pi$v denotes the projection $\LS_\cG^{\on{restr},\on{rigid}_x}\to \LS_\cG^{\on{restr}}$;

\item $R_\cG\in \Rep(\cG)\simeq \QCoh(\on{pt}/\cG)$ is the regular representation.

\end{itemize}

\medskip

Hence, we obtain that it is enough to show that the functors
$$\Gamma_!\left(\LS_\cG^{\on{restr}},\on{ev}_x^*(V)\otimes \BL^{\on{restr}}_{G,\on{coarse}}(-)\right), \quad V\in \Rep(\cG)^\heartsuit.$$
have cohomological amplitudes uniformly bounded on the left. 

\sssec{}

By \propref{p:coeff}, we rewrite the above functor as 
\begin{equation} \label{e:coeff Hecke}
\on{coeff}^{\on{Vac}}\circ \on{H}_{V,x}(-),
\end{equation}
where $\on{H}_{V,x}$ is the Hecke endofunctor of $\Shv(\Bun_G)$ corresponding to the chosen $x$ and $V$. 

\medskip

We will show that the functors \eqref{e:coeff Hecke} (for $V\in \Rep(\cG)$) 
have cohomological amplitudes uniformly bounded on the left on all of $\Shv(\Bun_G)$ (and not just $\Shv_\Nilp(\Bun_G)$). 

\sssec{}

Consider the \emph{tempered subcategory}
$$\Shv(\Bun_G)_{\on{temp}}\overset{\bu}\hookrightarrow \Shv(\Bun_G)$$
as defined in \cite[Sect. 7]{FR} (using the choice of $x\in X$). The embedding $\bu$ admits a continuous right adjoint, denoted $\bu^R$. 

\medskip

The category $\Shv(\Bun_G)_{\on{temp}}$ carries a uniquely defined t-structure for which the functor $\bu^R$ is 
t-exact, see \cite[Sect. 7.2]{FR}.

\medskip

Another key feature of this t-structure is that the Hecke functors $\on{H}_{V,x}$ descend to $\Shv(\Bun_G)_{\on{temp}}$ 
and are t-exact on it (for $V\in \Rep(\cG)^\heartsuit$), see \cite[Theorem 7.1.0.1]{FR}.

\sssec{}

Note that since $\on{Poinc}_!^{\on{Vac}}\in \Shv(\Bun_G)_{\on{temp}}$, the functor $\on{coeff}^{\on{Vac}}$ factors 
canonically as
$$\Shv(\Bun_G) \overset{\bu^R}\to  \Shv(\Bun_G)_{\on{temp}} \overset{\on{coeff}^{\on{Vac}}_{\on{temp}}}\longrightarrow
\Vect.$$

Hence, it is enough to show that the functors
$$\on{coeff}^{\on{Vac}}_{\on{temp}}\circ \on{H}_{V,x}(-):\Shv(\Bun_G)_{\on{temp}} \to \Vect$$
have cohomological amplitudes uniformly bounded on the left.

\sssec{}

By the t-exactness of the Hecke action on $\Shv(\Bun_G)_{\on{temp}}$, it suffices to show that the functor 
$$\on{coeff}^{\on{Vac}}_{\on{temp}}:\Shv(\Bun_G)_{\on{temp}} \to \Vect$$
has a cohomological amplitude bounded on the left.

\medskip 

This is in turn equivalent to the functor 
$$\on{coeff}^{\on{Vac}}:\Shv(\Bun_G) \to \Vect$$
having a cohomological amplitude bounded on the left, which is obvious. 

\qed[\thmref{t:L sends comp to even coconn}]

\section{Proof of \thmref{t:ff GLC} in characteristic \texorpdfstring{$0$}{000}} \label{s:char 0}

In this section we will show that when $\on{char}(k)=0$, the functor 
$$\Shv_{\Nilp}(\Bun_G) \overset{\BL^{\on{restr}}_G}\to \IndCoh_\Nilp(\LS^{\on{restr}}_\cG)$$
is an equivalence.

\ssec{Constructible Betti geometric Langlands} \label{ss:Betti constr}

In this subsection we take $k$ to be the field of complex numbers $\BC$, and we will work with the sheaf theory
denoted $\Shv^{\on{Betti,constr}}(-)$ of (ind-)constructible Betti sheaves with coefficients in an arbitrary field $\sfe$
of characteristic $0$. 

\sssec{}

Let $\LS^{\on{Betti,restr}}_\cG$ be the moduli space of local systems with restricted variation,
defined using the constructible sheaf theory $\Shv^{\on{Betti,constr}}(-)$. 

\medskip

The material in Sects. \ref{ss:coarse}-\ref{ss:Eis}
applies verbatim and we obtain a functor 
$$\BL_G^{\on{Betti,restr}}:\Shv^{\on{Betti,constr}}_\Nilp(\Bun_G)\to \IndCoh_{\Nilp}(\LS^{\on{Betti,restr}}_\cG).$$

\sssec{}

We claim:

\begin{thm} \label{t:Betti restr}
The functor $\BL_G^{\on{Betti,restr}}$ is an equivalence.
\end{thm}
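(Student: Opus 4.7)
The plan is to deduce the constructible Betti equivalence from the de Rham (D-module) version of GLC proven in the [GLC] series, via the Riemann-Hilbert correspondence. Over $\BC$ and for smooth complete $X$, Riemann-Hilbert yields compatible identifications on both sides of the Langlands functor: on the geometric side, it relates constructible Betti sheaves on $\Bun_G$ to regular holonomic D-modules; on the spectral side, $\LS^{\on{dR}}_\cG$ (moduli of flat $\cG$-bundles) maps to the analytic Betti local system stack, inside which $\LS^{\on{Betti,restr}}_\cG$ sits as the union of formal completions at semi-simple points.

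First I would recall the equivalence
$$\BL_G^{\on{dR}}: \Dmod_\Nilp(\Bun_G) \overset{\sim}\to \IndCoh_\Nilp(\LS^{\on{dR}}_\cG)$$
from the [GLC] series. Then I would observe that the construction of the Langlands functor in Sects.~\ref{ss:coarse}--\ref{ss:actual functor} is entirely sheaf-theoretic: it relies only on the Whittaker Poincar\'e object $\on{Poinc}_!^{\on{Vac}}$, the spectral action of $\QCoh(\LS^{\on{restr}}_\cG)$, and the universal property from \corref{c:existence of Langlands functor}. Each of these ingredients is preserved by Riemann-Hilbert, so $\BL_G^{\on{Betti,restr}}$ should match the restriction of $\BL_G^{\on{dR}}$ (transported through RH) to appropriately matched subcategories on the two sides.

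The crucial comparison is then that the subcategory $\Shv_\Nilp^{\on{Betti,constr}}(\Bun_G)$ corresponds, under the analytic/constructible Riemann-Hilbert dictionary, to the full subcategory of $\IndCoh_\Nilp(\LS^{\on{Betti}}_\cG)$ supported on the formal thickening $\LS^{\on{Betti,restr}}_\cG$ of the semi-simple locus. I would verify this using the Hecke eigensheaf description of $\Shv_\Nilp$, in parallel to the argument used in the proof of \propref{p:Eis preserve Nilp} and to \cite[Proposition 15.5.3(a)]{AGKRRV1}: both sides decompose as colimits indexed by the quasi-smooth approximations $Z_{\alpha,n}$ to connected components of the restricted local system stack, and the identification is compatible with the $\QCoh$-action. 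Invoking the characterization of $\BL_G^{\on{Betti,restr}}$ by \corref{c:existence of Langlands functor} then pins down the functor uniquely from the de Rham input.

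The main obstacle I expect is the reconciliation of analytic Betti sheaves (the natural target of classical Riemann-Hilbert) with the constructible Betti sheaf theory used here, ensuring that the ``formal thickening of the semi-simple locus'' on the spectral side matches precisely the constructibility and nilpotent singular support conditions on the geometric side. This careful bookkeeping \textemdash{} in particular, verifying that the passage to the restricted stack $\LS^{\on{Betti,restr}}_\cG$ is exactly mirrored on the automorphic side by the passage from analytic to constructible sheaves \textemdash{} is precisely the content of \cite{GLC1}, which we quote.
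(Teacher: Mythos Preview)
Your approach is essentially the same as the paper's: both deduce the constructible Betti equivalence from the de Rham GLC of \cite{GLC5} via Riemann--Hilbert, with the comparison of the two Langlands functors and the matching of the ``restricted'' spectral subcategory with the nilpotent-singular-support automorphic subcategory supplied by \cite{GLC1}. The paper simply cites \cite[Theorem 3.5.6]{GLC1} directly, whereas you outline its content before invoking it; the substance is identical.
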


\begin{proof}

The above functor $\BL_G^{\on{Betti,restr}}$ is the same as the functor denoted by the same symbol 
in \cite[Sect. 3.5.3]{GLC1}.

\medskip

Now the assertion follows from the validity of the full de Rham version of the geometric Langlands conjecture
(proved in \cite{GLC5}) combined with \cite[Theorem 3.5.6]{GLC1}.

\end{proof} 

\ssec{Betti vs \'etale comparison} \label{ss:over C}

In this subsection we continue to assume that $k=\BC$, and we will take $\sfe:=\ol\BQ_\ell$.
We will work with two sheaf theories:

\medskip

One is $\Shv^{\on{et}}(-)=:\Shv(-)$ of  (ind-)constructible $\ol\BQ_\ell$-adic \'etale sheaves,
considered in \secref{s:l-adic}. 

\medskip

The other is $\Shv^{\on{Betti,constr}}(-)$ considered in \secref{ss:Betti constr} above. 

\medskip

We will show that the validity of \thmref{t:Betti restr} implies the validity of \thmref{t:ff GLC}(ii) 
(for $k=\BC$). 

\sssec{} \label{sss:et to Betti}

Note that we have a fully faithful natural transformation between the sheaf theories
\begin{equation} \label{e:et to Betti}
(\on{et}\to \on{Betti}):\Shv^{\on{et}}(-)\to \Shv^{\on{Betti,constr}}(-)
\end{equation} 
that commutes with both !- and *- direct and inverse images, see 
\cite[Theorems XI.4.4, XVI.4.1 and XVII.5.3.3]{SGA4}.

\medskip

At the level of compact objets (i.e., constructible sheaves) on affine schemes, its essential image 
is characterized as follows:

\medskip

Let $\CF$ be an object of $\Shv^{\on{Betti,constr}}(Y)^c$, and let $Y_\alpha$ be a decomposition of $Y$ 
into smooth locally closed subsets, such that the restrictions (either *- or !-) of $\CF$ to the strata are
lisse. Consider the individual cohomology sheaves $H^i(\CF|_{Y_\alpha})$ as representations $V_{\alpha,i}$
of $\pi_1(Y_\alpha)$ (for some chosen base point). 

\medskip

Then $\CF$ lies in the essential image of $(\on{et}\to \on{Betti}))_Y$ if and only if each $V_{\alpha,i}$
admits a $\pi_1(Y_\alpha)$-invariant lattice with respect to $\CO_E\subset E\subset \ol\BQ_\ell$ for
a finite extension $E\supseteq \BQ_\ell$. 

\sssec{} \label{sss:et to Betti BunG}

In particular, the natural transformation \eqref{e:et to Betti} induces a fully 
faithful functor
$$(\on{et}\to \on{Betti})_{\Bun_G}:\Shv^{\on{et}}(\Bun_G)\to \Shv^{\on{Betti,constr}}(\Bun_G),$$
which restricts to a (fully faithful) functor
$$\Shv^{\on{et}}_\Nilp(\Bun_G)\to \Shv^{\on{Betti,constr}}_\Nilp(\Bun_G).$$

\sssec{} \label{sss:compactness char 0}

Recall that \thmref{t:compactness} has been proved for 
$$\Shv^{\on{Betti,constr}}_\Nilp(\Bun_G)\hookrightarrow \Shv^{\on{Betti,constr}}(\Bun_G)$$ in
\cite[Theorem 16.4.10]{AGKRRV1}. Hence, it holds for 
$$\Shv_\Nilp^{\on{et}}(\Bun_G)\hookrightarrow  \Shv^{\on{et}}(\Bun_G)$$ as well. 

\sssec{}

Let
$$\LS^{\on{et,restr}}_\cG \text{ and } \LS^{\on{Betti,restr}}_\cG$$
be the two versions of the be the moduli space of local systems with restricted variation. 

\medskip

The functor
$$(\on{et}\to \on{Betti})_X:\on{QLisse}^{\on{et}}(X)\to \on{QLisse}(X)^{\on{Betti,constr}}$$
is symmetric monoidal, and hence induces a map between the corresponding moduli spaces
\begin{equation} \label{e:et to Betti LS}
(\on{et}\to \on{Betti})_{\LS_\cG}:\LS^{\on{et,restr}}_\cG\to \LS^{\on{Betti,restr}}_\cG.
\end{equation} 

\medskip

We claim:

\begin{lem} \label{l:et to Betti LS}
The map \eqref{e:et to Betti LS} factors through an isomorphism from $\LS^{\on{et,restr}}_\cG$
to the disjoint union of some of the connected components of $\LS^{\on{Betti,restr}}_\cG$.
\end{lem}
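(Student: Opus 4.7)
The plan is to check the assertion one connected component at a time. Recall (see \cite[Sect. 1.6]{AGKRRV1}) that the connected components of $\LS^{\on{et,restr}}_\cG$ (resp., $\LS^{\on{Betti,restr}}_\cG$) are in natural bijection with isomorphism classes of semisimple étale (resp., Betti) $\cG$-local systems on $X$. On the étale side, these are continuous representations of $\pi_1^{\on{et}}(X)$ whose image preserves an $\CO_E$-lattice for some finite subextension $E\subset\ol\BQ_\ell$ of $\BQ_\ell$; on the Betti side, they are arbitrary representations of $\pi_1^{\on{top}}(X)$. Pullback along the profinite completion map $\pi_1^{\on{top}}(X)\to \pi_1^{\on{et}}(X)$ defines an \emph{injection} of indexing sets, whose image consists precisely of those ``Galois-type'' semisimple Betti local systems characterized by the criterion of \secref{sss:et to Betti}. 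The map $(\on{et}\to\on{Betti})_{\LS_\cG}$ respects these indexings, so it remains to verify that for each such $\sigma$ the induced map between the corresponding components $\CZ^{\on{et}}_\sigma\to\CZ^{\on{Betti}}_\sigma$ is an isomorphism.

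To this end, recall the presentations
$$\CZ^{\on{et}}_\sigma\simeq \underset{n}{\on{colim}}\, Z^{\on{et}}_{\sigma,n}, \qquad \CZ^{\on{Betti}}_\sigma\simeq \underset{n}{\on{colim}}\, Z^{\on{Betti}}_{\sigma,n}$$
of \cite[Sect. 21.1]{AGKRRV1} as filtered colimits of quasi-smooth algebraic stacks along regular closed embeddings that are isomorphisms at the reduced level. The underlying reduced stacks match by the discussion of the first paragraph. The derived structure of each $Z^?_{\sigma,n}$ is controlled by the truncated deformation theory of $\sigma$ as a $?$-local system, whose $L_\infty$ tangent algebra is a shift of the cochain complex $C^*(X,\on{ad}(\sigma))$ computed with respect to the sheaf theory in question. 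For a Galois-type $\sigma$, Artin's comparison theorem identifies the Betti and étale cochain complexes of $\on{ad}(\sigma)$ compatibly with cup products and higher operations, so the two $L_\infty$ tangent algebras are canonically quasi-isomorphic. One concludes that $Z^{\on{et}}_{\sigma,n}\overset{\sim}\to Z^{\on{Betti}}_{\sigma,n}$ for each $n$, and hence $\CZ^{\on{et}}_\sigma\overset{\sim}\to \CZ^{\on{Betti}}_\sigma$.

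The main obstacle is this second step: matching the full derived formal deformation theory across the two sheaf theories. Artin comparison yields the desired quasi-isomorphism of tangent complexes, but one must check that this respects the specific presentations used to define the restricted-variation structure in \cite{AGKRRV1}. The cleanest route is to verify that the construction $? \mapsto \LS^{\on{?,restr}}_\cG$ is naturally functorial in the sheaf theory via the transformation \eqref{e:et to Betti}, so that the derived formal completion at a Galois-type $\sigma$ reduces to the derived $\cG$-character variety of $X$ at $\sigma$, which depends only on $C^*(X,\on{ad}(\sigma))$ and not on the ambient sheaf theory. Once this is established, summing over the semisimple classes that occur on the étale side identifies $\LS^{\on{et,restr}}_\cG$ with the disjoint union of the components of $\LS^{\on{Betti,restr}}_\cG$ of Galois type.
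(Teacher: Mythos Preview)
Your strategy is sound but takes a different route from the paper. The paper's proof is a one-line citation: it invokes the description of the essential image of $(\on{et}\to\on{Betti})$ from \secref{sss:et to Betti} together with \cite[Sect.~9.5.8]{AGKRRV1}. The point (as the subsequent remark in the paper makes explicit) is that the construction $\qLisse(X)\rightsquigarrow \LS^{\on{restr}}_\cG$ depends only on the symmetric monoidal category $\qLisse(X)$, and a \emph{full} symmetric monoidal embedding $\qLisse^{\on{et}}(X)\hookrightarrow \qLisse^{\on{Betti}}(X)$ automatically yields an identification of $\LS^{\on{et,restr}}_\cG$ with a union of connected components of $\LS^{\on{Betti,restr}}_\cG$. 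No deformation-theoretic computation is needed; the result is a formal consequence of how $\LS^{\on{restr}}_\cG$ is built from the Tannakian data.

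Your approach via Artin comparison of tangent $L_\infty$-algebras is morally correct and more explicit, but notice that your third paragraph already identifies its own soft spot and then proposes to resolve it by ``verifying that the construction is naturally functorial in the sheaf theory'' --- which is exactly the paper's argument. So you have essentially rediscovered the paper's route as the cleanest way to close the gap. One further caution: your second paragraph asserts that matching the $L_\infty$ tangent algebras forces $Z^{\on{et}}_{\sigma,n}\simeq Z^{\on{Betti}}_{\sigma,n}$ for each $n$, but the $Z_{\sigma,n}$ are \emph{choices} of presentation, not canonical truncations of the deformation theory; what Artin comparison directly gives you is an isomorphism of the formal completions $\CZ^{\on{et}}_\sigma\simeq \CZ^{\on{Betti}}_\sigma$ (since each is the formal thickening of $\on{pt}/\on{Aut}(\sigma)$ governed by the same $L_\infty$-algebra), and that is already enough --- you need not match the individual $Z_{\sigma,n}$.
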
 

\begin{proof}

Follows from the description of essential image of the natural transformation 
\eqref{e:et to Betti} in \secref{sss:et to Betti}, see \cite[Sect. 9.5.8]{AGKRRV1}.

\end{proof} 

\begin{rem}
An assertion parallel to \lemref{l:et to Betti LS} holds for any full symmetric monoidal 
subcategory of $\on{Lisse}(X)^\heartsuit$.
\end{rem} 

\sssec{}

Thus, we can regard
$$\QCoh(\LS^{\on{et,restr}}_\cG)\underset{\QCoh(\LS^{\on{Betti,restr}}_\cG)}\otimes \Shv^{\on{Betti,constr}}_\Nilp(\Bun_G)$$
as a full subcategory (in fact, a direct summand) of $\Shv^{\on{Betti,constr}}_\Nilp(\Bun_G)$, and it is easy to see
that the functor 
$$(\on{et}\to \on{Betti})_{\Bun_G}:\Shv^{\on{et}}_\Nilp(\Bun_G)\to \Shv^{\on{Betti,constr}}_\Nilp(\Bun_G)$$
lands in it: this follows from the fact that this functor is compatible with the actions of
$\QCoh(\LS^{\on{et,restr}}_\cG)$, viewed as a direct factor of $\QCoh(\LS^{\on{Betti,restr}}_\cG$ on the two sides. 

\medskip

Similarly, direct image along $(\on{et}\to \on{Betti})_{\LS_\cG}$ is an equivalence
$$\IndCoh_\Nilp(\LS^{\on{et,restr}}_\cG)\overset{\sim}\to 
\QCoh(\LS^{\on{et,restr}}_\cG)\underset{\QCoh(\LS^{\on{Betti,restr}}_\cG)}\otimes \IndCoh_\Nilp(\LS^{\on{restr}}_\cG).$$

\medskip

Furthermore, it follows from \propref{p:coeff} that we have a commutative diagram
$$
\CD
\Shv^{\on{et}}_\Nilp(\Bun_G) @>{(\on{et}\to \on{Betti})_{\Bun_G}}>> 
\QCoh(\LS^{\on{et,restr}}_\cG)\underset{\QCoh(\LS^{\on{Betti,restr}}_\cG)}\otimes \Shv^{\on{Betti,constr}}_\Nilp(\Bun_G) \\
@V{\BL_G^{\on{et,restr}}}VV  @V{\sim}V{\on{Id}\otimes \BL_G^{\on{Betti,restr}}}V \\
\IndCoh_\Nilp(\LS^{\on{et,restr}}_\cG) @>{(\on{et}\to \on{Betti})_{\LS_\cG}}>{\sim}>
\QCoh(\LS^{\on{et,restr}}_\cG)\underset{\QCoh(\LS^{\on{Betti,restr}}_\cG)}\otimes \IndCoh_\Nilp(\LS^{\on{Betti,restr}}_\cG),
\endCD
$$
in which the top horizontal arrow is fully faithful.

\medskip

Hence, we obtain that the functor $\BL_G^{\on{et,restr}}$ is fully faithful. Thus, in order to prove that it is an equivalence, 
it remains to show that the essential image of $\BL_G^{\on{et,restr}}$ generates the target. 

\sssec{}

Write
$$\BL_G^{\on{et,restr}}=(\BL_G^{\on{et,restr}})_{\on{red}}\sqcup (\BL_G^{\on{et,restr}})_{\on{irred}};$$
these are unions of connected components that correspond to reducible (resp., irreducible) local systems.

\medskip

It suffices to show that the essential image of each of the corresponding functors
\begin{multline*} 
\QCoh((\BL_G^{\on{et,restr}})_{\on{red}})\underset{\QCoh(\LS^{\on{et,restr}}_\cG)}\otimes \Shv^{\on{et}}_\Nilp(\Bun_G)
\overset{\on{Id}\otimes \BL_G^{\on{et,restr}}}\longrightarrow \\
\to \QCoh((\BL_G^{\on{et,restr}})_{\on{red}})\underset{\QCoh(\LS^{\on{et,restr}}_\cG)}\otimes \IndCoh_\Nilp(\LS^{\on{et,restr}}_\cG)\simeq
\IndCoh_\Nilp((\LS^{\on{et,restr}}_\cG)_{\on{red}})
\end{multline*}
and 
\begin{multline*} 
\QCoh((\BL_G^{\on{et,restr}})_{\on{irred}})\underset{\QCoh(\LS^{\on{et,restr}}_\cG)}\otimes \Shv^{\on{et}}_\Nilp(\Bun_G)
\overset{\on{Id}\otimes \BL_G^{\on{et,restr}}}\longrightarrow \\
\to \QCoh((\BL_G^{\on{et,restr}})_{\on{irred}})\underset{\QCoh(\LS^{\on{et,restr}}_\cG)}\otimes \IndCoh_\Nilp(\LS^{\on{et,restr}}_\cG)\simeq
\IndCoh_\Nilp((\LS^{\on{et,restr}}_\cG)_{\on{irrred}})
\end{multline*}
generates its target category.

\medskip

This is in turn equivalent to showing that the essential image of the each of the functors
\begin{equation} \label{e:gen for red}
\Shv^{\on{et}}_\Nilp(\Bun_G)\overset{\BL_G^{\on{et,restr}}}\longrightarrow \IndCoh_\Nilp(\LS^{\on{et,restr}}_\cG)
\overset{\on{restriction}}\longrightarrow \IndCoh_\Nilp((\LS^{\on{et,restr}}_\cG)_{\on{red}})
\end{equation} 
and 
\begin{equation} \label{e:gen for irred}
\Shv^{\on{et}}_\Nilp(\Bun_G)\overset{\BL_G^{\on{et,restr}}}\longrightarrow \IndCoh_\Nilp(\LS^{\on{et,restr}}_\cG)
\overset{\on{restriction}}\longrightarrow \IndCoh_\Nilp((\LS^{\on{et,restr}}_\cG)_{\on{irred}})
\end{equation} 
generates the target category. 

\sssec{}

We first prove the assertion for \eqref{e:gen for red}. By induction on the semi-simple rank, we may
assume that \thmref{t:ff GLC}(ii) holds for proper Levi subgroups of $G$.

\medskip

Note that\footnote{In {\it loc. cit.} this is proved in the de Rham context, but the argument applies in any sheaf-theoretic context.} 
by \cite[Theorem 13.3.6]{AG1}, the union of the essential images of the functors 
$$\Eis^{-,\on{spec}}:\IndCoh_\Nilp(\LS^{\on{et,restr}}_\cM)\to \IndCoh_\Nilp(\LS^{\on{et,restr}}_\cG)$$
for proper Levi subgroups generates $\IndCoh_\Nilp((\LS^{\on{et,restr}}_\cG)_{\on{red}})$,
viewed as a full subcategory in $\IndCoh_\Nilp(\LS^{\on{et,restr}}_\cG)$.

\medskip 

Hence, the required generation assertion follows from \thmref{t:L and Eis}. 

\sssec{}

We now show that \eqref{e:gen for irred} generates the target category. Note that the functor
$\bu^{\on{spec}}$ (see \secref{sss:emb temp spec}) induces an \emph{equivalence}
$$\QCoh((\LS^{\on{et,restr}}_\cG)_{\on{irred}})\to \IndCoh_\Nilp((\LS^{\on{et,restr}}_\cG)_{\on{irred}}).$$

\medskip

Hence, it is enough to show that the essential image of the functor
$$
\Shv^{\on{et}}_\Nilp(\Bun_G)\overset{\BL_{G,\on{coarse}}^{\on{et,restr}}}\longrightarrow \QCoh(\LS^{\on{et,restr}}_\cG)
\overset{\on{restriction}}\longrightarrow \QCoh((\LS^{\on{et,restr}}_\cG)_{\on{irred}})
$$
generates the target category.

\medskip

We will show that the essential image of the functor $\BL_{G,\on{coarse}}^{\on{et,restr}}$ itself generates the target category. 
First, we note that this functor is fully faithful (indeed, our situation embeds fully faithfully into the Betti situation, where the functor
in question is an equivalence). Hence, it is enough to show that its \emph{left}\footnote{Note that without fully faithfulness,
generation of the target is equivalent to the fact that the \emph{right} adjoint be consevative.} adjoint, 
i.e., the functor $\BL_{G,\on{temp}}^{\on{et,restr},L}$, 
is conservative. 

\medskip

Note, however, that the diagram
$$
\CD
\Shv^{\on{et}}_\Nilp(\Bun_G) @<{\BL_{G,\on{temp}}^{\on{et,restr},L}}<<  \QCoh(\LS^{\on{et,restr}}_\cG) \\
@V{(\on{et}\to \on{Betti})_{\Bun_G}}VV @VV{(\on{et}\to \on{Betti})_{\LS_\cG}}V \\
\Shv^{\on{Betti,constr}}_\Nilp(\Bun_G) @<{\BL_{G,\on{temp}}^{\on{Betti,restr},L}}<{\sim}<  \QCoh(\LS^{\on{Betti,restr}}_\cG)
\endCD
$$
is commutative: indeed, the two circuits are compatible with the spectral action $\QCoh(\LS^{\on{et,restr}}_\cG)$
(which we think as a direct factor of $\QCoh(\LS^{\on{Betti,restr}}_\cG)$) and send $\CO_{\LS^{\on{et,restr}}_\cG}$
to (the Betti version of) $\on{Poinc}^{\on{Vac}}_{!,\Nilp}$. 

\medskip

Since the right vertical arrow in the diagram is conservative, and the bottom horizontal arrow
is an equivalence, we obtain that the top horizontal arrow is conservative, as required.

\qed[\thmref{t:ff GLC}(ii) for $k=\BC$]

\ssec{Proof of \thmref{t:ff GLC}(ii) for an arbitrary \texorpdfstring{$k$}{k} of char. \texorpdfstring{$0$}{0}}

The proof will use the Lefschetz principle. 

\sssec{}

Note that if $k_1\subset k_2$ is an extension of algebraically closed fields of characteristic $0$,
for a prestack $\CZ_1$ over $k_1$ and its base change $\CZ_2$ to $k_2$, the pullback functor
\begin{equation} \label{e:lisse base change}
\on{Lisse}(\CZ_1)\to \on{Lisse}(\CZ_2)
\end{equation} 
is an equivalence, see \cite[Proposition XIII.4.3]{SGA1}. 

\medskip

This formally implies that the pullback functor 
$$\Shv(\CZ_1)\to \Shv(\CZ_2)$$
is fully faithful. 

\medskip

Let $\CN_1\subset T^*(\CZ_1)$ be a conical Lagrangian subset. It follows from \eqref{e:lisse base change}
that the pullback functor
$$\Shv_{\CN_1}(\CZ_1)\to \Shv_{\CN_2}(\CZ_2)$$
is an equivalence. 

\sssec{}

In particular, for a curve $X_1$ defined over $k_1$, and its base change $X_2$ to $k_2$, the map 
$$\LS^{\on{restr}}_\cG(X_1)\to \LS^{\on{restr}}_\cG(X_2)$$
is an isomorphism (as prestacks over $\ol\BQ_\ell$).

\sssec{}

Given the curve $X$ over $k$, let $k'$ be a countably generated field over which it is defined;
denote the resulting curve over $k'$ by $X'$.  

\medskip

We obtain a commutative diagram
$$
\CD
\Shv_\Nilp(\Bun_G(X')) @>{\BL^{\on{restr}}_G(X')}>> \IndCoh_\Nilp(\LS^{\on{restr}}_\cG(X')) \\
@V{\sim}VV @VV{\sim}V \\
\Shv_\Nilp(\Bun_G(X)) @>{\BL^{\on{restr}}_G(X)}>> \IndCoh_\Nilp(\LS^{\on{restr}}_\cG(X)).  
\endCD
$$

Hence, the validity of \thmref{t:ff GLC}(ii) over $k$ is equivalent to its validity over $k'$.

\sssec{}

Embedding $k'$ into $\BC$, we obtain that  the validity of \thmref{t:ff GLC}(ii) over $k'$ is equivalent to its validity over $\BC$.
However, the latter has been established in \secref{ss:over C} above.

\qed[\thmref{t:ff GLC}(ii)]

\section{The specialization functor} \label{s:Sp}

In this section we introduce a procedure that will allow us to deduce information about the
Langlands functor in characteristic $p$ from its counterpart in characteristic $0$.

\medskip

This procedure essentially amounts to taking nearby cycles for a family of curves over a DVR.  

\ssec{Axiomatics for the functor}

\sssec{}

Let $\sk$ be an (algebraically closed) field of positive characteristic. Let  $\sR_0:=\on{Witt}(\sk)$
be the ring of Witt vectors of $\sk$, let $\sK_0$ denote the field of fractions of $\sR_0$ and let
$\sK$ denote the algebraic closure of $\sK_0$. Let $\sR$ denote the integral closure of $\sR_0$
in $\sK$. 

\medskip

Given a (smooth complete) curve $X_\sk$ over $\sk$, we can \emph{choose} its extension to a (smooth complete) curve 
$X_{\sR_0}$ over $\Spec(\sR_0)$. Let $X_\sK$ be the base change of $X_{\sR_0}$ to $K$. 

\sssec{Notational convention}

We will insert subscripts $\sk$, $\sR_0$ or $\sK$ into the corresponding geometric objects in order to 
specify which situation we are working with. 

\medskip

A symbol without such a subscript (e.g., just $\Bun_G$) means that the discussion applies
to any of the above situations. 

\sssec{} \label{sss:Sp LS}

Note that restriction along $X_\sk\to X_{\sR_0}$ defines an equivalence
$$\qLisse(X_{\sR_0})\simeq \qLisse(X_\sk).$$

Restriction along $X_\sK\to X_{\sR_0}$ defines a \emph{fully faithful} functor
$$\qLisse(X_\sR)\to \qLisse(X_\sK).$$

From here we obtain an embedding
$$\LS^{\on{restr}}_{\cG,\sk}\overset{\iota}\hookrightarrow \LS^{\on{restr}}_{\cG,\sK},$$
which identifies $\LS^{\on{restr}}_{\cG,\sk}$ with the union of \emph{some of} the 
connected components of $\LS^{\on{restr}}_{\cG,\sK}$. 

\sssec{} \label{sss:preoperties}

Consider the corresponding moduli stacks
$$\Bun_{G,\sk} \text{ and } \Bun_{G,\sK}.$$

\medskip

Consider the full subcategory (in fact, a direct summand)\footnote{We emphasize that there is no such object as $\Shv(\Bun_{G,\sK,\sk})$;
only $\Shv_{\Nilp}(\Bun_{G,\sK,\sk})$.}
$$\Shv_{\Nilp}(\Bun_{G,\sK,\sk}):=
\QCoh(\LS^{\on{restr}}_{\cG,\sk})\underset{\QCoh(\LS^{\on{restr}}_{\cG,\sK})}\otimes \Shv_{\Nilp}(\Bun_{G,\sK})
\subset \Shv_{\Nilp}(\Bun_{G,\sK}).$$

%

\sssec{}

We will prove:

\begin{thm}  \label{t:construct deg}
There exists a functor
\begin{equation} \label{e:Sp Nilp}
\on{Sp}_{\sK\to \sk}:\Shv_{\Nilp}(\Bun_{G,\sK,\sk})\to \Shv_{\Nilp}(\Bun_{G,\sk})
\end{equation} 
with the following properties:

\medskip

\begin{itemize}

\item{\em(A)} The functor $\on{Sp}_{\sK\to \sk}$ is a Verdier quotient. 

\medskip

\item{\em(B)} The functor $\on{Sp}_{\sK\to \sk}$ intertwines the actions of $\QCoh(\LS^{\on{restr}}_{\cG,\sk})$. 

\medskip

\item{\em(C)} The functor $\on{Sp}_{\sK\to \sk}$ sends $\on{Poinc}^{\on{Vac}}_{!,\Nilp,\sK,\sk}$ to $\on{Poinc}^{\on{Vac}}_{!,\Nilp,\sk}$,
where $\on{Poinc}^{\on{Vac}}_{!,\Nilp,\sK,\sk}$ is the direct summand of  $\on{Poinc}^{\on{Vac}}_{!,\Nilp,\sK}$ that belongs to 
$\Shv_{\Nilp}(\Bun_{G,\sK,\sk})$. 

\medskip

\item{\em(D)} The functor $\on{Sp}_{\sK\to \sk}$ makes the diagrams
$$
\CD
\Shv_{\Nilp}(\Bun_{M,\sK,\sk}) @>{\on{Sp}_{\sK\to \sk,M}}>> \Shv_{\Nilp}(\Bun_{M,\sk}) \\
@V{\Eis^-_{!,\sK}}VV @VV{\Eis^-_{!,\sk}}V \\
\Shv_{\Nilp}(\Bun_{G,\sK,\sk}) @>{\on{Sp}_{\sK\to \sk,G}}>> \Shv_{\Nilp}(\Bun_{G,\sk}) 
\endCD
$$
commute.

\medskip

\item{\em(E)} The functor $\on{Sp}_{\sK\to \sk}$ is t-exact. 

\end{itemize} 

\end{thm}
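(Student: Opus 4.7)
The plan is to construct $\on{Sp}_{\sK\to\sk}$ as the restriction to $\Shv_{\Nilp}(\Bun_{G,\sK,\sk})$ of a specialization functor
$$\on{Sp}: \Shv(\Bun_{G,\sK}) \to \Shv(\Bun_{G,\sk})$$
obtained from unipotent nearby cycles for the family $\Bun_{G,\sR_0}\to \Spec(\sR_0)$. Since $\Bun_G$ is not of finite type, the construction of $\on{Sp}$ requires assembling the nearby cycles functors on each quasi-compact open substack into a single continuous functor; this is carried out in \secref{s:construct funct}.

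Properties (B)--(E) will be deduced from four universal local acyclicity (ULA) inputs for the family $\Bun_{G,\sR_0}\to \Spec(\sR_0)$, listed in the introduction and proved in \secref{s:ULA} and \secref{s:proof Poinc}. First, ULA of the Hecke kernel implies that $\on{Sp}$ commutes with the Hecke functors. Since the $\QCoh(\LS^{\on{restr}}_\cG)$-action on $\Shv_\Nilp(\Bun_G)$ is characterized as the action factoring the Hecke action, this both forces $\on{Sp}$ to restrict to a functor between the Nilp subcategories (yielding $\on{Sp}_{\sK\to\sk}$) and establishes property (B). Next, ULA of the Eisenstein kernel gives (D) via base change for nearby cycles applied to the correspondence defining $\Eis^-_!$. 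ULA of the vacuum Poincar\'e object $\on{Poinc}^{\on{Vac}}_{!,\sR_0}$ yields (C), since unipotent nearby cycles applied to a ULA extension recovers the original sheaf on the special fiber. Finally, (E) is the standard t-exactness of unipotent nearby cycles for the perverse t-structure; it descends to the Nilp direct summands since the projector onto $\Shv_\Nilp$ is t-exact.

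The hard part is property (A). My approach is to apply the general categorical \lemref{l:criter for loc}, whose hypothesis reduces to showing that the canonical map
$$(\Delta_{\Bun_{G,\sk}})_!(\ul\sfe_{\Bun_{G,\sk}}) \to \on{Sp}\bigl((\Delta_{\Bun_{G,\sK}})_!(\ul\sfe_{\Bun_{G,\sK}})\bigr)$$
is an isomorphism. By standard nearby cycles formalism this is equivalent to the ULA property of $(\Delta_{\Bun_{G,\sR_0}})_!(\ul\sfe_{\Bun_{G,\sR_0}})$ with respect to $\Bun_{G,\sR_0}\to \Spec(\sR_0)$, which is the content of \thmref{t:diag acycl}; the latter is proved in \secref{s:ULA} via the Drinfeld-Lafforgue-Vinberg compactification $\ol\Bun_G$ and the local models of \cite{Sch}. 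Verifying the remaining categorical hypotheses of \lemref{l:criter for loc} --- notably dualizability of $\Shv_\Nilp(\Bun_G)$ and a K\"unneth-type compatibility --- requires the structural results on $\Shv_\Nilp(\Bun_G)$ from \cite{AGKRRV2}, most crucially the categorical K\"unneth formula.
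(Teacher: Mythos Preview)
Your proposal is correct and follows the paper's approach closely. One point worth flagging: the passage from \thmref{t:diag acycl} to Property~(A) via \lemref{l:criter for loc} is more involved than your summary suggests---the paper must identify the dual of $\Shv_\Nilp(\Bun_G)$ with $\Shv_\Nilp(\Bun_G)_{\on{co}}$ and the unit with $\sP\bigl((\Delta_{\Bun_G})^{\on{fine}}_*(\omega_{\Bun_G})\bigr)$, construct a compatible ``co''-version $\on{Sp}^{\on{co}}$ of specialization (using contractivity of deep Harder--Narasimhan strata to show that $\on{Sp}$ commutes with the truncation functors $(\jmath_{\theta,\theta'})_*$), and then check that the map of units is the identity by combining the diagonal acyclicity with compatibility of $\on{Sp}$ with Beilinson's projector~$\sP$; but your outline captures the essential strategy and the key inputs.
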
 

This theorem will be proved in the course of Sects. \ref{s:unit}-\ref{s:asymptotics}. For the rest of this section 
we will assume \thmref{t:construct deg}. We will derive some further properties of the functor $\on{Sp}_{\sK\to \sk}$ 
of \eqref{e:Sp Nilp}, as well as consequences for the category 
$\Shv_{\Nilp}(\Bun_{G,\sk})$ that follow from the above properties. 

%

\sssec{} \label{sss:Sp compactness}

We claim that the functor $\on{Sp}_{\sK\to \sk}$ of \eqref{e:Sp Nilp} preserves compactness.

\medskip

Indeed, the validity of \thmref{t:ff GLC} for $\sK$ implies that the category $\Shv_{\Nilp}(\Bun_{G,\sK,\sk})$ 
has compact generators of the form:

\smallskip

\noindent(i) $\CE\star \on{Poinc}^{\on{Vac}}_{!,\Nilp,\sK,\sk}$, where $\CE$ is a dualizable object in $\QCoh(\LS^{\on{restr}}_{\cG,\sk})$;

\smallskip

\noindent(ii) $\Eis^-_!(\CF_M)$ for $\CF_M\in \Shv_{\Nilp}(\Bun_{M,\sK,\sk})^c$.

\medskip 

Now, objects of the form
$$\on{Sp}_{\sK\to \sk}(\CE\star \on{Poinc}^{\on{Vac}}_{!,\Nilp,\sK,\sk})$$
are compact in $\Shv_{\Nilp}(\Bun_{G,\sk})$ by Properties (B) and (C), and objects of the form 
$$\on{Sp}_{\sK\to \sk}(\Eis^-_!(\CF_M))$$
are compact in $\Shv_{\Nilp}(\Bun_{G,\sk})$ by Property (D) and induction on semi-simple rank. 

\sssec{}

Note that combining with Property (A), we obtain:

\begin{cor} \label{c:generation in pos char}
Objects of the form 

\smallskip

\noindent{\em(i)} $\CE\star \on{Poinc}^{\on{Vac}}_{!,\Nilp,\sk}$, $\CE$ is a dualizable object in $\QCoh(\LS^{\on{restr}}_{\cG,\sk})$;

\smallskip

\noindent{\em(ii)} $\Eis^-_!(\CF_M)$, $\CF_M\in \Shv_{\Nilp}(\Bun_{M,\sk})^c$ 

\medskip

\noindent compactly generate $\Shv_{\Nilp}(\Bun_{G,\sk})$.
\end{cor}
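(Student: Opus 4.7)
The strategy is to realize the desired generators as images under the specialization functor $\on{Sp}_{\sK\to\sk}$ of the analogous generators upstairs, where \thmref{t:ff GLC} has already been established for the characteristic-zero field $\sK$. The key categorical input is that a continuous functor between compactly generated stable categories that is a Verdier quotient and preserves compactness automatically sends any set of compact generators of the source to a set of compact generators of the target; after passing to idempotent completions, every compact object of the target is a direct summand of the image of some compact object of the source.

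I would proceed by induction on the semisimple rank of $G$, the base case (rank zero) being trivial since there are no proper Levi subgroups and objects of type~(i) already compactly generate via \thmref{t:ff GLC}. For the inductive step, one first checks that $\Shv_{\Nilp}(\Bun_{G,\sK,\sk})$ is compactly generated by the two analogous families: $\CE\star\on{Poinc}^{\on{Vac}}_{!,\Nilp,\sK,\sk}$ with $\CE$ dualizable in $\QCoh(\LS^{\on{restr}}_{\cG,\sk})$, and $\Eis^-_{!,\sK}(\CF'_M)$ with $\CF'_M\in\Shv_{\Nilp}(\Bun_{M,\sK,\sk})^c$ for a proper Levi $M$. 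This is exactly the generation statement recorded in \secref{sss:Sp compactness}; it follows from \thmref{t:ff GLC}(ii) over $\sK$ combined with \thmref{t:L and Eis} and the spectral generation result \cite[Theorem 13.3.6]{AG1}, which identify the irreducible locus of $\LS^{\on{restr}}_{\cG,\sk}$ with the tempered image of $\QCoh(\LS^{\on{restr}}_{\cG,\sk})$ and the reducible locus with the image of the spectral Eisenstein functors.

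Now apply $\on{Sp}_{\sK\to\sk}$ to these compact generators. By \secref{sss:Sp compactness} the images are compact in $\Shv_{\Nilp}(\Bun_{G,\sk})$, and by Property~(A) of \thmref{t:construct deg} this functor is a Verdier quotient, so the categorical principle recalled above yields that the images compactly generate. It remains to identify the images: by Properties~(B) and~(C), $\on{Sp}_{\sK\to\sk}(\CE\star\on{Poinc}^{\on{Vac}}_{!,\Nilp,\sK,\sk})\simeq\CE\star\on{Poinc}^{\on{Vac}}_{!,\Nilp,\sk}$, which is of the form~(i); by Property~(D), $\on{Sp}_{\sK\to\sk}(\Eis^-_{!,\sK}(\CF'_M))\simeq\Eis^-_{!,\sk}(\on{Sp}_{\sK\to\sk,M}(\CF'_M))$, and by the inductive hypothesis applied to $M$ the objects $\on{Sp}_{\sK\to\sk,M}(\CF'_M)$ range over a compact generating family of $\Shv_{\Nilp}(\Bun_{M,\sk})$, so these images are of the form~(ii). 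The only nontrivial input is the characteristic-zero generation statement for $\Shv_{\Nilp}(\Bun_{G,\sK,\sk})$, already accounted for in \secref{sss:Sp compactness}; once that is in hand, the rest of the deduction is purely formal.
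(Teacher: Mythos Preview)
Your proposal is correct and follows essentially the same approach as the paper: the paper deduces the corollary directly from Property~(A) combined with the compactness established in \secref{sss:Sp compactness}, using exactly the identification of the images via Properties~(B), (C), and~(D) that you spell out. Your induction step for the Levi is slightly more than needed (you only need that $\on{Sp}_{\sK\to\sk,M}(\CF'_M)$ is compact, not that these images generate), but the argument is sound and matches the paper's.
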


\begin{rem}
Note that \corref{c:generation in pos char} is as an extension to positive characteristic 
of the main result of \cite{FR}.

\medskip

One can similarly show that objects of type (i) in \corref{c:generation in pos char} 
generate the \emph{tempered} subcategory of $\Shv_{\Nilp}(\Bun_{M,\sk})$. 

\end{rem} 

\begin{rem}

We conjecture that the entire $\Shv(\Bun_{G,\sk})$ is generated by objects of the form: 

\medskip

\noindent(i) $\CF\star \on{Poinc}^{\on{Vac}}_{!,\sk}$, $\CF\in \Rep(\cG)_\Ran$, 

\medskip

\noindent(ii) $\Eis^-_!(\CF_M)$, $\CF_M\in \Shv(\Bun_{M,\sk})^c$. 

\medskip

We can prove this over a ground field of characteristic $0$, but so far not over a field
of positive characteristic. 

\end{rem} 

\sssec{Proof of \thmref{t:compactness}} \label{sss:proof of compactness}

We can now deduce \thmref{t:compactness} for $\sk$.

\medskip

We need to show that compact generators of $\Shv_{\Nilp}(\Bun_{G,\sk})$ are compact as objects of
$\Shv(\Bun_{G,\sk})$.

\medskip

By \corref{c:generation in pos char}, it suffices to show to show that the objects
$$\CE\star \on{Poinc}^{\on{Vac}}_{!,\Nilp,\sk} \simeq \on{Sp}_{\sK\to \sk}(\CE\star \on{Poinc}^{\on{Vac}}_{!,\Nilp,\sK,\sk})$$
and
$$\Eis^-_!(\CF_M), \quad \CF_M\in \Shv_{\Nilp}(\Bun_{M,\sk})^c$$  
are compact in $\Shv(\Bun_{G,\sk})$.

\medskip

For objects of the second type, this is clear by induction on the semi-simple rank, since the functor $\Eis^-_!$
preserves compactness.  Thus, it remains to deal with objects of the first type.

\medskip

According to \cite[Proposition 16.4.7]{AGKRRV1}, the validity of \thmref{t:compactness} (in a given context)
is equivalent to the fact that the compact generators of $\Shv_\Nilp(\Bun_G)$ are eventually coconnective. 

\medskip

Hence, this property holds for $\Shv_\Nilp(\Bun_{G,\sK})$ by \secref{sss:compactness char 0}. In particular,
the objects $\CE\star \on{Poinc}^{\on{Vac}}_{!,\Nilp,\sK,\sk}$ are eventually coconnective. Now Property (E) 
implies that the objects 
$$\on{Sp}_{\sK\to \sk}(\CE\star \on{Poinc}^{\on{Vac}}_{!,\Nilp,\sK})$$
are also eventually coconnective. 

\qed[\thmref{t:compactness}]

\ssec{Proof of \thmref{t:ff GLC}(i)}

In this subsection we will show that the existence of the functor $\on{Sp}_{\sK\to \sk}$ with Properties (A)-(E)
specified in \thmref{t:construct deg} allows us to deduce \thmref{t:ff GLC}(i) from \thmref{t:ff GLC}(ii).

\sssec{}

First, note that the validity of \thmref{t:ff GLC}(ii) for $\sK$ implies that the functor $\BL^{\on{restr}}_{G,\sK}$ induces an equivalence
$$\Shv_{\Nilp}(\Bun_{G,\sK,\sk})\overset{\sim}\to 
\QCoh(\LS^{\on{restr}}_{\cG,\sk})\underset{\QCoh(\LS^{\on{restr}}_{\cG,\sK})}\otimes 
\IndCoh_\Nilp(\LS^{\on{restr}}_{\cG,\sK})
\simeq \IndCoh_\Nilp(\LS^{\on{restr}}_{\cG,\sk});$$
denote it by $\BL^{\on{restr}}_{G,\sK,\sk}$. 

\sssec{}

Set
\begin{equation} \label{e:Langlands quasi-inv}
'\BL^{\on{restr},L}_{G,\sk}:=\on{Sp}_{\sK\to \sk}\circ (\BL^{\on{restr}}_{G,\sK,\sk})^{-1}, \quad
\IndCoh_\Nilp(\LS^{\on{restr}}_{\cG,\sk})\to \Shv_\Nilp(\Bun_{G,\sk}).
\end{equation} 

Note that it follows from Property (C) that
\begin{equation} \label{e:temp adj}
'\BL^{\on{restr},L}_{G,\sk}\circ \bu^{\on{spec}}\simeq \BL^{\on{restr},L}_{G,\on{temp},k}.
\end{equation} 

\sssec{}

We will prove:

\begin{prop} \label{p:adj}
The functor $'\BL^{\on{restr},L}_{G,\sk}$ is the left adjoint of $\BL^{\on{restr}}_{G,\sk}$.
\end{prop}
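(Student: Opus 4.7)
The plan is to identify the right adjoint of $'\BL^{\on{restr},L}_{G,\sk}$ with $\BL^{\on{restr}}_{G,\sk}$ via the uniqueness characterization in \corref{c:existence of Langlands functor}. First I would note that $'\BL^{\on{restr},L}_{G,\sk}$ preserves compactness: the equivalence $(\BL^{\on{restr}}_{G,\sK,\sk})^{-1}$ does so automatically, and $\on{Sp}_{\sK\to \sk}$ does so by the argument of \secref{sss:Sp compactness}. Hence $'\BL^{\on{restr},L}_{G,\sk}$ admits a continuous right adjoint $R$. Property (B) of \thmref{t:construct deg}, together with the $\QCoh(\LS^{\on{restr}}_{\cG,\sk})$-linearity of $\BL^{\on{restr}}_{G,\sK,\sk}$ (inherited from $\BL^{\on{restr}}_{G,\sK}$) and the semi-rigidity of $\QCoh(\LS^{\on{restr}}_{\cG,\sk})$, equips both $'\BL^{\on{restr},L}_{G,\sk}$ and $R$ with canonical strict $\QCoh(\LS^{\on{restr}}_{\cG,\sk})$-linear structures.

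Next I would verify that $R$ satisfies the two defining properties of $\BL^{\on{restr}}_{G,\sk}$ in \corref{c:existence of Langlands functor}. For property (ii), taking right adjoints of both sides of \eqref{e:temp adj} yields $\bu^{\on{spec},R}\circ R \simeq (\BL^{\on{restr},L}_{G,\on{temp},\sk})^R \simeq \BL^{\on{restr}}_{G,\on{coarse},\sk}$, as required. For property (i) --- that $R$ sends compact objects to eventually coconnective ones --- I would factor $R \simeq \BL^{\on{restr}}_{G,\sK,\sk}\circ \on{Sp}^R_{\sK\to \sk}$. Property (E) of \thmref{t:construct deg} shows that $\on{Sp}^R_{\sK\to \sk}$ is left t-exact; combined with \thmref{t:compactness} (proved in \secref{sss:proof of compactness}) and the fact that compacts in $\Shv_{\Nilp}$ are eventually coconnective by \cite[Proposition 16.4.7]{AGKRRV1}, this shows $\on{Sp}^R_{\sK\to \sk}$ sends compacts to eventually coconnective objects of $\Shv_\Nilp(\Bun_{G,\sK,\sk})$.

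The main obstacle is then to establish that $\BL^{\on{restr}}_{G,\sK,\sk}$ preserves eventual coconnectivity on these resulting objects. I would argue this by reducing to the compact generators provided by (the characteristic $0$ analog of) \corref{c:generation in pos char}, namely tempered objects of the form $\CE\star \on{Poinc}^{\on{Vac}}_{!,\Nilp,\sK,\sk}$ and essential images of Eisenstein functors from proper Levi subgroups. Using the $\QCoh(\LS^{\on{restr}}_{\cG,\sk})$-linearity, Property (D) of \thmref{t:construct deg}, and \thmref{t:L and Eis}, together with induction on the semisimple rank of $G$, one reduces to the tempered situation, where eventual coconnectivity on the spectral side descends from $\bu^{\on{spec},R}(R(\CM))\simeq \BL^{\on{restr}}_{G,\on{coarse},\sk}(\CM)$ via \thmref{t:L sends comp to even coconn}. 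With both properties (i) and (ii) established for $R$, the uniqueness in \corref{c:existence of Langlands functor} delivers a canonical $\QCoh(\LS^{\on{restr}}_{\cG,\sk})$-linear isomorphism $R \simeq \BL^{\on{restr}}_{G,\sk}$, yielding the desired adjunction $'\BL^{\on{restr},L}_{G,\sk}\dashv \BL^{\on{restr}}_{G,\sk}$.
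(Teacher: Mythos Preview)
Your overall strategy---identify the right adjoint $R$ of $'\BL^{\on{restr},L}_{G,\sk}$ and then invoke the uniqueness in \corref{c:existence of Langlands functor}---is sound in spirit, and your verification of property~(ii) via right adjoints of \eqref{e:temp adj} is correct. The gap is in property~(i).

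You factor $R \simeq \BL^{\on{restr}}_{G,\sK,\sk}\circ \on{Sp}^R_{\sK\to \sk}$ and correctly observe that $\on{Sp}^R_{\sK\to \sk}$ sends compacts to eventually coconnective objects. But the equivalence $\BL^{\on{restr}}_{G,\sK,\sk}$ is only known to send \emph{compact} objects to eventually coconnective ones; there is no a priori bound on its cohomological amplitude on arbitrary eventually coconnective objects, and $\on{Sp}^R_{\sK\to \sk}(\CM)$ need not be compact. Your proposed patch---``eventual coconnectivity on the spectral side descends from $\bu^{\on{spec},R}(R(\CM))\simeq \BL^{\on{restr}}_{G,\on{coarse},\sk}(\CM)$''---does not work: the functor $\bu^{\on{spec},R}$ annihilates the anti-tempered part of $\IndCoh_\Nilp$, so knowing that $\bu^{\on{spec},R}(\CF)$ is eventually coconnective says nothing about $\CF$ itself. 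The ``reduction to compact generators'' does not help either, since the issue is not about generators of the source but about the (possibly non-compact) intermediate object $\on{Sp}^R_{\sK\to \sk}(\CM)$.

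The paper's main proof sidesteps this entirely: rather than characterizing $R$, it directly establishes the Hom-isomorphism $\CHom({}'\BL^{\on{restr},L}_{G,\sk}(\CF),\CM)\simeq \CHom(\CF,\BL^{\on{restr}}_{G,\sk}(\CM))$ for compact $\CF$ and $\CM$ by passing through the tempered quotients. The key additional ingredients you are missing are \propref{p:Sp temp} (that $\on{Sp}_{\sK\to\sk}$ descends to the tempered categories, so that $'\BL^{\on{restr},L}_{G,\sk}$ does as well) and \lemref{l:proj to temp} (that $\bu^R$ is fully faithful on compacts). With these, one uses \eqref{e:temp adj} to get the adjunction at the tempered level, and then lifts back: $\bu^{\on{spec},R}$ is fully faithful on the eventually coconnective objects $\BL^{\on{restr}}_{G,\sk}(\CM)$, and $\bu^R$ is fully faithful on the compact objects ${}'\BL^{\on{restr},L}_{G,\sk}(\CF)$. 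The paper also gives an alternative proof along the lines of your strategy, but it first proves (using the extra input of Remark~\ref{r:Sp and CT}) that $R$ is an equivalence onto a direct summand, whence $R$ sends compacts to compacts and property~(i) is immediate.
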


The proof will be given in \secref{ss:adj} below. We proceed with the proof of \thmref{t:ff GLC}(i).
Assuming the proposition, we will denote
$$\BL^{\on{restr},L}_{G,\sk}:={}'\BL^{\on{restr},L}_{G,\sk}.$$

\sssec{}

As a formal corollary of \propref{p:adj} combined with Property (A), we obtain:

\begin{cor} \label{c:L ff}
The functor $\BL^{\on{restr}}_{G,\sk}$ is fully faithful.
\end{cor}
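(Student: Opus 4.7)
The plan is to verify that the counit of the adjunction furnished by \propref{p:adj}, namely
$$\BL^{\on{restr},L}_{G,\sk} \circ \BL^{\on{restr}}_{G,\sk} \to \on{Id}_{\Shv_{\Nilp}(\Bun_{G,\sk})},$$
is an isomorphism; this is exactly the condition for the right adjoint $\BL^{\on{restr}}_{G,\sk}$ to be fully faithful.

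First, I would unpack the two sides of the adjunction. By \eqref{e:Langlands quasi-inv} together with \propref{p:adj},
$$\BL^{\on{restr},L}_{G,\sk} = \on{Sp}_{\sK\to \sk}\circ (\BL^{\on{restr}}_{G,\sK,\sk})^{-1}.$$
Since $\BL^{\on{restr}}_{G,\sK,\sk}$ is an equivalence (its inverse serves as both adjoints), passing to right adjoints gives
$$\BL^{\on{restr}}_{G,\sk} \;\simeq\; \BL^{\on{restr}}_{G,\sK,\sk}\circ \on{Sp}_{\sK\to \sk}^R,$$
where $\on{Sp}_{\sK\to \sk}^R$ denotes the right adjoint of $\on{Sp}_{\sK\to \sk}$, which exists and is continuous by general nonsense together with Property (A).

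Composing the two formulas and cancelling $(\BL^{\on{restr}}_{G,\sK,\sk})^{-1}\circ \BL^{\on{restr}}_{G,\sK,\sk}\simeq \on{Id}$ in the middle, the counit in question identifies canonically with the counit
$$\on{Sp}_{\sK\to \sk}\circ \on{Sp}_{\sK\to \sk}^R \;\to\; \on{Id}_{\Shv_{\Nilp}(\Bun_{G,\sk})}.$$
Property (A) of \thmref{t:construct deg} asserts that $\on{Sp}_{\sK\to \sk}$ is a Verdier quotient, which is precisely the statement that $\on{Sp}_{\sK\to \sk}^R$ is fully faithful, equivalently that this counit is an isomorphism. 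This yields the desired full faithfulness of $\BL^{\on{restr}}_{G,\sk}$.

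The only point requiring care is the bookkeeping verifying that the counit of the composite adjunction $\BL^{\on{restr},L}_{G,\sk}\dashv \BL^{\on{restr}}_{G,\sk}$ really agrees, under the identifications above, with the Verdier-quotient counit of $\on{Sp}_{\sK\to \sk}$ — but this is a formal consequence of how adjoints compose, and there is no genuine obstacle.
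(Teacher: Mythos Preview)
Your argument is correct and is exactly the unpacking of what the paper means by ``formal corollary of \propref{p:adj} combined with Property (A)'': since $\BL^{\on{restr},L}_{G,\sk}=\on{Sp}_{\sK\to\sk}\circ(\BL^{\on{restr}}_{G,\sK,\sk})^{-1}$ with the second factor an equivalence, the counit of the adjunction $\BL^{\on{restr},L}_{G,\sk}\dashv\BL^{\on{restr}}_{G,\sk}$ identifies with that of $\on{Sp}_{\sK\to\sk}\dashv\on{Sp}_{\sK\to\sk}^R$, which is an isomorphism by Property~(A). One small remark: continuity of $\on{Sp}_{\sK\to\sk}^R$ is not needed for this step, and in any case it follows from the compactness preservation of $\on{Sp}_{\sK\to\sk}$ established in \secref{sss:Sp compactness} rather than from Property~(A).
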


Thus, to prove \thmref{t:ff GLC}(ii) it remains to show that the essential image of $\BL^{\on{restr}}_{G,\sk}$
equals 
$$\IndCoh_\Nilp({}'\!\LS^{\on{restr}}_{\cG,\sk}),$$
where $'\!\LS^{\on{restr}}_{\cG,\sk}$ is the disjoint union of some of the connected components
of $\LS^{\on{restr}}_{\cG,\sk}$.

\sssec{} \label{sss:proof main start}

Since the functors involved are $\QCoh(\LS^{\on{restr}}_{\cG,\sk})$-linear, we can work with 
one connected component of $\LS^{\on{restr}}_{\cG,\sk}$ at a time. I.e., we need to show that for
a given connected component $\CZ_\alpha$ of $\LS^{\on{restr}}_{\cG,\sk}$, either the adjoint functors
\begin{equation} \label{e:adj alpha}
\BL^{\on{restr}}_{\cG,\sk,\alpha}:
\QCoh(\CZ_\alpha)\underset{\QCoh(\LS^{\on{restr}}_{\cG,\sk})}\otimes \Shv_\Nilp(\Bun_{G,\sk}) \leftrightarrows
\IndCoh_\Nilp(\CZ_\alpha):\BL^{\on{restr},L}_{G,k,\alpha}
\end{equation} 
are mutually inverse equivalences, or the left-hand side is zero. 

\medskip

We consider separately 
the cases when a given connected component corresponds to reducible or irreducible
local systems.

\sssec{}

We start with the irreducible case. Note that the inclusion
$$\bu^{\on{spec}}:\QCoh(\CZ_\alpha)\to \IndCoh_\Nilp(\CZ_\alpha)$$
is an equality.

\medskip

Hence, the composition
$$\BL^{\on{restr}}_{\cG,\sk,\alpha}\circ \BL^{\on{restr},L}_{G,k,\alpha},$$
being a $\QCoh(\CZ_\alpha)$-linear endofunctor of $\QCoh(\CZ_\alpha)$,
is given by tensoring by a unital algebra object\footnote{The algebra structure comes from the fact
that $\BL^{\on{restr}}_{\cG,\sk,\alpha}\circ \BL^{\on{restr},L}_{G,k,\alpha}$ is a $\QCoh(\CZ_\alpha)$-linear monad.}
$$\CA_\alpha\in  \QCoh(\CZ_\alpha).$$

We need to show that either the unit map
\begin{equation} \label{e:unit map}
\CO_{\CZ_\alpha}\to \CA_\alpha
\end{equation}
is an isomorphism, or $\CA_\alpha=0$.

\medskip

By Barr-Beck-Lurie, the right adjoint in \eqref{e:adj alpha} identifies the left-hand side 
with 
$$\CA_\alpha\mod(\QCoh(\CZ_\alpha)).$$

By \corref{c:L ff}, the forgetful functor
\begin{equation} \label{e:unit map alg}
\CA_\alpha\mod(\QCoh(\CZ_\alpha))\to \QCoh(\CZ_\alpha)
\end{equation}
is fully faithful.

\medskip

Let 
$$i_\sigma:\on{pt}\to \CZ_\alpha$$
correspond to the (unique) closed point of $\CZ_\alpha$. 

\medskip

Applying to \eqref{e:unit map alg} the operation $\Vect \underset{i_\sigma^*,\QCoh(\CZ_\alpha)}\otimes (-)$,
we obtain a fully faithful functor
$$i_\sigma^*(\CA_\alpha)\mod(\Vect)\to \Vect.$$

This easily implies that either $\sfe\to i_\sigma^*(\CA_\alpha)$ is an isomorphism, or $i_\sigma^*(\CA_\alpha)=0$. 
The latter dichotomy implies that one of the above two possibilities for $\CA_\alpha$ itself must hold. 

%
%
%
%
%
%
%
%

\sssec{} \label{sss:red case}

We now consider the \emph{reducible} case. Let $\LS^{\on{restr}}_{\cG,\sk,\sigma}$ be a connected
component whose closed point $\sigma$ is a semi-simple local system. Then there 
exists a unique conjugacy class of Levi subgroups $M$ (see \cite[Sects. 3.6 and 3.7]{AGKRRV1}) such that 
$\sigma$ factors via an \emph{irreducible} $\cM$-local system $\sigma_\cM$.

\medskip

By induction on the semi-simple rank, we can assume that the assertion of \thmref{t:ff GLC}(ii)
is valid for $M$. Let $\LS^{\on{restr}}_{\cM,\sk,\sigma_\cM}$ be the corresponding connected
component of $\LS^{\on{restr}}_{\cM,\sk}$.

\medskip

We consider the following two cases of the behavior of the functor \eqref{e:adj alpha} 
with $G$ replaced by $M$ and $\CZ_\alpha:=\LS^{\on{restr}}_{\cM,\sk,\sigma_\cM}$:

\smallskip

\noindent(a) It is an equivalence;

\smallskip

\noindent(b) The left-hand side is zero. 

\sssec{} \label{sss:gen comp}

We claim that in case (a), the functor \eqref{e:adj alpha} for $G$ and $\CZ_\alpha:=\LS^{\on{restr}}_{\cG,\sk,\sigma}$
is an equivalence. 

\medskip

Indeed, it is enough to show that the essential image of $\BL^{\on{restr}}_{\cG,\sk,\alpha}$ generates the target category.
Note that $\IndCoh_\Nilp(\LS^{\on{restr}}_{\cG,\sk,\sigma})$ is generated by the essential
images of the functors
$$\Eis^{-,\on{spec}}:\IndCoh_\Nilp(\LS^{\on{restr}}_{\cM,\sk,\sigma})\to 
\IndCoh_\Nilp(\LS^{\on{restr}}_{\cG,\sk,\sigma})$$
for the parabolics in the given class of association.

\medskip

Now the required assertion follows from \thmref{t:L and Eis} (for $\sk$).

\sssec{} \label{sss:proof main end}

We claim that in case (b), the left adjoint in \eqref{e:adj alpha} for $G$ and $\CZ_\alpha:=\LS^{\on{restr}}_{\cG,\sk,\sigma}$,
is zero. This will show that the left-hand side is zero (by \corref{c:L ff}). 

\medskip

By construction and Property (B), we can identify the functor in question with
$$\on{Sp}_{\sK\to \sk}:
\QCoh(\LS^{\on{restr}}_{\cG,\sk,\sigma})\underset{\QCoh(\LS^{\on{restr}}_{\cG,\sk})}\otimes 
\Shv_\Nilp(\Bun_{G,\sK,\sk})\to \QCoh(\LS^{\on{restr}}_{\cG,\sk,\sigma})\underset{\QCoh(\LS^{\on{restr}}_{\cG,\sk})}\otimes 
\Shv_\Nilp(\Bun_{G,\sk}).$$

We will show that the above functor annihilates the generators. By Property (D), we have a commutative diagram
$$
\CD
\QCoh(\LS^{\on{restr}}_{\cM,\sk,\sigma_\cM})\underset{\QCoh(\LS^{\on{restr}}_{\cM,\sk})}\otimes 
\Shv_\Nilp(\Bun_{M,\sK,\sk}) @>{\Eis^-_!}>> 
\QCoh(\LS^{\on{restr}}_{\cG,\sk,\sigma})\underset{\QCoh(\LS^{\on{restr}}_{\cG,\sk})}\otimes 
\Shv_\Nilp(\Bun_{G,\sK,\sk}) \\
@V{\on{Sp}_{\sK\to \sk}}VV @VV{\on{Sp}_{\sK\to \sk}}V \\
\QCoh(\LS^{\on{restr}}_{\cM,\sk,\sigma_\cM})\underset{\QCoh(\LS^{\on{restr}}_{\cM,\sk})}\otimes 
\Shv_\Nilp(\Bun_{M,\sk}) @>>{\Eis^-_!}> 
\QCoh(\LS^{\on{restr}}_{\cG,\sk,\sigma})\underset{\QCoh(\LS^{\on{restr}}_{\cG,\sk})}\otimes 
\Shv_\Nilp(\Bun_{G,\sk}).
\endCD
$$

We claim that the top horizontal arrows in these diagrams, taken for all the parabolics in the given class of association,
generate the target. This follows from the combination of:

\medskip

\noindent(i) The fact that $\BL^{\on{restr}}_{G,\sK}$ is an equivalence;

\smallskip

\noindent(ii) \thmref{t:L and Eis} for $\sK$;

\smallskip

\noindent(iii) The corresponding fact on the spectral side.

\medskip 

Hence, it suffices to show that the anti-clockwise circuits in these diagrams vanish. However, this follows
from the fact that the lower-left corner vanishes (this is the assumption in case (b)).

\qed[\thmref{t:ff GLC}(i)]

\sssec{Proof of \thmref{t:ff GLC}(iii)}

We need to show that in the notations of \eqref{e:adj alpha}, the category 
\begin{equation} \label{e:cat conn comp}
\QCoh(\CZ_\alpha)\underset{\QCoh(\LS^{\on{restr}}_{GL_n})}\otimes \Shv_\Nilp(\Bun_{GL_n})
\end{equation} 
is non-zero for every connected component $\CZ_\alpha$ of $\LS^{\on{restr}}_{GL_n}$.

\medskip

By induction, we may assume that the functor $\BL^{\on{restr}}_{GL_{n'}}$ is an equivalence for $n'<n$.
In particular, it is an equivalence for proper Levi subgroups of $G=GL_n$. Hence, in the notations of
\secref{sss:red case} only scenario (a) occurs. In particular, the category \eqref{e:cat conn comp} is
non-zero whenever $\CZ_\alpha$ corresponds to reducible local systems.

\medskip

It remains to treat the case of $\CZ_\alpha$ corresponding to an irreducible local system $\sigma$.
It is enough to show that $\Shv_\Nilp(\Bun_{GL_n})$ contains a non-zero Hecke eigensheaf corresponding
to $\sigma$. However, this has been established in \cite{FGV,Ga1}.

\qed[\thmref{t:ff GLC}(iii)]

\ssec{Proof of \propref{p:adj}} \label{ss:adj}

\sssec{}

Note that the functor $'\BL^{\on{restr},L}_{G,\sk}$ preserves compactness (see \secref{sss:Sp compactness}).
Hence, it is enough to show that for
$$\CF\in \IndCoh_\Nilp(\LS^{\on{restr}}_{\cG,\sk})^c \text{ and } \CM\in \Shv_\Nilp(\Bun_{G,\sk})^c,$$
there is a canonical isomorphism
\begin{equation} \label{e:adj 1}
\CHom_{\Shv(\Bun_{G,\sk})}({}'\BL^{\on{restr},L}_{G,\sk}(\CF),\CM)\simeq
\CHom_{\IndCoh_\Nilp(\LS^{\on{restr}}_{\cG,\sk})}(\CF,\BL^{\on{restr}}_{G,\sk}(\CM)).
\end{equation} 

\sssec{}

Consider the tempered quotients
$$\bu^R:\Shv_\Nilp(\Bun_{G,\sk})\leftrightarrows \Shv_\Nilp(\Bun_{G,\sk})_{\on{temp}}:\bu$$
and
$$\bu^{\on{spec},R}: \IndCoh_\Nilp(\LS^{\on{restr}}_{\cG,\sk}) \leftrightarrows  \QCoh(\LS^{\on{restr}}_{\cG,\sk}):\bu^{\on{spec}}.$$

By construction, the functor $\BL^{\on{restr}}_{G,\sk}$ induces a functor
$$\BL^{\on{restr}}_{G,\on{temp},\sk}:\Shv_\Nilp(\Bun_{G,\sk})_{\on{temp}}\to \QCoh(\LS^{\on{restr}}_{\cG,\sk})$$
such that
$$\BL^{\on{restr}}_{G,\on{temp},k}\circ \bu^R\simeq \BL^{\on{restr}}_{G,\on{coarse},\sk}\simeq \bu^{\on{spec},R}\circ \BL^{\on{restr}}_{G,\sk}.$$

In particular, the functor $\BL^{\on{restr},L}_{G,\on{temp},\sk}$ takes values in $\Shv_\Nilp(\Bun_{G,\sk})_{\on{temp}}$
(viewed as a subcategory) and provides a left adjoint to $\BL^{\on{restr}}_{G,\on{temp},\sk}$.

\sssec{}

By \propref{p:Sp temp} below, the functor $'\BL^{\on{restr},L}_{G,\sk}$ induces a functor
$$'\BL^{\on{restr},L}_{G,\on{temp},\sk}:\QCoh(\LS^{\on{restr}}_{\cG,\sk})\to \Shv_\Nilp(\Bun_{G,\sk})_{\on{temp}},$$
so that 
$$\bu^R\circ {}'\BL^{\on{restr},L}_{G,\sk}\simeq {}'\BL^{\on{restr},L}_{G,\on{temp},\sk}\circ \bu^{\on{spec},R}.$$

\medskip

From \eqref{e:temp adj} we obtain that
$$'\BL^{\on{restr},L}_{G,\on{temp},\sk}\simeq \BL^{\on{restr},L}_{G,\on{temp},\sk}.$$

In particular, we obtain that the functors
\begin{equation} \label{e:adj 2}
\BL^{\on{restr}}_{G,\on{temp},\sk}:
\Shv_\Nilp(\Bun_{G,\sk})_{\on{temp}} \leftrightarrows \QCoh(\LS^{\on{restr}}_{\cG,\sk}):{}'\BL^{\on{restr},L}_{G,\on{temp},\sk}
\end{equation} 
do form an adjoint pair.

\sssec{}

Consider the maps
\begin{multline} \label{e:adj 3}
\CHom_{\Shv(\Bun_{G,\sk})}({}'\BL^{\on{restr},L}_{G,\sk}(\CF),\CM) \to
\CHom_{\Shv_\Nilp(\Bun_{G,\sk})_{\on{temp}}}(\bu^R\circ {}'\BL^{\on{restr},L}_{G,\sk}(\CF),\bu^R(\CM)) = \\
=\CHom_{\Shv_\Nilp(\Bun_{G,\sk})_{\on{temp}}}({}'\BL^{\on{restr},L}_{G,\on{temp},k}\circ \bu^{\on{spec},R}(\CF),\bu^R(\CM)) 
\overset{\text{adjunction\,\eqref{e:adj 2}}}\simeq \\
\simeq \CHom_{\QCoh(\LS^{\on{restr}}_{\cG,\sk})}(\bu^{\on{spec},R}(\CF),\BL^{\on{restr}}_{G,\on{temp},\sk}\circ \bu^R(\CM))= \\
= \CHom_{\QCoh(\LS^{\on{restr}}_{\cG,\sk})}(\bu^{\on{spec},R}(\CF),\bu^{\on{spec},R}\circ \BL^{\on{restr}}_{G,\sk}(\CM))\leftarrow
\CHom_{\IndCoh_\Nilp(\LS^{\on{restr}}_{\cG,\sk})}(\CF,\BL^{\on{restr}}_{G,\sk}(\CM))
\end{multline}

\medskip

In order to establish \eqref{e:adj 1}, it suffices to show that the first and the last arrow in \eqref{e:adj 3} are isomorphisms. 

\medskip

For the last arrow, this follows from the fact that the functor $\BL^{\on{restr}}_{G,\sk}$ sends compact objects to eventually
coconnective objects (by construction), and the functor $\bu^{\on{spec},R}$
is fully faithful on the eventually coconnective subcategory.

\medskip

For the first arrow, since $\BL^{\on{restr},L}_{G,\sk}(\CF)$ is compact (see \secref{sss:Sp compactness}), the assertion
follows from the next lemma: 

\begin{lem} \label{l:proj to temp}
The restriction of the functor $\bu^R$ to $\Shv_\Nilp(\Bun_{G,\sk})^c$ is fully faithful.
\end{lem}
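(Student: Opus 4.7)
The plan is to reformulate the statement via adjunction and then check the resulting vanishing on a set of compact generators.

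First, by the adjunction $\bu \dashv \bu^R$ and the fully faithfulness of $\bu$, the lemma is equivalent to showing that for every compact $\CM_1 \in \Shv_\Nilp(\Bun_{G,\sk})^c$, the cofiber
$$\CM_1^{\on{antitemp}} := \on{cofib}(\bu \bu^R \CM_1 \to \CM_1)$$
is right-orthogonal to every compact $\CM_2 \in \Shv_\Nilp(\Bun_{G,\sk})^c$. This cofiber satisfies $\bu^R(\CM_1^{\on{antitemp}}) = 0$ (it is ``anti-tempered''), so the required property says that the anti-tempered part of any compact object is invisible to compact test objects.

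Next, I would invoke \corref{c:generation in pos char} to reduce to the two classes of compact generators: (i) $\CE \star \on{Poinc}^{\on{Vac}}_{!,\Nilp,\sk}$ for $\CE$ dualizable in $\QCoh(\LS^{\on{restr}}_{\cG,\sk})$, and (ii) $\Eis^-_!(\CF_M)$ for $\CF_M \in \Shv_\Nilp(\Bun_{M,\sk})^c$ and $P^-$ a proper parabolic. For class (i), the Poincar\'e object is tempered by construction, and the tempered subcategory is closed under the spectral action of $\QCoh(\LS^{\on{restr}}_{\cG,\sk})$ (see \cite{FR}), so $\CM_1$ itself is tempered and its anti-tempered cofiber vanishes identically.

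For class (ii), I proceed by induction on the semi-simple rank of $G$, invoking the $M$-version of the lemma as the inductive hypothesis. The key compatibility I would establish is that $\Eis^-_!$ sends the $M$-tempered subcategory into the $G$-tempered subcategory, so that $\CM_1^{\on{antitemp}} \simeq \Eis^-_!(\CF_M^{\on{antitemp}})$; the desired right-orthogonality then follows by adjunction, passing the vanishing statement to the $M$-side via the right adjoint of $\Eis^-_!$ and invoking the inductive hypothesis.

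The principal obstacle is precisely this compatibility between the Eisenstein functor and the tempered truncation $\bu^R$. While it is natural from the spectral description of the tempered subcategory, writing it out rigorously requires using \thmref{t:L and Eis} for $M$ (valid by the induction running the proof of \thmref{t:ff GLC}(i)) together with the compatibility of the spectral Eisenstein functor $\Eis^{-,\on{spec}}$ with the inclusion $\bu^{\on{spec}}: \QCoh(\LS^{\on{restr}}_{\cG,\sk}) \hookrightarrow \IndCoh_\Nilp(\LS^{\on{restr}}_{\cG,\sk})$; the technical heart is to verify that the constant term of a compact object is tempered in the appropriate sense.
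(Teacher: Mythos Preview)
Your reformulation in the first paragraph is backwards. With $\bu \dashv \bu^R$ and $\bu$ fully faithful, one has $\CHom_{\on{temp}}(\bu^R\CM_2,\bu^R\CM_1)\simeq \CHom(\bu\bu^R\CM_2,\CM_1)$, so the map induced by $\bu^R$ on Hom-spaces is precomposition with the counit $\bu\bu^R\CM_2\to\CM_2$. Hence full faithfulness on compacts is equivalent to $\CHom(\CM_2^{\on{antitemp}},\CM_1)=0$ for all compact $\CM_1,\CM_2$ --- the anti-tempered part of the \emph{source} must be left-orthogonal to compacts, not the anti-tempered part of the target right-orthogonal. This matters for the logic of your reduction, though the class-(i) step survives the correction since those generators are themselves tempered.

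More seriously, the paper's proof is entirely different and much shorter: it simply invokes \cite[Proposition~5.2.3]{GLC1}, whose argument is a t-structure estimate. The point is that anti-tempered objects lie in $\Shv_\Nilp(\Bun_G)^{\leq -n}$ for every $n$ (this is the content of the characterization in \cite[Sect.~7]{FR}), while compact objects of $\Shv_\Nilp(\Bun_G)$ are eventually coconnective by \thmref{t:compactness} combined with \cite[Proposition~16.4.7]{AGKRRV1}. The vanishing $\CHom(\CM_2^{\on{antitemp}},\CM_1)=0$ is then immediate from the t-structure axioms. No induction, no Eisenstein compatibility, no appeal to \corref{c:generation in pos char} is needed.

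Your proposed route through the generators is not hopeless, but the ``principal obstacle'' you identify --- that $\Eis^-_!$ intertwines the temperization functors for $M$ and $G$ --- is a genuine gap. You would need not just that $\Eis^-_!$ sends tempered to tempered but that it sends anti-tempered to anti-tempered (so that $\Eis^-_!(\CF_M)^{\on{antitemp}}\simeq\Eis^-_!(\CF_M^{\on{antitemp}})$), and then that $\on{CT}^-_*$ of a compact object is again compact in $\Shv_\Nilp(\Bun_M)$ to close the induction. Neither of these is established in your sketch, and the first does not follow from \thmref{t:L and Eis} alone.
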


\begin{proof}

Repeats verbatim the proof of \cite[Proposition 5.2.3]{GLC1}.

\end{proof}

\qed[\propref{p:adj}]

\ssec{An alternative proof of \thmref{t:ff GLC}(i) and \propref{p:adj}}

This proof will use an additional property of the functor $\on{Sp}_{\sK\to \sk}$, given by Remark
\ref{r:Sp and CT}. 

\sssec{}

Let $$'\BL^{\on{restr},L}_{G,\sk}:\IndCoh_\Nilp(\LS^{\on{restr}}_{\cG,\sk})\to \Shv_\Nilp(\Bun_{G,\sk})$$
be defined as in \eqref{e:Langlands quasi-inv}. 

\medskip

We already know that this functor preserves compactness and by Property (A) of the functor \eqref{e:Sp Nilp},
it is a Verdier quotient. I.e., the functor $'\BL^{\on{restr},L}_{G,\sk}$ admits a $\QCoh(\LS^{\on{restr}}_{\cG,\sk})$-linear
fully faithful right adjoint. Denote it by $'\BL^{\on{restr}}_{G,\sk}$ (note, however, that we do know yet that 
$'\BL^{\on{restr}}_{G,\sk}$ is isomorphic to $\BL^{\on{restr}}_{G,\sk}$). 

\medskip

We wish to show that $'\BL^{\on{restr}}_{G,\sk}$ is an equivalence onto a direct summand of $\IndCoh_\Nilp(\LS^{\on{restr}}_{\sG,\sk})$
corresponding  to the union of some of the connected
components of $\LS^{\on{restr}}_{\cG,\sk}$.

\sssec{}

Inspecting the argument in Sects. \ref{sss:proof main start}-\ref{sss:proof main end}, we see that the only place where we
used that $'\BL^{\on{restr}}_{G,\sk}=\BL^{\on{restr}}_{G,\sk}$ was in case (a) in \secref{sss:gen comp}. We provide an alternative argument as follows:

\medskip

It suffices to prove that the functor $'\BL^{\on{restr},L}_{G,\sk}|_{\IndCoh_\Nilp(\LS^{\on{restr}}_{\cG,\sk,\sigma})}$
is conservative. In other words, we wish to show that in case (a)  the functor
\begin{equation} \label{e:Sp comp}
\on{Sp}_{\sK\to \sk}:
\QCoh(\LS^{\on{restr}}_{\cG,\sk,\sigma})\underset{\QCoh(\LS^{\on{restr}}_{\cG,\sk})}\otimes \Shv(\Bun_{G,\sK,\sk})\to
\QCoh(\LS^{\on{restr}}_{\cG,\sk,\sigma})\underset{\QCoh(\LS^{\on{restr}}_{\cG,\sk})}\otimes \Shv(\Bun_{G,\sk})
\end{equation} 
is conservative.

\medskip

According to Remark \ref{r:Sp and CT}, we have commutative diagrams
\begin{equation} \label{e:CT q-Lang}
\CD
\QCoh(\LS^{\on{restr}}_{\cM,\sk,\sigma})\underset{\QCoh(\LS^{\on{restr}}_{\cM,\sk})}\otimes \Shv(\Bun_{M,\sK,\sk}) @>{\on{Sp}_{\sK\to \sk}}>> 
\QCoh(\LS^{\on{restr}}_{\cM,\sk,\sigma})\underset{\QCoh(\LS^{\on{restr}}_{\cM,\sk})}\otimes \Shv(\Bun_{M,\sk})  \\
@A{\on{CT}^-_*}AA @AA{\on{CT}^-_*}A \\
\QCoh(\LS^{\on{restr}}_{\cG,\sk,\sigma})\underset{\QCoh(\LS^{\on{restr}}_{\cG,\sk})}\otimes \Shv(\Bun_{G,\sK,\sk}) @>{\on{Sp}_{\sK\to \sk}}>> 
\QCoh(\LS^{\on{restr}}_{\cG,\sk,\sigma})\underset{\QCoh(\LS^{\on{restr}}_{\cG,\sk})}\otimes \Shv(\Bun_{G,\sk}),
\endCD 
\end{equation}
where $\on{CT}^-_*$ denotes the constant term functor. 

\medskip

Now, the validity of GLC over $\sK$ implies that the direct sums of the functors $\on{CT}^-_*$ over the
parabolics in the given class of association (i.e., the left vertical arrow in \eqref{e:CT q-Lang})
is conservative. 

\medskip
 
By induction on the semi-simple rank we can assume that the top horizontal arrow in \eqref{e:CT q-Lang}
is conservative.

\medskip

Hence, the bottom the top horizontal arrow in \eqref{e:CT q-Lang}
is also conservative, as desired. 

\qed[\thmref{t:ff GLC}(i)]

\sssec{}

We now give an alternative argument for \propref{p:adj}. Namely, we wish to show that
$'\BL^{\on{restr}}_{G,\sk}=\BL^{\on{restr}}_{G,\sk}$. We will show that $'\BL^{\on{restr}}_{G,\sk}$
satisfies the two conditions of \corref{c:existence of Langlands functor}. 

\medskip

We already know that $'\BL^{\on{restr}}_{G,\sk}$ induces an equivalence 
$$\Shv_\Nilp(\Bun_{G,\sk}) \to \IndCoh_\Nilp({}'\LS^{\on{restr}}_\sk).$$
In particular, it sends compact objects to compact (and, in particular, eventually
coconnective) ones. Hence, $'\BL^{\on{restr}}_{G,\sk}$ satisfies condition (i)
of \corref{c:existence of Langlands functor}.

\medskip

Condition (ii) follows by passing to right adjoints from \eqref{e:temp adj}. 

\qed[\propref{p:adj}]

\section{Proof of \thmref{t:construct deg}} \label{s:construct funct}

In this section we begin the proof of \thmref{t:construct deg}. Namely, we will
construct the functor $\on{Sp}$ and establish Properties (B)-(E). 

\medskip

Property (A) will be dealt with in the next section.

\ssec{Construction of the specialization functor}

\sssec{}

Let $\CY_{\sR_0}$ be a scheme or algebraic stack over $\sR_0$. Our goal is
to define a functor
\begin{equation} \label{e:specialization functor}
\on{Sp}_{\sK\to \sk}:\Shv(\CY_\sK)\to \Shv(\CY_\sk).
\end{equation} 

\medskip

Denote
$$\Spec(\sk)\overset{\bi}\to \Spec(\sR)\overset{\bj}\leftarrow \Spec(\sK).$$

We will use the same symbols $\bi$ and $\bj$ for the corresponding maps
$$\CY_\sk\to \CY_{\sR}\leftarrow \CY_\sK.$$

Euphemistically, the functor $\on{Sp}_{\sK\to \sk}$ is given by
\begin{equation} \label{e:Sp euph}
\bi^*\circ \bj_*.
\end{equation} 

Note, however, that $\Spec(\sR)$ is non-Noetherian. In what follows we will rewrite the definition of 
$\on{Sp}_{\sK\to \sk}$ in terms of functors that only use algebro-geometric 
objects of finite type. 

\sssec{}

First, covering $\CY_{\sR_0}$ smoothly be (affine) schemes, it is sufficient to define $\on{Sp}_{\sK\to \sk}$
for $\CY_{\sR_0}=S_{\sR_0}$, where $S_{\sR_0}$ is an affine scheme of finite type over $\sR_0$, 
provided that it commutes with pullbacks along smooth maps $S'_{\sR_0}\to S_{\sR_0}$. 

\medskip

Second, since $\Shv(S_\sK)$ is by definition the ind-completion of $\Shv(S_\sK)^c$, it suffices to define 
$\on{Sp}_{\sK\to \sk}$ as a functor $\Shv(S_\sK)^c\to \Shv(S_\sk)^c$. 

\medskip

Third, by the definition of $\Shv(-)^c$, this category is the colimit of $\Shv(-)_E^c$, over $\BQ_\ell\subset E\subset \ol\BQ_\ell$,
where $E$ is a finite extension of $\BQ_\ell$. Hence, it suffices to define 
$\on{Sp}_{\sK\to \sk}$ as a functor $\Shv(S_\sK)_E^c\to \Shv(S_\sk)_E^c$. 

\medskip

Fourth, by the definition of $\Shv(-)_E^c$, it is obtained as the localization with respect to $\ell$ of $\Shv(-)^c_{\CO_E}$. 
Hence, it suffices to define 
$\on{Sp}_{\sK\to \sk}$ as a functor $\Shv(S_\sK)_{\CO_E}^c\to \Shv(S_\sk)_{\CO_E}^c$. 

\medskip

Finally, by the definition of $\Shv(-)_{\CO_E}^c$, it is obtained as
$$\underset{n}{\on{lim}}\, \Shv(-)_{\CO_E/\ell^n}^c,$$ 
where the limit is taken in the $\infty$-category of \emph{non-cocomplete} DG categorries.

\medskip

Hence, it suffices to define $\on{Sp}_{\sK\to \sk}$ as a functor 
\begin{equation} \label{e:Sp finite}
\Shv(S_\sK)_{\CO_E/\ell^n}^c\to \Shv(S_\sk)_{\CO_E/\ell^n}^c.
\end{equation}

\sssec{}

Note that
$$\Shv(S_\sK)_{\CO_E/\ell^n}^c=\underset{\sK'_0}{\on{colim}}\, \Shv(S_{\sK'_0})_{\CO_E/\ell^n}^c,$$
where:

\begin{itemize}

\item The colimit is taken in the $\infty$-category of \emph{non-cocomplete} DG categorries;

\item The index category is the poset of finite extensions $\sK'_0\supset \sK_0$.

\end{itemize}

We proceed to define the corresponding functors
\begin{equation} \label{e:Sp finite finite}
\on{Sp}_{\sK'_0\to \sk}:\Shv(S_{\sK'_0})_{\CO_E/\ell^n}^c\to  \Shv(S_\sk)_{\CO_E/\ell^n}^c.
\end{equation}

\sssec{}

Let $\sR'_0$ denote the integral closure of $\sR_0$ in $\sK'_0$. Consider the diagram
$$
\CD
\CY_\sk @>>> \CY_{\sR'_0} @<<< \CY_{\sK'_0} \\
@VVV @VVV @VVV \\
\Spec(\sk) @>>> \Spec(\sR'_0) @<<< \Spec(\sK'_0).
\endCD
$$

We define \eqref{e:Sp finite finite} to be the nearby cycles functor $\Psi$. 

\sssec{}

Note that, by construction, the composition
$$\Shv(\CY_{\sK_0}) \overset{\text{pullback}}\longrightarrow \Shv(\CY_\sK) \overset{\on{Sp}_{\sK\to \sk}}\longrightarrow \Shv(\CY_\sk)$$
is the usual nearby cycles functor
$$\Psi:\Shv(\CY_{\sK_0}) \to \Shv(\CY_\sk).$$

\sssec{} \label{sss:Sp is exact}

We record the following properties of the functor \eqref{e:specialization functor}:

\medskip

\begin{enumerate}

\item It is t-exact;

\medskip

\item It sends constructible\footnote{Constructible:=pullback to an affine scheme by means of a 
smooth morphism is constructible (i.e., compact). Note, however, that for stacks ``constructible"
does not imply ``compact": the obstruction is (i) non-quasi-compactness of the stack and 
(ii) non-trivial stabilizers.} objects in $\Shv(\CY_\sK)$ to constructible objects in $\Shv(\CY_\sK)$;

\medskip

\item For $\CY_{\sR_0}=\CY^1_{\sR_0}\times \CY^2_{\sR_0}$ and $\CF_i\in \Shv(\CY^i_\sK)$, we have
\begin{equation} \label{e:Sp Kunneth}
\on{Sp}_{\sK\to \sk}(\CF_1\boxtimes \CF_2)\simeq \on{Sp}_{\sK\to \sk}(\CF_1)\boxtimes \on{Sp}_{\sK\to \sk}(\CF_2).
\end{equation} 

\medskip

\item For a map $f:\CY^1_{\sR_0}\to \CY^2_{\sR_0}$ we have the natural transformations
\begin{equation} \label{e:Sp and dir im}
(f_\sk)_!\circ \on{Sp}_{\sK\to \sk}\to \on{Sp}_{\sK\to \sk}\circ (f_\sK)_!, \,\,
\on{Sp}_{\sK\to \sk}\circ (f_\sK)_*\to (f_\sk)_*\circ \on{Sp}_{\sK\to \sk},
\end{equation} 
and 
\begin{equation} \label{e:Sp and inv im}
(f_\sk)^*\circ \on{Sp}_{\sK\to \sk}\to \on{Sp}_{\sK\to \sk}\circ (f_\sK)^*,\,\, 
\on{Sp}_{\sK\to \sk}\circ (f_\sK)^!\to (f_\sk)^!\circ \on{Sp}_{\sK\to \sk},
\end{equation} 
where $f_\sk$ (resp., $f_\sK$) denotes the fiber of $f$ over $\Spec(\sk)$ (resp., $\Spec(\sK)$).

\medskip

\item The maps \eqref{e:Sp and dir im} are mutually inverse isomorphisms when $f$ is proper and
the maps \eqref{e:Sp and inv im} are mutually inverse isomorphisms (up to a shift by the relative dimension) when $f$ is smooth. 

\medskip

\item\label{i:sp 6} It commutes with Verdier duality on constructible objects
\begin{equation} \label{e:Sp and Verdier 0}
\BD_{\CY_\sk}(\on{Sp}_{\sK\to \sk}(\CF)) \simeq \on{Sp}_{\sK\to \sk}(\BD_{\CY_\sK}(\CF));
\end{equation} 
moreover, for an object $\CF\in \Shv(\CY_\sK)^{\on{constr}}$,
the following diagram commutes
\begin{equation} \label{e:Sp and Verdier}
\CD
\on{Sp}_{\sK\to \sk}(\CF)\boxtimes  \BD_{\CY_\sk}(\on{Sp}_{\sK\to \sk}(\CF)) @>{\sim}>{\on{id}\boxtimes \text{\eqref{e:Sp and Verdier 0}}}>
\on{Sp}_{\sK\to \sk}(\CF)\boxtimes \on{Sp}_{\sK\to \sk}(\BD_{\CY_\sK}(\CF)) \\
@AAA @AAA \\
(\Delta_{\CY_\sk})_!(\ul\sfe_{\CY_\sk}) @>>>
\on{Sp}_{\sK\to \sk}\circ (\Delta_{\CY_\sK})_!(\ul\sfe_{\CY_\sK}),
\endCD
\end{equation} 
where:

\smallskip

\begin{itemize}

\item The symbol $\on{Sp}_{\sK\to \sk}$ in the bottom-right corner refers to the functor \eqref{e:specialization functor}
for $\CY_{\sR_0}\underset{\Spec(\sR_0)}\times \CY_{\sR_0}$;

\item The bottom horizontal arrow comes from the identification $\ul\sfe_{\Spec(\sk)}\simeq \on{Sp}_{\sK\to \sk}(\ul\sfe_{\Spec(\sK)})$
and the natural transformation 
$$(\Delta_{\CY_\sk})_!\circ \pi_{\CY_\sk}^* \circ \on{Sp}_{\sK\to \sk}\to \on{Sp}_{\sK\to \sk} \circ (\Delta_{\CY_\sK})_! \circ \pi_{\CY_\sK}^*$$
where $\pi_{\CY_-}:\CY_- \to \Spec(-)$ is the relevant structural map.

\end{itemize}

\end{enumerate} 

\begin{rem}\label{r:sp and tw arr}

Suppose $\alpha:\CF \to \CG$
is a morphism of constructible sheaves. From \eqref{e:Sp and Verdier}
we obtain a tautological commutative diagram
\begin{equation}\label{e:Sp and Verdier, tw}
\CD
\on{Sp}_{\sK\to \sk}(\CG)\boxtimes  \BD_{\CY_\sk}(\on{Sp}_{\sK\to \sk}(\CF)) @>{\sim}>{\on{id}\boxtimes \text{\eqref{e:Sp and Verdier 0}}}>
\on{Sp}_{\sK\to \sk}(\CG)\boxtimes \on{Sp}_{\sK\to \sk}(\BD_{\CY_\sK}(\CF)) \\
@AAA @AAA \\
(\Delta_{\CY_\sk})_!(\ul\sfe_{\CY_\sk}) @>>>
\on{Sp}_{\sK\to \sk}\circ (\Delta_{\CY_\sK})_!(\ul\sfe_{\CY_\sK}).
\endCD
\end{equation}

In this formulation, there is some additional homotopy coherence to note. 
Each term of \eqref{e:Sp and Verdier, tw}
can now be considered as a functor
from $\on{TwArr}(\Shv(\CY_\sK))^{\on{constr}}$, the 
twisted arrow category of constructible sheaves, 
to $\Shv(\CY_\sk \times_{\Spec(\sk)} \CY_\sk)$. Then the 
more functorial assertion of \eqref{i:sp 6} is that there is an commutative square of
functors \eqref{e:Sp and Verdier, tw}.

\end{rem}

\sssec{}

The following observation will be used in the sequel:

\begin{lem} \label{l:Sp of ULA}
Let $f:\CY_{\sR_0}\to \CZ_{\sR_0}$ be a map of algebraic stacks over $\Spec(\sR_0)$.
Let $\CF_{\CY,\sR_0}\in \Shv(\CY_{\sR_0})$ be ULA over $\CZ_{\sR_0}$. Then for any 
$\CF_{\CZ}\in \Shv(\CZ_{\sK})$, the naturally defined map
$$\CF_{\CY,\sk}\overset{*}\otimes (f_{\sk}^*\circ \on{Sp}_{\sK\to \sk}(\CG_\CZ))\to 
\on{Sp}_{\sK\to\sk}(\CF_{\CY,\sK}\overset{*}\otimes f_{\sK}^*(\CF_{\CZ}))$$
is a isomorphism, where $\CF_{\CY,\sk}$ (resp., $\CF_{\CY,\sK}$) denotes the *-restriction of $\CF_{\CY,\sR_0}$ to
$\CY_\sk$ (resp., $\CY_\sK$)
\end{lem}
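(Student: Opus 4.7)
The plan is to unwind the definition of $\on{Sp}_{\sK\to\sk}$ given in this section and reduce the claim to a projection-type identity that is a direct consequence of ULA. By construction, $\on{Sp}_{\sK\to\sk}$ is assembled as a colimit over finite extensions $\sK_0\subset\sK_0'\subset\sK$ (with integral closure $\sR_0' \supset \sR_0$) of the ordinary nearby cycles functors $\Psi_{\sK_0'}$. Any constructible object of $\Shv(\CZ_\sK)$ descends to some $\Shv(\CZ_{\sK_0'})^c$, and ULA is preserved under base change, so $\CF_{\CY,\sR_0'}$ is ULA over $\CZ_{\sR_0'}$. It therefore suffices to prove the analogous statement for $\Psi_{\sK_0'}$ at a fixed finite level.

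Write $\Psi_{\sK_0'} = \bi^*\bj_*$, where $\bi$ and $\bj$ denote the inclusions of the special and generic fibers over $\Spec(\sR_0')$ (and also, abusively, for $\CY_{\sR_0'}$ and $\CZ_{\sR_0'}$). The claim reduces to establishing
$$
\bj_{\CY,*}\bigl(\CF_{\CY,\sK_0'}\overset{*}{\otimes} f_{\sK_0'}^*\CF_{\CZ,\sK_0'}\bigr) \;\simeq\; \CF_{\CY,\sR_0'}\overset{*}{\otimes} f_{\sR_0'}^*\bj_{\CZ,*}\CF_{\CZ,\sK_0'},
$$
after which applying $\bi_\CY^*$ and using its compatibility with $\overset{*}{\otimes}$ and with $f_{\sR_0'}^*$ (via $\bi_\CY^* f_{\sR_0'}^* = f_\sk^* \bi_\CZ^*$) recovers the left-hand side of the natural transformation in question. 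The displayed identity is the projection formula for the open immersion $\bj_\CY$ in the presence of the ULA sheaf $\CF_{\CY,\sR_0'}$; equivalently, it combines smooth base change $f_{\sR_0'}^*\bj_{\CZ,*}\simeq \bj_{\CY,*}f_{\sK_0'}^*$ with the projection formula $\bj_{\CY,*}(\bj_\CY^*\CF_{\CY,\sR_0'}\overset{*}{\otimes}(-))\simeq \CF_{\CY,\sR_0'}\overset{*}{\otimes}\bj_{\CY,*}(-)$, which is a standard characterization of ULA.

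The only conceptual step is the last one: recognizing that ULA of $\CF_{\CY,\sR_0'}$ over $\CZ_{\sR_0'}$ encodes exactly this commutation of $\CF_{\CY,\sR_0'}\overset{*}{\otimes} f_{\sR_0'}^*(-)$ with pushforward along maps of the base. In our sheaf-theoretic setting this is either taken as part of the definition of ULA (in the style of Lu--Zheng or Hansen--Scholze) or is a direct consequence of the classical characterization via preservation under base change to a trait. The remainder of the argument is bookkeeping, mildly complicated by the need to pass through the finite extensions $\sK_0'$ because $\Spec(\sR)$ is non-Noetherian; once the above ULA identity is granted, the desired isomorphism is formal.
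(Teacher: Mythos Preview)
The paper does not supply a proof of this lemma; it is stated as a standard consequence of the ULA hypothesis. Your overall plan---pass to a finite extension $\sK_0'/\sK_0$ and argue with the ordinary nearby cycles functor $\Psi=\bi^*\bj_*$ at that level---is correct and is how one unwinds the definition of $\on{Sp}_{\sK\to\sk}$.

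The gap is in your decomposition of the displayed identity into (a) ``smooth base change'' $f_{\sR_0'}^*\bj_{\CZ,*}\simeq\bj_{\CY,*}f_{\sK_0'}^*$ and (b) the projection formula $\bj_{\CY,*}(\bj_\CY^*\CF\otimes -)\simeq\CF\otimes\bj_{\CY,*}(-)$. Neither step holds separately, even under the ULA hypothesis. Step (a) is not smooth base change (that would require $f$ smooth) and in fact fails in general: applying $\bi_\CY^*$ to it would assert $f_\sk^*\circ\Psi_\CZ\simeq\Psi_\CY\circ f_{\sK_0'}^*$, i.e., that nearby cycles commute with arbitrary $*$-pullback, which is false. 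Step (b) is not a characterization of ULA over $\CZ$ either; as stated it would have to hold for \emph{all} objects of $\Shv(\CY_{\sK_0'})$, not just those of the form $f_{\sK_0'}^*\CG$, and ULA over $\CZ$ says nothing about that. Only the \emph{combination} is valid. The clean argument is the one your final paragraph already points to: since $\bj_\CY^*$ and $\bi_\CY^*$ are jointly conservative, it suffices to check the displayed identity after $\bi_\CY^*$, where it becomes
\[
\Psi_\CY\bigl(\CF_{\sK_0'}\overset{*}{\otimes} f_{\sK_0'}^*\CG\bigr)\;\simeq\;\CF_\sk\overset{*}{\otimes} f_\sk^*\,\Psi_\CZ(\CG),
\]
and this is precisely the classical statement that a sheaf ULA over $\CZ$ commutes with nearby cycles relative to any base under $\CZ$ (see e.g.\ Illusie's expos\'e in Ast\'erisque 223, or the Lu--Zheng and Hansen--Scholze formulations you cite). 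Drop the two-step ``equivalently'' decomposition and invoke this directly.
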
 

\begin{rem} \label{r:van cycles}

Note that when $\CZ_{\sR_0}=\Spec(\sR_0)$, the ULA condition simply means that
$$\Phi(\CF_{\CY,\sR_0})=0,$$
where $\Phi$ is the vanishing cycles functor.

\medskip

In other words, this is equivalent to the map
$$\CF_{\CY,\sk}\to \on{Sp}_{\sK\to\sk}(\CF_{\CY,\sK})$$
being an isomorphism. 

\end{rem} 

\sssec{}

In what follows we will study the resulting functor 
\begin{equation} \label{e:Sp BunG}
\on{Sp}_{\sK\to \sk}:\Shv(\Bun_{G,\sK})\to \Shv(\Bun_{G,\sk}).
\end{equation}

\ssec{Specialization and Hecke functors}

\sssec{}

Consider the version of the Hecke stack over $\Spec(\sR_0)$:
$$
\CD
\Bun_{G,\sR_0} @<{\hl}<< \on{Hecke}_{X,\sR_0} @>{\hr}>> \Bun_{G,\sR_0}  \\
& & @VV{s}V \\
& & X_{\sR_0}.
\endCD
$$

For an irreducible representation
$$V^\lambda\in \Rep(\cG), \quad \lambda\in \Lambda^+,$$
let 
$$\on{Sat}^{\on{nv}}_{X,\sR_0}(V^\lambda)\in \Shv(\on{Hecke}_{X,\sR_0})$$
be the object\footnote{The cohomological shift $[-2]$ in the formula below is designed in order to offset 
$\dim(X)+\dim(\sR_0)$.}
$$\IC_{\ol{\on{Hecke}}^\lambda_{X,\sR_0}}[-2].$$

\medskip

We claim:

\begin{thm} \label{t:Sat acycl}
The object $\on{Sat}^{\on{nv}}_{X,\sR_0}(V^\lambda)$ restricts to 
$\Sat^{\on{nv}}_{X,\sK}(V^\lambda)$ and $\Sat^{\on{nv}}_{X,\sk}(V^\lambda)$, respectively, and is ULA over
$\Bun_{G,\sR_0}\underset{\Spec(\sR_0)}\times X_{\sR_0}$ with respect to either of the projections. 
\end{thm}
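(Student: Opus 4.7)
The plan is to reduce the ULA assertion to a local question about the (relative) affine Grassmannian and then to resolve the Schubert variety by an iterated convolution of Bott--Samelson type. Since the ULA property is local in the smooth topology of the target, and since étale-locally on $\Bun_{G,\sR_0}\times_{\Spec(\sR_0)} X_{\sR_0}$ the Hecke stack $\on{Hecke}_{X,\sR_0}$ is identified (via either projection $\hl,\hr$) with the relative affine Grassmannian $\Gr_{G,X_{\sR_0}}\to X_{\sR_0}$, it suffices to show that $\IC_{\ol{\Gr^\lambda}_{X,\sR_0}}[-2]$ is ULA with respect to the structural map to $X_{\sR_0}$. The ``restriction to fibers'' assertion is also local in this sense and follows from the fact that $\ol{\Gr^\lambda}_{X,\sR_0}$ is flat over $\Spec(\sR_0)$ (the affine Grassmannian being defined integrally) with smooth open stratum $\Gr^\lambda_{X_{\sR_0}}$, so that the intermediate extension restricts to the intermediate extension on each of the $\sK$- and $\sk$-fibers.

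For the ULA property itself, I would pick a sequence $\ul{\lambda}=(\lambda_1,\ldots,\lambda_n)$ of dominant coweights with $\sum \lambda_i=\lambda$, each chosen small enough that the individual $\ol{\Gr^{\lambda_i}}_{X,\sR_0}$ is smooth over $X_{\sR_0}$, and form the convolution scheme
$$\wt{\Gr}^{\ul{\lambda}}_{X,\sR_0} \,\overset{\pi}{\longrightarrow}\, \ol{\Gr^\lambda}_{X,\sR_0}.$$
By construction $\wt{\Gr}^{\ul{\lambda}}_{X,\sR_0}$ is smooth over $X_{\sR_0}$ (being an iterated locally trivial fibration with smooth fiber), and $\pi$ is proper and semismall. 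The constant sheaf $\ul{\sfe}$ on $\wt{\Gr}^{\ul{\lambda}}_{X,\sR_0}$ is tautologically ULA over $X_{\sR_0}$, and ULA is preserved by proper pushforward, so $\pi_!(\ul{\sfe})$ is ULA over $X_{\sR_0}$.

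Finally, to extract $\IC_{\ol{\Gr^\lambda}_{X,\sR_0}}$ from $\pi_!(\ul{\sfe})$ while retaining the ULA property, I would invoke the contraction principle of \propref{p:preserve ULA} applied to the natural $\BG_m$-action on the fibers of $\pi$ that contracts the Bott--Samelson fibers onto their image inside $\ol{\Gr^\lambda}_{X,\sR_0}$. This bypasses any appeal to the decomposition theorem in mixed characteristic by directly transferring the ULA property of $\pi_!(\ul{\sfe})$ to its summand supported on the image, which agrees with $\IC_{\ol{\Gr^\lambda}_{X,\sR_0}}$ up to shift. I expect this last step, namely cleanly extracting the intermediate extension as a summand with its ULA property intact in a relative setting over the non-Noetherian base $\Spec(\sR_0)$, to be the main technical obstacle; the contraction principle is precisely the tool designed to make this uniform across the characteristic $0$ and characteristic $p$ fibers.
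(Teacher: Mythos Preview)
Your overall strategy—reduce to the relative affine Grassmannian, resolve via iterated convolution, and push forward the constant sheaf from something smooth—is in the right spirit, but there are two genuine gaps.

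First, the decomposition $\lambda=\sum_i\lambda_i$ with each $\ol{\Gr^{\lambda_i}}_{X,\sR_0}$ smooth over $X_{\sR_0}$ does not exist for all $G$: smoothness of $\ol{\Gr^{\lambda_i}}$ essentially forces $\lambda_i$ to be minuscule, and groups of type $E_8$, $F_4$, $G_2$ have no nonzero minuscule coweights. The paper circumvents this by passing to the \emph{affine flag} level. One introduces the stack $\Bun^{\on{Fl}}_{G,\sR_0}$ and the Iwahori Hecke stack $\on{Hecke}^{\on{Fl}}_{X,\sR_0}$, pulls $\Delta^\lambda$ back, filters it by standard objects indexed by $\lambda'\in\Lambda$, and realizes each of those as a proper pushforward of a standard object $\Delta^{\wt{w}}$ on the affine flag side. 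The induction is then on the length of $\wt{w}\in W^{\on{aff,ext}}$: for $\ell(\wt{w})\leq 1$ the stratum closure is a $\BP^1$-bundle, hence smooth over the base, and the convolution identity $\Delta^{\wt{w}_1}\star\Delta^{\wt{w}_2}\simeq\Delta^{\wt{w}}$ for $\ell(\wt{w})=\ell(\wt{w}_1)+\ell(\wt{w}_2)$ propagates ULA via proper pushforward.

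Second, your final step—invoking \propref{p:preserve ULA} to extract $\IC_{\ol{\Gr^\lambda}}$ as a summand of $\pi_!(\ul\sfe)$—does not match what that proposition says. It transfers ULA across an open embedding whose complement is $\BG_m$-contracted; it does not split summands out of a pushforward, and there is no natural $\BG_m$-action on the fibers of a convolution map contracting them onto their image. The paper's route avoids this entirely: it proves that $\Delta^\lambda$ and $\nabla^\lambda$ are ULA (via the affine-flag argument above) and then cites \cite[Corollary 1.12]{HS} for the fact that the ULA property is inherited by subquotients of perverse cohomologies. This yields the ULA property for the IC sheaf and simultaneously forces the correct restrictions to the $\sK$- and $\sk$-fibers. (The remark after \propref{p:preserve ULA} does sketch an alternative via contraction, but through intersecting with opposite Schubert strata \`a la Kazhdan--Lusztig, not through convolution fibers.)
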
 

We will prove this theorem in \secref{ss:Sat acycl}.

\sssec{}

For a finite set $\CI$, consider the corresponding $\CI$-legged Hecke stack:
$$
\CD
\Bun_G @<{\hl}<< \on{Hecke}_{X^\CI} @>{\hr}>> \Bun_G  \\
& & @VV{s}V \\
& & X^\CI.
\endCD
$$

As in the usual geometric Satake theory, \thmref{t:Sat acycl} allows us to construct a family of functors
\begin{equation} \label{e:Sat nv legs}
\on{Sat}^{\on{nv}}_{X^{\CI},\sR_0}:\Rep(\cG)^{\otimes \CI}\to \Shv(\on{Hecke}_{X^{\CI},\sR_0}).
\end{equation} 

Moreover, the essential image of the functor \eqref{e:Sat nv legs} lies in the full subcategory of $\Shv(\on{Hecke}_{X^{\CI},\sR_0})$
that consists of objects that are ULA over $\Bun_{G,\sR_0}$ (in fact, over $\Bun_{G,\sR_0}\underset{\Spec(\sR_0)}\times X^\CI_{\sR_0}$). 

\sssec{}

Consider the Hecke functor
$$\on{H}_V:\Shv(\Bun_G)\to \Shv(\Bun_G\times X^{\CI}), 
\quad (\hl\times s)_!\left(\hr^*(-)\overset{*}\otimes \on{Sat}^{\on{nv}}_{X^{\CI}}(V)\right),\quad V\in \Rep(\cG)^{\otimes \CI}.$$

\medskip

The functors \eqref{e:Sat nv legs} give rise to natural transformations 
\begin{equation} \label{e:Hecke action Sp}
\on{H}_{V,\sk}\circ \on{Sp}_{\sK\to \sk}\to \on{Sp}_{\sK\to \sk}\circ \on{H}_{V,\sK}
\end{equation} 
as functors
$$\Shv(\Bun_{G,\sK})\to \Shv(\Bun_{G,\sk}\times X^{\CI}_{\sk}).$$

\medskip

We claim:

\begin{prop} \label{p:Sp and Hecke}
The natural transformations \eqref{e:Hecke action Sp} are isomorphisms. 
\end{prop}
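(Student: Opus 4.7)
The plan is to chain two commutation statements: one for the tensor product by a ULA sheaf, and one for proper pushforward. Both are supplied by results already in hand.

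First, I reduce to the case of an irreducible representation $V = V^{\bar\lambda} \in \Rep(\cG)^{\otimes\CI}$, and hence to the situation where $\on{Sat}^{\on{nv}}_{X^\CI,\sR_0}(V)$ is supported on a closed substack $\ol{\on{Hecke}}^{\bar\lambda}_{X^\CI,\sR_0} \subset \on{Hecke}_{X^\CI,\sR_0}$. The $X^\CI$ analogue of \thmref{t:Sat acycl} (obtained by the standard factorization/convolution argument of geometric Satake, using \thmref{t:Sat acycl} itself as input at each leg) shows that $\on{Sat}^{\on{nv}}_{X^\CI,\sR_0}(V)$ is ULA over $\Bun_{G,\sR_0}\underset{\Spec(\sR_0)}\times X^\CI_{\sR_0}$, and in particular over $\Bun_{G,\sR_0}$ via $\hr$. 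Applying \lemref{l:Sp of ULA} to $f = \hr: \on{Hecke}_{X^\CI,\sR_0} \to \Bun_{G,\sR_0}$, we obtain, for $\CF \in \Shv(\Bun_{G,\sK})$, a canonical isomorphism
$$\on{Sat}^{\on{nv}}_{X^\CI,\sk}(V) \overset{*}\otimes \hr_\sk^*(\on{Sp}_{\sK\to\sk}(\CF)) \;\overset{\sim}\to\; \on{Sp}_{\sK\to\sk}\!\left(\on{Sat}^{\on{nv}}_{X^\CI,\sK}(V) \overset{*}\otimes \hr_\sK^*(\CF)\right).$$

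Second, I apply $(\hl \times s)_!$ to both sides. Since the expressions above are supported on $\ol{\on{Hecke}}^{\bar\lambda}_{X^\CI}$, and the restriction of $\hl \times s$ to this closed substack is a proper morphism (this is the standard properness underlying the very definition of $\on{H}_V$), I may invoke property (5) of \secref{sss:Sp is exact}, which says that the natural transformation $(f_\sk)_! \circ \on{Sp}_{\sK\to\sk} \to \on{Sp}_{\sK\to\sk} \circ (f_\sK)_!$ is an isomorphism for proper $f$. Combining these two isomorphisms yields
$$(\hl\times s)_{!,\sk}\!\left(\hr_\sk^*(\on{Sp}_{\sK\to\sk}(\CF)) \overset{*}\otimes \on{Sat}^{\on{nv}}_{X^\CI,\sk}(V)\right) \;\overset{\sim}\to\; \on{Sp}_{\sK\to\sk}\!\left((\hl\times s)_{!,\sK}\!\left(\hr_\sK^*(\CF) \overset{*}\otimes \on{Sat}^{\on{nv}}_{X^\CI,\sK}(V)\right)\right),$$
which is precisely the claim that \eqref{e:Hecke action Sp} is an isomorphism. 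A direct inspection of the construction of \eqref{e:Hecke action Sp} shows that it coincides with the composite isomorphism above.

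The only nontrivial ingredient is the $X^\CI$-version of \thmref{t:Sat acycl}, but this follows from the single-leg statement by the usual factorization structure on $\on{Hecke}_{X^\CI}$: away from the diagonals, $\on{Hecke}_{X^\CI}$ is the fiber product of copies of $\on{Hecke}_{X}$ over $\Bun_G$, and the ULA property is stable under exterior products and (thanks to ULA being preserved by convolution of kernels that are ULA over the relevant base) under specialization along the diagonals. Given this, the proof is a formal consequence of \lemref{l:Sp of ULA} and the properness of $\hl\times s$ on $\ol{\on{Hecke}}^{\bar\lambda}_{X^\CI,\sR_0}$, with no further obstacle. \qed
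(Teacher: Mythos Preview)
Your proof is correct and follows essentially the same approach as the paper: invoke \lemref{l:Sp of ULA} for the ULA object $\on{Sat}^{\on{nv}}_{X^\CI,\sR_0}(V)$ to commute $\on{Sp}_{\sK\to\sk}$ past the tensor-pullback, then use properness of $\hl\times s$ to commute it past the pushforward. The paper states the $X^\CI$-version of the ULA property just before the proposition (as a consequence of the construction \eqref{e:Sat nv legs}), so your discussion of deducing it from the single-leg case is already absorbed into the setup; and the paper simply asserts that $\hl\times s$ is proper without first restricting to a closed Schubert substack, whereas your reduction to irreducible $V$ makes this step slightly more explicit.
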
 

\begin{proof}

The map
$$\hl\times s:\on{Hecke}_{X^{\CI}}\to \Bun_G\times X^\CI$$
is proper, hence
$$(\hl\times s)_!\circ \on{Sp}_{\sK\to \sk}\to \on{Sp}_{\sK\to \sk}\circ (\hl\times s)_!$$
is an isomorphism.

\medskip

Now, the natural transformation
$$(\hr^*(-)\overset{*}\otimes \on{Sat}^{\on{nv}}_{X^{\CI}}(V))\circ \on{Sp}_{\sK\to \sk}\to
\on{Sp}_{\sK\to \sk}\circ (\hr^*(-)\overset{*}\otimes \on{Sat}^{\on{nv}}_{X^{\CI}}(V))$$
is also an isomorphism by \lemref{l:Sp of ULA} and the ULA property of the objects
$\on{Sat}^{\on{nv}}_{X^{\CI},\sR_0}(V)$.

\end{proof} 

\sssec{}

The functors $\on{Sat}^{\on{nv}}_{X^{\CI},\sR_0}$, $\CI\in\on{fSet}$ make the following system of diagrams of functors 
commute:
\begin{equation} \label{e:Sat nv legs compat}
\CD
\Rep(\cG)^{\otimes \CI_1} @>{\on{Sat}^{\on{nv}}_{X^{\CI_1},\sR_0}}>> \Shv(\on{Hecke}_{X^{\CI_1},\sR_0}) \\
@V{\Rep(\cG)^\phi}VV @VV{\Delta^*_\phi}V \\
\Rep(\cG)^{\otimes \CI_2} @>{\on{Sat}^{\on{nv}}_{X^{\CI_2},\sR_0}}>> \Shv(\on{Hecke}_{X^{\CI_2},\sR_0}),
\endCD
\end{equation} 
for $\phi:\CI_1\to \CI_2$, where:

\begin{itemize}

\item The left vertical arrow is induced by the symmetric monoidal structure on $\Rep(\cG)$;

\item The map $\Delta_\phi$ is (the base change of) the diagonal map $X^{\CI_2}\to X^{\CI_1}$.

\end{itemize}

This leads to a system of commutative diagrams of functors
$$
\CD
(\on{id}_{\Bun_G}\times \Delta_\phi)^*\circ \on{H}_{V,\sk}\circ \on{Sp}_{\sK\to \sk} @>>> 
(\on{id}_{\Bun_G}\times \Delta_\phi)^*\circ \on{Sp}_{\sK\to \sk}\circ \on{H}_{V,\sK} \\
@V{\sim}VV @VV{\sim}V \\
\on{H}_{\Res_\phi(V),\sk}\circ \on{Sp}_{\sK\to \sk} @>>> \on{Sp}_{\sK\to \sk}\circ \on{H}_{\Res_\phi(V),\sK},
\endCD
$$ 
which also depends functorially on $V\in \Rep(\cG)^{\otimes \CI}$. 

\ssec{Compatibility with Beilinson's projector}

\sssec{}

Recall Beilinson's projector, denoted $\sP$, which is an idempotent acting on $\Shv(\Bun_G)$
with essential image $\Shv_\Nilp(\Bun_G)$, see \cite[Sect. 13.4]{AGKRRV1}.

\medskip

We will denote by $\sP_\sk$ (resp., $\sP_\sK$) the corresponding endofunctor of $\Shv(\Bun_{G,\sK})$
(resp., $\Shv(\Bun_{G,\sk})$). 

\medskip

Let now $\sP_{\sK,\sk}$ be a version of $\sP_\sK$, where in we take
$\Gamma_!(\LS^{\on{restr}}_{\sG,\sk},-)$ instead of $\Gamma_!(\LS^{\on{restr}}_{\sG,\sK},-)$.
This is an idempotent endomorphism of $\Shv(\Bun_{G,\sK})$, equal to the composition of
$\sP_\sK$ and the projection onto the direct summand 
$$\Shv_{\Nilp}(\Bun_{G,\sK,\sk})\subset \Shv_{\Nilp}(\Bun_{G,\sK}).$$

\sssec{}

We claim: 

\begin{prop} \label{p:Sp and P}
There exists a canonical isomorphism
$$\on{Sp}_{\sK\to \sk}\circ \sP_{\sK,\sk}\simeq \sP_\sk\circ \on{Sp}_{\sK\to \sk}.$$
\end{prop}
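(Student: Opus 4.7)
The plan is to lift both $\sP_{\sK,\sk}$ and $\sP_\sk$ to the same explicit Hecke-theoretic colimit, and then commute $\on{Sp}_{\sK\to \sk}$ past that colimit termwise. The key point is that $\sP_?$ is not just \emph{any} retraction onto $\Shv_\Nilp(\Bun_G)$, but is built from the Hecke/spectral action, and \propref{p:Sp and Hecke} has prepared exactly the input needed to commute $\on{Sp}_{\sK\to \sk}$ with each building block.

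Concretely, I would follow the construction of Beilinson's projector from \cite[Sect. 15.4.5]{AGKRRV1}: one exhibits $\sP_?$ as the colimit of a $\Ran$-space--indexed diagram whose terms are compositions built out of the Hecke functors $\on{H}_{V,\ux}$ (for $\CI$ a finite set, $\ux\in X^\CI$, $V\in \Rep(\cG)^{\otimes \CI}$), the diagonal compatibilities of \eqref{e:Sat nv legs compat}, and the spectral integration $\Gamma_!(\LS^{\on{restr}}_\cG,-)$. The variant $\sP_{\sK,\sk}$ is obtained from the same diagram over $\sK$, but replacing $\Gamma_!(\LS^{\on{restr}}_{\cG,\sK},-)$ by $\Gamma_!(\LS^{\on{restr}}_{\cG,\sk},-)$ via the inclusion $\iota:\LS^{\on{restr}}_{\cG,\sk}\hookrightarrow \LS^{\on{restr}}_{\cG,\sK}$ of a union of connected components. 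In particular, the spectral integration functor $\Gamma_!(\LS^{\on{restr}}_{\cG,\sk},-)$ appearing in both $\sP_{\sK,\sk}$ and $\sP_\sk$ is literally the same purely algebro-geometric object, independent of the sheaf theory on $\Bun_G$.

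Applying $\on{Sp}_{\sK\to \sk}$ to the diagram defining $\sP_{\sK,\sk}$, at each vertex \propref{p:Sp and Hecke} produces a canonical isomorphism
$$\on{Sp}_{\sK\to \sk}\circ \on{H}_{V,\ux,\sK}\,\simeq\,\on{H}_{V,\ux,\sk}\circ \on{Sp}_{\sK\to \sk},$$
and these intertwiners are functorial in $(\CI, V, \ux)$ thanks to the commutativity of \eqref{e:Sat nv legs compat} over $\sR_0$. Because $\on{Sp}_{\sK\to \sk}$ is by construction continuous (being the ind-extension of its restriction to $\Shv(-)^c$), it commutes with the colimit that realizes $\sP_{\sK,\sk}$. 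Reassembling, the resulting functor is precisely the colimit defining $\sP_\sk$ precomposed with $\on{Sp}_{\sK\to \sk}$, giving the desired isomorphism.

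The main technical obstacle is the bookkeeping: one must check that the termwise intertwiners glue into a morphism of the whole $\Ran$-indexed diagram, not merely into a family of isomorphisms at individual vertices. This reduces to upgrading \eqref{e:Sat nv legs compat} to a commutativity statement for the $\on{Sp}_{\sK\to \sk}$-intertwiners, which is formal from the construction of the relative kernels $\on{Sat}^{\on{nv}}_{X^\CI,\sR_0}$ together with the ULA property provided by \thmref{t:Sat acycl}---exactly the input used in the proof of \propref{p:Sp and Hecke}.
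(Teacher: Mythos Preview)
Your approach is essentially the same as the paper's: both express $\sP_{\sK,\sk}$ (resp.\ $\sP_\sk$) as a colimit of explicit Hecke-theoretic building blocks and commute $\on{Sp}_{\sK\to\sk}$ past each block. The only refinement in the paper's version is that it explicitly separates out a middle step you left implicit: between the Hecke functor and the spectral integration there is a step of tensoring along $X^I$ with the tautological object $\on{Ev}(V'_I)\in\qLisse(X^I)\otimes\QCoh(\LS^{\on{restr}}_{\cG,\sk})$, and commutation of $\on{Sp}_{\sK\to\sk}$ with this step is handled via \lemref{l:Sp of ULA} rather than \propref{p:Sp and Hecke}.
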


\begin{proof} 

By construction (see \cite[Sect. 13.1.11]{AGKRRV1}), the functor $\sP_{\sK,\sk}$ is the colimit of functors $\sF_I$, $I\in \on{fSet}$,
each of which is the composition of functors of the following form:

\medskip

\begin{itemize}

\item(a) $\on{H}_{V_I}:\Shv(\Bun_G)\to \Shv(\Bun_G\times X^I)$ for $V_I\in \Rep(\cG)^{\otimes I}$;

\medskip

\item(b) $(-)\otimes \on{Ev}(V'_I):\Shv(\Bun_G\times X^I)\to \Shv(\Bun_G\times X^I)\otimes \QCoh(\LS^{\on{restr}}_\cG)$,
where:

\begin{itemize}

\item $V'_I\in  \Rep(\cG)^{\otimes I}$;

\item $\on{Ev}(V'_I)\in \qLisse(X^I)\otimes \QCoh(\LS^{\on{restr}}_\cG)$ is the tautological object corresponding to $V'_I$.

\end{itemize}

\medskip

\item(c) $\on{Id}\otimes \Gamma_!(\LS^{\on{restr}}_{\cG,\sk},-)$.

\end{itemize} 

\medskip

The functors in (a) commute with $\on{Sp}_{\sK\to \sk}$ by \propref{p:Sp and Hecke}. The functors in (b)
commute with $\on{Sp}_{\sK\to \sk}$ by \lemref{l:Sp of ULA}. The functors in (c) commute with 
$\on{Sp}_{\sK\to \sk}$ tautologically.

\end{proof} 

\begin{rem} \label{r:Hecke param}
For future use we remark that both Proposition \ref{p:Sp and Hecke} \and \ref{p:Sp and P}
remain valid if instead of $\Bun_{G,\sR_0}$ we consider 
$$\Bun_{G,\sR_0}\underset{\Spec(\sR_0)}\times \CY_{\sR_0},$$
for some stack $\CY_{\sR_0}$ over $\Spec(\sR_0)$. The proofs remain the same.

\end{rem}

\sssec{}

From \propref{p:Sp and P} we obtain: 

\begin{cor} \label{c:Sp Nilp}
The functor \eqref{e:Sp BunG} sends
$$\Shv_{\Nilp}(\Bun_{G,\sK,\sk})\subset \Shv_{\Nilp}(\Bun_{G,\sK})$$
to
$$\Shv_\Nilp(\Bun_{G,\sk})\subset \Shv(\Bun_{G,\sk}).$$
\end{cor}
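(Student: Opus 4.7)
The plan is to deduce this as an immediate formal consequence of \propref{p:Sp and P}, using that both $\Shv_{\Nilp}(\Bun_{G,\sK,\sk})$ and $\Shv_\Nilp(\Bun_{G,\sk})$ are characterized as the essential images of the relevant Beilinson projectors.

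First, I would recall the setup from \secref{sss:preoperties}: the subcategory $\Shv_{\Nilp}(\Bun_{G,\sK,\sk}) \subset \Shv_{\Nilp}(\Bun_{G,\sK})$ is the direct summand cut out by the component $\LS^{\on{restr}}_{\cG,\sk} \hookrightarrow \LS^{\on{restr}}_{\cG,\sK}$, and correspondingly the idempotent $\sP_{\sK,\sk}$ is defined so that its essential image is precisely $\Shv_{\Nilp}(\Bun_{G,\sK,\sk})$ (this is the content of the preceding discussion, where $\sP_{\sK,\sk}$ is obtained from $\sP_\sK$ by replacing $\Gamma_!(\LS^{\on{restr}}_{\cG,\sK},-)$ with $\Gamma_!(\LS^{\on{restr}}_{\cG,\sk},-)$). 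Similarly, the essential image of $\sP_\sk$ is $\Shv_\Nilp(\Bun_{G,\sk})$.

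Now take $\CF \in \Shv_{\Nilp}(\Bun_{G,\sK,\sk})$. Since $\sP_{\sK,\sk}$ acts as the identity on its essential image, we have $\CF \simeq \sP_{\sK,\sk}(\CF)$. Applying $\on{Sp}_{\sK\to\sk}$ and invoking \propref{p:Sp and P}, we obtain
$$\on{Sp}_{\sK\to\sk}(\CF) \simeq \on{Sp}_{\sK\to\sk}\circ \sP_{\sK,\sk}(\CF) \simeq \sP_\sk\circ \on{Sp}_{\sK\to\sk}(\CF),$$
which exhibits $\on{Sp}_{\sK\to\sk}(\CF)$ as lying in the essential image of $\sP_\sk$, hence in $\Shv_\Nilp(\Bun_{G,\sk})$, as required.

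There is no serious obstacle: all the real work has been done in establishing \propref{p:Sp and Hecke} (compatibility of $\on{Sp}$ with Hecke functors), \lemref{l:Sp of ULA} (compatibility with ULA-twisting by evaluation sheaves), and the ensuing \propref{p:Sp and P}. The corollary is a one-line consequence of the characterization of $\Shv_\Nilp$ as the image of Beilinson's projector.
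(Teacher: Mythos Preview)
Your proposal is correct and matches the paper's approach exactly: the paper simply states the corollary as an immediate consequence of \propref{p:Sp and P} (writing ``From \propref{p:Sp and P} we obtain:''), and the subsequent remark spells out precisely the argument you give, namely that $\sP_{\sK,\sk}$ acts as the identity on $\Shv_{\Nilp}(\Bun_{G,\sK,\sk})$ so the intertwining relation forces the image into the essential image of $\sP_\sk$.
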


\begin{rem}

Note that \propref{p:Sp and P} implies that the functor \eqref{e:Sp BunG} sends all of
$\Shv_{\Nilp}(\Bun_{G,\sK})$ to $\Shv_\Nilp(\Bun_{G,\sk})$, while killing the direct summands
of $\Shv_{\Nilp}(\Bun_{G,\sK})$ that are supported off 
$$\LS^{\on{restr}}_{\cG,\sk}\subset \LS^{\on{restr}}_{\cG,\sK}.$$

This follows from the fact that the functor $\sP_{\sK,\sk}$ applied to $\Shv_{\Nilp}(\Bun_{G,\sK})$
acts as a projector on $\Shv_{\Nilp}(\Bun_{G,\sK,\sk})$. 

\end{rem}

\begin{rem}

One can also prove that the functor \eqref{e:Sp BunG} sends $\Shv_{\Nilp}(\Bun_{G,\sK})\to \Shv_{\Nilp}(\Bun_{G,\sk})$
as follows:

\medskip

It follows from \cite[Theorem 14.4.3]{AGKRRV1} that the subcategory
$$\Shv_\Nilp(\Bun_G)\subset \Shv(\Bun_G)$$
can be characterized as follows: it consists of those objects $\CF\in \Shv(\Bun_G)$ for which for every $V\in \Rep(\cG)$
the object
$$\on{H}_V(\CF)\in \Shv(\Bun_G\times X)$$
belongs to the full subcategory\footnote{In \cite[Theorem 14.4.3]{AGKRRV1}, this is formulated as belonging to 
$\Shv(\Bun_G)\otimes \on{qLisse}(X)\subset \Shv(\Bun_G\times X)$; however, the apparently weaker condition of belonging to
$\Shv(\Bun_G)\otimes \Shv(X)$ is equivalent to this stronger condition: objects in the essential image of $\on{H}_V$ are ULA
over $X$, and any object in $\Shv(\Bun_G)\otimes \Shv(X)$ that is ULA over $X$ lies in $\Shv(\Bun_G)\otimes \on{qLisse}(X)$.}
$$\Shv(\Bun_G)\otimes \Shv(X)\subset \Shv(\Bun_G\times X).$$

Now, this property is preserved by the functor $\on{Sp}_{\sK\to \sk}$ by \eqref{e:Sp Kunneth}. 

\end{rem} 

\sssec{}

Thus, thanks to \corref{c:Sp Nilp} we obtain the functor \eqref{e:Sp Nilp}. 

\medskip

Note that the functor \eqref{e:Sp Nilp} satisfies Property (E) from \secref{sss:preoperties} holds. In fact, 
the functor $\on{Sp}_{\sK\to \sk}$ is t-exact by \secref{sss:Sp is exact}.

\sssec{}

We are now ready to establish Property (B) (from \secref{sss:preoperties}).
Indeed, it follows from \propref{p:Sp and Hecke}, since the spectral action
is determined by the Hecke functors (see \cite[Corollary 12.8.4(b)]{AGKRRV1}).

\ssec{Properties (C) and (D) of the specialization functor}

\sssec{}

Note that the construction in \cite[Sect. 3.3]{GLC1} (see \secref{sss:recall Poinc} below for a review)
makes sense over $\sR_0$, and produces an object
$$\on{Poinc}^{\on{Vac}}_{!,\sR_0}\in \Shv(\Bun_{G,\sR_0}).$$

In \secref{ss:Poinc acycl} we will prove:

\begin{thm} \label{t:Poinc acycl}
The object $\on{Poinc}^{\on{Vac}}_{!,\sR_0}$ is ULA over $\Spec(\sR_0)$.
\end{thm}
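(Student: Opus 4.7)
The plan is to establish ULA of $\on{Poinc}^{\on{Vac}}_{!,\sR_0}$ over $\Spec(\sR_0)$ by unwinding its construction as a !-pushforward of a Whittaker character sheaf and then deploying either a contracting group action or the comparison with the *-Poincaré sheaf. First I would recall that $\on{Poinc}^{\on{Vac}}_!$ arises as a colimit (over the Ran space) of !-pushforwards to $\Bun_G$ of the Artin-Schreier sheaf pulled back along a Whittaker character map defined on an enhancement of $\Bun_N$. Since the Artin-Schreier sheaf on $\GG_{a,\sR_0}$ is tautologically ULA over $\Spec(\sR_0)$ (it is pulled back from $\Spec(\BZ)$), and the Whittaker character map is smooth, the character sheaf itself is ULA over $\Spec(\sR_0)$. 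The content of the theorem is that this ULA property survives the \emph{non-proper} !-pushforward to $\Bun_{G,\sR_0}$.

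The first approach follows the ``short proof'' strategy sketched in the introduction: exhibit a $\GG_m$-action on the Whittaker space which contracts it onto a locus where the !-pushforward is manifestly controlled and which preserves the Artin-Schreier character up to an invertible twist. Under the auxiliary assumption on $\on{char}(\sk)$ relative to $g(X)$ and $G$, such a $\GG_m$-action can be produced, and ULA-preservation of the !-pushforward then follows from the contraction principle, i.e., the $\Spec(\sR_0)$-version of \propref{p:preserve ULA}. The role of the characteristic hypothesis is precisely to guarantee that the relevant weights of the $\GG_m$-action are nonzero modulo $\ell$ (so that the character is genuinely contracted).

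The second (hypothesis-free) approach leverages \thmref{t:Poinc ! to *}, which identifies the cone of $\on{Poinc}^{\on{Vac}}_! \to \on{Poinc}^{\on{Vac}}_*$ with an object in the subcategory generated by the Eisenstein functors $\Eis^-_!$ for \emph{proper} parabolics $P^- \subsetneq G$. Granting this, I would combine three inputs: (a) ULA for $\on{Poinc}^{\on{Vac}}_{*,\sR_0}$, established via a Drinfeld-style compactification that turns its construction into a proper pushforward; (b) ULA-preservation under the Eisenstein functor $\Eis^-_!$, which is the companion acyclicity theorem proved via Zastava spaces; and (c) an induction on semisimple rank, applied to $\on{Poinc}^{\on{Vac}}_{!,M,\sR_0}$ for Levi subgroups $M$ of proper parabolics. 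Together (b) and (c) show that the Eisenstein contribution to the cone is ULA; combined with (a), this forces $\on{Poinc}^{\on{Vac}}_{!,\sR_0}$ itself to be ULA.

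The main obstacle is \thmref{t:Poinc ! to *} itself, whose proof must identify the cone in terms of Eisenstein series by analyzing the asymptotic behavior of the Whittaker sheaf as the nondegenerate character degenerates to one factoring through a proper parabolic. To feed into the present theorem, this analysis must be carried out \emph{in families over $\Spec(\sR_0)$}, so that the comparison morphism, the Eisenstein description of the cone, and the ULA of each piece are all genuinely compatible with the specialization from $\sK$ to $\sk$. Establishing this $\sR_0$-relative version is where essentially all of the technical work resides; once it is in hand, the deduction of \thmref{t:Poinc acycl} is formal.
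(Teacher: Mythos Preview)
Your proposal has the right high-level shape but both approaches contain genuine gaps as stated.

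For the first approach: the contraction is not on the Whittaker space but on $\Bun_G$ via Harder--Narasimhan strata. The paper factors $\sfp/T$ through the open substack $\Bun_G^{(\leq (2g-2)\rho)}$ (for $g\geq 2$), with the first map \emph{proper}; ULA-preservation under the remaining open extension then follows from contractiveness of the deep HN strata in the complement, via \propref{p:preserve ULA}. The characteristic assumption is not about weights modulo $\ell$: it is Assumption~(*) on \emph{strangeness~$0$} of the graded pieces $(\fn_P)_{\check\mu}$ as $M$-representations, a condition on $p=\on{char}(\sk)$ that guarantees these HN strata are contractive. The cases $g=1$ and $g=0$ are treated separately. (Also: over the mixed-characteristic base $\sR_0$ there is no Artin--Schreier sheaf, and the vacuum object is not a Ran colimit; the construction uses the Kirillov model from \cite[Sect.~3.3]{GLC1}.)

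For the second approach, step~(a) is a real gap: $\on{Poinc}^{\on{Vac}}_{*,\sR_0}$ is, up to shifts, the Verdier dual of $\on{Poinc}^{\on{Vac}}_{!,\sR_0}$, so its ULA property is \emph{equivalent} to the theorem you are trying to prove. The compactified map $\ol\sfp/T$ is indeed proper, but you still have to $*$-extend across the open embedding $\Bun_{N,\rho(\omega_X)}\hookrightarrow \ol\Bun_{N,\rho(\omega_X)}$, which does not preserve ULA without further input. The paper avoids this entirely: its second proof shows $\Phi(\on{Poinc}^{\on{Vac}}_{!,\sR_0})=0$ by splitting into cuspidal and Eisenstein parts. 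The Eisenstein part commutes $\Phi$ past $\on{CT}_*\simeq\on{CT}^-_!$, applies the explicit formula of \thmref{t:CT of Poinc} for $\on{CT}^-_!(\on{Poinc}^{\on{Vac}}_!)$, and concludes by induction on rank. The cuspidal part uses the \emph{enhanced} $!$-to-$*$ comparison \eqref{e:Poinc ! to * bis} (with a coefficient $\CG\in\Shv(\Spec(\sR_0))$) together with Beilinson's construction of nearby cycles, showing the relevant square becomes an isomorphism after $\be^L_{\on{cusp}}$. Note also that \thmref{t:Poinc ! to *} only places the cone in the Eisenstein subcategory; it does not exhibit it as $\Eis^-_!$ of a ULA object, so your steps (b)--(c) would also need to be reworked.
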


\sssec{}

By combining \thmref{t:Poinc acycl} with \propref{p:Sp and P} and \lemref{l:Sp of ULA},
we obtain Property (C). 

\sssec{}

Consider the stacks
$$\Bun_{P^-,\sR_0}\overset{j}\hookrightarrow \wt\Bun_{P^-,\sR_0}$$
and the projection
$$\wt\sfq^-:\wt\Bun_{P^-,\sR_0}\to \Bun_{M,\sR_0}.$$

\medskip

In \secref{ss:IC acycl} we will prove:

\begin{thm} \label{t:IC acycl}
The object 
$$j_!(\ul\sfe_{\Bun_{P^-,\sR_0}})\in \Shv(\wt\Bun_{P^-,\sR_0})$$
is ULA with respect to the projection $\wt\sfq^-$.
\end{thm}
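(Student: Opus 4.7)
The plan is to verify the ULA property via Zastava-space local models for $\wt\Bun_{P^-,\sR_0}$ over $\Bun_{M,\sR_0}$, combined with the contraction principle of \propref{p:preserve ULA}. Namely, for each $\theta$ in the appropriate positive part of the coweight lattice, one has a Zastava space $\mathcal{Z}^{-,\theta}_{\sR_0}$ parametrizing generically $P^-$-reductions of total defect $\theta$ of a trivial $G$-bundle on $X_{\sR_0}$; this space is equipped with a factorization projection to the symmetric power $X^\theta_{\sR_0}$, an open subscheme $\mathring{\mathcal{Z}}^{-,\theta}_{\sR_0}$ on which the defect is honestly zero (with open inclusion $j^{\mathcal{Z}}$), and a $\BG_m$-action, via a suitable cocharacter into $Z(M)$, that contracts $\mathcal{Z}^{-,\theta}_{\sR_0}$ onto the image of the diagonal embedding $X^\theta_{\sR_0} \hookrightarrow \mathcal{Z}^{-,\theta}_{\sR_0}$. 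Following the Feigin--Finkelberg--Kuznetsov--Mirkovi\'c local model, one obtains an \'etale-local identification of the pair $(\wt\Bun_{P^-,\sR_0}, j_!(\ul\sfe_{\Bun_{P^-,\sR_0}}))$, over $\Bun_{M,\sR_0}$, with $(\mathcal{Z}^{-,\theta}_{\sR_0} \times U, (j^{\mathcal{Z}})_!(\ul\sfe_{\mathring{\mathcal{Z}}^{-,\theta}_{\sR_0}}) \boxtimes \ul\sfe_U)$ for a smooth atlas $U \to \Bun_{M,\sR_0}$.

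Because ULA is \'etale-local on the source and stable under smooth base change on the target, this reduces the theorem to the assertion that $(j^{\mathcal{Z}})_!(\ul\sfe_{\mathring{\mathcal{Z}}^{-,\theta}_{\sR_0}})$ is ULA over $\Spec(\sR_0)$ for each $\theta$. For this I would invoke \propref{p:preserve ULA}: the $\BG_m$-action contracts $\mathcal{Z}^{-,\theta}_{\sR_0}$ onto the smooth closed subscheme $X^\theta_{\sR_0}$, and the object in question is $\BG_m$-monodromic by construction, so ULA over $\Spec(\sR_0)$ can be tested after restriction to the fixed locus $X^\theta_{\sR_0}$. This restriction is computable directly from the factorization structure on $\mathcal{Z}^{-,\theta}_{\sR_0}$: it decomposes, over each disjoint locus in $X^\theta_{\sR_0}$, as a suitable exterior product of shifted IC-type sheaves on the relevant symmetric powers of $X_{\sR_0}$. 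Since $X_{\sR_0}$ is smooth and proper over $\Spec(\sR_0)$, all such symmetric powers are as well, and ULA on the fixed locus is immediate.

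The main obstacle is checking that the FFKM local-model construction, together with the $\BG_m$-contraction and factorization structure on $\mathcal{Z}^{-,\theta}$, are available in the mixed-characteristic relative setting over $\Spec(\sR_0)$, rather than only over an algebraically closed field. In essence this is a bookkeeping matter: Zastava spaces and the defect map are defined in terms of generic reductions of torsors and torsion sheaves of prescribed length, both of which make sense relative to $\Spec(\sR_0)$, and the $\BG_m$-action is obtained by rescaling and is manifestly $\sR_0$-linear. Once these relative constructions are verified, the contraction principle reduces the problem to a transparent ULA assertion on symmetric powers of $X_{\sR_0}$, from which the desired ULA of $j_!(\ul\sfe_{\Bun_{P^-,\sR_0}})$ over $\Bun_{M,\sR_0}$ follows by smooth descent.
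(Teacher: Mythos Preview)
Your ingredients are the right ones---Zastava local models and the contraction principle---but you have misread \propref{p:preserve ULA}, and this creates a genuine gap. That proposition does \emph{not} say that ULA can be tested on the $\BG_m$-fixed locus. It says the opposite: if a sheaf on the \emph{open complement} $\CU=\CY\setminus\CZ$ of the contracting locus $\CZ$ is ULA, then its $!$-extension to $\CY$ is ULA. In your situation $\CY=\on{Zast}^{\theta}$ and $\CZ=X^{\theta}$, so the complement $\CU$ is $(\on{Zast}^{\theta})_{<\theta}$, which is strictly larger than the open stratum $\mathring{\on{Zast}}^{\theta}=(\on{Zast}^{\theta})_0$ whenever $\theta$ is not a simple coroot. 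To invoke \propref{p:preserve ULA} you must already know that $(j_{\on{Zast}})_!(\ul\sfe)$ is ULA on all of $(\on{Zast}^{\theta})_{<\theta}$, i.e.\ across every intermediate defect stratum---and this is precisely what a single application of contraction does not give you.

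The paper closes this gap by an induction on $\theta$: assuming ULA on $(\wt\Bun_{P^-})_{<\theta}$, one transfers to ULA on $(\on{Zast}^{\theta})_{<\theta}$ via the local-model argument (\lemref{l:interplay}), then applies \propref{p:preserve ULA} once to cross the single closed stratum $(\on{Zast}^{\theta})_{\theta}\simeq X^{\theta}$, and transfers back. Your proposal skips this induction and replaces it with a purported direct computation of the restriction to $X^{\theta}$ ``via factorization''. Even setting aside the misreading of \propref{p:preserve ULA}, that restriction is $i^*(j_{\on{Zast}})_!(\ul\sfe)\simeq(\pi|_{\mathring{\on{Zast}}^{\theta}})_!(\ul\sfe)$, the fiberwise compactly supported cohomology of the open Zastava over $X^{\theta}$; asserting that this is ULA over $\Spec(\sR_0)$ amounts to knowing that these fibers have the same cohomology in characteristic $0$ and characteristic $p$, which is essentially the content of the theorem and is not a consequence of $X^{\theta}$ being smooth and proper. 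So the factorization remark does not supply the missing step; the induction does.
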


\sssec{}

By combining \thmref{t:IC acycl} and \lemref{l:Sp of ULA} we deduce Property (D)
of \eqref{e:Sp Nilp}. 

\ssec{Specialization and temperedness}

In this subsection and formulate and prove a property of the functor $\on{Sp}_{\sK\to \sk}$
that describes its interaction with the \emph{temperization} functor.

\sssec{}

We claim:

\begin{prop} \label{p:Sp temp}
The functor 
 $$\on{Sp}_{\sK\to \sk}:\Shv(\Bun_{G,\sK}) \to \Shv(\Bun_{G,\sk})$$
 induces a functor
 $$\Shv(\Bun_{G,\sK})_{\on{temp}} \to \Shv(\Bun_{G,\sk})_{\on{temp}}$$
 so that the diagram
 $$
\CD
\Shv(\Bun_{G,\sK}) @>{\on{Sp}_{\sK\to \sk}}>> \Shv(\Bun_{G,\sk}) \\
@V{\bu^R}VV @VV{\bu^R}V \\
\Shv(\Bun_{G,\sK})_{\on{temp}} @>>>  \Shv(\Bun_{G,\sk})_{\on{temp}}
\endCD
$$
commutes.
\end{prop}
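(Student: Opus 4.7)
The plan is to mimic the strategy of \propref{p:Sp and P}: express the temperization functor $\bu^R$ Hecke-theoretically at a single point, and then invoke the compatibilities of $\on{Sp}_{\sK\to \sk}$ with Hecke operators that have already been established.

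First, I would fix an $\sR_0$-point $x$ of $X_{\sR_0}$, which induces compatible points of $X_\sK$ and $X_\sk$ via specialization; this common point is used to define the tempered subcategory on both sides. Following \cite[Sect. 7]{FR}, the functor $\bu^R$ on $\Shv(\Bun_G)$ admits a presentation as a filtered colimit of endofunctors, each assembled out of: (a) Hecke functors $\on{H}_{V,x}$ at the fixed point $x$ for varying $V\in \Rep(\cG)$, (b) tensor products with lisse sheaves of matrix-coefficient type, and (c) the structural units and counits of the $\Rep(\cG)$-module structure on $\Shv(\Bun_G)$. Exactly as in the proof of \propref{p:Sp and P}, I would verify that each of these building blocks commutes with $\on{Sp}_{\sK\to \sk}$: the Hecke functors by \propref{p:Sp and Hecke} (in the parametrized form of Remark \ref{r:Hecke param}), the tensor products by \lemref{l:Sp of ULA} (lisse sheaves on a point are visibly ULA over $\Spec(\sR_0)$), and the remaining structural operations tautologically. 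Passing to the colimit, which is preserved because $\on{Sp}_{\sK\to \sk}$ is continuous, I would obtain a canonical isomorphism
$$\on{Sp}_{\sK\to \sk} \circ \bu^R_\sK \simeq \bu^R_\sk \circ \on{Sp}_{\sK\to \sk}$$
of functors $\Shv(\Bun_{G,\sK}) \to \Shv(\Bun_{G,\sk})$, where $\bu^R$ is viewed as the idempotent endofunctor $\bu \circ \bu^R$.

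From this isomorphism both claims of the proposition would follow. Preservation of the tempered subcategory holds because $\CG$ is tempered if and only if the counit $\bu\bu^R(\CG) \to \CG$ is an isomorphism, a property visibly preserved under $\on{Sp}_{\sK\to \sk}$ given the displayed commutation. The asserted commutative square in the statement is then a direct reformulation of this compatibility, restricted to morphisms between the tempered subcategories.

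The main obstacle I anticipate is locating the precise Hecke-theoretic presentation of $\bu^R$ in \cite{FR} that matches the exact shape required. I expect this to be essentially the same format already used for $\sP$ in \cite[Sect. 13.1]{AGKRRV1}, specialized to the Whittaker-tempered projector at a fixed point $x$ rather than to the Nilpotent projector over $\Ran(X)$. Once that presentation is in hand, the remainder of the argument is a direct transcription of the proof of \propref{p:Sp and P}.
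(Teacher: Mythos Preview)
Your approach is in the right spirit and closely parallels the paper's \emph{second} proof, but the specific implementation differs in a way that matters. You propose to present $\bu\circ\bu^R$ itself as a filtered colimit of naive Hecke endofunctors $\on{H}_{V,x}$ and then commute term by term with $\on{Sp}_{\sK\to\sk}$. The obstacle you flag is real: such a colimit presentation of the temperization idempotent is not directly available in \cite{FR}, and producing it would require expressing the unit of the tempered part of $\Sph(G)_x$ (equivalently, $\Xi(\CO)$ in $\Sph^{\on{spec}}(\cG)_x$) as a colimit of objects in the image of naive Satake, which is extra work. The paper's second proof sidesteps this by working on the dual side: it invokes the characterization of $\ker(\bu^R)$ from \cite[Sect.~4.3]{FR} in terms of certain functors, observes that those functors commute with $\on{Sp}_{\sK\to\sk}$, and concludes that the kernel is preserved. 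Since $\bu^R$ is a localization, preserving the kernel is exactly what is needed for the commuting square. This is logically equivalent to what you want but avoids having to build a formula for the projector.

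The paper also gives a \emph{first} proof along a genuinely different route: rather than naive Hecke operators, it uses the full action of $\Sph^{\on{spec}}(\cG)_x\simeq\Sph(G)_x$ via derived Satake (which is what actually controls temperization, cf.\ \cite[Sect.~18.4]{AG1}). One then has to check that the monoidal endofunctor of $\Sph^{\on{spec}}(\cG)_x$ induced by $\on{Sp}_{\sK\to\sk}$ is an automorphism; this is a rigidity statement (\lemref{l:Satake rigid}) proved by de-equivariantizing and analyzing the induced map on $\on{Sym}(\cg[-2])$. Your approach does not touch derived Satake at all, which is an advantage in simplicity, but it then needs the kernel characterization from \cite{FR} (or your hoped-for presentation) to close the gap.
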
 

We will give two proofs of this proposition.  

\sssec{First proof}

Let $x\in X$ be a chosen point (which we use to define $\Shv_\Nilp(\Bun_{G})_{\on{temp}}$) and let 
$\Sph(G)_x$ is the spherical Hecke category at $x$ (see \cite[Sect. 1.5]{GLC2}).

\medskip 

Recall (see \cite[Sect. 18.4]{AG1}) that the colocalization 
$$\bu^R:\Shv(\Bun_{G})\leftrightarrows \Shv_\Nilp(\Bun_{G})_{\on{temp}}:\bu$$
can be characterized in terms of the action of the monoidal category 
$$\IndCoh_\Nilp(\on{pt}\underset{\cg}\times \on{pt}/\on{Ad}(\cG))$$
on $\Shv(\Bun_{G})$, obtained as a composition of the 
\emph{derived Satake} equivalence
$$\Sph^{\on{spec}}(\cG)_x:=\IndCoh_\Nilp(\on{pt}\underset{\cg}\times \on{pt}/\on{Ad}(\cG)) \overset{\on{Sat}_G}\simeq \Sph(G)_x$$
and the natural action of $\Sph(G)_x$ on $\Shv(\Bun_{G})$ by Hecke functors. 

\medskip

Thus, in order to prove \propref{p:Sp temp}, it suffices to show that the functor $\on{Sp}_{\sK\to \sk}$
intertwines the actions of $\Sph^{\on{spec}}(\cG)_x$ on $\Shv(\Bun_{G,\sK})$ and on $\Shv(\Bun_{G,\sk})$,
where we let $x$ be a section $\Spec(\sR_0)\to X_{\sR_0}$. 

\medskip

By construction, the functor
$$\on{Sp}_{\sK\to \sk}:\Sph(G)_{x,\sK}\to \Sph(G)_{x,\sk}$$
intertwines the action of $\Sph(G)_{x,\sK}$ on $\Shv(\Bun_{G,\sK})$ with the action of 
$\Sph(G)_{x,\sk}$ on $\Shv(\Bun_{G,\sk})$.
 
\medskip 

Hence, it remains to show that the endomorphism $\phi$ of $\Sph^{\on{spec}}(\cG)_x$ that makes the 
following diagram commute
$$
\CD
\Sph^{\on{spec}}(\cG)_x @>{\phi}>> \Sph^{\on{spec}}(\cG)_x \\
@V{\on{Sat}_G}V{\sim}V @V{\sim}V{\on{Sat}_G}V \\
\Sph(G)_{x,\sK} @>{\on{Sp}_{\sK\to \sk}}>> \Sph(G)_{x,\sk}
\endCD
$$
is actually an automorphism. 

\medskip

We note that $\phi$ has the following properties:

\begin{itemize}

\item It is monoidal;

\item It makes the diagram 
$$
\CD
\Rep(\cG)  @>{\on{id}}>> \Rep(\cG)  \\
@VVV @VVV \\
\Sph^{\on{spec}}(\cG)_x @>{\phi}>> \Sph^{\on{spec}}(\cG)_x 
\endCD
$$
commute (this follows from \thmref{t:Sat acycl});

\item It induces the identity map on endomorphisms of the unit object
$$\on{Sym}(\cg[-2])^\cG\simeq \End_{\Sph^{\on{spec}}(\cG)_x}(\one_{\Sph^{\on{spec}}(\cG)_x})\overset{\phi}\to 
\End_{\Sph^{\on{spec}}(\cG)_x}(\one_{\Sph^{\on{spec}}(\cG)_x})\simeq \on{Sym}(\cg[-2])^\cG.$$

\end{itemize}

The third points follow from the fact that the composite
\begin{multline*} 
\on{Sym}(\ft^*[-2])^W
\simeq \on{C}^\cdot(\on{pt}/G_\sK)\simeq \End_{\Sph(G)_{x,\sK}}(\one_{\Sph(G)_{x,\sK}})\overset{\on{Sp}_{\sK\to \sk}}\longrightarrow  \\
\to \End_{\Sph(G)_{x,\sK}}(\one_{\Sph(G)_{x,\sk}})
\simeq \on{C}^\cdot(\on{pt}/G_\sk)\simeq \on{Sym}(\ft^*[-2])^W
\end{multline*}
is the identity map, while the identification $\on{Sym}(\cg[-2])^\cG\simeq \End_{\Sph^{\on{spec}}(\cG)_x}(\one_{\Sph^{\on{spec}}(\cG)_x})$ equals 
\begin{multline*} 
\on{Sym}(\cg[-2])^\cG\simeq \on{Sym}(\check\ft[-2])^W\simeq \\
\simeq \on{Sym}(\ft^*[-2])^W\simeq \End_{\Sph(G)_x}(\one_{\Sph(G)_x})\overset{\on{Sat}_G}\simeq \End_{\Sph^{\on{spec}}(\cG)_x}(\one_{\Sph^{\on{spec}}(\cG)_x}).
\end{multline*} 

We now claim:

\begin{lem} \label{l:Satake rigid}
Any endomorphism of $\Sph^{\on{spec}}(\cG)_x$ that has the above three properties is an automorphism.
\end{lem}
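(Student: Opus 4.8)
The plan is to use the structure theory of the spectral spherical category $\Sph^{\on{spec}}(\cG)_x = \IndCoh_\Nilp(\on{pt}\underset{\cg}\times \on{pt}/\on{Ad}(\cG))$, whose central feature is that it is generated, as a module category over its Hochschild cohomology (equivalently, over $\on{Sym}(\cg[-2])^\cG$), by the image of $\Rep(\cG)$ under the tautological monoidal functor $\Rep(\cG)\to \Sph^{\on{spec}}(\cG)_x$. Concretely, I would first recall (from derived Satake, \cite{BF} / \cite{AG1}) that the monoidal category $\Sph^{\on{spec}}(\cG)_x$ is compactly generated by the objects $V\star \one$ for $V\in\Rep(\cG)^c$, and that every compact object is a finite colimit of shifts of such objects; equivalently, the small category $\Sph^{\on{spec}}(\cG)_x^c$ is the thick subcategory generated by the essential image of $\Rep(\cG)^c$. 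Then an endomorphism $\phi$ that (a) is monoidal, (b) restricts to the identity on $\Rep(\cG)$ via the tautological functor, and (c) is the identity on $\End(\one)\simeq \on{Sym}(\cg[-2])^\cG$ must, by these properties, act as the identity on each generator $V\star\one$ in a way compatible with the $\on{Sym}(\cg[-2])^\cG$-action and with the monoidal structure; since $\phi$ preserves a generating set and all the structure linking those generators, it is essentially surjective and fully faithful, hence an equivalence.

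In more detail, the key steps, in order, are: (1) Establish that $\phi$ preserves compactness and sends the generating objects $V\star \one$ to objects isomorphic (compatibly with the $\Rep(\cG)$-structure) to $V\star\one$ — this is immediate from property (b) together with monoidality, since $V\star\one = (\text{image of }V)\star\phi(\one)$ and $\phi(\one)\simeq\one$ as algebras. (2) Check that $\phi$ is the identity on morphism spaces between such generators: by monoidality it suffices to control $\Hom(V\star\one, W\star\one)\simeq \Hom(\one, (V^\vee\otimes W)\star\one)$, which as a module over $\End(\one)=\on{Sym}(\cg[-2])^\cG$ is finitely generated, and the action of $\phi$ on it is $\on{Sym}(\cg[-2])^\cG$-semilinear along property (c), i.e.\ actually linear; combined with compatibility with the $\Rep(\cG)$-action (property (b)) this pins down $\phi$ on these Hom-spaces as the identity — one uses here that the relevant Hom-modules are generated by the images of morphisms of $\cG$-representations, which is again part of derived Satake. (3) Conclude by thick-subcategory generation: $\phi$ is the identity on a set of compact generators and on all morphisms among them, hence it is naturally isomorphic to $\on{Id}$ on $\Sph^{\on{spec}}(\cG)_x^c$, and therefore (being continuous) on all of $\Sph^{\on{spec}}(\cG)_x$; in particular $\phi$ is an automorphism.

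The main obstacle I anticipate is Step (2): showing that properties (b) and (c) together genuinely force $\phi$ to be the identity on the Hom-complexes $\Hom_{\Sph^{\on{spec}}(\cG)_x}(V\star\one, W\star\one)$, rather than merely an automorphism of them fixing the $\Rep(\cG)$- and $\on{Sym}(\cg[-2])^\cG$-structures. The resolution should come from the precise statement of derived geometric Satake: these Hom-complexes are described explicitly (e.g.\ via the bar-type / Koszul-dual model for $\IndCoh_\Nilp(\on{pt}\times_\cg \on{pt}/\on{Ad}(\cG))$), and in that description every element is built functorially from morphisms of $\cG$-representations and from the generator of $\on{Sym}(\cg[-2])^\cG$ — so fixing those two ingredients fixes everything. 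An alternative, perhaps cleaner route for Step (2) is to note that the space of monoidal endofunctors of $\Sph^{\on{spec}}(\cG)_x$ equipped with the data (b) is itself controlled by a Hochschild-cohomology computation whose degree-zero part is exactly $\on{Sym}(\cg[-2])^\cG$-scalars acting trivially by (c); I would pursue whichever of these is shortest given what is already available in \cite{AG1} and \cite{AGKRRV1}.
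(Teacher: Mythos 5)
Your approach differs from the paper's in one essential respect: the paper \emph{de-equivariantizes} first, replacing $\Sph^{\on{spec}}(\cG)_x\simeq\on{Sym}(\cg[-2])\mod^{\cG}$ by $\on{Sym}(\cg[-2])\mod$ via $\Vect\underset{\Rep(\cG)}\otimes(-)$. Property (b) turns into the statement that the resulting $\cG$-equivariant endofunctor $\wt\phi$ fixes the compact generator $\on{Sym}(\cg[-2])$, and one is reduced to analyzing the induced algebra endomorphism of $\End(\on{Sym}(\cg[-2]))\simeq\on{Sym}(\cg[-2])$. An algebra endomorphism of a free graded-commutative algebra is determined by its restriction to the generators; the grading forces this to come from a linear map $\cg\to\cg$; $\cG$-equivariance forces it to be scalar on each simple factor; and property (c) (identity on $\on{Sym}(\cg[-2])^\cG$) pins down the scalar. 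You never get to this kind of pinpoint computation because you stay inside the equivariant category.

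The gap you flag in Step (2) is real, and the resolution you sketch does not work. You assert that the Hom-complexes $\Hom(V\star\one,W\star\one)$ are ``generated by the images of morphisms of $\cG$-representations and the generator of $\on{Sym}(\cg[-2])^\cG$.'' Take $V=W=\cg$ (the adjoint representation) and $G$ simple. Then $\Hom(\one,\cg\star\one)\simeq(\on{Sym}(\cg[-2])\otimes\cg)^\cG$, whose cohomological degree-$2$ component is the one-dimensional space spanned by the Killing form, i.e.\ the image of the canonical $\cg\hookrightarrow\cg\otimes\cg\otimes\cg\twoheadrightarrow\on{Sym}^1(\cg)\otimes\cg$. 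This class is \emph{not} in the $\on{Sym}(\cg[-2])^\cG$-submodule generated by $\Hom_{\Rep(\cG)}(\one,\cg)=\cg^\cG=0$: the invariants of $\on{Sym}(\cg[-2])^\cG$ start in degree $4$, and there is nothing in degree $0$ for them to act on. So your properties (b) and (c), interpreted literally as pinning down morphisms coming from reps and the endomorphisms of $\one$, leave a genuine one-parameter freedom on this Hom-space; it is exactly the scalar $c$ that the paper's de-equivariantized argument has to contend with (and kills using grading, simplicity, and property (c) on the degree-$4$ invariant). What you would really need is the full $\Rep(\cG)$-\emph{linear} structure on the Hom-complexes, which is precisely the data encoded by de-equivariantization --- your proof does not actually invoke it, and without it Step (2) cannot be completed.

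One further remark: even granting Step (2), your argument never uses the grading on $\on{Sym}(\cg[-2])$ or the decomposition of $\cg$ into simple factors. These are what actually rule out (or, for the purpose of the lemma, render harmless) the residual scalar; any proof of the lemma must engage with them somewhere, and yours does not.
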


The proof of the lemma is given in \secref{sss:Satake rigid} below.

\qed[First proof of \propref{p:Sp temp}]

\sssec{Second proof}

We have to show that the functor $\on{Sp}_{\sK\to \sk}$ sends 
$$\on{ker}(\bu^R):\Shv(\Bun_{G,\sK}) \overset{\bu^R}\twoheadrightarrow \Shv(\Bun_{G,\sK})_{\on{temp}}$$
to
$$\on{ker}(\bu^R):\Shv(\Bun_{G,\sk}) \overset{\bu^R}\twoheadrightarrow \Shv(\Bun_{G,\sk})_{\on{temp}}.$$

\medskip

Recall the characterization of the kernel of 
$$\Shv(\Bun_G) \overset{\bu^R}\to \Shv(\Bun_G)_{\on{temp}}$$
in \cite[Sect. 4.3]{FR}.

\medskip

The required assertion follows from the fact that the functors involved in this characterization commute with
$\on{Sp}_{\sK\to \sk}$. 

\qed[Second proof of \propref{p:Sp temp}]

\sssec{Proof of \lemref{l:Satake rigid}} \label{sss:Satake rigid}

We rewrite
$$\Sph^{\on{spec}}(\cG)_x\simeq \Sym(\cg[-2])\mod^{\cG}.$$

Using the first two properties, we can de-equivariantize both sides, i.e.,
apply $\Vect\underset{\Rep(\cG)}\otimes -$, and obtain a functor
$$\on{Sym}(\cg[-2])\mod \overset{\wt\phi}\to \on{Sym}(\cg[-2])\mod$$
with the following properties:

\begin{itemize}

\item It is $\cG$-equivariant;

\item It sends $\on{Sym}(\cg[-2])\in \on{Sym}(\cg[-2])\mod$ to itself;

\item It makes the diagram
$$
\CD
\on{Sym}(\cg[-2])^\cG @>{\on{id}}>> \on{Sym}(\cg[-2])^\cG  \\
@VVV @VVV \\
\on{Sym}(\cg[-2]) & & \on{Sym}(\cg[-2]) \\
@V{\sim}VV @V{\sim}VV \\
\End_{\on{Sym}(\cg[-2])\mod}(\on{Sym}(\cg[-2])) @>{\wt\phi}>> \End_{\on{Sym}(\cg[-2])\mod}(\on{Sym}(\cg[-2])) 
\endCD
$$
commute.

\end{itemize}

It suffices to show that the bottom horizontal arrow in this diagram is an isomorphism. This arrow is a map
of algebras, hence is determined by a map of vector spaces. 
$$\cg[-2]\to \on{Sym}(\cg[-2]).$$

The grading forces the latter map to come from a map of vector spaces $\cg\to \cg$. By $\cG$-equivariance,
the latter map acts as a scalar on each simple factor. 

\medskip

However, the commutation of the above diagram forces this scalar to be $1$.

\qed[\lemref{l:Satake rigid}]

\section{Proof of Property (A)} \label{s:unit}

The goal of this section is to establish Property (A) of the functor \eqref{e:Sp Nilp}. 

\ssec{The key input}

\sssec{}

Consider the object
\begin{equation} \label{e:diagonal}
(\Delta_{\Bun_{G,\sR_0}})_!(\ul\sfe_{\Bun_{G,\sR_0}})\in \Shv(\Bun_{G,\sR_0}\underset{\Spec(\sR_0)}\times \Bun_{G,\sR_0}).
\end{equation}

\sssec{}

The key input in the proof of Property (A) is provided by the following result:

\begin{thm} \label{t:diag acycl}
The object \eqref{e:diagonal} is ULA over $\Spec(\sR_0)$.
\end{thm}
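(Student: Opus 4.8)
I would reduce \thmref{t:diag acycl} to a statement about the Drinfeld--Lafforgue--Vinberg compactification $\ol{\Bun}_{G,\sR_0}$ of the diagonal, exactly as the introduction advertises. Recall that the diagonal map $\Delta_{\Bun_G}$ factors as
$$\Bun_G \hookrightarrow \ol{\Bun}_G \to \Bun_G\times\Bun_G,$$
where the first map is an open embedding and the second is proper. Hence $(\Delta_{\Bun_{G,\sR_0}})_!(\ul\sfe)$ is the proper pushforward of $\imathb_!(\ul\sfe_{\Bun_{G,\sR_0}})$, where $\imathb:\Bun_{G,\sR_0}\hookrightarrow\ol{\Bun}_{G,\sR_0}$ is the open inclusion. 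Since ULA is preserved under proper pushforward (over the base $\Spec(\sR_0)$, which makes no requirement on the target of the proper map), it suffices to prove that $\imathb_!(\ul\sfe_{\Bun_{G,\sR_0}})\in\Shv(\ol{\Bun}_{G,\sR_0})$ is ULA over $\Spec(\sR_0)$. Equivalently, by \remref{r:van cycles}, one must show the vanishing cycles $\Phi\big(\imathb_!(\ul\sfe_{\ol{\Bun}_{G,\sR_0}})\big)=0$, i.e. the natural map from the special fiber $\imathb_{\sk,!}(\ul\sfe_{\Bun_{G,\sk}})$ to $\on{Sp}_{\sK\to\sk}\big(\imathb_{\sK,!}(\ul\sfe_{\Bun_{G,\sK}})\big)$ is an isomorphism.

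**Main step: local models.** The ULA property is local in the smooth topology on $\ol{\Bun}_{G,\sR_0}$, so I would use the local models for $\ol{\Bun}_G$ developed by Schieder in \cite{Sch}: étale-locally (or smooth-locally, after passing to appropriate charts), $\ol{\Bun}_G$ together with its stratification and the open locus $\Bun_G$ is modeled on explicit finite-type schemes built from the Vinberg semigroup / degenerations, whose strata are indexed by the Levi data, and whose structure is ``the same'' over $\sR_0$ as over a point. Concretely, the relevant local model is a product of a smooth $\sR_0$-scheme with a standard singular chart — something like the Vinberg degeneration of $G$, or the space $\ol{T\backslash G}$ for $T$-torsors — and one is reduced to checking the ULA property of $j_!(\ul\sfe)$ for $j$ the open embedding of the non-degenerate locus in such a model. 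This is then handled by the \emph{contraction principle} (\propref{p:preserve ULA}): the local models carry a $\GG_m$-action (the Vinberg $\GG_m$ scaling the degeneration parameters, or the central $\GG_m$ of the Levi) contracting everything onto the closed stratum, and $j_!(\ul\sfe)$ of the open locus is monodromic/equivariant for this action in the appropriate way. The contraction principle then reduces the ULA statement to the closed fiber, where it is clear — and crucially, the contracting $\GG_m$-action and the monodromicity are defined already over $\sR_0$, so the argument is uniform in the base. This is structurally parallel to the proof of \thmref{t:IC acycl} (Zastava spaces) already cited in the paper.

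**The main obstacle.** The hard part will be the bookkeeping needed to apply the contraction principle correctly to $j_!(\ul\sfe)$ rather than to an IC sheaf: one must verify that $\imathb_!(\ul\sfe_{\Bun_{G,\sR_0}})$ — or its avatar on the local model — is a \emph{clean}/monodromic extension in the sense required by \propref{p:preserve ULA}, i.e. that its $!$-restrictions to the strata of $\ol{\Bun}_{G,\sR_0}$ behave well under the contracting flow. The singular support description quoted in the remark after \propref{p:Eis preserve Nilp} (that $\jmath_!(\ul\sfe_{\Bun_{P^-}})$ has singular support in the union of conormals to the strata) is precisely the kind of input needed, and its analog here must be verified over $\sR_0$. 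A secondary technical point is that $\ol{\Bun}_G$ is only locally of finite type, so the reduction to local models and the stability of ULA under proper pushforward must be phrased with some care about quasi-compactness (one restricts to the truncations $\ol{\Bun}_G^{\le\mu}$, which are quasi-compact, and checks the statement there). Once the local model reduction and the monodromicity of $j_!(\ul\sfe)$ are in hand, the contraction principle finishes the proof.
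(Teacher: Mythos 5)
Your overall strategy — factor the diagonal through the Drinfeld--Lafforgue--Vinberg compactification, reduce to Schieder's local models, and apply the contraction principle — is exactly the one the paper takes. However, the factorization you write down is imprecise in a way that has a real consequence: the open stratum of $\ol{\Bun}_G$ is not $\Bun_G$ but $\Bun_G\times\on{pt}/Z_G$ (after reducing to $G$ semi-simple). The diagonal thus factors as $\Bun_G=\Bun_G\times\on{pt}\to\Bun_G\times\on{pt}/Z_G\overset{j}\hookrightarrow\ol{\Bun}_G\overset{\ol\Delta}\to\Bun_G\times\Bun_G$, and only the middle arrow is an open embedding. Consequently the object you must prove ULA on $\ol{\Bun}_{G,\sR_0}$ is $j_!(\ul\sfe_{\Bun_{G,\sR_0}}\boxtimes R_{Z_G})$, with the regular representation of $Z_G$ appearing because of the $!$-pushforward of the constant sheaf along $\on{pt}\to\on{pt}/Z_G$. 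This is not a fatal problem for your plan, but it is a genuine bookkeeping step you omit, and it propagates through the rest of the argument (the $R_{Z_G}$ factor travels with the local-model sheaf).

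The second place your proposal is thin is the reduction to local models: the paper runs a double induction, first over the parabolic stratum $\ol{\Bun}_{G,P}$ of the Vinberg compactification, then over $\theta\in\Lambda^{\pos}_{G,P}$, and the local model is the Zastava-type open substack $Y^P\subset\Bun_P\underset{\Bun_G}\times\ol{\Bun}_G\underset{\Bun_G}\times\Bun_{P^-}$ cut out by the generic transversality condition, equipped with the $\BG_m$-contraction of $Y^{P,\theta}_{\geq P,\leq\theta}$ onto $Y^{P,\theta}_{P,\theta}$. You gesture at ``a product of a smooth scheme with a standard singular chart,'' which is not wrong in spirit but does not identify the actual model or the induction structure. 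You correctly flag the equivariance verification as the technical crux; in the paper it is precisely the \emph{quasi}-equivariance (equivariance after a finite self-isogeny of $\BG_m$) of $(j_Y)_!(\ul\sfe_{\overset{\circ}{\on{Zast}}_{\sR_0}}\boxtimes R_{Z_G})$ for the contracting action, which is then fed into \propref{p:preserve ULA}. Once these two points are tightened your argument lines up with the paper's.
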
 

The theorem will be proved in \secref{ss:diag acycl}. We now proceed with the proof of Property (A). 

\sssec{}

Combining \thmref{t:diag acycl} with \lemref{l:Sp of ULA}, we obtain:

\begin{cor} \label{c:Sp of diag}
The natural map 
$$(\Delta_{\Bun_{G,\sk}})_!(\ul\sfe_{\Bun_{G,\sk}})\simeq 
(\Delta_{\Bun_{G,\sk}})_!\circ \on{Sp}_{\sK\to \sk}(\ul\sfe_{\Bun_{G,\sK}})\to
\on{Sp}_{\sK\to \sk}\circ (\Delta_{\Bun_{G,\sK}})_!(\ul\sfe_{\Bun_{G,\sK}})$$
is an isomorphism.
\end{cor}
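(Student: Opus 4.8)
The plan is to deduce this immediately from \thmref{t:diag acycl} via \lemref{l:Sp of ULA}, applied in the special case described in Remark \ref{r:van cycles}. First I would take $f$ to be the structural morphism $\pi:\Bun_{G,\sR_0}\underset{\Spec(\sR_0)}\times \Bun_{G,\sR_0}\to \Spec(\sR_0)$ and set $\CF_{\CY,\sR_0}:=(\Delta_{\Bun_{G,\sR_0}})_!(\ul\sfe_{\Bun_{G,\sR_0}})$, which by \thmref{t:diag acycl} is ULA over $\Spec(\sR_0)$. Note that the $*$-restriction of $\CF_{\CY,\sR_0}$ to the special fiber $\Bun_{G,\sk}\underset{\Spec(\sk)}\times \Bun_{G,\sk}$ is $(\Delta_{\Bun_{G,\sk}})_!(\ul\sfe_{\Bun_{G,\sk}})$, and its $*$-restriction to the generic fiber is $(\Delta_{\Bun_{G,\sK}})_!(\ul\sfe_{\Bun_{G,\sK}})$: both identifications hold because $\Delta$ is a map of stacks over $\sR_0$ whose formation commutes with base change, and because $!$-pushforward along a (schematic, separated) map commutes with $*$-base change, while the constant sheaf pulls back to the constant sheaf.

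By Remark \ref{r:van cycles}, the ULA property of $\CF_{\CY,\sR_0}$ over $\Spec(\sR_0)$ is precisely the assertion that the natural comparison map
$$
\CF_{\CY,\sk}\to \on{Sp}_{\sK\to\sk}(\CF_{\CY,\sK})
$$
is an isomorphism; unwinding the identifications of the two fibers above, this is exactly the statement that
$$
(\Delta_{\Bun_{G,\sk}})_!(\ul\sfe_{\Bun_{G,\sk}})\to \on{Sp}_{\sK\to\sk}\circ(\Delta_{\Bun_{G,\sK}})_!(\ul\sfe_{\Bun_{G,\sK}})
$$
is an isomorphism. The only remaining point is to check that the comparison map produced by Remark \ref{r:van cycles} agrees, up to the stated identifications, with the map in the Corollary; this map is the one coming from the identity $\ul\sfe_{\Spec(\sk)}\simeq \on{Sp}_{\sK\to\sk}(\ul\sfe_{\Spec(\sK)})$ together with the commutation morphism $(\Delta_{\Bun_{G,\sk}})_!\circ\pi_{\Bun_{G,\sk}}^*\circ\on{Sp}_{\sK\to\sk}\to \on{Sp}_{\sK\to\sk}\circ(\Delta_{\Bun_{G,\sK}})_!\circ\pi_{\Bun_{G,\sK}}^*$ from item (6) of \secref{sss:Sp is exact}, and by construction of \lemref{l:Sp of ULA} these coincide.

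Since \thmref{t:diag acycl} is cited as known at this point, there is no real obstacle here; the content of the Corollary is entirely the ULA input, and the only mild care needed is the bookkeeping identifying the base-changed fibers of $(\Delta_{\Bun_{G,\sR_0}})_!(\ul\sfe_{\Bun_{G,\sR_0}})$ with the diagonal sheaves over $\sk$ and $\sK$, which is routine.
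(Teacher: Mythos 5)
Your proof is correct and follows exactly the paper's route: the paper deduces the corollary by combining \thmref{t:diag acycl} (ULA of the $!$-extended diagonal sheaf) with \lemref{l:Sp of ULA}, which in the absolute case $\CZ_{\sR_0}=\Spec(\sR_0)$ reduces, via \remref{r:van cycles}, to precisely the assertion that the comparison map is an isomorphism. Your additional bookkeeping about identifying fibers and matching the comparison map with the one from item (6) of \secref{sss:Sp is exact} is fine but implicit in the paper's treatment.
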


Using the commutation of specialization with Verdier duality, from \corref{c:Sp of diag} we obtain:

\begin{cor} \label{c:Sp diag omega} 
The canonical map
$$\on{Sp}_{\sK\to \sk}\circ (\Delta_{\Bun_{G,\sK}})_*(\omega_{\Bun_{G,\sK}})\to
(\Delta_{\Bun_{G,\sk}})_*\circ \on{Sp}_{\sK\to \sk}(\omega_{\Bun_{G,\sK}})\simeq 
(\Delta_{\Bun_{G,\sk}})_*(\omega_{\Bun_{G,\sk}})$$
is an isomorphism. 
\end{cor}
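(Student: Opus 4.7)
The plan is to deduce this corollary from \corref{c:Sp of diag} by a formal application of Verdier duality, using the commutation $\BD_\sk \circ \on{Sp}_{\sK\to\sk} \simeq \on{Sp}_{\sK\to\sk} \circ \BD_\sK$ on constructible objects recorded as \eqref{e:Sp and Verdier 0}. Although $\Bun_G$ is not quasi-compact, its diagonal is locally of finite type, so on any quasi-compact open substack $W \subset \Bun_{G,\sR_0}\underset{\Spec(\sR_0)}\times \Bun_{G,\sR_0}$ the restriction of $(\Delta_{\Bun_{G,\sR_0}})_!(\ul\sfe_{\Bun_{G,\sR_0}})$ is constructible. Since the claim of the corollary is local on $\Bun_{G,\sk}\underset{\Spec(\sk)}\times \Bun_{G,\sk}$, I would choose an exhaustion by quasi-compact opens and argue on each of them, where the Verdier duality formalism applies without reservation.

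The core of the proof is then to apply $\BD$ to the isomorphism
$$(\Delta_{\Bun_{G,\sk}})_!(\ul\sfe_{\Bun_{G,\sk}})\overset{\sim}\to \on{Sp}_{\sK\to\sk}\circ (\Delta_{\Bun_{G,\sK}})_!(\ul\sfe_{\Bun_{G,\sK}})$$
supplied by \corref{c:Sp of diag}. The general identity $\BD\circ f_!\simeq f_*\circ \BD$ together with the definition $\omega=\BD(\ul\sfe)$ sends the left-hand side to $(\Delta_{\Bun_{G,\sk}})_*(\omega_{\Bun_{G,\sk}})$, while \eqref{e:Sp and Verdier 0} sends the right-hand side to $\on{Sp}_{\sK\to\sk}\circ (\Delta_{\Bun_{G,\sK}})_*(\omega_{\Bun_{G,\sK}})$. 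Since Verdier duality is contravariant, this yields an isomorphism going in the direction demanded by the corollary.

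It remains to identify the isomorphism produced this way with the canonical arrow in the statement, which is assembled from the Beck--Chevalley natural transformation $\on{Sp}\circ f_*\to f_*\circ \on{Sp}$ of \eqref{e:Sp and dir im} together with the isomorphism $\on{Sp}_{\sK\to\sk}(\omega_{\Bun_{G,\sK}})\simeq \omega_{\Bun_{G,\sk}}$. This compatibility is exactly the content of \remref{r:sp and tw arr}, which records that the exchange transformations $f_!\circ \on{Sp}\to \on{Sp}\circ f_!$ and $\on{Sp}\circ f_*\to f_*\circ \on{Sp}$ are coherently interchanged under Verdier duality. I expect the main, though largely bookkeeping, obstacle to be carrying out this identification cleanly in the non-quasi-compact setting, which is handled via the exhaustion reduction of the first paragraph.
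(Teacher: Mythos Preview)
Your proposal is correct and follows exactly the paper's approach: the paper's entire argument is the single sentence ``Using the commutation of specialization with Verdier duality, from \corref{c:Sp of diag} we obtain,'' which is precisely the dualization you carry out. Your additional care about the non-quasi-compact exhaustion and the identification of the dualized arrow with the canonical Beck--Chevalley map are reasonable elaborations (though \remref{r:sp and tw arr} is not quite the right pointer for the latter; the compatibility of the two transformations in \eqref{e:Sp and dir im} under Verdier duality is the standard fact encoded already in property \eqref{i:sp 6} / diagram \eqref{e:Sp and Verdier}).
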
 

\ssec{Harder-Narasimhan strata}

\sssec{}

Recall the notion of \emph{contractive} locally closed substack, see \cite[Sect. 5.2.1]{DG1}. 

\medskip

For $\theta\in \Lambda^+_\BQ$, let 
$$\Bun_G^{(\leq \theta)} \subset \Bun_G$$
be the open union of Harder-Narasimhan strata, as defined in \cite[Sect. 7.3.4]{DG1}.

\medskip

The main technical result of the paper \cite{DG1}, proved in Sect. 9.3 of {\it loc. cit.}, says that for all $\theta$ large enough 
and $\theta'\geq \theta$, the (locally) closed substack
$$\Bun_G^{(\leq \theta')}-\Bun_G^{(\leq \theta)}\subset \Bun_G^{(\leq \theta')}$$
is contractive.

\medskip

The proof of this result applies equally well in the relative situation over $\Spec(\sR_0)$. 

\sssec{}

Let $\theta\leq \theta'$ be as above and consider the corresponding open embedding
$$\jmath_{\theta,\theta'}:\Bun_G^{(\leq \theta)}\hookrightarrow \Bun_G^{(\leq \theta')}.$$

We are going to prove:

\begin{prop} \label{p:Sp trunc}
The natural transformations
$$(\jmath_{\theta,\theta'})_!\circ \on{Sp}_{\sK\to \sk}\to \on{Sp}_{\sK\to \sk}\circ (\jmath_{\theta,\theta'})_!$$
and
$$\on{Sp}_{\sK\to \sk}\circ (\jmath_{\theta,\theta'})_*\to (\jmath_{\theta,\theta'})_*\circ \on{Sp}_{\sK\to \sk}$$
are isomorphisms.
\end{prop}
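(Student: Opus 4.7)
The plan is to reduce each assertion to a vanishing on the closed complement via recollement, then apply the contractive-stratification technology of \cite{DG1}. First, by the compatibility of $\on{Sp}_{\sK\to\sk}$ with Verdier duality on constructible objects (item \eqref{i:sp 6} of \secref{sss:Sp is exact} together with Remark \ref{r:sp and tw arr}), the two natural transformations in the proposition are mutually Verdier dual on constructibles, so it suffices to treat the $(\jmath_{\theta,\theta'})_!$ case. Let $i_{\sR_0}: Z_{\sR_0} \hookrightarrow \Bun_{G,\sR_0}^{(\leq \theta')}$ denote the closed embedding complementary to $\jmath_{\theta,\theta'}$. By the open/closed recollement, the natural transformation is an isomorphism if and only if
$$i_\sk^{*}\circ \on{Sp}_{\sK\to \sk}\circ (\jmath_{\theta,\theta'})_!(\CF) = 0$$
for every $\CF$, its $\jmath_{\theta,\theta'}^*$-restriction being automatically an isomorphism by the étale pullback compatibility in \eqref{e:Sp and inv im}.

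To deduce this vanishing, I will use that the proof of \cite[Sect.\ 9.3]{DG1}, which constructs a $\BG_m$-action exhibiting $Z$ as contractive inside $\Bun_{G}^{(\leq \theta')}$, applies verbatim in the relative setting over $\Spec(\sR_0)$, since the construction there is algebraic and built from parabolic reduction stacks defined integrally. One thereby obtains a $\BG_m$-equivariant morphism
$$\psi: \mathbb{A}^1_{\sR_0}\underset{\Spec(\sR_0)}\times Z_{\sR_0}\to \Bun_{G,\sR_0}^{(\leq \theta')}$$
whose restriction to $\{0\} \times Z_{\sR_0}$ is $i_{\sR_0}$ and whose restriction to $\BG_m \times Z_{\sR_0}$ factors through $\Bun_{G,\sR_0}^{(\leq \theta)}$. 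The contraction principle (\cite[Sect.\ 5.2]{DG1}) then expresses $i^*\mathcal{H}$, for any sheaf $\mathcal{H}$ on $\Bun_{G,\sR_0}^{(\leq\theta')}$, as a composition of smooth pullback along $\psi$, proper pushforward along a compactification of the projection $\mathbb{A}^1 \times Z \to Z$, and restriction to a point, each of which commutes with $\on{Sp}_{\sK\to\sk}$ by items (4)--(5) of \secref{sss:Sp is exact}. Consequently
$$i_\sk^*\circ \on{Sp}_{\sK\to\sk}\simeq \on{Sp}_{\sK\to\sk}\circ i_\sK^*$$
on $\Shv(\Bun_{G,\sK}^{(\leq \theta')})$, and applying both sides to $(\jmath_{\theta,\theta'})_!(\CF)$ yields $\on{Sp}_{\sK\to \sk}(0) = 0$, as required.

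The main obstacle is the precise formulation of the contraction principle in the relative setting so as to manifestly exhibit $i^*$ as a composite of operations that commute with $\on{Sp}_{\sK\to\sk}$. This amounts to verifying that the auxiliary $\BG_m$-action of \cite[Sect.\ 9.3]{DG1} is defined over $\Spec(\sR_0)$, and that the compactification of the $\mathbb{A}^1$-projection used in the contraction principle can be arranged so that any contribution from the divisor at infinity is itself given by smooth pullback and proper pushforward from data living in the open stratum (where it is killed by $j_!$). These are structural checks once the contractive geometry of \cite{DG1} is in place.
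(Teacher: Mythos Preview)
Your overall strategy---reducing via recollement to the statement that the natural transformation $i^*\circ\on{Sp}_{\sK\to\sk}\to\on{Sp}_{\sK\to\sk}\circ i^*$ is an isomorphism---matches the paper's (this is exactly \propref{p:Sp contr}). The gap is in how you propose to prove that intermediate statement.

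The map $\psi:\BA^1_{\sR_0}\times Z_{\sR_0}\to\Bun_{G,\sR_0}^{(\leq\theta')}$ you describe, with $\psi|_{\{0\}\times Z}=i$ and $\psi|_{\BG_m\times Z}$ landing in the open stratum, does not exist. Contractiveness in the sense of \cite[Sect.~5.2.1]{DG1} means that, smooth-locally, the pair $(\Bun_G^{(\leq\theta')},Z)$ is modeled on $(\BA^n/\BG_m\times Z_0,\,\{0\}/\BG_m\times Z_0)$ with $\BG_m$ acting on $\BA^n$ by strictly positive weights. In this model the $\BG_m$-flow sends points of the \emph{open} stratum toward $Z$ as $t\to 0$, not the reverse; there is no canonical one-parameter deformation of points of $Z$ into the open. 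Moreover, even granting such a $\psi$, it would have no reason to be smooth, and the composite you describe (pullback along $\psi$, then pushforward along a compactified $\BA^1\times Z\to Z$, then restriction to a point) does not compute $i^*$ in any formulation of the contraction principle I am aware of; the contraction principle rather expresses $i^*$ as $\pi_!$ for the limit map $\pi:\CY\to\CZ$, which is neither smooth nor proper.

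The paper's argument instead uses smooth descent to pass to the local model $\CY=\BA^n/\BG_m\times Z_0$, $\CZ=\on{pt}/\BG_m\times Z_0$ (this is the setting of \propref{p:Sp contr}), reduces via blow-up to $n=1$, and then verifies $i^*\circ\on{Sp}\simeq\on{Sp}\circ i^*$ by hand on a set of generators of $\Shv(\BA^1/\BG_m\times Z_0)$: objects of the form $(f_m)_*(\CF_{\BA^1})\boxtimes\CF_{Z_0}$ with $\CF_{\BA^1}\in\{\delta_{0,\BA^1},\,\ul\sfe_{\BA^1}\}$. For these explicit objects both functors are immediately computable.
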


\begin{proof}

We will prove the first isomorphism, as the second one is similar. 

\medskip

It is enough to prove the assertion after applying pullback with respect to a smooth surjective map.
Hence, by the definition of contractiveness, it is enough to prove the more general \propref{p:Sp contr} 
below.

\end{proof}

\sssec{} \label{sss:contr}

Let $\CY_{\sR_0}$ and $\CZ_{\sR_0}$ be as in \cite[Sect. 5.1.1]{DG1}. Denote
$$\CU_{\sR_0}:=\CY_{\sR_0}-\CZ_{\sR_0}\overset{\jmath}\hookrightarrow \CY_{\sR_0}.$$

\begin{prop} \label{p:Sp contr}
The natural transformation
$$\jmath_!\circ \on{Sp}_{\sK\to \sk}\to \on{Sp}_{\sK\to \sk}\circ \jmath_!$$
is an isomorphism.
\end{prop}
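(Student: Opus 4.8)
\textbf{Proof plan for \propref{p:Sp contr}.}
The setup is that $\CZ_{\sR_0}\hookrightarrow \CY_{\sR_0}$ is a contractive closed substack in the sense of \cite[Sect. 5.1.1]{DG1}, meaning there is a one-parameter ``contraction'' $\BG_m$-action (or, more precisely, a $\BA^1$-family) that retracts a neighborhood of $\CZ_{\sR_0}$ onto $\CZ_{\sR_0}$, all defined over $\Spec(\sR_0)$. The plan is to use the contraction principle together with the base-change and $\BA^1$-invariance properties of the specialization functor recorded in \secref{sss:Sp is exact}. The first step is to reduce the claim to the statement that $\on{Sp}_{\sK\to\sk}$ commutes with the $!$-restriction functor $\ui^!$ along $\CZ\hookrightarrow\CY$ up to the expected natural transformation being an isomorphism; indeed, $\jmath_!\circ\jmath^!\to\on{Id}\to \ui_*\circ\ui^*$ together with the general compatibilities \eqref{e:Sp and dir im}--\eqref{e:Sp and inv im} shows that the failure of $\jmath_!$ to commute with $\on{Sp}_{\sK\to\sk}$ is measured by the failure of $\ui^!$ (equivalently $\ui^*$, after the contraction-principle identification) to commute with it.

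The second, and main, step is to invoke the contraction principle: for a contractive pair, the functor $\ui^!$ on the relevant subcategory is computed by $!$-pushforward along the contracting flow, i.e.\ $\ui^!\simeq (p_{\CZ})_*\circ(\text{restriction to the attracting locus})[\text{shift}]$, where $p_{\CZ}$ is the projection from the attractor to $\CZ$; dually $\ui^*$ is computed by $*$-pullback-then-pushforward along the same flow (this is exactly the mechanism used in \cite[Sect. 5.2]{DG1}, and the mechanism \propref{p:preserve ULA} will later package). Since the contracting $\BG_m$-action is defined over $\sR_0$, each of the functors appearing in this description — proper (in the stacky, cohomologically proper sense) pushforward along the flow, and $*$- or $!$-pullback along the open/closed embeddings of the attracting locus — commutes with $\on{Sp}_{\sK\to\sk}$ by the corresponding item in \secref{sss:Sp is exact} (properness gives the isomorphism in \eqref{e:Sp and dir im}; smoothness/open immersions give it in \eqref{e:Sp and inv im}). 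Assembling these identifications gives that $\on{Sp}_{\sK\to\sk}$ intertwines the contraction-principle formulas on the two sides, hence commutes with $\ui^!$, hence (by the reduction of step one) with $\jmath_!$.

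The main obstacle I expect is \emph{not} the formal bookkeeping but checking that the contraction principle itself holds in the relative non-Noetherian setting over $\Spec(\sR_0)$ (and then over $\Spec(\sR)$), and that the attracting/repelling loci and the flow are well-behaved enough that the cited properness statements genuinely apply fiberwise over $\Spec(\sR_0)$ — in particular that ``contractive over $\sR_0$'' is stable under the base changes to $\sk$ and $\sK$ and that the flow map is ind-proper in the required sense. The cleanest way around this is to reduce to affine schemes of finite type over $\sR_0$ by smooth descent (using that $\on{Sp}_{\sK\to\sk}$ commutes with smooth pullback, the first bullet of the reductions in \secref{s:construct funct}), where the contraction principle is a finite-type statement and the nearby-cycles description of $\on{Sp}_{\sK\to\sk}$ applies directly; there $\BA^1$-invariance of nearby cycles and base change along the $\BG_m$-action close the argument. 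Once \propref{p:Sp contr} is in hand, \propref{p:Sp trunc} follows by pulling back along a smooth surjective chart, as indicated, and then the Verdier-duality compatibility \eqref{e:Sp and Verdier 0} gives the $(\jmath_{\theta,\theta'})_*$ statement for free.
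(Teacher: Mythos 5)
Your first reduction — from commutation of $\jmath_!$ with $\on{Sp}_{\sK\to\sk}$ to commutation of $\imath^*$ (equivalently $\imath^!$) — coincides with the paper's. From there, however, your argument diverges, and the divergent step has a real gap.

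You propose to compute $\imath^!$ (resp.\ $\imath^*$) via the contraction principle, i.e.\ as $\pi_*$ (resp.\ $\pi_!$) along the contraction map $\pi:\CY\to\CZ$, and then to conclude commutation with $\on{Sp}_{\sK\to\sk}$ by appealing to the properness and smoothness compatibilities of \secref{sss:Sp is exact}. But the contraction map $\pi$ is \emph{not} proper — already in the basic model $\pi:\BA^1/\BG_m\to\on{pt}/\BG_m$ it is an affine non-finite map — so the isomorphism $\on{Sp}_{\sK\to\sk}\circ\pi_*\simeq\pi_*\circ\on{Sp}_{\sK\to\sk}$ (or the dual statement for $\pi_!$) does not follow from \eqref{e:Sp and dir im}. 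Your parenthetical appeal to cohomological properness ``in the stacky sense'' does not help here: the nearby-cycles/specialization functor commutes with $f_*$ when $f$ is representably proper, not merely when $f_*$ preserves compactness, and indeed the failure of nearby cycles to commute with non-proper $f_*$ is precisely the phenomenon that vanishing cycles measure. To salvage your route one would need to invoke, as a theorem in its own right, the compatibility of nearby cycles with hyperbolic localization (in the style of Richarz's ``Spaces with $\BG_m$-action, hyperbolic localization and nearby cycles'') — which is a genuine additional input, not a formal consequence of the properties listed in \secref{sss:Sp is exact}. Your closing appeal to ``$\BA^1$-invariance of nearby cycles'' is likewise not a formal property of $\Psi$ and again amounts to this theorem.

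The paper avoids this issue entirely by proceeding more concretely: after reducing to $\imath^*$, it reduces (via \cite[Sects.~5.1.3--5.1.5]{DG1} and a blow-up) to the explicit local model $\CZ=\on{pt}/\BG_m\times Z$, $\CY=\BA^1/\BG_m\times Z$, and then checks the natural transformation on the specific generators $(f_m)_*(\CF_{\BA^1})\boxtimes\CF_Z$ with $\CF_{\BA^1}\in\{\delta_{0,\BA^1},\sfe_{\BA^1}\}$. On these generators the computation is elementary, using only Künneth \eqref{e:Sp Kunneth}, compatibility with the finite map $f_m$, and the trivial action of $\on{Sp}_{\sK\to\sk}$ on $\delta_0$ and the constant sheaf. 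This buys a self-contained argument that never needs the nearby-cycles/hyperbolic-localization compatibility theorem. Your approach, once supplemented by the missing reference, would give a more conceptual (if less elementary) proof; as written it has a gap at the properness step.
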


\begin{proof}

Let $\imath$ denote the closed embedding $\CZ_{\sR_0}\to \CY_{\sR_0}$. It is enough to show that
the natural transformation
\begin{equation} \label{e:Sp contr}
\imath^*\circ \on{Sp}_{\sK\to \sk}\to \on{Sp}_{\sK\to \sk}\circ \imath^*,
\end{equation} 
as functors $\Shv(\CY_\sK)\to \Shv(\CZ_\sk)$, 
is an isomorphism.

\medskip

As in \cite[Sects. 5.1.3-5.1.5]{DG1}, we can assume that\footnote{In the formulas below, $\on{pt}:=\Spec(\sR_0)$, and similarly
for $\BA^1$ and $\BG_m$.} $\CZ=\on{pt}/\BG_m\times Z$ and 
$\CY=\BA^n/\BG_m\times Z$ for some base $Z$, where $\BG_m$ acts on $\BA^n$ via the
$m$th power of the standard character, where $m>0$. Further, applying blow-up 
(see \cite[Sect. 5.1.6]{DG1}), we can assume that $n=1$. 

\medskip

It is enough to prove that the map \eqref{e:Sp contr} is an isomorphism on the generators.
We take the generators to be of the form
$$(f_m)_*(\CF_{\BA^1})\boxtimes \CF_Z,$$
where 

\begin{itemize}

\item $f_m$ is the ``raising to the power $m$" map $\BA^1\to \BA^1$;

\item $\CF_Z\in \Shv(Z)$,

\item $\CF_{\BA^1}\in \Shv(\BA^1)^{\BG_m}$ is either $\delta_{0,\BA^1}$ or $\sfe_{\BA^1}$.

\end{itemize}

In the above cases, the fact that \eqref{e:Sp contr} is an isomorphism is evident. 

\end{proof} 

\ssec{Specialization for the ``co"-category}

\sssec{} \label{sss:Sp on co}

Recall the category $\Shv(\Bun_G)_{\on{co}}$, see \cite[Sects. 2.5 and C.2]{AGKRRV2}. By definition,
$$\Shv(\Bun_G)_{\on{co}}\simeq \underset{\theta\in \Lambda^+_\BQ}{\on{colim}}\, \Shv(\Bun_G^{(\leq \theta)}),$$
where the colimit is taken with respect to the functors $(\jmath_{\theta,\theta'})_*$.

\medskip

From \propref{p:Sp trunc} we obtain that there exists a well-defined functor
\begin{equation} \label{e:Sp for co}
\on{Sp}^{\on{co}}_{\sK\to \sk}:\Shv(\Bun_{G,\sK})_{\on{co}}\to \Shv(\Bun_{G,\sk})_{\on{co}}
\end{equation} 
that makes the following diagrams commute
\begin{equation} \label{e:Sp and j co}
\CD
\Shv(\Bun_{G,\sK})_{\on{co}} @>{\on{Sp}^{\on{co}}_{\sK\to \sk}}>> \Shv(\Bun_{G,\sk})_{\on{co}} \\
@A{(\jmath_\theta)_{\on{co,*}}}AA @AA{(\jmath_\theta)_{\on{co,*}}}A \\
\Shv(\Bun_{G,\sK}^{(\leq \theta)}) @>{\on{Sp}_{\sK\to \sk}}>> \Shv(\Bun_{G,\sk}^{(\leq \theta)}),
\endCD
\end{equation}
where:

\begin{itemize}

\item $\jmath_\theta$ denotes the open embedding $\Bun_G^{(\leq \theta)}\hookrightarrow \Bun_G$;

\item $(\jmath_\theta)_{\on{co,*}}$ denotes the corresponding tautological functor $\Shv(\Bun_G^{(\leq \theta)}) \to \Shv(\Bun_G)_{\on{co}}$
(not to be confused with $(\jmath_\theta)_*:\Shv(\Bun_G^{(\leq \theta)}) \to \Shv(\Bun_G)$.

\end{itemize}

\sssec{}

Moreover, the following diagram commutes tautologically
\begin{equation} \label{e:nv Ps-Id}
\CD
\Shv(\Bun_{G,\sK})_{\on{co}} @>{\on{Sp}^{\on{co}}_{\sK\to \sk}}>> \Shv(\Bun_{G,\sk})_{\on{co}} \\
@VV{\on{Ps-Id}^{\on{nv}}}V  @VV{\on{Ps-Id}^{\on{nv}}}V \\
\Shv(\Bun_{G,\sK})  @>{\on{Sp}_{\sK\to \sk}}>> \Shv(\Bun_{G,\sk}),
\endCD
\end{equation}
where 
$$\on{Ps-Id}^{\on{nv}}:\Shv(\Bun_G)_{\on{co}}\to \Shv(\Bun_G)$$
is as in \cite[Sect. C.2.3]{AGKRRV2}.

\sssec{}

Recall that for $\CF\in \Shv(\Bun_G)^c$, its Verdier dual is well-defined as an object
$$\BD_{\Bun_G}(\CF)\in  \Shv(\Bun_G)_{\on{co}}.$$

\medskip

The commutation of specialization with Verdier duality implies that for $\CF\in \Shv(\Bun_{G,\sK})^c$
we have a canonical isomorphism 
\begin{equation} \label{e:Sp verdier co}
\on{Sp}^{\on{co}}_{\sK\to \sk}(\BD_{\Bun_{G,\sK}}(\CF))\simeq \BD_{\Bun_{G,\sk}}(\on{Sp}_{\sK\to \sk}(\CF))
\end{equation} 
as objects in $\Shv(\Bun_{G,\sk})_{\on{co}}$.

\sssec{}

We now consider the ``mixed" category
$$\Shv(\Bun_G\times \Bun_G)_{\on{co}_2},$$
defined in \cite[Sect. C.4.2]{AGKRRV2}.

\medskip

By definition,
$$\Shv(\Bun_G\times \Bun_G)_{\on{co}_2}=\underset{\theta\in \Lambda^+_\BQ}{\on{colim}}\, \Shv(\Bun_G\times \Bun_{G,\sK}^{(\leq \theta)}),$$
where the colimit is taken with respect to the functors
$(\on{id}\times \jmath_{\theta,\theta'})_*$.

\medskip

According to \cite[Sect. C.4.3]{AGKRRV2}, the functor
\begin{equation} \label{e:mxd as limit}
\Shv(\Bun_G\times \Bun_G)_{\on{co}_2}\to
\underset{\CU}{\on{lim}}\, \Shv(\CU\times \Bun_G)_{\on{co}}
\end{equation}
is an equivalence, where:

\begin{itemize}

\item $\CU$ runs over the poset of quasi-compact open substacks of $\Bun_G$;

\item For $\CU\subset \CU'$, the functor $\Shv(\CU'\times \Bun_G)_{\on{co}}\to \Shv(\CU\times \Bun_G)_{\on{co}}$
is given by restriction.

\end{itemize}

\medskip

The discussion in \secref{sss:Sp on co} applies equally to the mixed situation.

\medskip

In addition, we have a tautologically defined functor
$$\on{Ps-Id}^{\on{nv}}:\Shv(\Bun_G\times \Bun_G)_{\on{co}_2}\to \Shv(\Bun_G\times \Bun_G)$$
and the corresponding counterpart of diagram \eqref{e:nv Ps-Id} commutes. 

\sssec{}

We will consider the corresponding functor
$$\on{Sp}_{\sK\to \sk}^{\on{co}_2}:
\Shv(\Bun_{G,\sK}\times \Bun_{G,\sK})_{\on{co}_2}\to \Shv(\Bun_{G,\sk}\times \Bun_{G,\sk})_{\on{co}_2}.$$

\sssec{}

Recall now the object
$$(\Delta_{\Bun_G})^{\on{fine}}_*(\omega_{\Bun_G})\in \Shv(\Bun_G\times \Bun_G)_{\on{co}_2},$$
defined in \cite[Sect. C.4.6]{AGKRRV2}.

\medskip 

In terms of the equivalence \eqref{e:mxd as limit}, the restriction of $(\Delta_{\Bun_G})^{\on{fine}}_*(\omega_{\Bun_G})$
to a given $\CU\times \Bun_G$ is
$$(\on{id}\times \jmath)_{\on{co},*}\circ (\Delta_\CU)_*(\omega_\CU), \quad \jmath:\CU\hookrightarrow \Bun_G.$$

We have
$$\on{Ps-Id}^{\on{nv}}((\Delta_{\Bun_G})^{\on{fine}}_*(\omega_{\Bun_G}))\simeq (\Delta_{\Bun_G})_*(\omega_{\Bun_G}),$$
as objects of $\Shv(\Bun_G\times \Bun_G)$. 

\sssec{}

We are going to prove:

\begin{prop} \label{p:Sp omega fine}
There exist a unique isomorphism
\begin{equation} \label{e:Sp omega fine}
\on{Sp}^{\on{co}_2}_{\sK\to \sk}\left((\Delta_{\Bun_{G,\sK}})^{\on{fine}}_*(\omega_{\Bun_{G,\sK}})\right)\simeq 
(\Delta_{\Bun_{G,\sk}})^{\on{fine}}_*(\omega_{\Bun_{G,\sk}})
\end{equation}
as objects of $\Shv(\Bun_{G,\sk}\times \Bun_{G,\sk})_{\on{co}_2}$ that makes the diagram
$$
\CD
\on{Ps-Id}^{\on{nv}}\circ \on{Sp}^{\on{co}_2}_{\sK\to \sk}\left((\Delta_{\Bun_{G,\sK}})^{\on{fine}}_*(\omega_{\Bun_{G,\sK}})\right)
@>{\sim}>{\text{\eqref{e:Sp omega fine}}}>
\on{Ps-Id}^{\on{nv}}\circ (\Delta_{\Bun_{G,\sk}})^{\on{fine}}_*(\omega_{\Bun_{G,\sk}}) \\
@V{\sim}VV @VV{\sim}V \\
\on{Sp}_{\sK\to \sk}\circ (\Delta_{\Bun_{G,\sK}})_*(\omega_{\Bun_{G,\sK}}) @>{\sim}>{\text{\corref{c:Sp diag omega}}}> 
(\Delta_{\Bun_{G,\sk}})_*(\omega_{\Bun_{G,\sk}})
\endCD
$$
commute.
\end{prop}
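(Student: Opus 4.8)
The plan is to reduce the claim to the already-established isomorphisms on the "uncompactified'' side and then use the defining colimit/limit presentations of the mixed category $\Shv(\Bun_G \times \Bun_G)_{\on{co}_2}$. First I would unwind both sides using the equivalence \eqref{e:mxd as limit}, which expresses $\Shv(\Bun_G\times \Bun_G)_{\on{co}_2}$ as $\underset{\CU}{\on{lim}}\, \Shv(\CU\times \Bun_G)_{\on{co}}$ over quasi-compact opens $\CU \subset \Bun_G$. On the $\CU$-component, by the formula recalled just before the statement, $(\Delta_{\Bun_G})^{\on{fine}}_*(\omega_{\Bun_G})$ restricts to $(\on{id}\times \jmath)_{\on{co},*}\circ (\Delta_\CU)_*(\omega_\CU)$, where $\jmath:\CU\hookrightarrow \Bun_G$. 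So it suffices to produce, compatibly in $\CU$, an isomorphism
$$
\on{Sp}^{\on{co}}_{\sK\to \sk}\bigl((\on{id}\times \jmath)_{\on{co},*}\circ (\Delta_{\CU_\sK})_*(\omega_{\CU_\sK})\bigr)
\simeq (\on{id}\times \jmath)_{\on{co},*}\circ (\Delta_{\CU_\sk})_*(\omega_{\CU_\sk}).
$$
Here $\CU$ is quasi-compact, hence the diagonal $\Delta_\CU$ is of finite type (a quasi-compact quasi-separated morphism), and $\on{Sp}_{\sK\to \sk}$ genuinely makes sense on $\Shv(\CU\times \CU)$ without the $\Nilp$-caveat. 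The commutation of $\on{Sp}^{\on{co}}_{\sK\to \sk}$ with the structure functors $(\jmath_\theta)_{\on{co},*}$ is exactly the content of diagram \eqref{e:Sp and j co}, so the constructions assemble into the colimit $\Shv(\Bun_G)_{\on{co}}$ and then, via \eqref{e:mxd as limit}, into $\Shv(\Bun_G\times \Bun_G)_{\on{co}_2}$.

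The essential point is therefore the local (quasi-compact) statement: that $\on{Sp}_{\sK\to \sk}$ commutes with $(\Delta_{\CU})_*(\omega_{\CU})$ when $\CU$ is quasi-compact. I would deduce this from \corref{c:Sp diag omega}. Strictly, \corref{c:Sp diag omega} is stated for $\Bun_G$ itself, but its proof rests on \thmref{t:diag acycl} — the ULA property of $(\Delta_{\Bun_{G,\sR_0}})_!(\ul\sfe_{\Bun_{G,\sR_0}})$ over $\Spec(\sR_0)$ — combined with \lemref{l:Sp of ULA} and compatibility of $\on{Sp}$ with Verdier duality \eqref{e:Sp and Verdier 0}. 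The same argument applies verbatim over any quasi-compact open $\CU_{\sR_0}\subset \Bun_{G,\sR_0}$: the restriction of a ULA object to an open remains ULA, and on a quasi-compact stack $\BD$ and Verdier duality behave as expected, so $(\Delta_{\CU_\sk})_!(\ul\sfe_{\CU_\sk}) \xrightarrow{\sim} \on{Sp}_{\sK\to \sk}\circ (\Delta_{\CU_\sK})_!(\ul\sfe_{\CU_\sK})$ and dually for $(\Delta_\CU)_*(\omega_\CU)$. Pushing forward along $\on{id}\times \jmath$ (a further open embedding in the second factor) and using that $\on{Sp}^{\on{co}}$ commutes with $(\jmath)_{\on{co},*}$ (\eqref{e:Sp and j co}) produces the desired isomorphism on each $\CU$-component.

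For the uniqueness and the compatibility with $\on{Ps-Id}^{\on{nv}}$: the functor $\on{Ps-Id}^{\on{nv}}: \Shv(\Bun_G\times \Bun_G)_{\on{co}_2}\to \Shv(\Bun_G\times \Bun_G)$ and its commutation with $\on{Sp}$ and $\on{Sp}^{\on{co}_2}$ (the $\on{co}_2$-analogue of \eqref{e:nv Ps-Id}) are already recorded. Since $\on{Ps-Id}^{\on{nv}}\bigl((\Delta_{\Bun_G})^{\on{fine}}_*(\omega_{\Bun_G})\bigr)\simeq (\Delta_{\Bun_G})_*(\omega_{\Bun_G})$, applying $\on{Ps-Id}^{\on{nv}}$ to \eqref{e:Sp omega fine} and matching it with \corref{c:Sp diag omega} is a diagram chase, and it is where one pins down the isomorphism. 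For uniqueness I would argue that $\on{Ps-Id}^{\on{nv}}$, restricted to the relevant piece of $\Shv(\Bun_G\times\Bun_G)_{\on{co}_2}$ supported near the diagonal, is conservative in the sense needed — more precisely, an endomorphism of $(\Delta_{\Bun_G})^{\on{fine}}_*(\omega_{\Bun_G})$ is determined by its image under $\on{Ps-Id}^{\on{nv}}$, since the latter is fully faithful on objects built from proper pushforward of the constant sheaf along the diagonal (the diagonal of each $\CU$ being of finite type and $\on{Ps-Id}^{\on{nv}}$ being the identity on the quasi-compact pieces up to the colimit). Spelling this last point out carefully — identifying exactly which full subcategory $\on{Ps-Id}^{\on{nv}}$ is fully faithful on, and checking our object lands there — is the step I expect to be the main obstacle; everything else is a combination of \thmref{t:diag acycl}, \lemref{l:Sp of ULA}, the Verdier-duality compatibility \eqref{i:sp 6}, and the colimit/limit bookkeeping already set up in \cite[Sect. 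C.4]{AGKRRV2}.
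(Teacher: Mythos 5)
Your proposal is correct and follows essentially the same route as the paper's proof: reduce via the limit presentation \eqref{e:mxd as limit} to quasi-compact opens (the paper takes the cofinal family $\Bun_G^{(\leq\theta)}$), commute $\on{Sp}^{\on{co}}$ past $(\jmath_\theta)_{\on{co},*}$ using \eqref{e:Sp and j co}, invoke \corref{c:Sp diag omega}, and then conclude via the fact that $\on{Ps-Id}^{\on{nv}}$ is fully faithful on the essential image of $(\on{id}\times\jmath_\theta)_{\on{co},*}:\Shv(\Bun_{G,\sk}^{(\leq\theta)}\times\Bun_{G,\sk}^{(\leq\theta)})\to\Shv(\Bun_{G,\sk}^{(\leq\theta)}\times\Bun_{G,\sk})_{\on{co}}$. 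The fully-faithfulness you flagged as the possible obstacle is indeed exactly the fact the paper uses (without further elaboration), so your plan contains no genuine gap.
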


\begin{proof}

Using the equivalence \eqref{e:mxd as limit}, we have to show that there exists a unique family
of isomorphisms
\begin{equation} \label{e:Sp omega fine U}
\on{Sp}^{\on{co}_2}_{\sK\to \sk}\left((\Delta_{\Bun_{G,\sK}})^{\on{fine}}_*(\omega_{\Bun_{G,\sK}})\right)|_{\CU_\sk\times \Bun_{G,\sk}}\simeq
(\Delta_{\Bun_{G,\sk}})^{\on{fine}}_*(\omega_{\Bun_{G,\sk}})|_{\CU_\sk\times \Bun_{G,\sk}}
\end{equation} 
as objects of $\Shv(\CU_\sk\times \Bun_{G,\sk})_{\on{co}}$, compatible via $\on{Ps-Id}^{\on{nv}}$ with the isomorphism
$$\on{Sp}_{\sK\to \sk}\circ (\Delta_{\Bun_{G,\sK}})_*(\omega_{\Bun_{G,\sK}})|_{\CU_\sk\times \Bun_{G,\sk}}  \simeq
(\Delta_{\Bun_{G,\sk}})_*(\omega_{\Bun_{G,\sk}})|_{\CU_\sk\times \Bun_{G,\sk}},$$
induced by the isomorphism of \corref{c:Sp diag omega}, where $\CU$ runs over the poset of quasi-compact open substacks of $\Bun_G$. 

\medskip

We can replace the poset of all $\CU$ by a cofinal one that consists of the substacks 
$\Bun_G^{(\leq \theta)}$ for $\theta$ sufficiently large. 

\medskip

The left-hand side in \eqref{e:Sp omega fine U} is by definition
\begin{equation} \label{e:Sp omega fine 1}
\on{Sp}^{\on{co}_2}_{\sK\to \sk}\circ (\on{id}\times \jmath_\theta)_{\on{co},*}\circ 
(\Delta_{\Bun_{G,\sK}^{(\leq \theta)}})_*(\omega_{\Bun_{G,\sK}^{(\leq \theta)}}).
\end{equation} 

Using \eqref{e:Sp and j co}, we write it as 
\begin{equation} \label{e:Sp omega fine 2}
(\on{id}\times \jmath_\theta)_{\on{co},*}\left(\on{Sp}_{\sK\to \sk} \circ (\Delta_{\Bun_{G,\sK}^{(\leq \theta)}})_*(\omega_{\Bun_{G,\sK}^{(\leq \theta)}})\right).
\end{equation} 

The right-hand in \eqref{e:Sp omega fine U} is
\begin{equation} \label{e:Sp omega fine 3}
(\on{id}\times \jmath_\theta)_{\on{co},*}\left((\Delta_{\Bun_{G,\sk}^{(\leq \theta)}})_*(\omega_{\Bun_{G,\sk}^{(\leq \theta)}})\right).
\end{equation}

By \corref{c:Sp diag omega}, the expressions in \eqref{e:Sp omega fine 2} and \eqref{e:Sp omega fine 3} become isomorphic after applying the functor
$$\on{Ps-Id}^{\on{nv}}:\Shv(\Bun_{G,\sk}^{(\leq \theta)}\times \Bun_{G,\sk})_{\on{co}}\to 
\Shv(\Bun_{G,\sk}^{(\leq \theta)}\times \Bun_{G,\sk}).$$

Now, the required assertion follows from the fact that the functor $\on{Ps-Id}^{\on{nv}}$ is fully faithful on the essential image of
$$(\on{id}\times \jmath_\theta)_{\on{co},*}:\Shv(\Bun_{G,\sk}^{(\leq \theta)}\times \Bun_{G,\sk}^{(\leq \theta)})\to
\Shv(\Bun_{G,\sk}^{(\leq \theta)}\times \Bun_{G,\sk})_{\on{co}}.$$

\end{proof}

\ssec{Method of proof}

In this subsection we launch the proof of Property (A) proper. 

\sssec{} \label{sss:op functor}

The proof is based on the following principle:

\medskip

Let $F:\bC_1\to \bC_2$ be a functor between compactly generated
categories. Assume that $F$ preserves compactness, so it admits a continuous right adjoint, denoted $F^R$.
Denote 
$$F^{\on{op}}:=(F^R)^\vee, \quad \bC_1^\vee\to \bC_2^\vee.$$

Explicitly, identifying
$$\bC^\vee_i:=\on{Ind}((\bC_i^c)^{\on{op}}),$$
the functor $F^{\on{op}}$ is obtained by ind-extending the same-named functor on compact objects.

\sssec{}

Consider the tautological map
\begin{equation} \label{e:map of units}
(F\otimes F^{\on{op}})(\bu_{\bC_1})\to \bu_{\bC_2},
\end{equation} 
where $\bu_{\bC_i}\in \bC_i\otimes \bC^\vee_i$ is the unit of the self-duality. 

\medskip

The map \eqref{e:map of units} is characterized as follows. For $\bc_1\in \bC_1^c$ with formal
dual $\bc_1^\vee\in \bC_1^\vee$, the diagram
\begin{equation} \label{e:char map of units}
\CD
(F\otimes F^{\on{op}})(\bu_{\bC_1}) @>{\text{\eqref{e:map of units}}}>> \bu_{\bC_2} \\
@AAA @AAA \\
(F\otimes F^{\on{op}})(\bc_1\otimes \bc_1^\vee) @>{\sim}>> F(\bc_1)\otimes F(\bc_1)^\vee,
\endCD
\end{equation} 
commutes, where the vertical arrows are the canonical maps
\begin{equation} \label{e:map to unit}
\on{can}_\bc:\bc\otimes \bc^\vee\to u_{\bC}, \quad \bc\in \bC^c
\end{equation} 
for a compactly generated category $\bC$.

\begin{rem} 

Properly speaking, the object $u_{\bC}\in \bC\otimes \bC^\vee$ can be characterized as the colimit
over the the category of twisted arrows in $\bC^c$ that sends
$$(\phi:\bc'' \to \bc')\in \on{TwArr}(\bC^c)\, \mapsto (\bc'\otimes (\bc'')^\vee)\in \bC\otimes \bC^\vee.$$

Indeed, for $\phi:\bc'' \to \bc'$ as above, the corresponding map $\bc'\otimes (\bc'')^\vee\to u_\bC$
is either of the circuits in the following commutative diagram
$$
\CD
\bc'\otimes (\bc'')^\vee @>{\on{id}\otimes \phi^\vee}>> \bc'\otimes (\bc')^\vee \\
@V{\phi\otimes \on{id}}VV  @VV{\on{can}_{\bc'}}V \\
\bc''\otimes (\bc'')^\vee @>{\on{can}_{\bc''}}>>  u_{\bC}.
\endCD
$$

In what follows, we have chosen to simplify the exposition by not explicitly
mentioning the twisted arrows functoriality and working only with a single object at a
time. To make the discussion complete, one replaces the eventual reference to 
\eqref{e:Sp and Verdier} 
with a reference to \remref{r:sp and tw arr}.

\end{rem}

\sssec{}

We have:

\begin{lem} \label{l:criter for loc}
The functor $F$ is a Verdier quotient if and only if the map \eqref{e:map of units} is an isomorphism.
\end{lem}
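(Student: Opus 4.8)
The statement is a general categorical fact about a compactness-preserving functor $F:\bC_1\to\bC_2$ between compactly generated DG categories, and the key point is to translate "Verdier quotient" into a statement about the image of the unit object under $F\otimes F^{\on{op}}$. Recall that $F$ is a Verdier quotient precisely when its right adjoint $F^R$ is fully faithful, i.e.\ when the counit $F\circ F^R\to \on{Id}_{\bC_2}$ is an isomorphism. So the plan is to show: \emph{the counit $F\circ F^R\to\on{Id}_{\bC_2}$ is an isomorphism if and only if \eqref{e:map of units} is an isomorphism.} The bridge between the two is the standard "integral kernel" dictionary: a continuous functor $\bC_2\to\bC_2$ is the same datum as an object of $\bC_2\otimes\bC_2^\vee$, and under this dictionary $F\circ F^R$ corresponds to $(F\otimes F^{\on{op}})(\bu_{\bC_1})$ while $\on{Id}_{\bC_2}$ corresponds to $\bu_{\bC_2}$.

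\textbf{Key steps.} First I would record that since $F$ preserves compactness, $F^R$ is continuous, so $F\circ F^R$ is a continuous endofunctor of $\bC_2$ and is therefore classified by a well-defined object $K_{F\circ F^R}\in\bC_2\otimes\bC_2^\vee$ (using that $\bC_2$ is compactly generated and hence dualizable). Second, I would identify this kernel with $(F\otimes F^{\on{op}})(\bu_{\bC_1})$: the endofunctor $\bC_1\to\bC_1$ classified by $\bu_{\bC_1}$ is the identity; applying $F$ on the left and $F^R$ on the right (equivalently, $F^{\on{op}}=(F^R)^\vee$ on the dual factor) is precisely the operation on kernels that corresponds to postcomposing with $F$ and precomposing with $F^R$ at the level of functors; so $K_{F\circ F^R}\simeq (F\otimes F^{\on{op}})(\bu_{\bC_1})$. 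This identification should be set up so that under it the canonical map \eqref{e:map of units} goes over to the natural transformation of \emph{kernels} induced by the counit $F\circ F^R\to\on{Id}_{\bC_2}$; this compatibility is exactly what diagram \eqref{e:char map of units} encodes, since the map $\on{can}_\bc:\bc\otimes\bc^\vee\to\bu_\bC$ is the kernel-level incarnation of evaluating a functor on $\bc$. Third, I would invoke the fact that the assignment (continuous endofunctor) $\leftrightarrow$ (kernel) is an equivalence of categories, so a natural transformation of endofunctors is an isomorphism if and only if the induced map of kernels is; applying this to $F\circ F^R\to\on{Id}_{\bC_2}$ and its kernel map \eqref{e:map of units} finishes the proof.

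\textbf{Main obstacle.} The only genuinely delicate point is the second step: carefully matching the kernel of the composite $F\circ F^R$ with $(F\otimes F^{\on{op}})(\bu_{\bC_1})$ \emph{together with} its comparison map to $\bu_{\bC_2}$, i.e.\ checking that the abstract identification of kernels intertwines the counit of the adjunction $(F,F^R)$ with the tautological map \eqref{e:map of units}. This is where one must be honest about the functoriality in twisted arrows alluded to in the remark: the cleanest way is to present $\bu_{\bC_1}$ as the colimit $\on{colim}_{(\phi:\bc''\to\bc')\in\on{TwArr}(\bC_1^c)}\,\bc'\otimes(\bc'')^\vee$, observe that $F\otimes F^{\on{op}}$ commutes with this colimit (both $F$ and $F^{\on{op}}$ being continuous), and then note that on each term $\bc'\otimes(\bc'')^\vee$ the comparison is the content of the commutative square displayed in the remark, with $F$ applied. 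Everything else — that fully faithful right adjoint is equivalent to Verdier quotient, that compactly generated categories are dualizable, that endofunctors are kernels — is standard and can be cited. I expect the proof to be short once this bookkeeping is arranged.
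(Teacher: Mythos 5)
Your proposal is correct and follows essentially the same route as the paper's proof: both reduce ``Verdier quotient'' to ``the counit $F\circ F^R\to\on{Id}_{\bC_2}$ is an isomorphism,'' both transport this to a map of kernels in $\bC_2\otimes\bC_2^\vee$ via dualizability (using $F^R=(F^{\on{op}})^\vee$ to slide the unit across), and both identify the resulting map with \eqref{e:map of units}. The one small difference is that you flag, and sketch how to discharge via the twisted-arrows presentation of $\bu_{\bC_1}$, the verification that the abstract identification of kernels actually intertwines the counit with the tautological map; the paper asserts this with ``Under this identification, the map becomes...'' and leaves the check implicit.
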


\begin{proof}

We need to show that the counit of the adjunction
$$F\circ F^R\to \on{Id}$$
is an isomorphism. I.e., we have to show that the map
\begin{equation} \label{e:counit adj}
((F\circ F^R)\otimes \on{Id})(\bu_{\bC_2})\to \bu_{\bC_2}
\end{equation} 
is an isomorphism. 

\medskip

Note that $F^R$ identifies with the functor dual to $F^{\on{op}}$. Hence, we can rewrite
\begin{multline*} 
((F\circ F^R)\otimes \on{Id})(\bu_{\bC_2})=
(F\otimes \on{Id})\circ (F^R\otimes \on{Id})\circ(\bu_{\bC_2})\simeq 
(F\otimes \on{Id})\circ ((F^{\on{op}})^\vee\otimes \on{Id})\circ(\bu_{\bC_2})\simeq \\
\simeq (F\otimes \on{Id})\circ (\on{Id}\otimes F^{\on{op}})(\bu_{\bC_1})=
(F\otimes F^{\on{op}})(\bu_{\bC_1}).
\end{multline*}

Under this identification, the map \eqref{e:counit adj} becomes the map \eqref{e:map of units}.

\end{proof}

\sssec{}

We will show that the situation described above takes place for
$$\bC_1:=\Shv_\Nilp(\Bun_{G,\sK}),\,\, \bC_1:=\Shv_\Nilp(\Bun_{G,\sk}),\,\, F=\on{Sp}_{\sK\to \sk}.$$

\sssec{}

First, recall that the functor \eqref{e:Sp Nilp} preserves compactness, see \secref{sss:Sp compactness}
(this relies on Properties (B), (C) and (D) , which are proved independently of Property (A)). 

\medskip

Next, recall that (when working over a field), according to \cite[Sects. 2.5.8, Corollary 2.6.5 and Proposition 2.7.6]{AGKRRV2}, the dual of the category 
$\Shv_\Nilp(\Bun_G)$ identifies with
$$\Shv_\Nilp(\Bun_G)_{\on{co}}\subset \Shv(\Bun_G)_{\on{co}}$$
in such a way that the unit of the duality is the object
\begin{equation} \label{e:unit Nilp}
\sP((\Delta_{\Bun_G})^{\on{fine}}_*(\omega_{\Bun_G})),
\end{equation}
where: 

\begin{itemize}

\item $\sP$ denotes here Beilinson's projector, applied along the first factor;

\medskip

\item The object in \eqref{e:unit Nilp}, which a priori lies in $\Shv(\Bun_G\times \Bun_G)_{\on{co}_2}$ 
belongs in fact to
$$\Shv_\Nilp(\Bun_G)\otimes \Shv_\Nilp(\Bun_G)_{\on{co}}\subset \Shv(\Bun_G\times \Bun_G)_{\on{co}_2}.$$ 

\end{itemize}

\sssec{} \label{sss:Verdier on Nilp}

Furthermore (still working over a field), the corresponding equivalence
$$(\Shv_\Nilp(\Bun_G)^c)^{\on{op}}\simeq (\Shv_\Nilp(\Bun_G)_{\on{co}})^c$$
is induced by the Verdier duality equivalence
$$(\Shv(\Bun_G)^c)^{\on{op}}\simeq (\Shv(\Bun_G)_{\on{co}})^c,$$
where we are using the fact that the embedidng
$$\Shv_\Nilp(\Bun_G)\hookrightarrow \Shv(\Bun_G)^c$$
preserves compactness, by \thmref{t:compactness}.

\sssec{}

A variant of the above discussion applies to $\Shv_\Nilp(\Bun_{G,\sK,\sk})$, where we only need to replace
$\sP$ by $\sP_{\sK,\sk}$. 

\sssec{}

We claim that the functor $\on{Sp}^{\on{co}}_{\sK\to \sk}$ of \eqref{e:Sp for co} sends
$$\Shv_\Nilp(\Bun_{G,\sK,\sk})_{\on{co}}\to \Shv_\Nilp(\Bun_{G,\sk})_{\on{co}}$$
and identifies with $(\on{Sp}_{\sK\to \sk})^{\on{op}}$ (in the notations of \secref{sss:op functor}).

\medskip

Indeed, this follows by combining \eqref{e:Sp verdier co} and \secref{sss:Verdier on Nilp}. 

\sssec{}

Thus, by \lemref{l:criter for loc}
 it remains to show that the map \eqref{e:map of units}, which in our case is the map
\begin{equation} \label{e:map of units to verify}
(\on{Sp}_{\sK\to \sk}\otimes \on{Sp}^{\on{co}}_{\sK\to \sk})
\left(\sP_{\sK,\sk}((\Delta_{\Bun_{G,\sK}})^{\on{fine}}_*(\omega_{\Bun_{G,\sK}}))\right)\to
\sP_{\sk}((\Delta_{\Bun_{G,\sk}})^{\on{fine}}_*(\omega_{\Bun_{G,\sk}})),
\end{equation}
is an isomorphism in $\Shv_\Nilp(\Bun_{G,\sk})\otimes \Shv_\Nilp(\Bun_{G,\sk})_{\on{co}}$. 

\ssec{Verification}

\sssec{}

Applying the (fully faithful) functor
\begin{multline*} 
\Shv_\Nilp(\Bun_{G,\sk})\otimes \Shv_\Nilp(\Bun_{G,\sk})_{\on{co}}\to \\
\to \Shv(\Bun_{G,\sk})\otimes \Shv(\Bun_{G,\sk})_{\on{co}}\to \Shv(\Bun_{G,\sk}\times \Bun_{G,\sk})_{\on{co}_2},
\end{multline*}
it suffices to show that \eqref{e:map of units to verify} is an isomorphism of objects in 
$\Shv(\Bun_{G,\sk}\times \Bun_{G,\sk})_{\on{co}_2}$.

\sssec{}

By \propref{p:Sp and P}, we identify the left-hand side in \eqref{e:map of units to verify} with
$$\sP_{\sk}\circ \on{Sp}^{\on{co}_2}_{\sK\to \sk}\left((\Delta_{\Bun_{G,\sK}})^{\on{fine}}_*(\omega_{\Bun_{G,\sK}})\right),$$
and using \propref{p:Sp omega fine}, we identify it further with 
$$\sP_{\sk}\left((\Delta_{\Bun_{G,\sk}})^{\on{fine}}_*(\omega_{\Bun_{G,\sk}})\right),$$
which is the right-hand side in \eqref{e:map of units to verify}.

\medskip

It remains to show that the map in \eqref{e:map of units to verify} corresponds under the above
identifications to the identity map. 

\sssec{}

By \eqref{e:char map of units}, we have to show that for $\CF\in \Shv_\Nilp(\Bun_{G,\sK,\sk})^c$, the isomorphism 
\begin{multline} \label{e:map of units to verify 1} 
\on{Sp}^{\on{co}_2}_{\sK\to \sk}
\left(\sP_{\sK,\sk}((\Delta_{\Bun_{G,\sK}})^{\on{fine}}_*(\omega_{\Bun_{G,\sK}}))\right) \overset{\text{ \propref{p:Sp and P}}}\simeq \\
\simeq \sP_{\sk}\circ \on{Sp}^{\on{co}_2}_{\sK\to \sk}\left((\Delta_{\Bun_{G,\sK}})^{\on{fine}}_*(\omega_{\Bun_{G,\sK}})\right)
\overset{\text{\propref{p:Sp omega fine}}}\simeq
\sP_{\sk}\left((\Delta_{\Bun_{G,\sk}})^{\on{fine}}_*(\omega_{\Bun_{G,\sk}})\right)
\end{multline}
makes the following diagram commute
\begin{equation} \label{e:map of units to verify 2} 
\CD
\on{Sp}^{\on{co}_2}_{\sK\to \sk}
\left(\sP_{\sK,\sk}((\Delta_{\Bun_{G,\sK}})^{\on{fine}}_*(\omega_{\Bun_{G,\sK}}))\right) @>{\text{\eqref{e:map of units to verify 1}}}>> 
\sP_{\sk}\left((\Delta_{\Bun_{G,\sk}})^{\on{fine}}_*(\omega_{\Bun_{G,\sk}})\right) \\
@AAA @AAA \\
\on{Sp}_{\sK\to \sk}(\CF)\boxtimes \on{Sp}^{\on{co}}_{\sK\to \sk}(\BD_{\Bun_{G,\sK}}(\CF)) @>{\sim}>>  
\on{Sp}_{\sK\to \sk}(\CF)\boxtimes \BD_{\Bun_{G,\sk}}(\on{Sp}_{\sK\to \sk}(\CF)),
\endCD
\end{equation}
where:

\begin{itemize}

\item The left vertical arrow is 
\begin{multline} \label{e:map of units to verify 3} 
\on{Sp}_{\sK\to \sk}(\CF)\boxtimes \on{Sp}^{\on{co}}_{\sK\to \sk}(\BD_{\Bun_{G,\sK}}(\CF)) 
\overset{\text{\eqref{e:Sp Kunneth}}}\simeq \on{Sp}_{\sK\to \sk}(\CF\boxtimes \BD_{\Bun_{G,\sK}}(\CF)) \to \\
\to \on{Sp}_{\sK\to \sk}
\left(\sP^{\on{co}_2}_{\sK,\sk}((\Delta_{\Bun_{G,\sK}})^{\on{fine}}_*(\omega_{\Bun_{G,\sK}}))\right),
\end{multline}
where the last arrow is obtained by applying $\sP_{\sK,\sk}$ to the canonical map \eqref{e:map to unit}, which in our case is 
\begin{equation} \label{e:map of units to verify 4} 
\CF\boxtimes \BD_{\Bun_{G,\sK}}(\CF) \to (\Delta_{\Bun_{G,\sK}})^{\on{fine}}_*(\omega_{\Bun_{G,\sK}});
\end{equation} 

\item The right vertical arrow is the map \eqref{e:map to unit}.

\end{itemize}

\sssec{}

Recall now that due to the validity of \thmref{t:compactness}, the functor $\sP$ is the right adjoint of the embedding
$\on{emb.Nilp}$, see \cite[Proposition 17.2.3]{AGKRRV1}. 

\medskip

From here, we formally obtain that the functor $\sP_{\sK,\sk}$ provides a right adjoint to the embedding
$$\Shv_\Nilp(\Bun_{G,\sK,\sk})\hookrightarrow \Shv(\Bun_{G,\sK}).$$

We obtain that the commutation of diagram \eqref{e:map of units to verify 2}
is equivalent to the commutation of the following diagram:
\begin{equation} \label{e:map of units to verify 5} 
\CD
\on{Sp}^{\on{co}_2}_{\sK\to \sk}
\left((\Delta_{\Bun_{G,\sK}})^{\on{fine}}_*(\omega_{\Bun_{G,\sK}})\right) @>{\text{\propref{p:Sp omega fine}}}>> 
(\Delta_{\Bun_{G,\sk}})^{\on{fine}}_*(\omega_{\Bun_{G,\sk}}) \\
@AAA @AAA \\
\on{Sp}_{\sK\to \sk}(\CF)\boxtimes \on{Sp}^{\on{co}}_{\sK\to \sk}(\BD_{\Bun_{G,\sK}}(\CF)) @>{\sim}>>  
\on{Sp}_{\sK\to \sk}(\CF)\boxtimes \BD_{\Bun_{G,\sk}}(\on{Sp}_{\sK\to \sk}(\CF)),
\endCD
\end{equation}
where:

\begin{itemize}

\item The left vertical arrow is 
\begin{multline} \label{e:map of units to verify 6} 
\on{Sp}_{\sK\to \sk}(\CF)\boxtimes \on{Sp}^{\on{co}}_{\sK\to \sk}(\BD_{\Bun_{G,\sK}}(\CF)) 
\overset{\text{\eqref{e:Sp Kunneth}}}\simeq \on{Sp}^{\on{co}_2}_{\sK\to \sk}(\CF\boxtimes \BD_{\Bun_{G,\sK}}(\CF)) \to \\
\to \on{Sp}^{\on{co}_2}_{\sK\to \sk}
(\Delta_{\Bun_{G,\sK}})^{\on{fine}}_*(\omega_{\Bun_{G,\sK}}),
\end{multline}
where the last arrow is obtained by applying $\sP_{\sK,\sk}$ to the canonical map \eqref{e:map to dualizing fine} below;

\medskip

\item The right vertical arrow is the map \eqref{e:map to dualizing fine}.

\end{itemize}

\sssec{}

Let $\CY$ be an algebraic stack. For an object $\CF\in \Shv(\CY)^{\on{constr}}$ there exists a canonical map
$$(\Delta_\CY)_!(\ul\sfe_\CY)\to \CF\boxtimes \BD_\CY(\CF).$$

Passing to Verdier duals, we obtain a map
\begin{equation} \label{e:map to dualizing}
\CF\boxtimes \BD_\CY(\CF)\to (\Delta_\CY)_*(\omega_\CY).
\end{equation} 

\sssec{}

We take $\CY=\Bun_G$ and $\CF\in \Shv(\Bun_G)^c$. In this case, the map \eqref{e:map to dualizing}
lifts canonically to a map
\begin{equation} \label{e:map to dualizing fine}
\CF\boxtimes \BD_{\Bun_G}(\CF)\to (\Delta_{\Bun_G})^{\on{fine}}_*(\omega_{\Bun_G})
\end{equation}
in $\Shv(\Bun_G\times \Bun_G)_{\on{co}_2}$. 

\medskip

Indeed, both sides in \eqref{e:map to dualizing fine} belong to the essential image of
$$(\on{id}\times \jmath)_{\on{co},*}:\Shv(\Bun_G\times \CU)\to \Shv(\Bun_G\times \Bun_G)_{\on{co}_2}$$
for a quasi-compact $\CU\overset{\jmath}\hookrightarrow \Bun_G$, cf. proof of \propref{p:Sp omega fine}. 

\sssec{}

Now, it follows by Verdier duality from \eqref{e:Sp and Verdier} that the diagram
\begin{equation} \label{e:map of units to verify 7} 
\CD
\on{Sp}^{\on{co}_2}_{\sK\to \sk}
\left((\Delta_{\Bun_{G,\sK}})_*(\omega_{\Bun_{G,\sK}})\right) @>{\text{\propref{p:Sp omega fine}}}>> 
(\Delta_{\Bun_{G,\sk}})_*(\omega_{\Bun_{G,\sk}}) \\
@AAA @AAA \\
\on{Sp}_{\sK\to \sk}(\CF)\boxtimes \on{Sp}^{\on{co}}_{\sK\to \sk}(\BD_{\Bun_{G,\sK}}(\CF)) @>{\sim}>>  
\on{Sp}_{\sK\to \sk}(\CF)\boxtimes \BD_{\Bun_{G,\sk}}(\on{Sp}_{\sK\to \sk}(\CF))
\endCD
\end{equation}
obtained from \eqref{e:map of units to verify 5} by applying
$$\on{Ps-Id}^{\on{nv}}:\Shv(\Bun_{G,\sk}\times \Bun_{G,\sk})_{\on{co}_2}\to \Shv(\Bun_{G,\sk}\times \Bun_{G,\sk})$$
does commute. 

\medskip

This formally implies the commutation of \eqref{e:map of units to verify 5} by the same principle as in the proof
of \propref{p:Sp omega fine}.

\section{Proofs of the local acyclicity theorems} \label{s:ULA}

The goal of this section is to prove the ULA theorems stated in the previous section. 

\ssec{Proof of \thmref{t:Sat acycl}} \label{ss:Sat acycl}

\sssec{}

Let $\Delta^\lambda_{X,\sR_0},\nabla^\lambda_{X,\sR_0}\in \Shv(\on{Hecke}_{X,\sR_0})$ be the standard and costandard objects corresponding to $\lambda$,
respectively. I.e., they are, respectively, the !- and *- extensions of the (cohomologically shifted) 
constant sheaf on $\on{Hecke}^\lambda_{X,\sR_0}$.

\medskip

It is enough to show that $\Delta^\lambda_{X,\sR_0}$ and $\nabla^\lambda_{X,\sR_0}$
are ULA over $\Bun_{G,\sR_0}\underset{\Spec(\sR_0)}\times X_{\sR_0}$.
Indeed, this would imply that $\IC^\lambda_{X,\sR_0}:=\IC_{\ol{\on{Hecke}}^\lambda_{X,\sR_0}}$ is also ULA  and has the specified
restrictions over $\sk$ (this follows from the fact that the ULA condition is inherited by the passage to subquotients of perverse cohomologies,
see \cite[Corollary 1.12]{HS}). 

\medskip

We will prove the assertion for $\Delta^\lambda_{X,\sR_0}$; the assertion for $\nabla^\lambda_{X,\sR_0}$ will follow by duality. 

\sssec{}

Let $\Bun^{\on{Fl}}_{G,\sR_0}$ be the moduli stack whose $S$-points are:

\smallskip

\begin{itemize}

\item An $S$-point $x$ of $X_{\sR_0}$;

\medskip

\item A $G$-bundle $\CP$ on $X_{\sR_0}\underset{\Spec(\sR_0)}\times S$;

\item A reduction to $B$ of the restriction of $\CP$ along 
$S\overset{(x,\on{id})}\to X_{\sR_0}\underset{\Spec(\sR_0)}\times S$. 

\end{itemize}

\medskip

Consider the fiber product
$$'\!\on{Hecke}_{X,\sR_0}:=\Bun^{\on{Fl}}_{G,\sR_0}\underset{\Bun_{G,\sR_0}}\times \on{Hecke}_{X,\sR_0},$$
equipped with the maps
$$\Bun^{\on{Fl}}_{G,\sR_0} \overset{'\!\hl}\leftarrow {}'\!\on{Hecke}_{X,\sR_0}\overset{'\!\hr}\to
\Bun_{G,\sR_0}.$$

It is naturally stratified by locally closed subsets 
$$'\!\on{Hecke}_{X,\sR_0}^{\lambda'}, \quad \lambda'\in \Lambda\simeq W\backslash W^{\on{aff,ext}},$$
where $W^{\on{aff,ext}}$ is the extended affine Weyl group. Denote by $'\Delta^{\lambda'}_{X,\sR_0}$ the corresponding
standard object.

\medskip

The pullback of $\Delta^\lambda_{X,\sR_0}$ along the (smooth) projection
$$'\!\on{Hecke}_{X,\sR_0}\to \on{Hecke}_{X,\sR_0}$$
admits a filtration with subquotients $'\Delta^{\lambda'}_{X,\sR_0}$ for
$\lambda'\in \Lambda$ projecting to $\lambda\in \Lambda^+\simeq \Lambda/W$.

\medskip

Hence, order to prove that $\Delta^\lambda_{X,\sR_0}$ is ULA over $\Bun^{\on{Fl}}_{G,\sR_0}$ (with respect to
both $\hl$ and $\hr$), it suffices to show that the objects $'\Delta^{\lambda'}_{X,\sR_0}$ are ULA over
$\Bun^{\on{Fl}}_{G,\sR_0}$ with respect to $'\!\hl$ and over $\Bun_{G,\sR_0}$ with respect to $'\!\hr$.

\sssec{}

Let $\on{Hecke}^{\on{Fl}}_{X,\sR_0}$ be the moduli stack\footnote{Here ``Fl" stands for ``affine flags", as opposed to the usual
Hecke stack, which is modeled on the affine Grassmannian.} whose $S$-points are:

\smallskip

\begin{itemize}

\item An $S$-point $x$ of $X_{\sR_0}$;

\smallskip

\item A pair of $G$-bundles $\CP$ and $\CP'$ on $X_{\sR_0}\underset{\Spec(\sR_0)}\times S$;

\item Reductions to $B$ of the restrictions of $\CP$ and $\CP'$ along 
$S\overset{(x,\on{id})}\to X_{\sR_0}\underset{\Spec(\sR_0)}\times S$;

\item An isomorphism $\CP|_{X_{\sR_0}\underset{\Spec(\sR_0)}\times S-S}\simeq \CP'|_{X_{\sR_0}\underset{\Spec(\sR_0)}\times S-S}$.

\end{itemize}

\medskip

Denote by $\hl^{\on{Fl}}$ and $\hr^{\on{Fl}}$ the natural projections
$$\Bun^{\on{Fl}}_{G,\sR_0} \overset{\hl^{\on{Fl}}}\leftarrow \on{Hecke}^{\on{Fl}}_{X,\sR_0}\overset{\hr^{\on{Fl}}}\to
\Bun^{\on{Fl}}_{G,\sR_0}.$$

\medskip

The prestack $\on{Hecke}^{\on{Fl}}_{X,\sR_0}$ is an ind-algebraic stack. It is naturally stratified by locally closed
substacks $\on{Hecke}^{\on{Fl},\wt{w}}_{X,\sR_0}$, where $\wt{w}$ runs over $W^{\on{aff,ext}}$.

\medskip

For a given $\wt{w}\in \wt{W}$, let 
$$\Delta^{\wt{w}}_{X,\sR_0}\in \Shv(\on{Hecke}^{\on{Fl},\wt{w}}_{X,\sR_0})$$ denote the corresponding standard object
(i.e., the !-extension of the constant sheaf on $\on{Hecke}^{\on{Fl},\wt{w}}_{X,\sR_0}$).

\sssec{}

We have a Cartesian diagram
$$
\CD
\on{Hecke}^{\on{Fl}}_{X,\sR_0} @>>> '\!\on{Hecke}_{X,\sR_0})\\
@V{\hr^{\on{Fl}}}VV @VV{'\!\hr}V \\
\Bun^{\on{Fl}}_{G,\sR_0} @>>> \Bun_{G,\sR_0}
\endCD
$$
with the horizontal maps being smooth and proper. 

\medskip

The object $'\Delta^{\lambda'}_{X,\sR_0}\in \Shv({}'\!\on{Hecke}_{X,\sR_0}))$ is isomorphic (up to a cohomological shift)
to the direct image of $\Delta^{\wt{w}}_{X,\sR_0}\in \Shv(\on{Hecke}^{\on{Fl}}_{X,\sR_0})$ for any $\wt{w}\in W^{\on{aff,ext}}$
that projects to $\lambda'$ under $W^{\on{aff,ext}}\to W\backslash W^{\on{aff,ext}}\simeq \Lambda$. 

\medskip

Hence, it is enough to show that the objects $\Delta^{\wt{w}}_{X,\sR_0}$ are ULA over $\Bun^{\on{Fl}}_{G,\sR_0}\underset{\Spec(\sR_0)}\times X_{\sR_0}$
for both $\hl^{\on{Fl}}$ and $\hr^{\on{Fl}}$. 

\sssec{}

We argue by induction on the length $\ell(\wt{w})$. 

\medskip

When $\ell(\wt{w})=0$, the map
$$\on{Hecke}^{\on{Fl},\wt{w}}_{X,\sR_0} \overset{\hl\times s\text{ or} \hr\times s}\longrightarrow  
\Bun^{\on{Fl}}_{G,\sR_0}\underset{\Spec(\sR_0)}\times X_{\sR_0}$$
is an isomorphism, and there is nothing to prove. 

\medskip

When $\ell(\wt{w})=1$, the map 
$$\on{Hecke}^{\on{Fl},\wt{w}}_{X,\sR_0}
\overset{\hl\times s\text{ or} \hr\times s}\longrightarrow  \Bun^{\on{Fl}}_{G,\sR_0}\underset{\Spec(\sR_0)}\times X_{\sR_0}$$
is smooth fibration with fibers isomorphic to $\BA^1$. Similarly, the closure  
$$\ol{\on{Hecke}}{}^{\on{Fl},\wt{w}}_{X,\sR_0}\supset \on{Hecke}^{\on{Fl},\wt{w}}_{X,\sR_0}$$ 
is a smooth fibration 
$$\ol{\on{Hecke}}{}^{\on{Fl},\wt{w}}_{X,\sR_0}
\overset{\hl\times s\text{ or} \hr\times s}\longrightarrow  \Bun^{\on{Fl}}_{G,\sR_0}\underset{\Spec(\sR_0)}\times X_{\sR_0}$$
with fibers isomorphic to $\BP^1$. 

\medskip

Hence, the object $\Delta^{\wt{w}}_{X,\sR_0}$ is a cone of a map between constant sheaves on schemes that are smooth
over $\Bun^{\on{Fl}}_{G,\sR_0}\underset{\Spec(\sR_0)}\times X_{\sR_0}$. Hence, it is ULA. 
 
\sssec{}
 
For $\wt{w}$ of length $\geq 2$, choose a decomposition
$$\wt{w}=\wt{w}_1\cdot \wt{w}_2,\quad \ell(\wt{w})=\ell(\wt{w}_1)+\ell(\wt{w}_2).$$

Consider the convolution diagram
$$
\xy
(0,0)*+{\Bun^{\on{Fl}}_{G,\sR_0}\underset{\Spec(\sR_0)}\times X_{\sR_0}}="X";
(20,20)*+{\on{Hecke}^{\on{Fl}}_{X,\sR_0}}="Y";
(40,0)*+{\Bun^{\on{Fl}}_{G,\sR_0}\underset{\Spec(\sR_0)}\times X_{\sR_0}}="Z";
(60,20)*+{\on{Hecke}^{\on{Fl}}_{X,\sR_0}}="W";
(80,0)*+{\Bun^{\on{Fl}}_{G,\sR_0}\underset{\Spec(\sR_0)}\times X_{\sR_0}.}="U";
(40,40)*+{\on{Hecke}^{\on{Fl}}_{X,\sR_0}\underset{\hr,\Bun^{\on{Fl}}_{G,\sR_0}\underset{\Spec(\sR_0)}\times X_{\sR_0},\hl}\times \on{Hecke}^{\on{Fl}}_{X,\sR_0}}="V";
{\ar@{->}_{\hl\times s} "Y";"X"};
{\ar@{->}^{\hr\times s} "Y";"Z"};
{\ar@{->}_{\hl\times s} "W";"Z"};
{\ar@{->}^{\hr\times s} "W";"U"};
{\ar@{->}_{'\!\hl} "V";"Y"};
{\ar@{->}^{'\!\hr} "V";"W"};
\endxy
$$

By the induction hypothesis, the object
\begin{equation} 
\Delta^{\wt{w_1}}_{X,\sR_0}\wt\boxtimes \Delta^{\wt{w_2}}_{X,\sR_0}:=
({}'\!\hl)^*(\Delta^{\wt{w_1}}_{X,\sR_0})\overset{*}\otimes 
({}'\!\hr)^*(\Delta^{\wt{w_2}}_{X,\sR_0})
\in \Shv(\on{Hecke}^{\on{Fl}}_{X,\sR_0}\underset{\hr,\Bun^{\on{Fl}}_{G,\sR_0},\hl}\times \on{Hecke}^{\on{Fl}}_{X,\sR_0})
\end{equation} 
is ULA with respect to both
$$(\hl\times s)\circ {}'\!\hl \text{ and } (\hr\times s)\circ {}'\!\hr.$$

\sssec{}

Consider the convolution map
$$\on{conv}:\on{Hecke}^{\on{Fl}}_{X,\sR_0}
\underset{\hr,\Bun^{\on{Fl}}_{G,\sR_0}\underset{\Spec(\sR_0)}\times X_{\sR_0},\hl}\times \on{Hecke}^{\on{Fl}}_{X,\sR_0}\to
\on{Hecke}^{\on{Fl}}_{X,\sR_0}\underset{\Spec(\sR_0)}\times X_{\sR_0}$$
so that
$$(\hl\times s)\circ {}'\!\hl =(\hl\times s)\circ \on{conv} \text{ and } (\hr\times s)\circ {}'\!\hr =(\hr\times s)\circ \on{conv}.$$

We obtain that  
$$\Delta^{\wt{w_1}}_{X,\sR_0}\wt\boxtimes \Delta^{\wt{w_2}}_{X,\sR_0}$$
is ULA with respect to both
$$(\hl\times s)\circ \on{conv} \text{ and } (\hr\times s)\circ \on{conv}.$$ 
 
Since the map $\on{conv}$ is proper, we obtain that
$$\Delta^{\wt{w_1}}_{X,\sR_0}\star \Delta^{\wt{w_2}}_{X,\sR_0}:=
\on{conv}_!(\Delta^{\wt{w_1}}_{X,\sR_0}\wt\boxtimes \Delta^{\wt{w_2}}_{X,\sR_0})$$
is ULA with respect to both $\hl\times s$ and $\hr\times s$. 

\sssec{}

Now, since the map $\on{conv}$ induces an isomorphism
$$\on{Hecke}^{\on{Fl},\wt{w}_1}_{X,\sR_0}\underset{\hr,\Bun^{\on{Fl}}_{G,\sR_0},\hl}\times \on{Hecke}^{\on{Fl},\wt{w}_2}_{X,\sR_0}\to
\on{Hecke}^{\on{Fl},\wt{w}}_{X,\sR_0},$$
we obtain that 
$$\Delta^{\wt{w_1}}_{X,\sR_0}\star \Delta^{\wt{w_2}}_{X,\sR_0}\simeq 
\Delta^{\wt{w}}_{X,\sR_0}.$$

Hence, $\Delta^{\wt{w}}_{X,\sR_0}$ is also ULA as required. 

\qed[\thmref{t:Sat acycl}]

\ssec{The key mechanism}

We now proceed with the proofs of Theorems \ref{t:IC acycl} and \ref{t:diag acycl}. The proof will
be based on the \emph{contraction principle}, embodied by \propref{p:preserve ULA} below. 

\sssec{}

We place ourselves again in the situation of
\cite{DG2}, over an arbitrary Noetherian base $S$ (i.e., this is a generalization of the context of \secref{sss:contr},
where instead of $\Spec(\sR_0)$ we have a more general $S$).  

\medskip 

We claim:

\begin{prop} \label{p:preserve ULA}
Let $\CF\in \Shv(\CU)$ be ULA over $S$. Then so
is $j_!(\CF)\in \Shv(\CY)$.
\end{prop}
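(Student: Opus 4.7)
The proof mirrors the reduction strategy used in the proof of \propref{p:Sp contr}. Since the ULA property over $S$ is smooth-local on the target, and the contraction hypothesis descends through smooth covers, by the reductions of \cite[Sects.~5.1.3--5.1.6]{DG1} we may assume
$$\CY = \BA^1/\BG_m \times Z, \qquad \CZ = \on{pt}/\BG_m \times Z, \qquad \CU = \CY - \CZ,$$
where $\BG_m$ acts on $\BA^1$ via the $m$-th power of the standard character for some $m \geq 1$, and $Z$ is an arbitrary base over $S$. (The reduction from $\BA^n$ to $\BA^1$ is accomplished by a blow-up, and at each step of the reduction the ULA hypothesis on $\CF$ is preserved because ULA is smooth-local.)

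In this local model, $\Shv(\CU)$ is compactly generated; by passing to the smooth atlas $\BG_m \times Z \to \CU$, I would take generators of the form $V \boxtimes \CG_Z$, where $V$ is a compact $\BG_m^{(m)}$-equivariant object on $\BG_m$ (for the weight-$m$ action) and $\CG_Z$ is a compact object of $\Shv(Z)$. Such a generator is ULA over $S$ precisely when $\CG_Z$ is, since the $\BG_m$-direction is proper and smooth over a point. It therefore suffices to verify the ULA conclusion for $j_!(V \boxtimes \CG_Z)$ under the assumption that $\CG_Z$ is ULA over $S$.

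The !-extension is then computed by smooth descent from the chart $\BA^1 \times Z \to \CY$: its pullback acquires the explicit form $(j')_!(V) \boxtimes \CG_Z$, where $j' \colon \BG_m \hookrightarrow \BA^1$ is the standard open embedding (and the equivariance encodes $V$). Since $j'$ is an affine open embedding and $V$ is constructible on $\BG_m$, the sheaf $(j')_!(V)$ is constructible on $\BA^1$, hence in particular ULA over a point. External tensor product with a sheaf that is ULA over $S$ preserves the ULA property over $S$, so $(j')_!(V) \boxtimes \CG_Z$ is ULA over $S$; descending back along the smooth cover, $j_!(V \boxtimes \CG_Z)$ is ULA over $S$, as required.

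The main technical content is concentrated in the reduction to the local model, since once there the operation $j_!$ acts only along the $\BA^1$-direction (which is orthogonal to the $S$-direction along which ULA is tested), and the verification becomes transparent. The delicate point to check carefully is that the blow-up reduction of \cite[Sect.~5.1.6]{DG1} genuinely preserves both the contractiveness of the closed substack \emph{and} the ULA hypothesis on $\CF$ after pullback; this should however be automatic from the smooth-local nature of both conditions.
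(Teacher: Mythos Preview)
Your overall strategy matches the paper's: reduce to the local model $\CY = \BA^1/\BG_m \times Z$ via the reductions of \cite[Sects.~5.1.3--5.1.6]{DG1}, then verify the claim there directly. The paper's proof is simply ``Repeats the proof of \propref{p:Sp contr}'', and you have essentially unpacked what that means.

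There is, however, a real gap in your reduction to ``generators''. You invoke compact generation of $\Shv(\CU)$ by objects $V \boxtimes \CG_Z$ and conclude that ``it therefore suffices'' to check the statement for such objects. But the ULA property is not preserved under arbitrary colimits, so compact generation alone cannot justify this reduction: a general ULA object $\CF$ need not be expressible as a colimit of ULA generators in any useful sense. What actually makes the argument go through is stronger: in the local model one has $\CU \simeq (\on{pt}/\mu_m) \times Z$, so $\Shv(\CU)$ splits as a \emph{finite direct sum} $\bigoplus_\chi \Shv(Z)$ over characters $\chi$ of $\mu_m$. Consequently any $\CF$ decomposes as $\bigoplus_\chi \CL_\chi \boxtimes \CF_\chi$, and $\CF$ is ULA over $S$ if and only if each summand $\CF_\chi$ is (ULA being stable under direct summands and finite sums). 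Once you replace ``compact generators'' by this finite decomposition, your computation on the atlas $\BA^1 \times Z$ is correct as written.

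A minor inaccuracy: $\BG_m$ is not proper over a point. What you need (and later use) is only that the first factor is constant along $S$, so that exterior product with a constructible sheaf there preserves the ULA condition over $S$.
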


\begin{proof}

Repeats the proof of \propref{p:Sp contr}.

\end{proof} 

\begin{rem}

The assertion of \propref{p:preserve ULA} replicates that of \cite[Theorem 6.1.3]{HHS}\footnote{We are grateful to L.~Hamann
for pointing this out to us.}. We refer the reader to {\it loc. cit.} where the proof is written out in detail.

\end{rem}

\begin{rem}

As a side remark, we observe that  \propref{p:preserve ULA} allows us to give an alternative proof of \thmref{t:Sat acycl}:

\medskip

We can show that the objects $\Delta^\lambda_{X,\sR_0}$ are ULA by reducing 
to a contractive situation as in the original Kazhdan-Lusztig paper \cite{KL},
by intersecting with the opposite Schubert strata.  

\end{rem} 

\ssec{Proof of \thmref{t:IC acycl}} \label{ss:IC acycl}

The proof below is inspired by the computation of IC stalks in \cite{BFGM}, and follows the same
line of thought as that in \cite[Theorem 6.2.1]{HHS}.

\sssec{}

Let $M$ denote the Levi quotient of $P^-$. Let $J\subset I$ be the subset of the Dynkin diagram of $G$ 
that corresponds to the roots inside $M$.

\medskip

Denote by $\Lambda^{\on{pos}}_{G,P}$ the monoid equal to 
the non-negative integral span of $\alpha_i$, $i\in (I-J)$.  
We endow it with the standard order relation.

\medskip

Recall (see \cite[Sect. 1.3.3]{BG1}) that $\BunPtm$ carries a stratification indexed by elements of $\Lambda^{\on{pos}}_{G,P}$,
$$\BunPtm=\underset{\theta\in \Lambda^{\on{pos}}_{G,P}}\cup\, (\BunPtm)_\theta,$$
with the open stratum $(\BunPtm)_0$ being $\Bun_{P^-}$. 

\medskip

For a given $\theta$, let 
$$(\BunPtm)_{< \theta}\subset \BunPtm \text{ and } 
(\BunPtm)_{\leq \theta}\subset \BunPtm$$
be the open substacks
$$\underset{\theta'< \theta}\cup\, (\BunPtm)_{\theta'} \text{ and } \underset{\theta'\leq \theta}\cup\, (\BunPtm)_{\theta'},$$
respectively.

\medskip

We will prove \thmref{t:IC acycl} by induction on $\theta$. The base of the induction is when $\theta=0$, in which case 
$(\BunPtm)_{\leq \theta}=\Bun_{P^-}$, which is smooth over $\Bun_{M,\sR_0}$, and the ULA property\footnote{For the duration
of this proof, ``ULA" means ``ULA over $\Bun_{M,\sR_0}$".} is obvious. 

\medskip

Thus, we will assume that the ULA statement holds for 
$$j_!(\ul\sfe_{\Bun_{P^-,\sR_0}})|_{(\wt\Bun_{P^-,\sR_0})_{<\theta}}$$
and we will deduce its validity for 
$$j_!(\ul\sfe_{\Bun_{P^-,\sR_0}})|_{(\wt\Bun_{P^-,\sR_0})_{\leq \theta}}.$$



\sssec{}

Consider the parabolic Zastava space $\on{Zast}$, see \secref{sss:Zast} below. Pulling back the above stratification along
the projection
$$\on{Zast}\to \BunPtm,$$
we obtain a stratification on $\on{Zast}$. We denote by
$$\on{Zast}_\theta,\,\, \on{Zast}_{\leq \theta},\,\, \on{Zast}_{<\theta}$$
the corresponding substacks.

\medskip

Denote
$$\overset{\circ}{\on{Zast}}:=\on{Zast}_0=\on{Zast}\underset{\BunPtm}\times \Bun_{P^-}.$$

Denote by $j_{\on{Zast}}$ the open embedding $\overset{\circ}{\on{Zast}}\hookrightarrow \on{Zast}$.

\medskip

Recall also that $\on{Zast}$ splits as a disjoint union
$$\on{Zast}:=\underset{\theta\in \Lambda^{\on{pos}}_{G,P}}\sqcup\, \on{Zast}^\theta.$$




\sssec{}

Arguing as in \cite[Sect. 5]{BFGM} or \cite[Sect. 3.9]{Ga4}, we obtain:

\begin{lem}  \label{l:interplay} \hfill

\smallskip

\noindent{\em(a)}
The ULA property of $j_!(\ul\sfe_{\Bun_{P^-,\sR_0}})|_{(\wt\Bun_{P^-,\sR_0})_{<\theta}}$
implies the ULA property of
$$(j_{\on{Zast}})_!(\ul\sfe_{\overset{\circ}{\on{Zast}}_{\sR_0}})|_{(\on{Zast}_{\sR_0})_{<\theta}}.$$

\medskip

\noindent{\em(b)} The following statements are equivalent:

\medskip

\noindent{\em(i)} $j_!(\ul\sfe_{\Bun_{P^-,\sR_0}})|_{(\wt\Bun_{P^-,\sR_0})_{\leq \theta}}$ is ULA;

\smallskip

\noindent{\em(ii)} $(j_{\on{Zast}})_!(\ul\sfe_{\overset{\circ}{\on{Zast}}_{\sR_0}})|_{(\on{Zast}_{\sR_0})_{\leq \theta}}$ is ULA;

\smallskip

\noindent{\em(iii)} $(j_{\on{Zast}})_!(\ul\sfe_{\overset{\circ}{\on{Zast}}_{\sR_0}})|_{(\on{Zast}^{\theta}_{\sR_0})_{\leq \theta}}$ is ULA.

\end{lem}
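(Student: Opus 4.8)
The plan is to transcribe, to the relative situation over $\Spec(\sR_0)$ and to the language of relative local acyclicity, the analysis of the singularities of $\BunPtm$ via Zastava spaces carried out in \cite[Sect. 5]{BFGM} (see also \cite[Sect. 3.9]{Ga4}); since every stack in sight is of finite type over $\sR_0$ and the constructions (fiber products, affine Grassmannians, open and closed substacks) are stable under base change, passing to the $\sR_0$-family introduces nothing new. The one geometric input I would isolate is the \emph{local model comparison}: there is a diagram of algebraic stacks over $\Bun_{M,\sR_0}$, say $\BunPtm \overset{\fa}\leftarrow \CW \overset{\fb}\to \on{Zast}$, with $\fa$ and $\fb$ smooth and surjective, compatible with the $\Lambda^{\on{pos}}_{G,P}$-stratifications on the two ends, and such that $\fa^{-1}(\Bun_{P^-}) = \fb^{-1}(\overset{\circ}{\on{Zast}})$; consequently $\fa^*j_!(\ul\sfe_{\Bun_{P^-,\sR_0}})$ and $\fb^*(j_{\on{Zast}})_!(\ul\sfe_{\overset{\circ}{\on{Zast}}_{\sR_0}})$ agree up to a cohomological shift by the relevant relative dimensions (using that $f^*$ commutes with $!$-extension along an open immersion for $f$ smooth). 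The second input is the factorization isomorphism $\on{Zast}^{\theta_1+\theta_2}|_{\text{disj}} \simeq (\on{Zast}^{\theta_1}\times \on{Zast}^{\theta_2})|_{\text{disj}}$ over the disjoint locus of the configuration spaces.

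Granting this, part (a) would follow because smooth pullback preserves the property of being ULA over $\Bun_{M,\sR_0}$ (all maps in the diagram are over $\Bun_{M,\sR_0}$) and ULA is local in the smooth topology on the source: the hypothesis that $j_!(\ul\sfe_{\Bun_{P^-,\sR_0}})|_{(\BunPtm)_{<\theta}}$ is ULA is inherited by its pullback to $\CW_{<\theta}$, hence, via the smooth surjection $\fb$, by $(j_{\on{Zast}})_!(\ul\sfe_{\overset{\circ}{\on{Zast}}_{\sR_0}})|_{(\on{Zast}_{\sR_0})_{<\theta}}$. For part (b), the implication (i)$\Rightarrow$(ii) is the identical argument with $<\theta$ replaced by $\leq\theta$, and (ii)$\Rightarrow$(i) is its converse: $\fa^*j_!(\ul\sfe_{\Bun_{P^-}})|_{(\BunPtm)_{\leq\theta}}$ is (a shift of) the $\fb$-pullback of $(j_{\on{Zast}})_!(\ul\sfe)|_{(\on{Zast})_{\leq\theta}}$, which is ULA by (ii), and ULA descends along the smooth surjection $\fa$.

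It then remains to prove (ii)$\Leftrightarrow$(iii). Since $\on{Zast}=\bigsqcup_{\theta'}\on{Zast}^{\theta'}$, statement (ii) is the conjunction over all $\theta'$ of the ULA property of $(j_{\on{Zast}})_!(\ul\sfe)|_{(\on{Zast}^{\theta'})_{\leq\theta}}$; the summand $\theta'=\theta$ is exactly (iii), so one must show the remaining summands are automatic. If $\theta'<\theta$ then $(\on{Zast}^{\theta'})_{\leq\theta}=\on{Zast}^{\theta'}$ and its ULA property is part (a) at level $\theta'$, available from the induction hypothesis of the proof of \thmref{t:IC acycl} (ULA of $j_!(\ul\sfe_{\Bun_{P^-}})$ on $(\BunPtm)_{<\theta}\supseteq(\BunPtm)_{\leq\theta'}$). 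If $\theta'$ is not $\leq\theta$, only strata $\on{Zast}^{\theta'}_{\theta''}$ with $\theta''\leq\theta$ and $\theta''<\theta'$ occur in $(\on{Zast}^{\theta'})_{\leq\theta}$, and the factorization isomorphism gives a smooth-local model of $\on{Zast}^{\theta'}$ near such a stratum in terms of Zastava spaces of degree $<\theta'$, on which the ULA property is known by induction on $\theta'$.

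The step carrying the real geometric content is the local model comparison assumed as the first input — producing the smooth correspondence $\CW$ compatible with the stratifications, with the open substacks $\Bun_{P^-}$ and $\overset{\circ}{\on{Zast}}$, and with the projections to $\Bun_{M,\sR_0}$. Over a field this is \cite[Sect. 5]{BFGM} (resp. \cite[Sect. 3.9]{Ga4}); the work here is just to check it runs unchanged over $\Spec(\sR_0)$, which it does. The only other point requiring care is the poset bookkeeping in the (ii)$\Leftrightarrow$(iii) reduction, but it becomes formal once the factorization models are in hand.
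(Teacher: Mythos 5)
The paper gives no argument for this lemma beyond the citation ``arguing as in [BFGM, Sect.~5] or [Ga4, Sect.~3.9]'', so your task is to reconstruct the BFGM/Ga4 mechanism. For part (a) and the direction (i)$\Leftrightarrow$(ii) of (b) your proposal does this correctly: the projection $\on{Zast}\to\BunPtm$ is itself the smooth correspondence you want (take $\CW=\on{Zast}$, $\fa$ the projection, $\fb=\on{id}$). It is smooth (base change of $\Bun_P\to\Bun_G$), surjective, sits over $\Bun_{M,\sR_0}$ via the pulled-back $\wt\sfq^-$, respects the defect stratification by construction, and carries $\overset{\circ}{\on{Zast}}$ to $\Bun_{P^-}$; smooth pullback and descent of ULA then give (a) and (i)$\Leftrightarrow$(ii). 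You are also right that (ii)$\Rightarrow$(iii) is formal, that the $\theta'<\theta$ summands are handled by the outer induction through part~(a), and that incomparable $\theta'$ in fact need no new argument at all, since $(\on{Zast}^{\theta'})_{\leq\theta}=(\on{Zast}^{\theta'})_{<\theta}$ in that case.

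The gap is in (iii)$\Rightarrow$(ii) for $\theta'>\theta$. You propose to cover a neighborhood of the new stratum $(\on{Zast}^{\theta'})_\theta$ by factorization pieces of strictly smaller total degree and then induct on $\theta'$. But the factorization isomorphism $\on{Zast}^{\theta'}|_{\on{disj}}\simeq(\on{Zast}^{\theta'_1}\times\on{Zast}^{\theta'_2})|_{\on{disj}}$ only holds over the locus where the coloured divisor decomposes into pieces with disjoint support, and $(\on{Zast}^{\theta'})_\theta$ contains \emph{fully concentrated} points that admit no such decomposition: e.g.\ for $G=SL_2$ take $\CE=\CO^{\oplus 2}$, $\CL_1=\langle e_1\rangle$, and $\CL_2\to\CE$ given near $x$ by $t^{\theta}e_1+t^{\theta'}e_2$, so that the Zastava divisor is $\theta'[x]$ and the defect divisor is $\theta[x]$ with the same support; your inner induction never lowers $\theta'$ at such a point. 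Note also that the obvious attempted shortcut, descending along $\on{Zast}^\theta\to(\BunPtm)_{\leq\theta}$ directly, does not work either, since that map is not surjective (a point of $(\BunPtm)_\theta$ whose saturated $P^-$-reduction is a non-split subbundle has all its transversal $P$-reductions of Zastava degree strictly greater than $\theta$). So closing this case needs an extra input beyond the factorization over the disjoint locus — for instance a $\BG_m$-contraction onto the relevant partial-defect stratum, or a more refined smooth-local model of $\on{Zast}^{\theta'}$ near the concentrated defect locus — and your write-up as it stands does not supply it.
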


\sssec{}

Let us denote by $j_{<\theta,\leq\theta}$ the open embedding
$$\on{Zast}^\theta_{<\theta}\hookrightarrow \on{Zast}^\theta_{\leq \theta}.$$

Note that the complement of this embedding is the closed stratum $\on{Zast}^\theta_\theta\subset \on{Zast}^\theta_{\leq \theta}$. 

\medskip

Thus, we have to show
\begin{equation} \label{e:Zast implication}
(j_{\on{Zast}})_!(\ul\sfe_{\overset{\circ}{\on{Zast}}_{\sR_0}})|_{(\on{Zast}^\theta_{\sR_0})_{<\theta}}\text{ is ULA } \Rightarrow 
(j_{<\theta,\leq\theta})_!\left((j_{\on{Zast}})_!(\ul\sfe_{\overset{\circ}{\on{Zast}}_{\sR_0}})|_{(\on{Zast}^\theta_{\sR_0})_{<\theta}}\right) 
\text{ is ULA }.
\end{equation} 

\sssec{}

Recall now (see \cite[Sect. 5.1]{BFGM}) that $\on{Zast}^\theta=\on{Zast}^\theta_{\leq \theta}$ carries an action of $\BG_m$
that contracts it onto the locus $\on{Zast}^\theta_{\theta}$. 

\medskip

Since the object $(j_{\on{Zast}})_!(\ul\sfe_{\overset{\circ}{\on{Zast}}_{\sR_0}})|_{(\on{Zast}^\theta_{\sR_0})_{<\theta}}$
is $\BG_m$-equivariant, the implication \eqref{e:Zast implication} follows from \propref{p:preserve ULA}.

\qed[\thmref{t:IC acycl}]

%
%
%
%

\ssec{Proof of \thmref{t:diag acycl}} \label{ss:diag acycl}

It is easy to reduce the assertion to the case when $G$ is semi-simple, which we will now assume.\footnote{Otherwise
replace $R_{Z_G}$ below by the $!$-direct image of $\sfe$ along
the map $\on{pt} \to \on{pt}/Z_G$.}

\sssec{}

Recall that according to \cite{Sch}, the diagonal map
$$\Delta:\Bun_G\to \Bun_G\times \Bun_G$$
can be factored as
$$\Bun_G=\Bun_G\times \on{pt}\to \Bun_G\times \on{pt}/Z_G\overset{j}\hookrightarrow \ol{\Bun}_G\overset{\ol\Delta}\to \Bun_G\times \Bun_G,$$
where the map $j$ is an open embedding and $\ol\Delta$ is proper. 

\sssec{Example} 

For $G=SL_2$, the stack $\ol{\Bun}_G$ classifies triples 
$$(\CE_1,\CE_2,\alpha)/\BG_m,$$
where $\CE_1$ and $\CE_2$ rank-2 bundles with trivialized determinants, 
and $\alpha$ is a \emph{non-zero} map $\CE_1\to \CE_2$.
The action of $\BG_m$ is given by scaling $\alpha$. 

\medskip

The open locus $\Bun_G\times \on{pt}/Z_G$ corresponds to the condition that $\alpha$ be an isomorphism. 

\sssec{}

Thus, it suffices to show that the object
\begin{equation} \label{e:on Vinb}
j_!(\ul\sfe_{\Bun_{G,\sR_0}}\boxtimes R_{Z_G})\in \Shv(\ol{\Bun}_{G,\sR_0})
\end{equation} 
is ULA over $\Spec(\sR_0)$, where 
$$R_{Z_G}\in \Rep(Z_G)\to \Shv(\Spec(\sR_0)/Z_G)$$
denotes the regular representation. 

\sssec{}

Recall (see \cite[Sect. 2.2.7]{Sch}) that stack $\ol{\Bun}_G$ admits a stratification indexed by the poset of standard parabolics in $G$:
$$\ol{\Bun}_G=\underset{P}\cup\, \ol{\Bun}_{G,P}$$
with the open stratum $\ol{\Bun}_{G,G}$ being the image of the embedding $j$.

\medskip

Denote by $\ol{\Bun}_{G,\geq P}$ (resp., $\ol{\Bun}_{G,>P}$) the open substack equal to the union of the strata
corresponding to the parabolics $P'$ with $P\subset P'$ (resp., $P\subsetneq P'$). 

\medskip

We argue by induction and assume that the object \eqref{e:on Vinb} is ULA when restricted to the open substack
$\ol{\Bun}_{G,>P,\sR_0}$.  We now perform the induction step and prove that the ULA property holds over 
$\ol{\Bun}_{G,\geq P,\sR_0}$.

\sssec{}

Consider the stratum $\ol{\Bun}_{G,P}$. According to \cite[Sect. 3]{Sch} that $\ol{\Bun}_{G,P}$ admits a further stratification
indexed by elements of $\Lambda^{\on{pos}}_{G,P}$:
$$\ol{\Bun}_{G,P}=\underset{\theta\in \Lambda^{\on{pos}}_{G,P}}\cup\, (\ol{\Bun}_{G,P})_\theta.$$

For a given $\theta$, let
$$(\ol{\Bun}_{G,P})_{<\theta} \subset (\ol{\Bun}_{G,P})_{\leq \theta}$$
be the corresponding open subsets.

\medskip

In particular, we have the open locus
$$(\ol{\Bun}_{G,P})_0\subset  \ol{\Bun}_{G,P}.$$


%
%
%

\sssec{Example} In the example of $G=SL_2$, the Borel stratum corresponds to the condition that the map $\alpha$ 
has generic rank $1$ and hence factors as
$$\CE_1\overset{\beta_1}\twoheadrightarrow \CL_1\overset{\gamma}\to \CL_2\overset{\beta_2}\hookrightarrow \CE_2,$$
where $\CL_i$ are line bundles and $\beta_i$ are bundle maps. 

\medskip

The stratification by $\Lambda^{\on{pos}}_{G,P}=\BZ^{\geq 0}$ is given by the total degree of zeroes of the map $\gamma$. 

\sssec{}

Denote by $\ol{\Bun}_{G,\geq P,<\theta}$ and $\ol{\Bun}_{G,\geq P,\leq\theta}$ the open substacks
$$\ol{\Bun}_{G,> P} \cup (\ol{\Bun}_{G,P})_{<\theta} \text{ and } \ol{\Bun}_{G,>P} \cup (\ol{\Bun}_{G,P})_{\leq \theta},$$
respectively. 

\medskip

By induction, we can assume that \eqref{e:on Vinb} is ULA\footnote{For the duration of this proof, ``ULA" means ``ULA over $\Spec(\sR_0)$.} 
when restricted to $\ol{\Bun}_{G,\geq P,<\theta,\sR_0}$. We will
now perform the induction step and prove that the ULA property holds over $\ol{\Bun}_{G,\geq P,\leq \theta,\sR_0}$.

\begin{rem}

The rest of the argument, which is explained below, uses the same principle as the proof of \thmref{t:IC acycl}: we will 
replace $\ol{\Bun}_{G,\geq P,\sR_0}$ by its local model and reduce the assertion to a contractive situation 
(i.e., one covered by \propref{p:preserve ULA}).

\end{rem}

\ssec{Proof of \thmref{t:diag acycl}, continuation}

\sssec{}

Let $Y^P$ denote the open substack
$$\left(\Bun_{P}\underset{\Bun_G}\times \ol{\Bun}_G  \underset{\Bun_G}\times \Bun_{P^-}\right)^{\on{tr}}\subset 
\Bun_{P}\underset{\Bun_G}\times \ol{\Bun}_G  \underset{\Bun_G}\times \Bun_{P^-},$$
where the superscript ``tr" refers to the generic transversality condition, see \cite[Sect. 6.1.6]{Sch}. 

\sssec{Example} For $G=SL_2$, the fiber product  
$$\Bun_{P}\underset{\Bun_G}\times \ol{\Bun}_G  \underset{\Bun_G}\times \Bun_{P^-}$$
classifies the data of
$$\CL'_1\overset{\delta_1}\to \CE_1 \overset{\alpha}\to \CE_2\overset{\delta_2}\to \CL'_2,$$
where $\delta_i$ are bundle maps.

\medskip

The generic transversality condition is that the composite map $\CL'_1\to\CL'_2$ be non-zero.

\sssec{}

The stratifications on $\ol{\Bun}_G$ induce the corresponding stratifications on $Y^P$, to be denoted
$Y^P_{P'}$, $Y^P_{P',\theta}$, etc. We note that $Y^P_{P'}=\emptyset$ unless $P\subset P'$. 

\medskip

Note that the open stratum $Y^P_{G}$ identifies with
$$\overset{\circ}{\on{Zast}}\times \on{pt}/Z_G.$$ 

Denote by $j_Y$ the open embedding
$$\overset{\circ}{\on{Zast}}\times \on{pt}/Z_G\hookrightarrow Y^P.$$

\medskip

In addition, $Y^P$ splits as a disjoint union 
\begin{equation} \label{e:split Y}
Y^P:=\underset{\theta\in \Lambda^{\on{pos}}_{G,P}}\sqcup\, Y^{P,\theta},
\end{equation}
and $Y^{P,\theta}_{P,\theta'}$ is empty unless $\theta'\leq \theta$.

\sssec{Example} In the example of $G=SL_2$, the decomposition \eqref{e:split Y} is
according to the total degree of zeroes of the map $\CL'_1\to\CL'_2$. 

\sssec{}

The following is parallel to \lemref{l:interplay}:

\begin{lem} \hfill

\smallskip

\noindent{\em(a)} The ULA property of 
$$j_!(\ul\sfe_{\Bun_{G,\sR_0}}\boxtimes R_{Z_G})|_{\ol{\Bun}_{G,\geq P,<\theta,\sR_0}}$$
implies the ULA property of 
$$(j_Y)_!(\ul\sfe_{\overset{\circ}{\on{Zast}_{\sR_0}}}\boxtimes R_{Z_G})|_{Y^P_{\geq P,<\theta,\sR_0}}.$$

\smallskip

\noindent{\em(b)} The following conditions are equivalent:

\smallskip

\noindent{\em(i)} $j_!(\ul\sfe_{\Bun_{G,\sR_0}}\boxtimes R_{Z_G})|_{\ol{\Bun}_{G,\geq P,\leq \theta,\sR_0}}$
is ULA;

\smallskip

\noindent{\em(ii)} $(j_Y)_!(\ul\sfe_{\overset{\circ}{\on{Zast}_{\sR_0}}}\boxtimes R_{Z_G})|_{Y^P_{\geq P,\leq \theta,\sR_0}}$
is ULA;

\smallskip

\noindent{\em(iii)} $(j_Y)_!(\ul\sfe_{\overset{\circ}{\on{Zast}_{\sR_0}}}\boxtimes R_{Z_G})|_{Y^{P,\theta}_{\geq P,\leq \theta,\sR_0}}$
is ULA.

\end{lem}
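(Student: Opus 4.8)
The plan is to prove the three equivalences of the Lemma by the same kind of comparison argument used for \lemref{l:interplay}, namely by exhibiting a smooth correspondence between the stratified stacks $\ol{\Bun}_{G,\geq P}$ and the local model $Y^P$ and invoking the fact that the ULA property is smooth-local and behaves well under smooth pullback. First I would recall from \cite[Sect. 6.1.6]{Sch} the explicit geometry of the map
$$Y^P=\left(\Bun_{P}\underset{\Bun_G}\times \ol{\Bun}_G \underset{\Bun_G}\times \Bun_{P^-}\right)^{\on{tr}}\longrightarrow \ol{\Bun}_G,$$
restricted over the open substack $\ol{\Bun}_{G,\geq P}$. The key point is that over $\ol{\Bun}_{G,\geq P}$ this projection is \emph{smooth and surjective} (the generic-transversality locus is exactly what makes it so), and moreover the stratification $Y^P_{P'},\, Y^P_{P',\theta'}$ is the pullback of the stratification $\ol{\Bun}_{G,P'},\, (\ol{\Bun}_{G,P'})_{\theta'}$. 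Under such a smooth surjection, an object is ULA over $\Spec(\sR_0)$ if and only if its $*$-pullback is ULA; and the pullback of $j_!(\ul\sfe_{\Bun_{G,\sR_0}}\boxtimes R_{Z_G})$ along this map agrees, up to the cohomological shift by the relative dimension, with $(j_Y)_!(\ul\sfe_{\overset{\circ}{\on{Zast}_{\sR_0}}}\boxtimes R_{Z_G})$ — this is because $j$ and $j_Y$ are both open embeddings fitting into a Cartesian square (the transversality condition precisely identifies the open stratum $Y^P_G$ with $\overset{\circ}{\on{Zast}}\times \on{pt}/Z_G$), and $j_!$ commutes with smooth pullback. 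That proves part (a) and the equivalence of (i) and (ii) in part (b).

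For the equivalence of (ii) and (iii) in part (b), I would use the disjoint-union decomposition \eqref{e:split Y}, $Y^P=\underset{\theta}\sqcup\, Y^{P,\theta}$, together with the observation (already recorded) that $Y^{P,\theta}_{P,\theta'}$ is empty unless $\theta'\leq \theta$. Concretely, restricting $(j_Y)_!(\ul\sfe\boxtimes R_{Z_G})$ to $Y^P_{\geq P,\leq\theta}$ is the same as restricting it separately to each $Y^{P,\theta'}_{\geq P,\leq\theta}$; for $\theta'<\theta$ this restriction is already ULA by the inductive hypothesis (part (a) combined with the inductive assumption on $\ol{\Bun}_{G,\geq P,<\theta}$, after noting the emptiness constraint), and for $\theta'=\theta$ it is nothing but the object appearing in (iii). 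Hence ULA-ness over $Y^P_{\geq P,\leq\theta}$ is equivalent to ULA-ness over the single piece $Y^{P,\theta}_{\geq P,\leq\theta}$. For $\theta'>\theta$ the piece $Y^{P,\theta'}_{\geq P,\leq\theta}$ is empty, so there is nothing to check there. This is a purely bookkeeping step, parallel to the passage from \lemref{l:interplay}(b)(ii) to (iii).

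The main obstacle I anticipate is establishing the smoothness of the projection $Y^P\to \ol{\Bun}_G$ over $\ol{\Bun}_{G,\geq P}$ and, relatedly, checking that the stratifications match up on the nose and that the relevant squares are Cartesian — all of this depends on the detailed local structure of $\ol{\Bun}_G$ and its local model developed in \cite{Sch}, particularly the content of {\it loc. cit.}, Sects. 2.2.7, 3 and 6.1.6. Once one has the precise statement that
$$Y^P\underset{\ol{\Bun}_G}\times \ol{\Bun}_{G,\geq P}\longrightarrow \ol{\Bun}_{G,\geq P}$$
is a smooth cover compatible with the stratifications and the relevant open embeddings, everything else is a formal consequence of the basic functoriality of ULA sheaves (stability under smooth pullback and descent, and commutation of $j_!$ with smooth base change), exactly as in the proof of \lemref{l:interplay} in \cite[Sect. 5]{BFGM} and \cite[Sect. 3.9]{Ga4}. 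I would therefore spend most of the effort pinning down the geometric input from \cite{Sch} and treat the sheaf-theoretic manipulations as routine.
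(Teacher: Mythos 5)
There is a genuine gap in your reduction of (ii) to (iii) in part (b). You assert that for $\theta' > \theta$ the piece $Y^{P,\theta'}_{\geq P,\leq\theta}$ is empty, but this is false: the emptiness constraint recorded in the paper says only that $Y^{P,\theta'}_{P,\theta''}=\emptyset$ when $\theta''>\theta'$, so for $\theta'>\theta$ every defect stratum with $\theta''\leq\theta$ survives, and the open $G$-stratum $Y^{P,\theta'}_G$ is of course nonempty. Already in the $SL_2$ example of the paper, a chain $\CL'_1\to\CE_1\to\CE_2\to\CL'_2$ in which $\alpha$ is an isomorphism but the composite $\CL'_1\to\CL'_2$ has zeroes of total degree $\theta'>\theta$ gives a point of $Y^{P,\theta'}_{\geq P,\leq\theta}$. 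So the disjoint-union decomposition $Y^P_{\geq P,\leq\theta}=\underset{\theta'}\sqcup\, Y^{P,\theta'}_{\geq P,\leq\theta}$ contains infinitely many nonempty pieces with $\theta'>\theta$ about which your argument says nothing.

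What those pieces require --- and what the cited model arguments in \cite[Sect. 5]{BFGM}, \cite[Sect. 3.9]{Ga4} and \cite{Sch} actually use --- is the \emph{factorization structure} of the Zastava-type local model: a point of $Y^{P,\theta'}_{\geq P,\leq\theta}$ with $\theta'>\theta$ has defect strictly less than $\theta'$, hence lies off the closed maximally-degenerate stratum $Y^{P,\theta'}_{P,\theta'}$; over that open complement, $Y^{P,\theta'}$ admits an \'etale-local product decomposition into pieces $Y^{P,\theta_1}\times Y^{P,\theta_2}$ with $\theta_1+\theta_2=\theta'$ and each $\theta_i<\theta'$, and $(j_Y)_!(\ul\sfe\boxtimes R_{Z_G})$ factors accordingly. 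One then checks ULA-ness component-wise and reduces to the lower-degree cases by induction on the Zastava degree, using (iii) together with the already-established lower cases. This factorization is the essential geometric content of the comparison with the local model, and the deduction (iii)$\Rightarrow$(ii) simply does not go through without it. Your treatment of (a) and of (i)$\Leftrightarrow$(ii) via the smooth correspondence $Y^P\to\ol{\Bun}_{G,\geq P}$, compatibility of stratifications, and commutation of $j_!$ with smooth base change is along the intended lines, consistent with the references; the missing ingredient is entirely in the (ii)$\Leftrightarrow$(iii) step.
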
 

\sssec{}

Let $j_{\geq P,<\theta,\leq \theta}$ the open embedding
$$Y^{P,\theta}_{\geq P,<\theta}\hookrightarrow Y^{P,\theta}_{\geq P,\leq \theta}.$$

Note that the complement of this embedding is the closed stratum
$$Y^{P,\theta}_{P,\theta}\subset Y^{P,\theta}_{\geq P,\leq \theta}.$$

\sssec{Example}

In the example of $G=SL_2$, the closed substack $Y^{P,\theta}_{P,\theta}$ is the locus
where the maps
$$\CL'_1\to \CL_1 \text{ and } \CL_2\to \CL'_2$$
are isomorphisms. 

\medskip

So $\CE_i=\CL_i\oplus \CL_i^{\otimes -1}$, and the map $\alpha$ is
$$\CE_1\simeq \CL_1\oplus \CL_1^{\otimes -1}\twoheadrightarrow \CL_1\to \CL_2\hookrightarrow
\CL_2\oplus \CL_2^{\otimes -1}.$$

Thus, we obtain that $Y^{P,\theta}_{P,\theta}$ is isomorphic to a version of the Hecke stack for $\BG_m$:
it classifies pairs $(\CL_1,D)$, where $\CL_1$ is a line bundle and $D$ is an effective divisor on $X$
of degree $\theta$. 

\sssec{}

Thus, we have to show:

$$(j_Y)_!(\ul\sfe_{\overset{\circ}{\on{Zast}_{\sR_0}}}\boxtimes R_{Z_G})|_{Y^{P,\theta}_{\geq P,<\theta,\sR_0}} \text{ is ULA }
\Rightarrow 
(j_{\geq P,<\theta,\leq \theta})_!\left((j_Y)_!(\ul\sfe_{\overset{\circ}{\on{Zast}_{\sR_0}}}\boxtimes R_{Z_G})|_{Y^{P,\theta}_{\geq P,<\theta,\sR_0}}\right)
\text{ is ULA.}$$

\sssec{}

Recall now (see \cite[Sect. 6.5.5]{Sch}) that $Y^{P,\theta}_{\geq P,\leq \theta}$ carries an action of $\BG_m$, which contracts its 
into $Y^{P,\theta}_{P,\theta}$. 

\sssec{Example} In the example of $G=SL_2$, the above action is the following one. A scalar $c\in \BG_m$ acts on 
the triple 
$$\CL'_1\overset{\delta_1}\to \CE_1 \overset{\alpha}\to \CE_2\overset{\delta_2}\to \CL'_2$$
by 
$$\delta_1\mapsto c^{-1}\cdot \delta_1,\,\, \delta_2\mapsto c^{-1}\cdot \delta_2,\,\, \alpha\mapsto c^2\cdot \alpha.$$

\sssec{}

Since the object 
$$(j_Y)_!(\ul\sfe_{\overset{\circ}{\on{Zast}_{\sR_0}}}\boxtimes R_{Z_G})|_{Y^P_{\geq P,<\theta,\sR_0}} \in
\Shv(Y^P_{\geq P,<\theta,\sR_0})$$
is \emph{quasi-equivariant} with respect to this $\BG_m$-action (i.e., equivariant after passing to a finite self-isogeny
of $\BG_m$), the required assertion follows from \propref{p:preserve ULA}. 

\qed[\thmref{t:diag acycl}]

\section{Proof of \thmref{t:Poinc acycl}} \label{s:proof Poinc}

We will give two proofs. The first proof is shorter, but it requires more stringent assumptions on the characteristic of the ground field 
$\sk$ (see Assumption (*) in \secref{sss:Ass *} and Remark \ref{r:IMP}). 

\ssec{First proof of \thmref{t:Poinc acycl}} \label{ss:Poinc acycl}

\sssec{} \label{sss:recall Poinc}

Let 
$$\on{exp}^{\boxtimes I}/T\in \Shv(\BG_a^I/T)$$
be as in \cite[Sect. 3.3]{GLC1}, where $I$ is the Dynkin diagram of $G$ (see also
\secref{sss:Kir on AI} below). 

\medskip

Recall that we have a canonically defined $T$-equivariant map 
$$\chi^I:\Bun_{N,\rho(\omega_X)}\to \BG_a^I,$$ 
and consider the corresponding map
$$\chi^I/T:\Bun_{N,\rho(\omega_X)}/T\to \BG_a^I/T.$$

\medskip

Denote
$$\on{exp}_{\chi^I}/T:=(\chi^I/T)^*(\on{exp}^{\boxtimes I}/T)\in \Shv(\Bun_{N,\rho(\omega_X)}/T).$$

\medskip

Recall the map 
$$\sfp:\Bun_{N,\rho(\omega_X)}\to \Bun_G$$ and note that it naturally factors via a map
$$\sfp/T:\Bun_{N,\rho(\omega_X)}/T\to \Bun_G.$$

Recall that the object $\on{Poinc}^{\on{Vac}}_!$ is defined as
$$(\sfp/T)_!(\on{exp}_{\chi^I}/T)\in \Shv(\Bun_G).$$

\sssec{}

We consider the above objects over $\Spec(\sR_0)$. The object $\on{exp}^{\boxtimes I}_{\sR_0}/T$ is ULA over $\Spec(\sR_0)$
(e.g., by \propref{p:preserve ULA}). Since the map $\chi^I/T$ is smooth, we obtain that
$$\on{exp}_{\chi^I,\sR_0}/T\in \Shv(\Bun_{N,\rho(\omega_X),\sR_0}/T)$$
is also ULA. 

\medskip

For the rest of the first proof, we will consider separately the cases of $g\geq 2$, $g=1$ and $g=0$.

\sssec{} \label{sss:Ass *}

We will make the following assumption on the pair $(G,\on{char}(p))$:

\medskip

\noindent(*) {\it For every standard Levi $M$ and a central character $\check\mu$ of $Z_M$, the 
direct summand $(\fn_P)_{\check\mu}$, viewed as a representation of $M$, has \emph{strangeness} $0$
(see \cite[Definition 10.3.4]{DG1} for what this means).} 

\begin{rem}
As was explained in {\it loc. cit.}, the above assumption is automatic when one works over a ground
field of characteristic $0$.
\end{rem}

\begin{rem} \label{r:IMP}

According to \cite[Theorem 3.1]{IMP}, the following condition guarantees that Assumption (*) is satisfied:

\medskip

We need that for all $M$ and all roots $\check\alpha$ appearing in $\fn_P$,
$$2\cdot \langle \rho_M,\check\alpha\rangle \cdot |Z^0_M\cap [M,M]|<p.$$

In the above formula, the factor $|Z^0_M\cap [M,M]|$ appears since
in \cite{IMP} only representations with trivial determinant are allowed.

\end{rem}

\sssec{}

We proceed with the first proof of \thmref{t:Poinc acycl}. We start with the case $g\geq 2$.

\medskip

Note that the map $\sfp/T$ factors as
$$\Bun_{N,\rho(\omega_X)}/T\to \Bun_G^{(\leq (2g-2)\cdot \rho)} \overset{\jmath_{(2g-2)\cdot \rho}}\hookrightarrow \Bun_G,$$
where the first arrow is proper (see \cite[Theorem 7.4.3(a)]{DG1}).

\medskip

Hence, it suffices to show that the functor 
$$(\jmath_{(2g-2)\cdot \rho})_!:\Shv(\Bun_{G,\sR_0}^{(\leq (2g-2)\cdot \rho)}) \to \Shv(\Bun_{G,\sR_0})$$
preserves the property of being ULA over $\Spec(\sR_0)$. 

\medskip

We now use Assumption (*). It implies that the complement of 
the embedding
$$\Bun_G^{(\leq (2g-2)\cdot \rho)}\overset{\jmath_{(2g-2)\cdot \rho}}\hookrightarrow \Bun_G$$
is contractive, see \cite[Proposition 10.1.3]{DG1}. 

\medskip

Hence, the required preservation of the ULA property follows from \propref{p:preserve ULA}. 

\sssec{}

We now consider the case $g=1$. In this case, the map $\sfp/T$ factors as
\begin{equation} \label{e:factor map from BunNb}
\Bun_{N,\rho(\omega_X)}/T
\to \Bun_G^{(\leq 0)} \overset{\jmath_0}\hookrightarrow \Bun_G.
\end{equation}

\medskip

Note that $\Bun_G^{(\leq 0)}$ is the semi-stable locus $\Bun^{ss}_G\subset \Bun_G$. We factor the first arrow in 
\eqref{e:factor map from BunNb} as
$$\Bun_{N,\rho(\omega_X)}/T\overset{\omega\,\text{is trivial}}\simeq \Bun_N/T\hookrightarrow
\Bun^0_B\to \Bun^{ss}_G,$$
where the second arrow is a closed embedding, and $\Bun_B^0$ is the preimage of the neutral connected
component of $\Bun_T$ under $\sfq:\Bun_B\to \Bun_T$. 

\medskip

Note that the map 
$$\Bun^0_B\to \Bun^{ss}_G$$
is proper (indeed, $\Bun_B^0=\Bun_B^0\underset{\Bun_G}\times  \Bun^{ss}_G\to \BunBb^0\underset{\Bun_G}\times  \Bun^{ss}_G$ is an equality). 

\medskip

Hence, the first arrow in \eqref{e:factor map from BunNb} is proper. Therefore, it remains to show
that the functor 
$$(\jmath_0)_!:\Shv(\Bun^{(\leq 0)}_{G,\sR_0})\to \Shv(\Bun_{G,\sR_0})$$
preserves the property of being ULA over $\Spec(\sR_0)$.

\medskip

However, in genus 1, the complement of $\Bun_G^{(\leq 0)}$ in $\Bun_G$ is contractive
(by Assumption (*) and \cite[Proposition 10.1.3]{DG1}). Hence, the assertion follows from \propref{p:preserve ULA}. 

\ssec{Proof of \propref{t:Poinc acycl} in genus \texorpdfstring{$0$}{00}}

It remains to consider the case of $g=0$. Here the argument will be of different nature
and in fact can be considered as a simplified version of the second proof relying on special features of the genus 0 situation. 

\sssec{}

We will show directly that 
$$\Phi(\on{Poinc}^{\on{Vac}}_{!,\sR_0})=0$$
(which is what we need for Property (C)). 

\medskip

However, by Remark \ref{r:van cycles}, this is equivalent to the fact that $\on{Poinc}^{\on{Vac}}_{!,\sR_0}$is
ULA over  $\Spec(\sR_0)$. 

%
%
%

\sssec{}

Note that the direct sum of the constant terms functors 
$$\on{CT}^\lambda_*:\Shv(\Bun_G)\to \Shv(\Bun^\lambda_T), \quad \lambda\in \Lambda^+$$
is conservative when $g=0$.

\medskip

Hence, it suffices to show that
$$\on{CT}^\lambda_*\circ \Phi(\on{Poinc}^{\on{Vac}}_{!,\sR_0})=0, \quad \lambda\in \Lambda^+.$$

\sssec{}

Recall also that according to \cite{DG2}, we have a canonical isomorphism
\begin{equation} \label{e:weird adj}
\on{CT}^\lambda_*\simeq \on{CT}^{-,\lambda}_!.
\end{equation} 

\medskip
 
We claim now that the canonical maps
$$\on{CT}^{-,\lambda}_!\circ \Phi \to \Phi \circ \on{CT}^{-,\lambda}_!$$
are isomorphisms when $\lambda\in \Lambda^+$.

\medskip

Indeed, the functor $\on{CT}^{-,\lambda}_!$ is given by
$$(\sfq^-)_!\circ (\sfp^-)^*$$
for the diagram
$$\Bun_G \overset{\sfp^-}\leftarrow \Bun^\lambda_{B^-}\overset{\sfq^-}\to \Bun_T^\lambda.$$

Now, for $\lambda\in \Lambda^+$, the map $\sfp^-$ is smooth, and hence the functor $(\sfp^-)^*$
commutes with vanishing cycles. The functor $(\sfq^-)_!$ commutes with vanishing cycles thanks to the
contraction principle (see \cite[Sect. 4.1]{DG2}) and isomorphism \eqref{e:Sp contr}. 

\sssec{}

Thus, it suffices to show that 
$$\Phi\circ \on{CT}^{-,\lambda}_!(\on{Poinc}^{\on{Vac}}_{!,\sR_0})=0.$$

\medskip

However, this is the assertion of \eqref{e:CT Vac Sp} below. 

%
%

\qed[\thmref{t:Poinc acycl}]

\ssec{Second proof of \thmref{t:Poinc acycl}} \label{ss:2nd proof}

It is easy to reduce the assertion to the case 
when $G$ is semi-simple, so we will make this assumption. 

\sssec{}

Recall (see Remark \ref{r:van cycles}) that we have to show that
\begin{equation} \label{e:van van cycles}
\Phi(\on{Poinc}^{\on{Vac}}_{!,\sR_0})=0.
\end{equation} 

\sssec{}

Recall that we have the semi-orthogonal decomposition of the category $\Shv(\Bun_G)$
$$\Shv(\Bun_G)_{\Eis}\overset{\be_{\on{Eis}}}\hookrightarrow  \Shv(\Bun_G)\overset{\be_{\on{cusp}}}\hookleftarrow \Shv(\Bun_G)_{\on{cusp}}.$$

Thus, every object $\CF\in \Shv(\Bun_G)$ fits into a (canonically defined) fiber sequence
$$\be_{\on{Eis}}\circ \be_{\on{Eis}}^R(\CF)\to \CF\to \be_{\on{cusp}}\circ  \be^L_{\on{cusp}}(\CF).$$

Furthermore, 
$$\be_{\on{Eis}}^R(\CF)=0 \text{ if and only if } \on{CT}_*(\CF)=0 \text{ for all proper parabolics } P\subset G.$$

\sssec{}

We will show that
\begin{equation} \label{e:cuspidal part}
\be^L_{\on{cusp}}(\Phi(\on{Poinc}^{\on{Vac}}_{!,\sR_0}))=0 
\end{equation} 
and 
\begin{equation} \label{e:Eis part}
\on{CT}_*(\Phi(\on{Poinc}^{\on{Vac}}_{!,\sR_0}))=0.
\end{equation}

\medskip

By the above, this will imply \eqref{e:van van cycles}.

\ssec{The Eisenstein part}

We first tackle \eqref{e:Eis part}. 

\sssec{}

First, we claim:

\begin{prop} \label{p:nearby and CT}
The natural transformation
$$\Psi\circ \on{CT}_*\to \on{CT}_*\circ \Psi$$
is an isomorphism.
\end{prop}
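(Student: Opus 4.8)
The statement asserts that the nearby cycles functor $\Psi$ (for the family over $\Spec(\sR_0)$) commutes with the constant term functor $\on{CT}_*$. The plan is to reduce this to the interplay between nearby cycles and the four basic operations recorded in \secref{sss:Sp is exact}, together with the contraction principle already used in this paper. Recall that $\on{CT}_* = (\sfq)_* \circ (\sfp)^*$ for the standard diagram $\Bun_G \overset{\sfp}\leftarrow \Bun_P \overset{\sfq}\to \Bun_M$ (and its version over $\sR_0$). Since $\sfp$ is smooth, the base-change map $(\sfp_\sk)^* \circ \Psi \to \Psi \circ (\sfp_\sK)^*$ is an isomorphism by item (5) of \secref{sss:Sp is exact} (smoothness of $\sfp$, up to the relevant shift which is harmless here). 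So the entire content is showing that $(\sfq)_*$ commutes with $\Psi$, i.e. that the natural transformation $\Psi \circ (\sfq_\sK)_* \to (\sfq_\sk)_* \circ \Psi$ from \eqref{e:Sp and dir im} is an isomorphism on the relevant objects.

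\textbf{Key steps.} The map $\sfq:\Bun_P\to \Bun_M$ is not proper, so item (5) does not apply directly; instead I would invoke the \emph{contraction principle} exactly as in the $g=0$ computation earlier in this section (the paragraph proving $\on{CT}^{-,\lambda}_!\circ \Phi \simeq \Phi \circ \on{CT}^{-,\lambda}_!$), which cites \cite[Sect. 4.1]{DG2} and isomorphism \eqref{e:Sp contr}. Concretely: the fibers of $\sfq$, i.e. $\Bun_{N_P}$-torsors fibered appropriately, admit a $\BG_m$-action (via a dominant regular cocharacter of $Z_M$) contracting them, and on the contracting locus $(\sfq)_*$ agrees with $!$-pushforward along the contraction, which commutes with $\Psi$ by \propref{p:Sp contr}/\eqref{e:Sp contr}. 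Thus one stratifies $\Bun_P$ by Harder--Narasimhan-type strata so that on each piece $\sfq$ restricts to a composition of a contractive situation and a proper (or smooth) map, and applies \secref{sss:Sp is exact}(5) together with \propref{p:Sp contr} stratum by stratum, assembling via the commutation of $\Psi$ with $!$-extensions from open substacks (which is \propref{p:Sp trunc}/\propref{p:Sp contr} applied to the HN truncations). Equivalently and more cleanly: use the identification $\on{CT}_* \simeq \on{CT}^-_!$ of \cite{DG2} (the analogue of \eqref{e:weird adj}), reducing to showing $\Psi$ commutes with $(\sfq^-)_!$ and $(\sfp^-)^*$; now $\sfp^-$ is smooth so \secref{sss:Sp is exact}(5) applies, and $(\sfq^-)_!$ commutes with $\Psi$ by the contraction principle exactly as in the displayed argument for $\on{CT}^{-,\lambda}_!\circ\Phi$ earlier. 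This second route is essentially verbatim what the paper already did for $\on{CT}^{-,\lambda}_!$ and $\Phi$; one only needs to note $\Psi$ and $\Phi$ differ by the identity functor so the same proof gives the statement for $\Psi$.

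\textbf{Main obstacle.} The genuine work is in handling the non-properness/non-smoothness of $\sfq$: one must check that the contraction principle of \cite{DG2} is available in the relative setting over $\Spec(\sR_0)$ and applies to the constant-term situation uniformly, and that the auxiliary objects ($\ul\sfe$, or more precisely whatever object one applies $\on{CT}_*$ to) are $\BG_m$-(quasi-)equivariant for the contracting action so that \propref{p:Sp contr} applies. Since the paper has already run this exact argument for $\on{CT}^{-,\lambda}_!$ against $\Phi$ in genus $0$, and since \cite{DG2} is stated (as the paper notes) over an arbitrary Noetherian base, the extension is routine; the only care needed is to phrase the reduction via $\on{CT}_*\simeq \on{CT}^-_!$ so that $\sfp^-$ rather than $\sfp$ is the one being pulled back, making the smoothness input clean. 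I would therefore organize the proof as: (1) reduce to $G$ semisimple if convenient; (2) apply \eqref{e:weird adj}-type identification $\on{CT}_*\simeq\on{CT}^-_!$; (3) write $\on{CT}^-_! = (\sfq^-)_! \circ (\sfp^-)^*$; (4) commute $\Psi$ past $(\sfp^-)^*$ using smoothness, \secref{sss:Sp is exact}(5); (5) commute $\Psi$ past $(\sfq^-)_!$ using the contraction principle, \cite[Sect. 4.1]{DG2} and \eqref{e:Sp contr}, noting $\sfp^-$ dominant regular forces the requisite contracting $\BG_m$-action; (6) conclude.
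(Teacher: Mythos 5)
Your proposal takes a genuinely different route from the paper's proof, and the route you chose has a gap. The paper does not use the contraction principle or any smoothness of a projection to $\Bun_G$. Instead, it unwinds $\Psi$ via Beilinson's colimit presentation as a colimit of functors $\bi_0^*\circ(\bj_0)_*((-)\otimes\CE)$ and checks that $\on{CT}_*$ commutes with each of the three pieces, using \emph{both} presentations of the constant term functor: the commutation with $(\bj_0)_*$ holds because $\on{CT}_*=\sfq_*\circ\sfp^!$ is built from $*$-pushforward and $!$-pullback (which commute with $(\bj_0)_*$ by base change), while the commutation with $\bi_0^*$ holds after rewriting $\on{CT}_*\simeq\on{CT}^-_!=(\sfq^-)_!\circ(\sfp^-)^*$, built from $!$-pushforward and $*$-pullback (which commute with $\bi_0^*$ by base change). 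All four commutations are unconditional; the only geometric input is the identification $\on{CT}_*\simeq\on{CT}^-_!$ from \cite{DG2}.

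The gap in your proposal is the repeated claim that $\sfp$ (in Route 1) or $\sfp^-$ (in Route 2) is smooth. This is false: the relative tangent complex of $\sfp^-:\Bun_{P^-}\to\Bun_G$ is $R\Gamma(X,(\fn^-_P)_\sigma)$, which has $H^1$ in general, so $\sfp^-$ is smooth only over strata of sufficiently positive degree. That is exactly why the paper's argument for \thmref{t:Poinc acycl} in genus $0$ restricts to the strata $\Bun^\lambda_{B^-}$ with $\lambda\in\Lambda^+$ and relies on the genus-$0$ conservativity of $\oplus_\lambda\on{CT}^\lambda_*$; no such reduction is available for the general Proposition. Without smoothness, the comparison map $(\sfp^-)^*\circ\Psi\to\Psi\circ(\sfp^-)^*$ is not an isomorphism and your step (4) breaks. (A secondary error in Route 1: the formula is $\on{CT}_*=\sfq_*\circ\sfp^!$, not $\sfq_*\circ\sfp^*$.) Your instinct that the contraction principle handles $(\sfq^-)_!$ is correct, but the pullback along the non-smooth map is the real obstruction, and neither smoothness nor contraction overcomes it; the escape is the paper's device of toggling between $\on{CT}_*$ and $\on{CT}^-_!$ according to which elementary operation in Beilinson's presentation one needs to commute with.
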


\begin{proof}

Using Beilinson's definition of nearby cycles, the functor $\Psi$ is the colimit
of functors
$$\bi_0^*\circ (\bj_0)_*((-)\otimes \CE),$$
where:
\begin{itemize}

\item The maps $\bi_0$ and $\bj_0$ are $\Spec(\sk)\to \Spec(\sR_0)$ and $\Spec(\sK_0)\to \Spec(\sR_0)$
and base changes thereof;

\item $\CE$ is a local system on $\Spec(\sK_0)$ or a pullback thereof.

\end{itemize}

We claim that the operation $\on{CT}_*$ commutes with all the three functors involved:

\medskip
 
\noindent(i) The commutation with $(-)\otimes \CE$ is obvious;

\medskip

\noindent(ii) The commutation with the functor $(\bj_0)_*$ is also obvious, since $\on{CT}_*$
involves !-pullbacks and *-pushforwards. 

\medskip

\noindent(iii) In order to establish the commutation with $\bi_0^*$, we apply the
isomorphism \eqref{e:weird adj}, and rewrite $\on{CT}_*$ as $\on{CT}^-_!$. 
Now again the commutation becomes obvious, since $\on{CT}^-_!$ involves *-pullback
and !-pushforward.

\end{proof}

\begin{rem} \label{r:Sp and CT}
Note that a variation of this argument shows that the natural transformation
$$\on{Sp}\circ \on{CT}_*\to \on{CT}_*\circ \on{Sp}$$
is an isomorphism.
\end{rem}

\begin{cor} \label{c:van and CT}
The natural transformation
$$\Phi\circ \on{CT}_*\to \on{CT}_*\circ \Phi$$
is an isomorphism.
\end{cor}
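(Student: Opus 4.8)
The statement to prove is \corref{c:van and CT}, namely that the natural transformation $\Phi\circ \on{CT}_* \to \on{CT}_*\circ \Phi$ is an isomorphism. The plan is to deduce this formally from \propref{p:nearby and CT}, which gives the analogous statement for the nearby cycles functor $\Psi$, using the standard distinguished triangle relating $\Psi$, $\Phi$, and the specialization (unit) map.

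First I would recall the defining triangle for vanishing cycles. For an object $\CF \in \Shv(\Bun_{G,\sK_0})$ (or in a base change thereof), there is a canonical distinguished triangle of functors
$$
\bi_0^* \to \Psi \to \Phi \overset{+1}\to,
$$
where $\bi_0^*$ denotes the $*$-restriction along $\Spec(\sk)\to \Spec(\sR_0)$ (applied to the relevant base change), $\Psi$ is the nearby cycles functor, and $\Phi$ is the vanishing cycles functor. Both constant term functors $\on{CT}_*$ in the statement are computed over $\sk$ and over $\sK_0$ respectively, and by \propref{p:nearby and CT} the middle term commutes with $\on{CT}_*$. I would then observe that $\on{CT}_*$ also commutes with $\bi_0^*$: indeed, exactly as in step (iii) of the proof of \propref{p:nearby and CT}, one rewrites $\on{CT}_*$ as $\on{CT}^-_!$ via the isomorphism \eqref{e:weird adj}, and then $\bi_0^*$ commutes with $\on{CT}^-_!$ because the latter is built from a $*$-pullback along the smooth map $\sfp^-$ (which commutes with any $*$-restriction) and a $!$-pushforward along $\sfq^-$ (which commutes with $*$-restriction by proper base change, or, in the non-proper case, by the contraction principle of \cite[Sect.~4.1]{DG2} together with \eqref{e:Sp contr}). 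Alternatively, and more simply, since $\bi_0^*$ is a $*$-pullback along a map of base schemes while $\on{CT}_*$ is built from operations along the fibers $\Bun_B \to \Bun_G$, $\Bun_B \to \Bun_T$, the commutation $\bi_0^* \circ \on{CT}_* \simeq \on{CT}_* \circ \bi_0^*$ holds by smooth (resp.\ proper) base change, exactly as in steps (ii)--(iii) of \propref{p:nearby and CT}.

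Having established that $\on{CT}_*$ commutes with two of the three terms of the triangle $\bi_0^* \to \Psi \to \Phi$ — and compatibly with the connecting maps — the commutation with the third term $\Phi$ follows automatically: applying $\on{CT}_*$ to the triangle and comparing with the triangle obtained by first applying each functor, the natural transformation $\Phi \circ \on{CT}_* \to \on{CT}_* \circ \Phi$ sits in a morphism of distinguished triangles whose other two components are the isomorphisms just discussed, and hence is itself an isomorphism (the cone of a morphism between the cones of two isomorphic maps vanishes).

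The main (and only) subtlety is bookkeeping: one must make sure that the constant term functor really does commute with $\bi_0^*$ and with $(\bj_0)_*$ and with the twist $(-)\otimes \CE$ at the level of Beilinson's presentation of $\Psi$, so that the comparison is with the same presentation of $\Psi$ that appears in \propref{p:nearby and CT}; this is precisely the content of steps (i)--(iii) of the proof of that proposition, so no new input is needed. I expect no genuine obstacle here — \corref{c:van and CT} is a formal corollary of \propref{p:nearby and CT} via the vanishing cycles triangle, just as \remref{r:Sp and CT} records the parallel formal consequence for $\on{Sp}$.
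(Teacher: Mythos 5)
Your proof is correct, but it differs from the paper's in one structural choice: you use the triangle $\bi_0^*\to \Psi \to \Phi$ and deduce commutation with $\bi_0^*$ by rewriting $\on{CT}_*\simeq \on{CT}^-_!$ (exactly as in step (iii) of \propref{p:nearby and CT}). The paper instead works with the Verdier-dual triangle $\Phi\to \Psi \to \bi^!$; since $\on{CT}_*=\sfq_*\circ\sfp^!$ is built entirely from a $*$-pushforward and a $!$-pullback, both of which commute with $\bi^!$ unconditionally (adjoint base change, resp.\ composition of $!$-pullbacks), the commutation with $\bi^!$ is immediate and \emph{no} use of \eqref{e:weird adj} is needed. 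Both routes are valid one-paragraph arguments; the paper's is marginally more economical because it avoids a second invocation of the $\on{CT}_*\simeq \on{CT}^-_!$ identity, and it is the natural choice once one notices that the shape of $\on{CT}_*$ already matches $\bi^!$.

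One small inaccuracy in your write-up: the ``alternatively, and more simply'' parenthetical claiming that $\bi_0^*\circ\on{CT}_*\simeq \on{CT}_*\circ\bi_0^*$ ``holds by smooth (resp.\ proper) base change'' is not correct as stated. Neither smooth base change (since $\bi_0$ is a closed immersion, not smooth) nor proper base change (since $\sfq$ is not proper) directly gives $\bi_0^*\circ\sfq_*\simeq \sfq_*\circ\bi_0^*$; the rewriting $\on{CT}_*\simeq\on{CT}^-_!$ is genuinely needed for the $\bi_0^*$ route, and your main argument correctly uses it. It is worth deleting or correcting that parenthetical. Finally, note that the two triangles define, in general, Verdier-dual versions of the vanishing cycles functor; for the present corollary and its use in the paper this is immaterial (the statement holds for both), but it is a convention one should be aware of.
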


\begin{proof}

Follows from the fiber sequence
$$\Phi\to \Psi \to \bi^!,$$
where the functor $\bi^!$ obviously commutes with $\on{CT}_*$.

\end{proof} 

\sssec{}

By \corref{c:van and CT}, it suffices to show that
$$\Phi\circ \on{CT}_*(\on{Poinc}^{\on{Vac}}_{!,\sR_0}))=0.$$

\medskip

We use again the isomorphism \eqref{e:weird adj}, and we rewrite 
$$\on{CT}_*(\on{Poinc}^{\on{Vac}}_{!,\sR_0})\simeq
\on{CT}^-_!(\on{Poinc}^{\on{Vac}}_{!,\sR_0}).$$
 
Thus, we have to prove:

\begin{equation} \label{e:CT Vac Sp} 
\Phi\circ \on{CT}^-_!(\on{Poinc}^{\on{Vac}}_{!,\sR_0})=0.
\end{equation}  

\medskip

We claim:

\begin{thm} \label{t:CT of Poinc}
There is a canonical isomorphism
$$(\on{transl}_{\rho_P(\omega_X)})^*\circ \on{CT}^-_!(\on{Poinc}^{\on{Vac}}_{G,!})[d]\simeq
\on{Fact}(\Omega^{\on{loc}})\star \on{Poinc}^{\on{Vac}}_{M,!},$$
where:
\begin{itemize}

\item $\on{transl}_{\rho_P(\omega_X)}$ is the automorphism of $\Bun_M$ given by translation by
$\rho_P(\omega_X)\in \Bun_{Z_M}$; 

\smallskip

\item The integer $d$ is as in \cite[Corollary 10.1.8]{GLC3};

\smallskip

\item The functor $(\on{Fact}(\Omega^{\on{loc}})\star (-))$ is as in \thmref{t:Whit of Eis}.

\end{itemize} 

\end{thm}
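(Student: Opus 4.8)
The plan is to compute $\on{CT}^-_!(\on{Poinc}^{\on{Vac}}_{G,!})$ by composing correspondences. Recall that $\on{Poinc}^{\on{Vac}}_{G,!}=(\sfp_G/T)_!(\on{exp}_{\chi^I}/T)$ for $\sfp_G\colon\Bun_{N,\rho(\omega_X)}\to\Bun_G$, while $\on{CT}^-_!=(\sfq^-)_!\circ(\sfp^-)^*$ along $\Bun_G\overset{\sfp^-}\leftarrow\Bun_{P^-}\overset{\sfq^-}\to\Bun_M$. Base change along the Cartesian square defining
$$\CY:=\Bun_{N,\rho(\omega_X)}/T\underset{\Bun_G}\times\Bun_{P^-}$$
identifies $(\sfp^-)^*\circ(\sfp_G/T)_!$ with $(\wt\sfp_G/T)_!\circ(\wt\sfp^-)^*$, so that $\on{CT}^-_!(\on{Poinc}^{\on{Vac}}_{G,!})\simeq(\sfq^-\circ\wt\sfp_G/T)_!$ applied to the pullback of $\on{exp}_{\chi^I}/T$ to $\CY$. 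Everything is thus reduced to the geometry of $\CY$ together with the restricted exponential sheaf, and (since this is needed for \thmref{t:Poinc acycl}) the whole discussion is carried out over $\Spec(\sR_0)$.

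Next I would analyze $\CY$. A point records a $G$-bundle with a generic reduction to $N=N^+$ (twisted by $\rho(\omega_X)$) together with a reduction to $P^-$; since $N^+\cap P^-=N^+_M$, the generic $N^+$-reduction induces a generic $N^+_M$-reduction of the associated $M$-bundle, giving a map $\CY\to\Bun_{N_M,\rho_M(\omega_X)}/T$ after absorbing the discrepancy $\rho=\rho_M+\rho_P$ — this is the origin of the twist $(\on{transl}_{\rho_P(\omega_X)})^*$. Stratify $\CY$ by the relative position of the two reductions, valued in $\Lambda^{\on{pos}}_{G,P}$. On the open transversal stratum $\CY_0$ the $N^+$-reduction is induced from its $N^+_M$-part, so the components of $\chi^I$ indexed by $i\in I\setminus I_M$ restrict to the zero section and $\on{exp}_{\chi^I}/T$ restricts to the $M$-exponential sheaf $\on{exp}_{\chi^{I_M}}/T$, while $\CY_0\cong\Bun_{N_M,\rho_M(\omega_X)}/T$ compatibly with $\sfp_M$; hence $\CY_0$ contributes exactly $\on{Poinc}^{\on{Vac}}_{M,!}$. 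The deeper strata $\CY_\theta$, $\theta>0$, fiber over $\Bun_M$ times a space of $\theta$-colored divisors, with fibers a parabolic Zastava-type local model along which the $N^+$- and $N^-_P$-reductions collide.

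Restricted to each $\CY_\theta$, the functor $(\sfq^-)_!$ is computed by the contraction principle, i.e. Braden's hyperbolic localization as used in \cite{DG2} and \cite{BG1}: there is a $\BG_m$-action contracting the Zastava fiber onto its central stratum, and since the sheaf in play is (quasi-)$\BG_m$-equivariant, the $!$-pushforward equals the $!$-restriction to that stratum (this step is covered over $\sR_0$ by \propref{p:preserve ULA}, using that the local models form pure $\sR_0$-families, as in \thmref{t:IC acycl} and \thmref{t:Sat acycl}). By the IC/standard-stalk computation for Zastava spaces (\cite{BFGM}, \cite{Ga4}), the resulting contribution of the $\theta$-stratum is precisely the $\theta$-component $\Omega^{\on{loc},\theta}$ of the factorization algebra $\on{Fact}(\Omega^{\on{loc}})$, which is the local/factorization incarnation of $\Omega(\check\fn(P))=\bOmega$ — here the $(I\setminus I_M)$-directions of $\chi^I$ along the collision divisor are exactly the source of this exponential/factorization factor. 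Assembling over all $\theta$, the strata organize into a factorization-module pattern, giving $(\on{transl}_{\rho_P(\omega_X)})^*\circ\on{CT}^-_!(\on{Poinc}^{\on{Vac}}_{G,!})[d]\simeq\on{Fact}(\Omega^{\on{loc}})\star\on{Poinc}^{\on{Vac}}_{M,!}$ with the convolution $\star$ of \thmref{t:Whit of Eis}; the shift $[d]$, with $d$ of \cite[Corollary 10.1.8]{GLC3}, is matched against the relative-dimension discrepancies accumulated in the base change and in the contraction.

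The main obstacle will be making the last assembly precise: establishing the cleanness statements needed to apply Braden's theorem uniformly in $\theta$, and verifying that the gluing data among the strata $\CY_\theta$ exactly reconstitutes the factorization structure underlying the functor $\on{Fact}(\Omega^{\on{loc}})\star(-)$ of \thmref{t:Whit of Eis} — so that there are no further corrections and the associativity/factorization coherences agree. A secondary technical point is to phrase all the Zastava computations in terms of standard and costandard objects, whose ULA-ness over the non-reduced base $\Spec(\sR_0)$ is already known, rather than in terms of the $\sk$- or $\sK$-fibers, so that the resulting isomorphism holds integrally over $\sR_0$ as required for \eqref{e:CT Vac Sp}.
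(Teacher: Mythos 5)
Your approach is genuinely different from the paper's. The paper disposes of \thmref{t:CT of Poinc} in one sentence, obtaining it from \thmref{t:Whit of Eis} by a duality manipulation (transposing the functor $\on{coeff}_G^{\on{Vac}}\circ\Eis^-_{!,\rho_P(\omega_X)}$, which is corepresentable, into a statement about the object $\on{CT}^-_!(\on{Poinc}^{\on{Vac}}_{G,!})$, via the formal mechanism packaged in \cite[Corollary 10.1.8]{GLC3}). Your proposal instead runs a direct geometric computation, which is exactly the alternative the paper flags in the remark that immediately follows ("rerunning the argument of \thmref{t:Whit of Eis}\dots working with the open Zastava space $\overset{\circ}{\on{Zast}}$"). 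So the strategy is sound and recognized by the authors, and it buys something real: an independent proof not resting on \thmref{t:Whit of Eis}, hence closer to self-contained. But the paper's chosen route is vastly shorter and leans on work already done.

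There is, however, a genuine gap in your outline. After the base change, $\CY=\Bun_{N,\rho(\omega_X)}/T\underset{\Bun_G}\times\Bun_{P^-}$ is \emph{not} exhausted by your stratification indexed by $\Lambda^{\on{pos}}_{G,P}$. That stratification only makes sense on the open locus where the $B$- and $P^-$-reductions are generically transversal; $\CY$ has a nonempty closed complement where the generic relative position of the two reductions lies in a non-open Bruhat cell, indexed by $W_M\backslash W/W_M$ rather than by colored divisors. (Already for $G=SL_2$, $P^-=B^-$: the locus where the subbundle $\omega^{1/2}\hookrightarrow\CE$ is exactly the kernel of $\CE\twoheadrightarrow\CL'$, i.e., the two flags coincide generically.) To reduce to the open Zastava you need a cleanness statement that these Weyl-indexed strata do not contribute to $(\sfq^-)_!$, analogous to the lemma invoked in the proof of \thmref{t:Whit of Eis} that kills the non-transversal locus in $\Bun_{N,\rho(\omega_X)}\underset{\Bun_G}\times\BunPtm$; this is where the nondegeneracy of the Whittaker character must enter. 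Your outline never names this step; the "obstacles" paragraph alludes to cleanness only in the context of applying Braden's theorem within the $\theta$-strata, not for killing the Bruhat strata, and those are different issues.

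Two smaller points. First, you invoke Braden/\propref{p:preserve ULA} and ULA over $\Spec(\sR_0)$ as part of your computation, but the paper's remark explicitly says the alternative proof "does not need any local acyclicity assertions"; the simplification of \thmref{t:CT of Poinc} relative to \thmref{t:Whit of Eis} is precisely that $\on{CT}^-_!=(\sfq^-)_!(\sfp^-)^*$ is written directly on $\Bun_{P^-}$, bypassing \eqref{e:Eis via compact} and the ULA of $j_!(\ul\sfe_{\Bun_{P^-}})$, so you should not re-import that machinery. Second, the $\sR_0$-version \eqref{e:CT Vac Sp} needed for \thmref{t:Poinc acycl} is obtained in the paper from the field-level statement of \thmref{t:CT of Poinc} plus \propref{p:Sp and Hecke}, rather than by proving \thmref{t:CT of Poinc} itself relatively over $\Spec(\sR_0)$; the latter is an unnecessary complication.
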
 

The assertion of \thmref{t:CT of Poinc} is obtained from that of \thmref{t:Whit of Eis} proven below
by a duality manipulation (see \cite[Corollary 10.1.8]{GLC3}). 

\begin{rem}

Alternatively, one can prove \thmref{t:CT of Poinc} by rerunning the argument of \thmref{t:Whit of Eis}.
However, the proof is simpler as here one works with the open Zastava space $\overset{\circ}{\on{Zast}}$
and one does not need any local acyclicity assertions.

\end{rem} 

\sssec{}

Thus, we have to show that
$$\Phi(\on{Fact}(\Omega^{\on{loc}})\star \on{Poinc}^{\on{Vac}}_{M,!,\sR_0})=0.$$

By \propref{p:Sp and Hecke}, 
$$\Phi(\on{Fact}(\Omega^{\on{loc}})\star \on{Poinc}^{\on{Vac}}_{M,!,\sR_0})\simeq
\on{Fact}(\Omega^{\on{loc}})\star \Phi(\on{Poinc}^{\on{Vac}}_{M,!,\sR_0}).$$

Now, by induction on the semi-simple rank, we can assume that $\Phi(\on{Poinc}^{\on{Vac}}_{M,!,\sR_0})=0$,
and the assertion follows. 

\ssec{The cuspidal part}

\sssec{}

The statement that we want to prove is that the map
\begin{equation} \label{e:on cusp}
\be^L_{\on{cusp}}(\on{Poinc}^{\on{Vac}}_{!,\sk})\simeq
\be^L_{\on{cusp}}\circ \bi_0^*(\on{Poinc}^{\on{Vac}}_{!,\sR_0})
\to \be^L_{\on{cusp}}\circ \Psi(\on{Poinc}^{\on{Vac}}_{!,\sK_0})
\end{equation} 
is an isomorphism.

\medskip

Thanks to \thmref{t:IC acycl}, the functors $\bi_0^*$, $\bj_{0,*}$, and
$\Psi$ are defined on the cuspidal 
category, viewed as a \emph{quotient} of $\Shv(\Bun_G)$, i.e., in a way that commutes
with the projection
$$\be_{\on{cusp}}^L:\Shv(\Bun_G)\to \Shv(\Bun_G)_{\on{cusp}}.$$

%
%
%
%
%

\sssec{}

Recall that $\on{Poinc}^{\on{Vac}}_!\in \Shv(\Bun_G)$ was defined as
$$(\sfp/T)_!(\on{exp}_{\chi^I}/T):=(\chi^I/T)^*(\on{exp}^{\boxtimes I}/T),$$
where $\on{exp}^{\boxtimes I}/T$ is as in \cite[Sect. 3.3]{GLC1}. 

\medskip

We will now change the notations
$$\on{exp}^{\boxtimes I}/T\rightsquigarrow \on{exp}_!^{\boxtimes I}/T \text{ and }
\on{exp}_{\chi^I}/T \rightsquigarrow \on{exp}_{\chi^I,!}/T$$ 
and we note that there exists another object
$$\on{exp}_*^{\boxtimes I}/T\in \Shv(\BG_a^I/T),$$
see \secref{sss:Kir on AI}. 

\medskip

In addition, there is a canonically defined map
\begin{equation} \label{e:Poinc ! to * pre}
\on{exp}_!^{\boxtimes I}/T\to \on{exp}_*^{\boxtimes I}/T,
\end{equation}
see \eqref{e:! to * exp}. 

\sssec{}

Denote 
$$\on{exp}_{\chi^I,*}/T:=(\chi^I/T)^*(\on{exp}_*^{\boxtimes I}/T).$$

Let
$$\on{Poinc}^{\on{Vac}}_*\in \Shv(\Bun_G)$$
be the object equal to 
$$(\sfp/T)_*(\on{exp}_{\chi^I,*}/T).$$

\medskip

The map \eqref{e:Poinc ! to * pre} gives rise to a map 
\begin{equation} \label{e:Poinc ! to *}
\on{Poinc}^{\on{Vac}}_!\to \on{Poinc}^{\on{Vac}}_*.
\end{equation}

We will prove:

\begin{thm} \label{t:Poinc ! to *}
The cone of \eqref{e:Poinc ! to *} belongs to $\Shv(\Bun_G)_\Eis$.
\end{thm}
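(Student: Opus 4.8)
The plan is to reduce the statement to a \emph{local} assertion about the two objects $\on{exp}_!^{\boxtimes I}/T$ and $\on{exp}_*^{\boxtimes I}/T$ on $\BG_a^I/T$, namely that the cone of the map \eqref{e:Poinc ! to * pre} is built out of pieces supported on the ``degenerate'' coordinate strata. Recall that $\BG_a^I = \prod_{i\in I}\BG_a$, and for a subset $J\subsetneq I$ the locus where the $J$-coordinates vanish (and the rest are nonzero) is a $T$-stable stratum. On each factor $\BG_a$ we have the Artin--Schreier (exponential) sheaf $\on{exp}_!$ and its $*$-counterpart $\on{exp}_*$, and the canonical map $\on{exp}_! \to \on{exp}_*$; its cone is a (shifted) skyscraper at the origin, which is precisely the contribution of a ``missing'' character. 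Taking the external product over $I$ and passing to the $T$-quotient, the cone of \eqref{e:Poinc ! to * pre} is glued from objects each of which is, along some nonempty set $J$ of coordinates, a skyscraper at $0$, and along the complement $I\setminus J$ the honest exponential sheaf. Degenerating the character in $J$-coordinates is exactly the geometric incarnation of passing to the parabolic $P_J$ with Levi $M_J$.

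\textbf{Key steps.} First I would fix the combinatorial decomposition: filter $\on{exp}_*^{\boxtimes I}/T$ (or rather the cone of \eqref{e:Poinc ! to * pre}) by the strata of $\BG_a^I/T$ indexed by subsets $J\subseteq I$, so that the associated graded piece for $J$ is $(\text{skyscraper at }0)^{\boxtimes J}\boxtimes(\on{exp}^{\boxtimes(I\setminus J)})$, and the cone in question is the part with $J\ne\emptyset$. Second, I would pull this filtration back along $\chi^I/T:\Bun_{N,\rho(\omega_X)}/T\to\BG_a^I/T$ and push forward along $\sfp/T:\Bun_{N,\rho(\omega_X)}/T\to\Bun_G$, using proper base change / the projection formula to interchange $(\sfp/T)_*$ with the filtration. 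Third, for each nonempty $J$ I must identify the resulting object $(\sfp/T)_*\bigl((\chi^I/T)^*(\delta_0^{\boxtimes J}\boxtimes\on{exp}^{\boxtimes(I\setminus J)})\bigr)$ with an object in the essential image of the Eisenstein functor $\Eis^-_!$ (or $\Eis_!$) for the parabolic $P_J$ corresponding to $J\subsetneq I$ — concretely, the condition ``$\chi_i=0$ for $i\in J$'' cuts out the substack $\Bun_{N(M_J),\rho(\omega_X)}\cdot \Bun_{N_{P_J}}$ up to the quotient, and the Poincar\'e construction for the degenerate character factors through $\Bun_{P_J}$. This is where one invokes, or re-derives, the computation underlying \thmref{t:CT of Poinc} and \thmref{t:Whit of Eis}: the ``asymptotics of the Whittaker sheaf as the character degenerates'' is literally the statement that these graded pieces are Eisenstein. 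Finally, since $\Shv(\Bun_G)_\Eis$ is closed under colimits and extensions and contains the image of each $\Eis^-_!$ for proper $P$, the cone of \eqref{e:Poinc ! to *}, being an iterated extension of such pieces, lies in $\Shv(\Bun_G)_\Eis$.

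\textbf{Main obstacle.} The hard part will be step three — making precise and proving that the $J$-graded piece of the degenerated Poincar\'e object is genuinely Eisenstein for $P_J$, with the correct normalization (the cohomological shift $[\delta_{(N^-_P)_{\rho_P(\omega_X)}}]$ and the translation by $\rho_P(\omega_X)$ appearing in \thmref{t:L and Eis} and \thmref{t:CT of Poinc}). This requires a careful analysis near the relevant stratum: one has to compare the Poincar\'e series attached to the degenerate character $\psi_J$ (trivial on the simple root subgroups indexed by $J$) with the composite $\Eis^-_{!}\circ\on{Poinc}^{\on{Vac}}_{M_J}$, which amounts to controlling the fibers of $\sfp/T$ over the Harder--Narasimhan-type strata where the $N_{P_J}$-part of the flag develops zeroes. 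I expect to run this comparison through the (open) Zastava space $\overset{\circ}{\on{Zast}}$ attached to $P_J$, exactly as in the remark following \thmref{t:CT of Poinc}, using that on the open Zastava space no local acyclicity input is needed and the relevant pushforward is computed by a clean factorization/contraction argument. Once the identification is in place for every nonempty $J$, assembling the filtration and concluding membership in $\Shv(\Bun_G)_\Eis$ is formal.
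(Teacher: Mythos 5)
There is a genuine gap. You are treating the cone of \eqref{e:Poinc ! to *} as if it were $(\sfp/T)_*$ (or $(\sfp/T)_!$) applied to the cone of the local map \eqref{e:Poinc ! to * pre} on $\BG_a^I/T$. But the two Poincar\'e objects differ by \emph{two} independent $!$-vs-$*$ discrepancies: one in the input sheaf ($\on{exp}_{\chi^I,!}/T$ versus $\on{exp}_{\chi^I,*}/T$), and one in the pushforward ($(\sfp/T)_!$ versus $(\sfp/T)_*$). Your filtration by the coordinate strata of $\BG_a^I$ only sees the first discrepancy. The second one is real and nontrivial, because $\sfp:\Bun_{N,\rho(\omega_X)}\to\Bun_G$ is not proper — that is, in fact, the whole reason the theorem is not a triviality. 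As written, your second step ("push forward along $\sfp/T$ ... using proper base change / the projection formula") silently assumes a property that $\sfp/T$ does not have; you would compute $(\sfp/T)_*$ of the filtered local cone, which is not the same thing as $\on{cone}\bigl(\on{Poinc}^{\on{Vac}}_!\to\on{Poinc}^{\on{Vac}}_*\bigr)$.

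The paper's proof handles exactly this by passing to the Drinfeld compactification $\ol{\Bun}_{N,\rho(\omega_X)}$ and writing $\on{Poinc}^{\on{Vac}}_?$ as $(\ol\sfp/T)_?\circ (f/T)_?\circ (\bj/T)_?\circ (\pi_{Z_G})_?\circ j_?$ applied to $(\chi^I)^*(\on{exp}^{\boxtimes I})$. Here $\ol\sfp$ is proper, $\pi_{Z_G}$ is finite, and the extension $j_!$ is clean, so the $!$/$*$ ambiguity survives only in $(f/T)_?$ and $(\bj/T)_?$. The $(\bj/T)_?$ discrepancy is what your stratification argument captures (this is \propref{p:rest is Eis}, and it does indeed land in the Eisenstein category for a proper parabolic — you have the right picture there, though note that the parabolic attached to the stratum is indexed by the complementary subset to your $J$). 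But the $(f/T)_?$ discrepancy requires a separate input, namely \propref{p:Four-proper}: a Fourier-properness statement that $f_!\simeq f_*$ on objects of the extended Whittaker category. This second input is entirely absent from your plan, and without it the iterated-extension argument in your final step does not close up to the actual cone of \eqref{e:Poinc ! to *}. A secondary, fixable confusion: your stratification lives on the source $\BG_a^I$ of $\chi^I$, whereas the paper's Fourier picture stratifies an auxiliary dual copy $(\BA^1)^I$ thought of as the space of characters; the Kirillov-model formulation of Sect.~\ref{ss:Kir} is what makes these pictures match, and reconciling them requires the equivariance-theoretic stratification rather than the naive support stratification when one works on $\ol{\Bun}_{N,\rho(\omega_X)}$ rather than on $\BG_a^I$ itself.
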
 

Actually, the proof shows more, namely that for any $\CG \in \Shv(\Spec(\sR_0))$, the morphism
\begin{equation} \label{e:Poinc ! to * bis}
\sfp_{\sR_0,!}(\pi_{\Bun_{N,\rho(\omega),\sR_0}}^*(\CG)
\overset{*}{\otimes} \on{exp}_{\chi^I,!}/T)
\to 
\sfp_{\sR_0,*}(\pi_{\Bun_{N,\rho(\omega),\sR_0}}^*(\CG)
\overset{*}{\otimes} \on{exp}_{\chi^I,*}/T)
\end{equation} 
has cone lying in $\Shv(\Bun_{G,\sR_0})_{\Eis}$;
here $\pi_{\Bun_{N,\rho(\omega),\sR_0}}$ is the projection
from $\Bun_{N,\rho(\omega),\sR_0}$ to $\Spec(\sR_0)$.

\medskip 

The proof of \thmref{t:Poinc ! to *} will be given in \secref{s:asymptotics}. We now proceed with the 
proof of the fact that \eqref{e:on cusp} is an isomorphism. 

\sssec{}

Take $\CE$ to be a lisse sheaf on $\Spec(\sK_0)$.
We have a commutative diagram
\begin{equation}\label{e:Poinc cusp}
\begin{tikzcd}
\on{Poinc}^{\on{Vac}}_{!,\sR_0} \overset{*}{\otimes} 
\bj_{0,*}(\CE) 
\arrow[rr]
\arrow[d]
&& 
\bj_{0,*}(\on{Poinc}^{\on{Vac}}_{!,\sK_0} 
\overset{*}{\otimes} \CE)
\arrow[d] \\
\on{Poinc}^{\on{Vac}}_{*,\sR_0} \overset{*}{\otimes} 
\bj_{0,*}(\CE)
\arrow[rr]
&&
\bj_{0,*}(\on{Poinc}^{\on{Vac}}_{*,\sK_0} 
\overset{*}{\otimes} \CE).
\end{tikzcd}
\end{equation}

\noindent The vertical arrows have Eisenstein cones
by \thmref{t:Poinc ! to *}. Moreover, the composition
from upper left to bottom right has Eisenstein cone by the property of \eqref{e:Poinc ! to * bis}
stated above. 
Therefore, each arrow in the
above square becomes an isomorphism after applying
$\be^L_{\on{cusp}}$.

\medskip 

Applying $\bi_0^*$, we obtain a similar diagram
\begin{equation}\label{e:Poinc cusp 0}
\begin{tikzcd}
\bi_0^*(\on{Poinc}^{\on{Vac}}_{!,\sR_0}) {\overset{*}\otimes} 
\bi_0^*\circ \bj_{0,*}(\CE) 
\arrow[rr]
\arrow[d]
&& 
\bi_0^*\circ \bj_{0,*}(\on{Poinc}^{\on{Vac}}_{!,\sK_0} 
\overset{*}{\otimes} \CE)
\arrow[d] \\
\bi_0^*(\on{Poinc}^{\on{Vac}}_{*,\sR_0}) {\overset{*}\otimes} 
\bi_0^*\circ \bj_{0,*}(\CE)
\arrow[rr]
&&
\bi_0^*\circ \bj_{0,*}(\on{Poinc}^{\on{Vac}}_{*,\sK_0} 
\overset{*}{\otimes} \CE)
\end{tikzcd}
\end{equation}

\noindent in which again all arrows have Eisenstein
cones. By Beilinson's construction of
nearby cycles, we can pass to a colimit of such $\CE$'s 
so that the top horizontal arrow in the above diagram
becomes the map
\[
\bi_0^*(\on{Poinc}^{\on{Vac}}_{!,\sR_0}) =
\bi_0^*(\on{Poinc}^{\on{Vac}}_{!,\sR_0}) {\otimes} 
\Psi(\sfe_{\Spec(K_0)})
\to 
\Psi(\on{Poinc}^{\on{Vac}}_{!,\sK_0}),
\]

\noindent which we deduce has Eisenstein cone.

\section{Comparison of !- vs *- Poincar\'e objects} \label{s:asymptotics} 

The goal of this section is to prove \thmref{t:Poinc ! to *}. We continue to assume that $G$ is semi-simple. 

\ssec{The case when there exists the exponential sheaf}

Note that we only used \thmref{t:Poinc ! to *} over $\sK$ and $\sR$. However, we will first give a proof
over a field of positive characteristic (or for D-modules), since it conveys the intuitive picture.

\begin{rem} 
In the course of the proof, we will see that the cone of \eqref{e:Poinc ! to *} admits 
a canonical filtration, whose associated graded can be described explicitly, see Remark
\ref{r:asympt}.

\end{rem}.

\sssec{}

We start by rewriting the objects 
\begin{equation} \label{e:exp T}
\on{exp}_!^{\boxtimes I}/T \text{ and } \on{exp}_*^{\boxtimes I}/T
\end{equation} 
in terms of the exponential sheaf\footnote{The definition of these objects in \secref{sss:Kir on AI} uses 
the Kirillov model and hence avoids the exponential sheaf.}. 

\medskip

Namely, we start with 
$$\on{exp}^{\boxtimes I}\in \Shv(\BG_a^I)$$
(here $I$ is the set of vertices of the Dynkin diagram) 
and the objects \eqref{e:exp T} are its !- and *- direct images, respectively,
along the map
$$\BG_a^I\to \BG_a^I/T.$$

%
%

\sssec{}

Consider the stack
$$\ol{\Bun}_{N,\rho(\omega)}\times (\BA^1)^I.$$

Let $f$ denote the projection 
$$\ol{\Bun}_{N,\rho(\omega)}\times (\BA^1)^I\to \ol{\Bun}_{N,\rho(\omega)}.$$

We consider the canonical $T$-action on $\ol{\Bun}_{N,\rho(\omega)}\times (\BA^1)^I$, where the action
on the second factor is via
$$T\to T_{\on{adj}}\overset{\on{simple\, roots}}\simeq \BG_m^I.$$

Let $f/T$ denote the projection 
$$(\ol{\Bun}_{N,\rho(\omega)}\times (\BA^1)^I)/T\to \ol{\Bun}_{N,\rho(\omega)}/T.$$

\sssec{}

Let $\bj$ denote the open embedding
$$\ol{\Bun}_{N,\rho(\omega)}\times (\BA^1-0)^I\hookrightarrow \ol{\Bun}_{N,\rho(\omega)}\times (\BA^1)^I.$$
and $\bj/T$ the embedding 
$$\ol{\Bun}_{N,\rho(\omega)}/Z_G\simeq (\ol{\Bun}_{N,\rho(\omega)}\times (\BA^1{}-0)^I)/T\hookrightarrow 
(\ol{\Bun}_{N,\rho(\omega)}\times (\BA^1)^I)/T.$$

\medskip

We can rewrite
\begin{equation} 
\on{Poinc}^{\on{Vac}}_?\simeq (\ol\sfp/T)_?\circ (f/T)_?\circ (\bj/T)_?\circ (\pi_{Z_G})_?\circ j_?\circ 
(\chi^I)^*(\on{exp}^{\on{\boxtimes I}}),
\end{equation} 
where:

\begin{itemize}

\item $?$ is either $!$ or $*$;

\smallskip

\item $j$ denote the embedding $\Bun_{N,\rho(\omega)}\hookrightarrow \ol{\Bun}_{N,\rho(\omega)}$; 

\smallskip

\item $\pi_{Z_G}$ denotes the projection $\ol{\Bun}_{N,\rho(\omega)}\to \ol{\Bun}_{N,\rho(\omega)}/Z_G$.

\end{itemize} 

\medskip

Note that:

\smallskip

\begin{itemize}

\item The map $\ol\sfp/T$ is proper, so $(\ol\sfp/T)_!\to (\ol\sfp/T)_*$ is an isomorphism;

\medskip

\item The map $\pi_{Z_G}$ is finite\footnote{Recall that $G$ was assumed semi-simple.}, 
so $(\pi_{Z_G})_!\to (\pi_{Z_G})_*$ is an isomorphism;

\medskip

\item The extension of $(\chi^I)^*(\on{exp}^{\on{\boxtimes I}})$ along $j$ is clean, so the map
$$j_!\circ (\chi^I)^*(\on{exp}^{\on{\boxtimes I}})\to j_*\circ (\chi^I)^*(\on{exp}^{\on{\boxtimes I}})$$
is an isomorphism. 

\end{itemize} 

\sssec{}  \label{sss:ext Whit}

We can talk about the full category
$$\Whit^{\on{ext}}(\ol{\Bun}_{N,\rho(\omega)}\times (\BA^1)^I)\subset  \Shv(\ol{\Bun}_{N,\rho(\omega)}\times (\BA^1)^I),$$
where the ``extended Whittaker condition" depends in  the point in $(\BA^1)^I$ (we think of this $(\BA^1)^I$ as the 
variety of characters of $\BG_a^I$), see \secref{sss:Four}.

\begin{rem}

The notation $\Whit^{\on{ext}}$ (and the idea thereof) is borrowed from \cite[Sect. 8]{Ga2}, where the
\emph{extended Whittaker category} is studied. 

\end{rem} 

\sssec{}

We can also consider the corresponding equivariant version
$$\Whit^{\on{ext}}(\ol{\Bun}_{N,\rho(\omega)}\times (\BA^1)^I)^{T}\subset  
\Shv(\ol{\Bun}_{N,\rho(\omega)}\times (\BA^1)^I)^{T}.$$

We will prove:

\begin{prop} \label{p:rest is Eis}
For $\CF\in \Whit^{\on{ext}}(\ol{\Bun}_{N,\rho(\omega)}\times (\BA^1)^I)^{T}$, supported
off $\ol{\Bun}_{N,\rho(\omega)}\times (\BA^1{}-0)^I$, the object
$$(\ol\sfp/T)_!\circ (f/T)_!(\CF)\in \Shv(\Bun_G)$$
is Eisenstein.
\end{prop}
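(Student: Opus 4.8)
\textbf{Proof strategy for \propref{p:rest is Eis}.} The plan is to reduce the statement to the analogous fact for a proper parabolic $P \subsetneq G$, and then invoke the definition of the Eisenstein subcategory $\Shv(\Bun_G)_{\Eis}$. The starting point is the combinatorics of the extended Whittaker condition: the variety $(\BA^1)^I$ of characters of $\BG_a^I$ decomposes, up to the $T$-action, according to which subset $J \subsetneq I$ of coordinates is allowed to vanish, and for a sheaf $\CF$ supported off $\ol{\Bun}_{N,\rho(\omega)} \times (\BA^1 - 0)^I$ this support meets only the loci where \emph{at least one} coordinate vanishes. First I would stratify $(\BA^1)^I$ by the locally closed subsets $(\BA^1 - 0)^{J} \times \{0\}^{I - J}$, for $J \subsetneq I$; the $T$-equivariance together with the extended Whittaker condition forces $\CF$ to be (co)clean with respect to these strata in the appropriate sense, so that $\CF$ acquires a finite filtration whose associated graded pieces are $(*$- or $!$-) extensions from the individual strata. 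It therefore suffices to treat a single stratum indexed by a proper subset $J \subsetneq I$.

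Next I would identify, for fixed $J$, the stratum-level geometry with that of the parabolic $P = P_J$ whose Levi $M$ has Dynkin diagram $J$. Concretely, the $T$-quotient of $\ol{\Bun}_{N,\rho(\omega)} \times \big((\BA^1 - 0)^{J} \times \{0\}^{I-J}\big)$ should be identified with (a space built from) $\Bun_{N_M, \rho_M(\omega)} \times \Bun_{N(P)}$-type data living over $\Bun_M$ — i.e. the locus where the character is nondegenerate precisely along $M$ and trivial along $N(P)$. Under this identification the composite functor $(\ol\sfp/T)_! \circ (f/T)_!$ restricted to this stratum factors through the geometric Eisenstein functor $\Eis^P_!$ from $\Bun_M$: the $N(P)$-directions get integrated out (producing the $\Eis$ from $\Bun_M$), while the $M$-directions carry a (vacuum or degenerate) Whittaker sheaf on $\Bun_M$. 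This is the same kind of manipulation already used to prove \thmref{t:CT of Poinc}/\thmref{t:Whit of Eis}, and I would model the bookkeeping on \cite[Sect. 10]{GLC3}. The key point is that $J \subsetneq I$ forces $P$ to be a \emph{proper} parabolic, so the output lies in the essential image of $\Eis^P_!$, hence in $\Shv(\Bun_G)_{\Eis}$ by definition.

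Finally, since $\Shv(\Bun_G)_{\Eis}$ is a full subcategory closed under colimits and extensions (it is the subcategory generated by the images of the $\Eis^P_!$ for proper $P$, and $\be_{\on{cusp}}^L$ kills it), assembling the finitely many graded pieces of the filtration shows $(\ol\sfp/T)_! \circ (f/T)_!(\CF) \in \Shv(\Bun_G)_{\Eis}$. I expect the main obstacle to be the second step: making the identification of each stratum's $T$-quotient with genuine parabolic-induction geometry \emph{precise and functorial}, including matching the cohomological shifts and keeping track of the twist by $\rho(\omega_X)$ versus $\rho_P(\omega_X)$, and checking that the extended Whittaker condition on $\CF$ really does degenerate to the honest (possibly degenerate) Whittaker condition on the $\Bun_M$-factor. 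The contraction-principle inputs (\propref{p:preserve ULA}) and the clean-extension properties listed just before the proposition should make the filtration step itself routine, but the precise geometric dictionary at the strata is where the real work lies.
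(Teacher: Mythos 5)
Your high-level strategy matches the paper's: stratify the character space $(\BA^1)^I$ by the subsets $J\subsetneq I$ of non-vanishing coordinates, pass to the associated graded of the resulting filtration of $\CF$, identify the geometry over a fixed $J$-stratum with parabolic-induction geometry for the standard parabolic $P=P_J$, and conclude by factoring through $\Eis_!$. The concern you raise at the end — making the stratum-level identification with genuine parabolic-induction geometry precise — is exactly what the paper still has to do, and the mechanism you omit is a \emph{second} stratification: the paper further stratifies $\ol{\Bun}_{N,\rho(\omega)}$ by $\lambda\in\Lambda^{\on{pos}}$, using $(\ol{\Bun}_{N,\rho(\omega)})_\lambda\simeq \Bun_B\underset{\Bun_T}\times X^{(\lambda)}$. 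This is not cosmetic. The map $\ol{\Bun}_{N,\rho(\omega)}\to\Bun_G$ does \emph{not} factor through $\Bun_P$ on the whole compactification, because the generalized $B$-reduction can degenerate; it only does so stratum-by-stratum, where you have an honest $\Bun_B$-factor. Once you restrict to a $\lambda$-stratum, the map does factor as $\Bun_B\underset{\Bun_T}\times X^{(\lambda)}\to\Bun_P\overset{\sfp_P}\to\Bun_G$, and the partial Whittaker condition on the $J$-stratum (trivial equivariance in the $N(P)$-directions) says that the objects are $*$-pullbacks along the map $'\!\sfq_P$ covering $\sfq_P:\Bun_P\to\Bun_M$. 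Base change in the resulting Cartesian square then gives $(\sfp_P)_!\circ (f_P)_!\circ ({}'\!\sfq_P)^*\simeq (\sfp_P)_!\circ (\sfq_P)^*\circ ({}'\!f_P)_!\simeq \Eis_!\circ ({}'\!f_P)_!$, which is the substance of the factorization. Your picture of "$\Bun_{N_M,\rho_M(\omega)}\times\Bun_{N(P)}$-type data" is too naive for $\ol{\Bun}_{N,\rho(\omega)}$ itself; the $X^{(\lambda)}$ factor and the $\Bun_B$-level description are needed. Once you add the $\lambda$-stratification and replace "integrate out $N(P)$" with "the Whittaker objects on the stratum are $*$-pullbacks along $'\!\sfq_P$, and base change against $\sfq_P$ produces $\Eis_!$", your proof becomes the paper's.
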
 

\begin{prop} \label{p:Four-proper}
For $\CF\in \Whit^{\on{ext}}(\ol{\Bun}_{N,\rho(\omega)}\times (\BA^1)^I)$, the map
$$f_!(\CF)\to f_*(\CF)$$ is an isomorphism.
\end{prop}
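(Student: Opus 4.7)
The plan is to use Fourier-Deligne transform along the $(\BA^1)^I$-factor in order to translate the proposition into a smoothness statement on the Fourier transform. First, let
\[
\on{Four}_{(\BA^1)^I}:\Shv(\ol{\Bun}_{N,\rho(\omega)}\times (\BA^1)^I)\to \Shv(\ol{\Bun}_{N,\rho(\omega)}\times (\BA^1)^I)
\]
denote Fourier-Deligne transform along the second factor (identifying $(\BA^1)^I$ with its dual via the chosen character $\psi$), and write $\widehat{\CF}:=\on{Four}_{(\BA^1)^I}(\CF)$. Let $0:\ol{\Bun}_{N,\rho(\omega)}\hookrightarrow \ol{\Bun}_{N,\rho(\omega)}\times (\BA^1)^I$ denote the zero-section inclusion. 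The standard compatibility of Fourier transform with fiber integration along $(\BA^1)^I$ gives canonical isomorphisms (up to a common shift and twist)
\[
f_!(\CF)\simeq 0^*\widehat{\CF}, \qquad f_*(\CF)\simeq 0^!\widehat{\CF}[2|I|](|I|),
\]
so the assertion of the proposition reduces to showing that the canonical morphism $0^*\widehat{\CF}\to 0^!\widehat{\CF}[2|I|](|I|)$ is an isomorphism.

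\medskip

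The key step will be to Fourier-dualize the extended Whittaker condition. The condition defining $\Whit^{\on{ext}}$ — character-twisted equivariance for the $\BG_a^I$-action on $\ol{\Bun}_{N,\rho(\omega)}\times (\BA^1)^I$, acting on the first factor via the simple-root map $N\to N^{\on{ab}}=\BG_a^I$ and on the second factor by scaling by $\psi(\langle -,c\rangle)$ — translates under Fourier duality into strong equivariance for the diagonal $\BG_a^I$-action combining the $N^{\on{ab}}$-action on $\ol{\Bun}_{N,\rho(\omega)}$ with translation on the dual $(\BA^1)^I$. Since translation acts freely on $(\BA^1)^I$, this equivariance will force $\widehat{\CF}$ to be locally pulled back from $\ol{\Bun}_{N,\rho(\omega)}$ along a smooth morphism of relative dimension $|I|$ in a neighborhood of the zero section.

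\medskip

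Granting this smoothness of $\widehat{\CF}$, the natural map $0^*\widehat{\CF}\to 0^!\widehat{\CF}[2|I|](|I|)$ is automatically an isomorphism, being the standard comparison of $*$- and $!$-restrictions along a codimension-$|I|$ closed embedding into a sheaf that is smooth in the normal direction. This will complete the proof.

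\medskip

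The main obstacle will be verifying rigorously the Fourier-dual characterization of $\Whit^{\on{ext}}$ in the presence of the Drinfeld compactification $\ol{\Bun}_{N,\rho(\omega)}$, where the naive $\BG_a^I$-action on the open part $\Bun_{N,\rho(\omega)}$ does not extend and the definition of the extended Whittaker category in \secref{sss:Four} must be handled with care. This likely requires adapting the extended Whittaker formalism of \cite{Ga2}, Section 8, to the present geometric setting, or else formulating $\Whit^{\on{ext}}$ directly as the Fourier image of a category of sheaves smooth along the second factor, so that the desired characterization becomes essentially tautological.
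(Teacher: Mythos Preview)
Your Fourier-transform strategy is essentially the paper's argument, organized dually. The paper first pulls back along the (smooth) action map $V\times\CY\overset{\on{act}}\to\CY$ to reduce to the case $\CY=V\times\CZ$ with $V$ acting freely on the first factor; in that case the equivalence $\Shv(V\times\CZ)\simeq\Whit(V^*\times V\times\CZ)$ is explicit, and $f_!$, $f_*$ become the $!$- and $*$-versions of Fourier transform $\Shv(V^*\times\CZ)\to\Shv(V\times\CZ)$, which agree. Your version---Fourier-transform along $(\BA^1)^I$ first, then observe that the Whittaker condition becomes strict equivariance for the free diagonal $V$-action on $V\times\CY$, hence $\widehat\CF$ is smooth along the zero-section---is the same mechanism read in the other direction.

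The obstacle you flag is real, and your description of the action is not quite accurate: there is no $\BG_a^I$-action on $\ol\Bun_{N,\rho(\omega)}$ coming from ``$N\to N^{\on{ab}}$''. The paper resolves this as follows. Cover $\ol\Bun_{N,\rho(\omega)}$ by the opens $\ol\Bun_{N,\rho(\omega),\text{good at }x}$ (non-degenerate reduction at $x$); it suffices to check the isomorphism over each. Pass to the level-structure variant $\ol\Bun^{\on{level}_x}_{N,\rho(\omega),\text{good at }x}$, which is acted on by the loop group $\fL(N_{\rho(\omega)})_x$ with its character $\chi^I_x:\fL(N_{\rho(\omega)})_x\to\BG_a^I$. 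Choosing a large enough subgroup $N'\subset\fL(N_{\rho(\omega)})_x$ and setting
\[
\CY:=\ol\Bun^{\on{level}_x}_{N,\rho(\omega),\text{good at }x}\big/\ker(\chi^I_x|_{N'}),
\]
one obtains a finite-type stack with a genuine $V=\BG_a^I$-action, and $\Whit^{\on{ext}}$ over the given open identifies with $\Whit(V^*\times\CY)$ in the abstract paradigm. With this in hand your argument goes through without further obstruction.
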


It is clear that the combination of these two propositions implies that \eqref{e:Poinc ! to *} is Eisenstein.

\ssec{Proof of \propref{p:rest is Eis}}

\sssec{}

For a subset $J\subset I$, let 
$$\bi_J:(\ol{\Bun}_{N,\rho(\omega)}\times (\BA^1-0)^J)\subset (\ol{\Bun}_{N,\rho(\omega)}\times (\BA^1)^I)$$
be the embedding of the corresponding stratum, so that $\bi_I=\bj$. 

\medskip

Denote by
$$\Whit^{\on{part}}(\ol{\Bun}_{N,\rho(\omega)}\times (\BA^1-0)^J)\subset  \Shv(\ol{\Bun}_{N,\rho(\omega)}\times (\BA^1-0)^J)$$
and 
$$\Whit^{\on{part}}(\ol{\Bun}_{N,\rho(\omega)}\times (\BA^1-0)^J)^{T}\subset  
\Shv(\ol{\Bun}_{N,\rho(\omega)}\times (\BA^1-0)^J)^{T}.$$
the corresponding subcategories, obtained by imposing the Whittaker-type equivariance condition.  

\begin{rem}

The notation $\Whit^{\on{part}}$ (and the idea thereof) is borrowed from \cite[Sect. 7]{Ga2}.

\end{rem}

\sssec{}

We will show that the functor
$$(\ol\sfp/T)_!\circ (f/T)_!\circ (\bi_J/T)_!:
\Whit^{\on{part}}(\ol{\Bun}_{N,\rho(\omega)}\times (\BA^1-0)^J)^{T}\to \Bun_G$$
factors through the subcategory generated by
$$\Eis_!:\Shv(\Bun_M)\to \Shv(\Bun_G),$$
where $P$ is the standard parabolic corresponding to $J$.

\sssec{} 

We stratify $\ol{\Bun}_{N,\rho(\omega)}$ by $(\ol{\Bun}_{N,\rho(\omega)})_\lambda$, $\lambda\in \Lambda^{\on{pos}}$,
$$(\ol{\Bun}_{N,\rho(\omega)})_\lambda\simeq \Bun_B\underset{\Bun_T}\times X^{(\lambda)},$$
where $X^{(\lambda)}\to \Bun_T$ is the Abel-Jacobi map, shifted by $\rho(\omega_X)$. 
Let $i_\lambda$ denote the corresponding locally closed embedding. 

\medskip

Consider the corresponding subcategories
$$\Whit^{\on{part}}((\ol{\Bun}_{N,\rho(\omega)})_\lambda\times (\BA^1-0)^J)\subset 
\Shv((\ol{\Bun}_{N,\rho(\omega)})_\lambda\times (\BA^1-0)^J)$$
and 
$$\Whit^{\on{part}}(\ol{\Bun}_{N,\rho(\omega)})_\lambda\times (\BA^1-0)^J)^{T}\subset  
\Shv((\ol{\Bun}_{N,\rho(\omega)})_\lambda\times (\BA^1-0)^J)^{T}.$$

\medskip

We will show that the functor 
$$(\ol\sfp/T)_!\circ (f/T)_!\circ (\bi_J/T)_!\circ (i_\lambda/T)_!:
\Whit^{\on{part}}((\ol{\Bun}_{N,\rho(\omega)})_\lambda\times (\BA^1-0)^J)^{T}\to \Bun_G$$
factors as
$$\Whit^{\on{part}}((\ol{\Bun}_{N,\rho(\omega)})_\lambda\times (\BA^1-0)^J)^{T}\to
\Bun_M \overset{\Eis_!}\to \Bun_G.$$

\sssec{}

Note that the map 
$$\left((\ol{\Bun}_{N,\rho(\omega)})_\lambda\times (\BA^1-0)^J\right)/T\overset{(\ol\sfp/T)\circ (f/T)\circ (\bi_J/T) \circ (i_\lambda/T)}\longrightarrow
 \Bun_G$$
factors as
$$\left((\ol{\Bun}_{N,\rho(\omega)})_\lambda\times (\BA^1-0)^J\right)/T\overset{f_P}\longrightarrow 
\Bun_P\overset{\sfp_P}\longrightarrow \Bun_G.$$

\medskip

Moreover, we have a Cartesian diagram

\smallskip

$$
\CD
\left((\ol{\Bun}_{N,\rho(\omega)})_\lambda \times (\BA^1-0)^J\right)/T \\
@V{\simeq }VV \\
\left((\Bun_B\underset{\Bun_T}\times X^{(\lambda)})\times (\BA^1-0)^J\right)/T @>{'\!\sfq_P}>> 
\left((\Bun_{B(M)}\underset{\Bun_T}\times X^{(\lambda)}) \times (\BA^1-0)^J\right)/T \\
@V{f_P}VV @VV{'\!f_P}V \\
\Bun_P @>{\sfq_P}>> \Bun_M \\
@V{\sfp_P}VV \\
\Bun_G
\endCD
$$
and every object from $\Whit^{\on{part}}\left((\ol{\Bun}_{N,\rho(\omega)})_\lambda\times (\BA^1-0)^J\right)^{T}$ 
is isomorphic to the *-pullback by means of $'\!\sfq_P$ of an object in
$$\Shv\left(\Bigl((\Bun_{B(M)}\underset{\Bun_T}\times X^{(\lambda)}) \times (\BA^1-0)^J\Bigr)/T\right).$$

\sssec{}

Now, 
\begin{multline}  \label{e:asymp}
(\ol\sfp/T)_!\circ (f/T)_!\circ (\bi_J/T)_!\circ (i_\lambda/T)_!\circ ({}'\!\sfq_P)^*\simeq \\
\simeq (\sfp_P)_!\circ (f_P)_!\circ ({}'\!\sfq_P)^*\simeq (\sfp_P)_!\circ (\sfp_P)^*\circ ({}'\!f_P)_!\simeq
\Eis_!\circ ({}'\!f_P)_!
\end{multline} 
(up to a cohomological shift\footnote{Which is involved in the definition of $\Eis_!$.}), as required.

\qed[\propref{p:rest is Eis}]
 
\begin{rem} \label{r:Vinb}

We have obtained that the cone of \eqref{e:Poinc ! to *} admits a canonical filtration indexed by $\emptyset \neq J\subset I$,
where the subquotient corresponding to a given $J$ in turn has a filtration indexed by $\lambda\in \Lambda^{\on{pos}}$,
and its subquotient corresponding to a given $\lambda$ is given by
\begin{equation} 
\Eis_!\Bigl(({}'\!f_P)_! \circ ({}'\!\sfq_P)_* \circ (i_\lambda/T)^*\circ (\bi_J/T)^* \circ (\bj/T)_*\circ (\pi_{Z_G})_!\circ 
j_!\circ (\chi^I)^*(\on{exp}^{\on{\boxtimes I}})\Bigr),
\end{equation} 
(up to a cohomological shift).

\medskip

One can describe the cone of \eqref{e:Poinc ! to *} more conceptually, by combining the results of \cite{Chen} and \cite{Lin}. Namely,
it has a canonical filtration indexed by the poset of proper parabolics with the associated graded corresponding to a given $P$
being 
$$\Eis_P^{\on{enh}}\circ \on{CT}_P^{\on{enh}}(\on{Poinc}^{\on{Vac}}_!),$$
where we refer the reader to \cite{Chen} for the ``enhanced" notation.

\end{rem}

\begin{rem} \label{r:asympt}

The object 
$$({}'\!f_P)_! \circ ({}'\!\sfq_P)_* \circ (i_\lambda/T)^*\circ (\bi_J/T)^* \circ (\bj/T)_*\circ (\pi_{Z_G})_!\circ 
j_!\circ (\chi^I)^*(\on{exp}^{\on{\boxtimes I}})\in \Shv(\Bun_M)$$
is closely related to (and can be algorithmically expressed via) the object
$$({}'\!f_P)_! \circ ({}'\!\sfq_P)_* \circ (i_\lambda/T)^!\circ (\bi_J/T)^! \circ (\bj/T)_*\circ (\pi_{Z_G})_!\circ 
j_!\circ (\chi^I)^*(\on{exp}^{\on{\boxtimes I}})\in \Shv(\Bun_M).$$

The computation of the latter objects is the main goal of the paper \cite{AG2}. Namely, it says that this object 
identifies with 
$$\on{CT}_*(\on{Poinc}^{\on{Vac}}_!),$$
which in turn be calculated using \thmref{t:CT of Poinc}. 

\end{rem} 

\ssec{Proof of \propref{p:Four-proper}}

\sssec{} \label{sss:Four}

The proof will fit into the following general paradigm:

\medskip

Let $\CY$ be a stack acted on by a vector group $V$. Consider the corresponding category
$$\Whit(V^*\times \CY)\subset \Shv(V^*\times \CY).$$

Namely, this is the full subcategory consisting of objects $\CF\in \Whit(V^*\times \CY)$, equipped with an 
isomorphism
$$(\on{id}\times \on{act})^*(\CF)\simeq (p_{1,2}^*\circ \on{ev}^*(\on{exp}))\overset{*}\otimes p_{1,3}^*(\CF)$$
in $\Shv(V^*\times V\times \CY)$ that restricts to the identity map\footnote{Since $V$ is unipotent as a group,
the full equivariance structure is actually a condition, which is equivalent to the simply-minded one above.}
on $V^*\times \{0\}\times \CY$, where 

\begin{itemize}

\item $\on{act}$ denotes the action map $V\times \CY\to \CY$;

\item $\on{ev}$ denotes the evaluation map $V^*\times V\to \BG_a$.

\end{itemize} 

\medskip

Note that the functor
$$\Shv(\CY)\overset{\on{act}^*}\to \Shv(V\times \CY) \overset{\on{Four}_Y}\longrightarrow \Shv(V^*\times \CY)$$
gives rise to an equivalence 
$$\Shv(\CY)\overset{\sim}\to \Whit(V^*\times \CY).$$

\sssec{}

Let $f$ denote the map $V^*\times \CY\to \CY$. We claim:

\begin{prop} \label{p:Four-proper abs}
The natural transformation $f_!\to f_*$ becomes an isomorphism when evaluated an objects of  $\Whit(V^*\times \CY)$.
\end{prop}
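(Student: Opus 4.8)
The statement is that for a vector group $V$ acting on a stack $\CY$, with $f:V^*\times \CY\to \CY$ the projection, the canonical map $f_!\to f_*$ is an isomorphism on the Whittaker subcategory $\Whit(V^*\times \CY)\subset \Shv(V^*\times \CY)$. The essential point is that objects of $\Whit(V^*\times \CY)$ are, via the Fourier transform $\on{Four}_Y$, supported set-theoretically (in the $V$-direction before Fourier transform) along the zero section, so that after passing back to the $V$-picture one is computing a $V$-homology that coincides with a $V$-cohomology. The cleanest route is to reduce to the case $\CY=\on{pt}$ (or more precisely to pull everything back along a smooth atlas and use base change), and then to an explicit computation on $V=\BG_a^n$.

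\textbf{First steps.} First I would recall, as stated just before the proposition, that $\on{Four}_Y$ gives an equivalence $\Shv(\CY)\overset{\sim}\to \Whit(V^*\times \CY)$, with inverse obtained by the inverse Fourier transform followed by $!$-restriction to the zero section of $V$ (equivalently $*$-restriction, since Fourier transform of a Whittaker object is, up to shift, $!$-extended from a point in the $V$-variable). Concretely, for $\CF\in \Whit(V^*\times \CY)$ corresponding to $\CG\in \Shv(\CY)$, we have $\CF\simeq \on{Four}_Y(\on{act}^*(\CG))$, where $\on{act}:V\times \CY\to \CY$. Under this description $f_!\CF$ and $f_*\CF$ become, respectively, the $!$- and $*$-pushforward along $V^*\times \CY\to \CY$ of the Fourier transform of $\on{act}^*(\CG)$; by the standard compatibility of Fourier transform with $!$-pushforward along $V^*$ and the fact that integrating the Fourier transform over $V^*$ computes the $!$-restriction of the original sheaf to $0\in V$ (up to a cohomological shift by $\dim V$), this identifies $f_!\CF$ with $\on{act}^*(\CG)|_{\{0\}\times \CY}[- \dim V]\simeq \CG[-\dim V]$, and symmetrically $f_*\CF$ with the $*$-restriction, which is again $\CG$ up to the analogous shift. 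The two identifications are compatible with the tautological map $f_!\to f_*$.

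\textbf{The computation and the main obstacle.} To make the previous paragraph rigorous I would work on an affine atlas: choose a smooth surjection $S\to \CY$ from a scheme and, using that $f$ is the base change of $V^*\to \on{pt}$ along $\CY\to \on{pt}$ and that smooth pullback is conservative and commutes with both $f_!$ and $f_*$ (up to a shift), reduce to the absolute statement for $\CY=S$, and then further reduce to $V\simeq \BG_a^n$ acting trivially or by translation on $S$. The heart is then the elementary fact about the Artin--Schreier (or exponential) sheaf on $\BG_a^n$: its $!$-pushforward and $*$-pushforward along the projection $\BG_a^n\to \on{pt}$ agree (both vanish, for the non-trivial character, and more relevantly the "clean extension" property that makes $j_!\to j_*$ an isomorphism in the family of characters; this is exactly the cleanness input already invoked in \secref{sss:ext Whit}). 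The main obstacle I anticipate is purely bookkeeping: tracking the cohomological shifts by $\dim V$ and the precise normalization of $\on{Four}_Y$ so that the map $f_!\to f_*$ really does go over to the identity under the two identifications, and checking that all the base-change isomorphisms (for the smooth atlas, for the action map, for Fourier transform versus pushforward along $V^*$) are compatible as a single coherent diagram rather than just objectwise. There is no deep geometric input; the statement is a packaging of the cleanness of the exponential sheaf together with the Fourier-transform description of the Whittaker category.
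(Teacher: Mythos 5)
Your overall strategy — use the equivalence $\Shv(\CY)\simeq\Whit(V^*\times\CY)$ coming from the Fourier transform and invoke the well-known isomorphism $\on{Four}_!\simeq\on{Four}_*$ — is the same as the paper's, and your informal computation that both $f_!\CF$ and $f_*\CF$ recover $\CG$ up to a shift is correct in spirit. However, the localization step you propose to make it rigorous has a gap. You suggest pulling back along a smooth atlas $S\to\CY$ and then ``further reduc[ing] to $V\simeq\BG_a^n$ acting trivially or by translation on $S$.'' For a general atlas there is no compatible $V$-action on $S$ at all (so the Whittaker condition on the pullback is not even defined), and when one does exist there is no reason for it to be a translation or trivial action; this last reduction is precisely the substance that needs an argument, and it is left unexplained.

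The paper's trick is to take the smooth cover to be the action map $\on{act}\colon V\times\CY\to\CY$ itself (smooth and surjective, being isomorphic to the second projection). This choice is effective for two reasons: (i) it reduces the statement to the case $\CY=V\times\CZ$ with $V$ acting by translation on the first factor, and (ii) the pullback of $\CF$ along $\on{id}\times\on{act}$ is, \emph{by the defining Whittaker equivariance isomorphism}, the explicit object $(p_{1,2}^*\on{ev}^*(\on{exp}))\overset{*}\otimes p_{1,3}^*(\CF)$. In that form $f_!$ and $f_*$ become literally the $!$- and $*$-Fourier transforms relative to $\CZ$, and one quotes $\on{Four}_{!,\CZ}\overset{\sim}\to\on{Four}_{*,\CZ}$. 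This also disposes of the coherence question you correctly flag as the ``main obstacle'': the natural transformation $f_!\to f_*$ is identified outright with the known isomorphism $\on{Four}_!\to\on{Four}_*$, rather than being a map between two objects you have separately identified with $\CG$.
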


\begin{proof} 

It enough to prove the isomorphism after the (smooth) pullback by means of the map 
$$V\times \CY\overset{\on{act}}\to \CY.$$

This reduces us to the case when $\CY$ has the form $V\times \CZ$ with $V$ acting on the 
first factor. 

\medskip

In this case, the equivalence
$$\Shv(V\times \CZ) \to \Whit(V^*\times V\times \CZ)$$
is given by 
$$\CF\mapsto p_{1,3}^*(\on{Four}_\CZ(\CF))\overset{*}\otimes p^*_{1,2}(\on{mult}^*(\on{exp})).$$

The operations
$$\CG \in \Shv(V^*\times \CZ) \rightsquigarrow 
f_!\left(p_{1,3}^*(\CG)\overset{*}\otimes p^*_{1,2}(\on{mult}^*(\on{exp})\right) \text{ and }
f_*\left(p_{1,3}^*(\CG)\overset{*}\otimes p^*_{1,2}(\on{mult}^*(\on{exp})\right)$$
are the !- and *- versions of the functor 
$$\on{Four}_\CZ:\Shv(V^*\times \CZ)\to \Shv(V\times \CZ).$$

Now, it is well-know that the natural transformation
$$\on{Four}_{!,\CZ}\to \on{Four}_{*,\CZ}$$
is an isomorphism. 

\end{proof}

\sssec{} \label{sss:adapt 1}

We apply the above paradigm as follows. Cover $\ol{\Bun}_{N,\rho(\omega)}$ by open substacks
$$\ol{\Bun}_{N,\rho(\omega),\text{good\,at\,x}}, \quad x\in X,$$
where we require that the generalized $B$-reduction be \emph{non-degenerate} at $x$.  It is enough to show
that the map $f_!(\CF)\to f_*(\CF)$ restricts to an isomorphism over every 
$$\ol{\Bun}_{N,\rho(\omega),\text{good\,at\,x}}.$$

\sssec{}

Consider the corresponding stack 
$$\ol{\Bun}^{\on{level}_x}_{N,\rho(\omega),\text{good\,at\,x}},$$
see \cite[Sect. 4.4.2]{Ga3}. 

\medskip

The stack $\ol{\Bun}^{\on{level}_x}_{N,\rho(\omega),\text{good\,at\,x}}$ is acted on by the group ind-scheme
$\fL(N_{\rho(\omega)})_x$. Let 
$$\chi^I_x:\fL(N_{\rho(\omega)})_x\to \BG^I_a$$
denote the canonical character. 

\sssec{}  \label{sss:adapt 3}

Let $N'\subset \fL(N_{\rho(\omega)})_x$ be a sufficiently large subgroup. Let 
$$\overset{\circ}{N}{}':=\on{ker}(\chi^I_x|_{N'}).$$

\medskip

Set 
$$\CY:=\ol{\Bun}^{\on{level}_x}_{N,\rho(\omega),\text{good\,at\,x}}/\overset{\circ}{N}{}'.$$

This is a stack locally of finite type, which carries an action of $V:=\BG_a^I$. Finally, we note 
that the category $\Whit(V^*\times \CY)$ identifies with the category 
$$\Whit^{\on{ext}}(\ol{\Bun}_{N,\rho(\omega),\text{good\,at\,x}}\times (\BA^1)^I),$$
where we think of $(\BA^1)^I$ as $V^*$. 

\qed[\propref{p:Four-proper}]

\ssec{The Kirillov model} \label{ss:Kir}

As a preparation to the proof of \thmref{t:Poinc ! to *} in the setting where the exponential
sheaf does not exist, we discuss the fomalism of \emph{Kirillov} (as opposed to Whittaker)
models.

\sssec{} \label{sss:Kir1}

Let $\CY$ be a stack equipped with an action of $\BG_a^I$. For every subset $J\subset I$, denote by
$$\Shv(\CY)_{J\on{-cl}}\overset{(\ol\bi_J)_!=(\ol\bi_J)_*}\hookrightarrow \Shv(\CY)$$
the embedding of the full subcategory $\Shv(\CY)^{\BG_a^J}$. 

\medskip

The above functor admits a left and right adjoints, denoted $(\ol\bi_J)^*$ and $(\ol\bi_J)^!$,
explicitly given by $\on{Av}_!^{\BG_a^J}$ and $\on{Av}_*^{\BG_a^J}$, respectively.

\sssec{}

Let $\Shv(\CY)_J$ be the quotient of $\Shv(\CY)_{J\on{-cl}}$ obtained by modding out
with respect to all $\Shv(\CY)_{J'\on{-cl}}$ with $J'\supset J$. Denote by $(\bj_J)^*=(\bj_J)^!$
the projection
$$\Shv(\CY)_{J\on{-cl}}\twoheadrightarrow \Shv(\CY)_J.$$

This projection admits left and right adjoints, denoted $(\bj_J)_!$ and $(\bj_J)_*$, respectively.

\sssec{} \label{sss:Kir3}

Denote
$$(\bi_J)_!:=(\ol\bi_J)_!\circ (\bj_J)_! \text{ and } (\bi_J)_*:=(\ol\bi_J)_*\circ (\bj_J)_*, \quad 
\Shv(\CY)_J\to \Shv(\CY).$$

These functors admit right and left adjoints, given by
$$(\bi_J)^!:=(\bj_J)^! \circ (\ol\bi_J)^! \text{ and } (\bi_J)^*:=(\bj_J)^* \circ (\ol\bi_J)^*,$$
respectively.

\medskip

Thus, we obtain a stratification of $\Shv(\CY)$ with subquotients $\Shv(\CY)_J$.  

\sssec{}

In particular, we have the ``open stratum" corresponding to $J=\emptyset$. 

\medskip

Note that the essential image of $(\bj_\emptyset)_!$ (resp., $(\bj_\emptyset)_*$) consists
of objects for which the !- (resp., *-) averaging for any coordinate copy of $\BG_a\subset \BG_a^I$
vanishes. 

\medskip

On general categorical grounds, we have a natural transformation
\begin{equation} \label{e:! to* Kir}
(\bj_\emptyset)_!\to (\bj_\emptyset)_*.
\end{equation} 

\sssec{} 

Let now $T$ be a torus equipped with a surjection onto $\BG_m^I$. Denote
$$Z_G:=\on{ker}(T\to \BG_m^I).$$

Assume that 
the action of $\BG_a^I$ on $\CY$ can be extended to an action of $T\ltimes \BG_a^I$.

\medskip

Then the discussion in Sects. \ref{sss:Kir1}-\ref{sss:Kir3} is applicable to 
$\Shv(\CY/T)$. In particular, we obtain the sub/quotient categories
$$\Shv(\CY/T)_{J\on{-cl}} \text{ and } \Shv(\CY/T)_J,$$
and the corresponding functors 
$$(\bi_J/T)_!, (\bi_J/T)_*, (\bi_J/T)^!, (\bi_J/T)^*, \text{etc.}$$

\sssec{}

For the next few subsection we will assume that we are in the situation in which the exponential sheaf exists,
and we will explain the connection with the Whittaker picture.

\medskip 

Consider the full subcategories
$$\Whit^{\on{ext}}(\CY\times (\BA^1)^I) \subset \Shv(\CY\times (\BA^1)^I)$$
and 
$$\Whit^{\on{ext}}(\CY\times (\BA^1)^I)^T \subset \Shv(\CY\times (\BA^1)^I)^T,$$
cf. \secref{sss:ext Whit}.

\medskip

Consider the corresponding functors
$$\bj_!: \Whit^{\on{ext}}(\CY\times (\BA^1-0)^I) \to \Whit^{\on{ext}}(\CY\times (\BA^1)^I)$$
and 
$$(\bj/T)_!: \Whit^{\on{ext}}(\CY\times (\BA^1-0)^I)^T \to \Whit^{\on{ext}}(\CY\times (\BA^1)^I)^T.$$

\medskip

Recall now that according to \secref{sss:Four}, we have an equivalence 
$$\Whit^{\on{ext}}(\CY\times (\BA^1)^I)\simeq \Shv(\CY),$$
explicitly given by !- (equivalently, *) direct image along the projection $\CY\times (\BA^1)^I\to \CY$.

\medskip

This induces an equivalence
\begin{equation} \label{e:Kir ext}
\Whit^{\on{ext}}(\CY\times (\BA^1)^I)^T\simeq \Shv(\CY/T).
\end{equation}

The following is an elementary verification:

\begin{lem}
The equivalence \eqref{e:Kir ext} fits onto the commutative diagram
$$
\CD
\Whit^{\on{ext}}(\CY\times (\BA^1-0)^I)^T  @>{(\bj/T)_!}>> \Whit^{\on{ext}}(\CY\times (\BA^1)^I)^T  \\
@V{\sim}VV @VV{\sim}V \\
\Shv(\CY/T)_\emptyset @>{(\bj_\emptyset/T)_!}>> \Shv(\CY/T).
\endCD
$$
\end{lem}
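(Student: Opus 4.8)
The statement to prove is the compatibility of the equivalence \eqref{e:Kir ext} with the two ``partial-Whittaker vs.\ Kirillov open stratum'' functors $(\bj/T)_!$ and $(\bj_\emptyset/T)_!$. The plan is to deduce it from the non-equivariant statement and then descend along the $T$-action; and to establish the non-equivariant statement by matching, term by term, the defining adjunctions of the two $!$-extension functors under the (already-known) equivalence $\Whit^{\on{ext}}(\CY\times(\BA^1)^I)\simeq \Shv(\CY)$ given by $f_!\simeq f_*$ (\propref{p:Four-proper abs}).

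First I would recall, from the setup in \secref{sss:Kir1}--\secref{sss:Kir3}, that $(\bj_\emptyset)_!$ is characterized as the left adjoint of the projection $(\bj_\emptyset)^*\colon \Shv(\CY)_{\emptyset\text{-cl}}\to \Shv(\CY)_\emptyset$, composed with the tautological embedding $\Shv(\CY)_\emptyset\hookrightarrow$ (nothing — it lands in $\Shv(\CY)$ via $(\ol\bi_\emptyset)_!=\mathrm{id}$). Dually, on the Whittaker side, $(\bj)_!\colon \Whit^{\on{ext}}(\CY\times(\BA^1-0)^I)\to \Whit^{\on{ext}}(\CY\times(\BA^1)^I)$ is the $!$-extension along the open embedding $(\BA^1-0)^I\hookrightarrow (\BA^1)^I$, restricted to the extended-Whittaker subcategories (one must check the $!$-extension preserves the extended-Whittaker condition, which holds because the character used to define the condition is pulled back from $(\BA^1)^I$ and the open embedding is $\BG_a^I$-equivariant). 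The key computation is then: under $f_!$, the subcategory $\Whit^{\on{ext}}(\CY\times(\BA^1-0)^I)$ goes to $\Shv(\CY)_\emptyset$, because an object in the former is, fiberwise over a point $v\in(\BA^1-0)^I=V^*\setminus 0$, genuinely $(\BG_a^I,\psi_v)$-equivariant with $\psi_v$ nontrivial on every coordinate $\BG_a$, hence all partial averages $\on{Av}_!^{\BG_a^{\{j\}}}$ vanish after Fourier transform — which is exactly the characterization of the open stratum $\Shv(\CY)_\emptyset$ recalled in the excerpt. Conversely I'd check the Fourier transform sends $\Shv(\CY)_\emptyset$ into $\Whit^{\on{ext}}$ supported over the open locus. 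Once both functors are identified with restrictions of the same $!$-extension / projection along a common diagram, the square commutes by functoriality of $!$-pushforward and the compatibility of Fourier transform with open extension (the latter being standard: $\on{Four}$ commutes with proper pushforward and with restriction/corestriction along the relevant loci, cf.\ the proof of \propref{p:Four-proper abs}).

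To pass to the equivariant statement, I would observe that everything in sight is $T$-equivariant: the open embedding $(\BA^1-0)^I\hookrightarrow (\BA^1)^I$, the character, the Fourier transform $\on{Four}_\CY$, and the equivalence of \secref{sss:Four} are all compatible with the $T$-action (via $T\to \BG_m^I$), so taking $T$-invariants (equivalently, working over $\CY/T$) is functorial and the non-equivariant commuting square induces the equivariant one. The only subtlety is that $T$-invariants can fail to commute with arbitrary colimits, but here it does commute with the relevant functors because $(\bj_\emptyset/T)_!$ and $(\bj/T)_!$ are by \emph{definition} (see \secref{sss:Kir3} and the equivariant discussion following it) the functors obtained by applying the non-equivariant construction $T$-equivariantly; so there is nothing to check beyond naturality.

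The main obstacle, and the only place real work is needed, is the identification ``$f_!$ sends $\Whit^{\on{ext}}$ over the open locus onto $\Shv(\CY)_\emptyset$'' — i.e.\ verifying that the extended-Whittaker equivariance over $(\BA^1-0)^I$ translates precisely into the vanishing of every coordinate partial averaging after Fourier transform. This is ``an elementary verification'' as the excerpt says, but it requires unwinding the Fourier-transform description of $\Whit^{\on{ext}}$ from \secref{sss:Four} against the stratification of \secref{sss:Kir1}--\secref{sss:Kir3}, reducing (as in \propref{p:Four-proper abs}) to the case $\CY = V\times \CZ$ where both sides become explicit Fourier integrals over the vector group, and there checking that restriction to $V^*\setminus 0$ corresponds on the other side to the quotient by the closed subcategories $\Shv_{J\text{-cl}}$, $J\neq\emptyset$. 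I expect this to be a short but slightly fiddly diagram chase with Fourier transforms; everything else is formal adjunction bookkeeping and $T$-equivariant descent.
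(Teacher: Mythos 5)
The paper itself offers no proof (the lemma is declared ``an elementary verification''), so there is no argument of the authors' to compare against; your proposal is a reasonable way to supply that verification and I believe the strategy is sound. The heart of it is correct: since $\on{Av}_!^{\BG_a^{\{j\}}}$ commutes with pushforward along $f':\CY\times(\BA^1-0)^I\to\CY$, and since the extended-Whittaker condition forces $\on{Av}_!^{\BG_a^{\{j\}}}$ to annihilate anything over $(\BA^1-0)^I$ (each fiberwise character is nontrivial on every coordinate copy of $\BG_a$, and $H^*_c(\BG_a,\psi)=0$ for $\psi$ nontrivial), the functor $f_!\circ(\bj/T)_!$ lands in the image of $(\bj_\emptyset/T)_!$, i.e.\ in the full subcategory of $\Shv(\CY/T)$ characterized in the paper by vanishing of all coordinate !-averages. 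Since both sides sit inside the ambient categories via recollements and $f_!$ is an equivalence, this single inclusion (together with the analogous observation matching the closed strata, which your argument also yields: a Whittaker sheaf supported on $\CY\times(\BA^1-0)^J$ is genuinely $\BG_a^{I\setminus J}$-equivariant and hence maps into $\Shv(\CY/T)_{(I\setminus J)\text{-cl}}$) upgrades to the asserted intertwining. The $T$-equivariant descent step is formal, as you say.

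Two small corrections, though. First, you write $(\BA^1-0)^I = V^*\setminus 0$ twice; this is false for $|I|>1$ — the locus is the complement of the coordinate hyperplanes of $V^*$, not of the origin. Your subsequent sentence (``$\psi_v$ nontrivial on \emph{every} coordinate $\BG_a$'') makes clear you intend the right thing, but if taken literally the identification would break the argument: an object supported merely away from the origin could have some coordinate $v_j$ vanishing, and the averaging $\on{Av}_!^{\BG_a^{\{j\}}}$ would then fail to vanish. Second, you state the converse (that $f_!$ carries $\Shv(\CY)_\emptyset$ back into the open-locus Whittaker subcategory) only as something you ``would check''; it is cleanest to deduce it not by a direct verification but by the orthogonality remark above — the image of $(\bj_\emptyset)_!$ is the left orthogonal of $\bigoplus_{J\neq\emptyset}\Shv(\CY)_{J\text{-cl}}$, the image of $\bj_!$ is the left orthogonal of the closed Whittaker strata, and $f_!$, being an equivalence, must match left orthogonals once the closed pieces are matched. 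With these two points patched, the proof is complete.
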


\sssec{}

Note that using the element $(1,...,1)\in (\BA^1)^I$, we can identify
$$\Shv(\CY\times (\BA^1-0)^I)^T)\simeq \Shv(\CY/Z_G)$$

Under this identification, the subcategory
$$\Whit^{\on{ext}}(\CY\times (\BA^1-0)^I)^T\subset \Shv(\CY\times (\BA^1-0)^I)^T)$$
corresponds to 
$$\Whit(\CY/Z_G)\subset \Shv(\CY/Z_G).$$

\medskip

We claim:

\begin{lem}
The equivalence
$$\Whit(\CY/Z_G)\simeq \Whit^{\on{ext}}(\CY\times (\BA^1-0)^I)^T \simeq \Shv(\CY/T)_\emptyset$$
is given by the composition
$$\Whit(\CY/Z_G)\hookrightarrow \Shv(\CY/Z_G)\overset{!-pushforward}\longrightarrow \Shv(\CY/T),$$
whose image lands in $\Shv(\CY/T)_\emptyset \subset \Shv(\CY/T)$.
\end{lem}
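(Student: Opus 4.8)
The statement to prove is the identification of the equivalence
$$\Whit(\CY/Z_G)\simeq \Shv(\CY/T)_\emptyset$$
with the composition $\Whit(\CY/Z_G)\hookrightarrow \Shv(\CY/Z_G)\overset{!\text{-pushforward}}\to \Shv(\CY/T)$. The plan is to assemble this from the two lemmas immediately preceding it, together with the identification $\Shv(\CY\times(\BA^1-0)^I)^T\simeq \Shv(\CY/Z_G)$ coming from the point $(1,\dots,1)\in(\BA^1)^I$. Concretely, the composite equivalence in the statement is by definition obtained as
$$\Whit(\CY/Z_G)\simeq \Whit^{\on{ext}}(\CY\times(\BA^1-0)^I)^T\simeq \Shv(\CY/T)_\emptyset,$$
where the first arrow is the above point-identification restricted to the Whittaker subcategory, and the second arrow is the equivalence $\Whit^{\on{ext}}(\CY\times(\BA^1)^I)^T\simeq \Shv(\CY/T)$ from \eqref{e:Kir ext} (which, by the previous lemma, carries $\Whit^{\on{ext}}(\CY\times(\BA^1-0)^I)^T$ onto $\Shv(\CY/T)_\emptyset$ compatibly with $(\bj/T)_!$ and $(\bj_\emptyset/T)_!$).

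\textbf{Key steps.} First I would spell out the equivalence \eqref{e:Kir ext}: by \secref{sss:Four}, applied with $V=\BG_a^I$ and the stack $\CY/T$ (or rather the $T$-equivariant enhancement), the functor $\Whit^{\on{ext}}(\CY\times(\BA^1)^I)^T\to\Shv(\CY/T)$ is !-pushforward (equivalently *-pushforward, which agree on the Whittaker subcategory by \propref{p:Four-proper abs}) along the projection $\pi\colon\CY\times(\BA^1)^I\to\CY$, descended to $T$-quotients. Second I would note that restricting this to the open stratum $(\BA^1-0)^I$ and identifying $(\BA^1-0)^I\simeq \BG_m^I$ with $T/Z_G$, the projection $\pi$ restricted to $\CY\times(\BA^1-0)^I$ becomes, after passing to $T$-quotients, exactly the map $\CY/Z_G\to\CY/T$; hence the !-pushforward along $\pi$ descends precisely to the !-pushforward along $\CY/Z_G\to\CY/T$ featured in the statement. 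Third, I would invoke the preceding lemma to see that the image of $\Whit^{\on{ext}}(\CY\times(\BA^1-0)^I)^T$ under \eqref{e:Kir ext} is $\Shv(\CY/T)_\emptyset$, and that \eqref{e:Kir ext} carries the subcategory $\Whit(\CY/Z_G)\subset\Shv(\CY/Z_G)$ (under the point-identification $\Shv(\CY\times(\BA^1-0)^I)^T\simeq\Shv(\CY/Z_G)$) onto this open stratum. Chaining these identifications gives the claim, and tracking that the composite is literally !-pushforward $\Shv(\CY/Z_G)\to\Shv(\CY/T)$ restricted to the Whittaker part is then a matter of unwinding definitions.

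\textbf{Main obstacle.} The genuinely delicate point is the compatibility of the Whittaker equivariance structure across the three descriptions: one must check that the object of $\Shv(\CY\times(\BA^1-0)^I)^T$ corresponding to $\CF\in\Whit(\CY/Z_G)$ under the $(1,\dots,1)$-trivialization is really the one obtained by the !-direct image recipe from $\Shv(\CY)$ via $\on{act}^*$ and Fourier transform, i.e.\ that the Kirillov-model averaging used to define $\Shv(\CY/T)_\emptyset$ matches the Whittaker averaging. This is exactly the content the author calls "an elementary verification," and it amounts to comparing, on the one hand, the partial Fourier transform along the coordinate copies of $\BG_a$ (which distinguishes $(\bj_\emptyset)_!$ as the condition that coordinatewise averaging vanishes) with, on the other hand, the nonvanishing-of-character condition defining $\Whit^{\on{ext}}$ over $(\BA^1-0)^I$. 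I expect this verification to be routine but to require care in bookkeeping the $T$-action versus the $Z_G$-action and the identification of $(\BA^1-0)^I$ with $T/Z_G$; no serious geometric input beyond \secref{sss:Four} and \propref{p:Four-proper abs} should be needed.
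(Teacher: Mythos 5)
Your proposal is correct and is essentially the argument the paper leaves implicit (the lemma is stated without proof, in the same ``elementary verification'' spirit as the one before it). The core of the matter is exactly what you isolate: by the preceding lemma, the equivalence $\Whit^{\on{ext}}(\CY\times(\BA^1-0)^I)^T\simeq \Shv(\CY/T)_\emptyset$ is computed by applying $(\bj/T)_!$ and then \eqref{e:Kir ext}; since \eqref{e:Kir ext} is $!$-pushforward along the projection $(\CY\times(\BA^1)^I)/T\to\CY/T$ and $(\bj/T)_!$ is $!$-extension along the open embedding, the composite is $!$-pushforward along $(\CY\times(\BA^1-0)^I)/T\to\CY/T$, which the $(1,\dots,1)$-trivialization turns into the quotient map $\CY/Z_G\to\CY/T$. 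One remark on your ``main obstacle'' paragraph: the delicate compatibility you flag (Kirillov averaging vs.\ Whittaker equivariance over $(\BA^1-0)^I$) is precisely what the preceding lemma already settles, so once that is granted — as you correctly do — the remaining verification is pure composition of geometric functors, with no further Fourier-theoretic input needed.
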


\sssec{} 

We now return to the situation when the exponential sheaf does not necessarily exist. We let
$\CY=\BG_a^I$, equipped with a natural action of $T\ltimes \BG_a^I$.

\medskip

Note that due to monodromicity, the essential image of the embedding 
$$\Shv(\BG_a^I/T)_\emptyset \overset{(\bj_\emptyset)_!}\hookrightarrow \Shv(\BG_a^I/T)$$
consists of objects whose !-restrictions to
$$\BG_a^J\subset \BG_a^I, \quad J\neq I$$
are $0$. I.e., these are objects that are *-extended from $(\BG_a-0)^I\overset{\jmath}\hookrightarrow \BG_a^I$. 

\medskip

Similarly, the essential image of the embedding 
$$\Shv(\BG_a^I/T)_\emptyset \overset{(\bj_\emptyset)_*}\hookrightarrow \Shv(\BG_a^I/T)$$
consists of objects whose *-restrictions to
$$\BG_a^J\subset \BG_a^I, \quad J\neq I$$
are $0$. I.e., these are objects that are !-extended from $(\BG_a-0)^I\overset{\jmath}\hookrightarrow \BG_a^I$. 

\medskip

We obtain that the category $\Shv(\BG_a^I/T)_\emptyset$ identifies with
$$\Shv((\BG_a-0)^I/T)\simeq \Shv(\on{pt}/Z_G)$$
\emph{in two different ways}\footnote{They differ by a cohomological shift.}. 

\medskip

In what follows we will use the identification
\begin{equation} \label{e:ident open}
\Shv(\BG_a^I/T)_\emptyset \overset{(\bj_\emptyset)_!}\hookrightarrow \Shv(\BG_a^I/T)\overset{(\jmath/T)^*}\to 
\Shv((\BG_a-0)^I/T)\simeq \Shv(\on{pt}/Z_G).
\end{equation} 

\sssec{} \label{sss:Kir on AI}

Denote
$$\on{exp}_\emptyset^{\boxtimes I}/T:=R_{Z_G}[-r]\in \Shv(\on{pt}/Z_G) 
\overset{\text{\eqref{e:ident open}}}\simeq \Shv(\BG_a^I/T)_\emptyset,$$
where:

\begin{itemize}

\item $R_{Z_G}\in \Shv(\on{pt}/Z_G)$ is the !-direct image of $\sfe\in \Vect=\Shv(\on{pt})$
along $\on{pt}\to \on{pt}/Z_G$;

\smallskip

\item $r=|I|$ is the semi-simple rank of $G$.

\end{itemize}

\medskip

In terms of the above identifications, we have
$$\on{exp}_!^{\boxtimes I}/T=(\bj_\emptyset)_!(\on{exp}_\emptyset^{\boxtimes I}/T),$$

Set
$$\on{exp}_*^{\boxtimes I}/T:=(\bj_\emptyset)_*(\on{exp}_\emptyset^{\boxtimes I}/T).$$

Thus, explicitly, 
$$\on{exp}_!^{\boxtimes I}/T\simeq (\jmath/T)_*(R_{Z_G})[-r]$$
and 
$$\on{exp}_*^{\boxtimes I}/T\simeq (\jmath/T)_!(R_{Z_G})[-r].$$

\medskip

Note that the natural transformation \eqref{e:! to* Kir} gives rise to a map
\begin{equation} \label{e:! to * exp}
\on{exp}_!^{\boxtimes I}/T\to \on{exp}_*^{\boxtimes I}/T.
\end{equation} 

In other words, this is a map 
\begin{equation} \label{e:! to * exp again}
(\jmath/T)_*(R_{Z_G})[-r]\to (\jmath/T)_!(R_{Z_G})[-r],
\end{equation}
note the direction of the arrow!

\sssec{Examples}

Let us analyze the behavior of the map \eqref{e:! to * exp again} explicitly.

\medskip
. 
We consider the pullback of the map \eqref{e:! to * exp again} to $\BG_a^I$ itself. Thus,
we are dealing with the map
\begin{equation} \label{e:! to * exp again again}
\jmath_*(\pi_!(\sfe_T))\to \jmath_!(\pi_!(\sfe_T)),
\end{equation}
where $\pi:T\to \BG^I_m$. 

\medskip

Assume first that $T\to \BG_m^I$ is an isomorphism. Then both sides of \eqref{e:! to * exp again again},
shifted cohomologically by $[r]$, are perverse, and the cosocle (resp., socle) of
$\jmath_*(\sfe_{\BG_m^I})[r]$ (resp., of $\jmath_!(\sfe_{\BG_m^I})[r]$) is the $\delta$-function at $0$,
i.e., $\ul\sfe_{\on{pt}}$. 

\medskip

With these identifications, the map \eqref{e:! to * exp again} is
$$\jmath_*(\sfe_{\BG_m^I})[r] \to \ul\sfe_{\on{pt}}\to \jmath_*(\sfe_{\BG_m^I})[r].$$

\medskip

In the general case, both sides in \eqref{e:! to * exp again again} have additional 
direct factors, given by Kummer sheaves corresponding to non-zero characters of (the finite group) $Z_G$. The map 
\eqref{e:! to * exp again again} is the natural isomorphism on these factors. 

\ssec{The case when there is no exponential sheaf}

In this subsection we will treat the case of \thmref{t:Poinc ! to *} when we work either over a field
of characteristic $0$ or in mixed characteristic.

\sssec{}

We adapt the formalism of \secref{ss:Kir} to 
$\ol{\Bun}_{N,\rho(\omega_X)}$, using the method of Sects. \ref{sss:adapt 1}-\ref{sss:adapt 3}.

\medskip

Thus, we obtain a stratification of the category $\Shv(\ol{\Bun}_{N,\rho(\omega_X)})$ (resp., $\Shv(\ol{\Bun}_{N,\rho(\omega_X)}/T)$)
by sub/quotient categories $\Shv(\ol{\Bun}_{N,\rho(\omega_X)})_J$ (resp., $\Shv(\ol{\Bun}_{N,\rho(\omega_X)}/T)_J$), 
$J\subset I$, and similarly for $\Bun_{N,\rho(\omega_X)}$

\sssec{}

Consider the object
$$\on{exp}_{\chi^I,\emptyset}/T:=(\chi^I/T)^*(\on{exp}_\emptyset^{\boxtimes I}/T)\in 
\Shv(\Bun_{N,\rho(\omega_X)}/T)_\emptyset.$$

\medskip

First, we note:

\begin{lem} \label{l:on deg}
For $J\neq 0$, the projection
$$\Shv(\ol{\Bun}_{N,\rho(\omega_X)})\overset{(\bj_\emptyset)^*=(\bj_\emptyset)^!}\twoheadrightarrow
\Shv(\ol{\Bun}_{N,\rho(\omega_X)})_\emptyset$$
annihilates objects supported on $\ol{\Bun}_{N,\rho(\omega_X)}-\Bun_{N,\rho(\omega_X)}$.
\end{lem}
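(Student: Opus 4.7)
The strategy is to prove the lemma by devissage on the BFGM stratification of the Drinfeld compactification, reducing to a stratum-by-stratum statement that amounts to the Kirillov-theoretic incarnation of the Frenkel--Gaitsgory--Vilonen cleanness theorem for Whittaker sheaves.

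First, by the definition of $\Shv(\ol{\Bun}_{N,\rho(\omega_X)})_\emptyset$ as the Verdier quotient of $\Shv(\ol{\Bun}_{N,\rho(\omega_X)})$ by the thick subcategory generated by $\Shv(-)^{\BG_a^J}$ for all non-empty $J\subset I$, it suffices to show that for each non-zero $\lambda\in\Lambda^{\on{pos}}$, every sheaf supported on the stratum $(\ol{\Bun}_{N,\rho(\omega_X)})_\lambda$ lies in the thick subcategory generated by $\BG_a^{J_\lambda}$-equivariant objects for some non-empty $J_\lambda\subset I$. A standard devissage on the BFGM $\Lambda^{\on{pos}}$-stratification of the boundary $\ol{\Bun}_{N,\rho(\omega_X)}-\Bun_{N,\rho(\omega_X)}$ reduces the problem to one stratum at a time.

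Second, for $\lambda\neq 0$ I would take $J_\lambda:=\{i\in I:\langle\lambda,\check\alpha_i\rangle>0\}$. This is non-empty: writing $\lambda=\sum c_i\alpha_i$ with $c_i\geq 0$ not all zero, one has $\sum_i c_i\,\langle\lambda,\check\alpha_i\rangle=\langle\lambda,\sum_i c_i\check\alpha_i\rangle>0$ by positive-definiteness of the Killing form on the Cartan of the semi-simple Lie algebra, forcing some individual term to be positive. The crux is then the geometric statement: every sheaf supported on $(\ol{\Bun}_{N,\rho(\omega_X)})_\lambda$, when pulled back to the local model $\CY$ of Sects.~\ref{sss:adapt 1}--\ref{sss:adapt 3}, carries a canonical $\BG_a^{J_\lambda}$-equivariant structure compatible with the $\BG_a^I$-action on $\CY$; once this is granted, its image under $(\bj_\emptyset)^*$ vanishes tautologically.

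The main obstacle is verifying this geometric equivariance on each boundary stratum. I would handle it by reducing to the classical Frenkel--Gaitsgory--Vilonen cleanness theorem: in sheaf theories where the exponential sheaf exists, the present lemma is Fourier-dual to the FGV statement that the Whittaker sheaf $j_!((\chi^I)^*(\on{exp}^{\boxtimes I}))$ on $\ol{\Bun}_{N,\rho(\omega_X)}$ is clean along the boundary, and a formal argument then transports the statement to the Kirillov formulation needed here. A more intrinsic route, which is independent of the existence of $\on{exp}$, proceeds by using the local structure of $\ol{\Bun}_{N,\rho(\omega_X)}$ near the defect divisor in terms of Zastava spaces (as in \secref{ss:IC acycl}): on $\on{Zast}^\lambda$, the $\BG_a^{J_\lambda}$-component of the $\BG_a^I$-action coming from modifications at $x$ gets absorbed by the defect structure at points $y\neq x$, providing the desired canonical $\BG_a^{J_\lambda}$-equivariance on the stratum. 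Combined with the devissage of the preceding paragraphs, this yields the lemma.
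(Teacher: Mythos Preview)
The paper gives no proof of this lemma. Your overall strategy—dévissage along the $\Lambda^{\on{pos}}$-stratification of the boundary, then a stratum-wise equivariance claim tied to the FGV mechanism—is the right one, but there are two genuine gaps.

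First, your non-emptiness argument for $J_\lambda=\{i:\langle\lambda,\check\alpha_i\rangle>0\}$ is incorrect in non-simply-laced type. With $\lambda=\sum c_i\alpha_i$ the quantity $\sum_i c_i\langle\lambda,\check\alpha_i\rangle$ equals $c^TAc$ for the (non-symmetric) Cartan matrix $A$, which is \emph{not} the Killing norm $(\lambda,\lambda)$; for $G_2$ with $\lambda=\alpha_1+\alpha_2$ one gets $c^TAc=0$. The conclusion does survive by a different argument—if $\langle\lambda,\check\alpha_i\rangle\le0$ for all $i$ then $Ac\le0$ componentwise, hence $c=A^{-1}(Ac)\le0$ since $A^{-1}$ has strictly positive entries in finite type, forcing $c=0$—but it is simpler to take $J_\lambda=\{i:n_i>0\}$, whose non-emptiness is tautological.

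Second, your claimed reduction to FGV cleanness is not sufficient. Under the equivalence of \secref{sss:Four} the lemma becomes the assertion that $\Shv(\text{boundary})^{\BG_a^I,\psi}=0$ for every non-degenerate $\psi$, i.e., there are \emph{no} non-zero Whittaker-equivariant sheaves on any boundary stratum. FGV cleanness only asserts that one particular such sheaf—the boundary $*$-restriction of $j_*\chi^!\on{exp}$—vanishes; the lemma is a priori stronger and does not follow formally from that statement. What you actually need is the geometric input underlying the FGV \emph{proof} (the stabilizer enlargement on each stratum), not its output. Your alternative Zastava sketch (``absorption at points $y\ne x$'') also misfires: the Kirillov $\BG_a^I$-action is the local one at the auxiliary point $x$, so the mechanism must be visible there, not at the defect locus.
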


Hence, we obtain that $\on{exp}_{\chi^I,\emptyset}/T$ uniquely extends to an object of 
$\Shv(\ol{\Bun}_{N,\rho(\omega_X)}/T)_\emptyset$; we denote it by
$$\ol{\on{exp}}_{\chi^I,\emptyset}/T.$$

\sssec{}

Unwinding the constructions, we obtain that the objects $\on{Poinc}^{\on{Vac}}_!$ and $\on{Poinc}^{\on{Vac}}_*$ are
$$(\ol\sfp/T)_!\circ (\bj_\emptyset)_!(\ol{\on{exp}}_{\chi^I,\emptyset}/T) \text{ and }
(\ol\sfp/T)_*\circ (\bj_\emptyset)_*(\ol{\on{exp}}_{\chi^I,\emptyset}/T),$$
respectively, where $(\ol\sfp/T)_!\simeq (\ol\sfp/T)_*$, since the map $\ol\sfp$ is proper.

\medskip

Moreover, the map \eqref{e:Poinc ! to *} is induced by the natural transformation 
\eqref{e:! to* Kir}.

\medskip

Hence, in order to prove \thmref{t:Poinc ! to *}, it suffices to show that for $J\neq \emptyset$, the essential image of the functor
\begin{equation} \label{e:ext Poinc deg}
\Shv(\ol{\Bun}_{N,\rho(\omega_X)}/T)_{J\on{-cl}} \overset{(\ol\bi_J)_!=(\ol\bi_J)_*}\longrightarrow 
\Shv(\ol{\Bun}_{N,\rho(\omega_X)}/T) \overset{(\ol\sfp/T)_!}\longrightarrow \Shv(\Bun_G)
\end{equation}
lies in the essential image of the functor 
$$\Shv(\Bun_M) \overset{\Eis_!}\to \Shv(\Bun_G),$$
where $P\twoheadrightarrow M$ are the standard Levi and parabolic corresponding to $J$.

\medskip

This is done by a manipulation similar to the one used in the proof of \propref{p:rest is Eis}. 
We elaborate on this below. 

\sssec{}

We stratify $\ol{\Bun}_{N,\rho(\omega_X)}$ by  
$$(\ol{\Bun}_{N,\rho(\omega_X)})_\lambda \overset{i_\lambda}\hookrightarrow \ol{\Bun}_{N,\rho(\omega_X)},$$
and consider the corresponding categories
$$\Shv((\ol{\Bun}_{N,\rho(\omega_X)})_\lambda/T)_{J\on{-cl}} \overset{(\ol\bi_J)_!=(\ol\bi_J)_*}\longrightarrow 
\Shv((\ol{\Bun}_{N,\rho(\omega_X)})_\lambda/T).$$

We will show that the composition
\begin{multline} \label{e:ext Poinc deg 1}
\Shv((\ol{\Bun}_{N,\rho(\omega_X)})_\lambda/T)_{J\on{-cl}} \overset{(i_\lambda)_!}\hookrightarrow 
\Shv(\ol{\Bun}_{N,\rho(\omega_X)}/T)_{J\on{-cl}} \overset{(\ol\bi_J)_!=(\ol\bi_J)_*}\longrightarrow  \\
\to \Shv(\ol{\Bun}_{N,\rho(\omega_X)}/T) \overset{(\ol\sfp/T)_!}\longrightarrow \Shv(\Bun_G)
\end{multline}
factors as
$$\Shv((\ol{\Bun}_{N,\rho(\omega_X)})_\lambda/T)_{J\on{-cl}} \to \Shv(\Bun_M) \overset{\Eis_!}\to \Shv(\Bun_G),$$
where $P$ is the standard parabolic corresponding to $J\subset I$.

\medskip

This follows from the Cartesian diagram
$$
\CD
(\ol{\Bun}_{N,\rho(\omega_X)})_\lambda/T \\
@V{\sim}VV \\
(\Bun_B\underset{\Bun_T}\times X^{(\lambda)})/T @>{'\!\sfq_P}>> (\Bun_{B(M)}\underset{\Bun_T}\times X^{(\lambda)})/T \\
@VVV @VVV \\
\Bun_P @>{\sfq_P}>> \Bun_M \\
@V{\sfp_P}VV \\
\Bun_G,
\endCD
$$
using the following observation:

\begin{lem}
The functor of *-pullback along $'\!\sfq_P$ gives rise to an equivalence
$$\Shv((\Bun_{B(M)}\underset{\Bun_T}\times X^{(\lambda)})/T)\to 
\Shv((\ol{\Bun}_{N,\rho(\omega_X)})_\lambda/T)_{J\on{-cl}}.$$
\end{lem}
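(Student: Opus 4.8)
The plan is to reduce the statement, via the Cartesian square displayed just above, to a local assertion over $\Bun_M$ at a good point, and then to recognize it as the Kirillov-model incarnation of an assertion already used in the setting where the exponential sheaf exists. Recall from \cite{BG1} the identification $(\ol{\Bun}_{N,\rho(\omega_X)})_\lambda\simeq \Bun_B\underset{\Bun_T}\times X^{(\lambda)}$, compatible with the $T$-action and with the underlying generalized $B$-structure. Since $\Bun_B\simeq \Bun_P\underset{\Bun_M}\times \Bun_{B(M)}$, under this identification $'\!\sfq_P$ is the base change of $\sfq_P:\Bun_P\to\Bun_M$ along the map $(\Bun_{B(M)}\underset{\Bun_T}\times X^{(\lambda)})/T\to \Bun_M$, and the $\BG_a^I$-action is pulled back from an action relative to $\Bun_M$. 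The claim — that $*$-pullback along $'\!\sfq_P$ is an equivalence onto the $\BG_a^J$-equivariant subcategory — is therefore local over the target, so it may be checked after a smooth surjective base change onto $\Bun_M$; this reduces us to the case where $\Bun_P\to\Bun_M$ is an honest (twisted) $\Bun_{U(P)}$-fibration over a scheme, equivariant for the restricted $\BG_a^J$-action.

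The heart of the argument is then the local model of this fibration at a good point $x$. Passing to the level-$x$ presentation of Sects. \ref{sss:adapt 1}-\ref{sss:adapt 3}, the $\BG_a^J$-action is realized through the loop group $\fL(N_{\rho(\omega)})_x$ and the character $\chi^J_x$, and the fibers of $'\!\sfq_P$ acquire a presentation as iterated extensions of affine lines (coming from the $H^1$ of the root subbundles of $\fu(P)$) and copies of $B\BG_a$ (coming from the $H^0$). The affine directions are absorbed by $\BG_a^J$-translations, and the $B\BG_a$-gerbes are transparent for our sheaf theory by $\BA^1$-invariance — the one place where char $p$ intervenes, harmlessly. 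Granting this, $\on{Av}^{\BG_a^J}_!$ (equivalently $(\ol\bi_J)^*$ in the notation of \secref{sss:Kir1}) and $*$-pullback along $'\!\sfq_P$ are mutually inverse equivalences: a $\BG_a^J$-equivariant object is constant along the translation orbits, hence descends to the target. Equivalently — and this is the route I expect to be cleanest to write — one notes that the present lemma is exactly the Kirillov-model counterpart of the assertion invoked in the proof of \propref{p:rest is Eis} (that every object of $\Whit^{\on{part}}((\ol{\Bun}_{N,\rho(\omega)})_\lambda\times(\BA^1-0)^J)^T$ is $*$-pulled back along $'\!\sfq_P$), whose proof uses only the Cartesian square, the geometry of $\Bun_B\to\Bun_{B(M)}$, and the elementary descent of Whittaker-type equivariance; that proof carries over once the functors of \secref{ss:Kir} replace the character-theoretic formulation.

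The main obstacle is precisely the local model identification: matching the $\BG_a^J$-action, which is defined only indirectly through the loop-group presentation and $\chi^J_x$, with the explicit geometry of the fibers of $\Bun_B\to\Bun_{B(M)}$, and verifying that the remaining directions — the part of $\fL(U(P))_x$ not detected by $\chi^J_x$, the passage to the $\fL^+$-quotient, and the $X^{(\lambda)}$-twist — are acyclic relative to the target. This is the same kind of bookkeeping carried out for the open Zastava space in the de Rham setting and in \cite{Ga2}; once it is in place, the descent statement is formal.
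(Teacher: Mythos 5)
The paper states this lemma as an ``observation'' and gives no proof, so there is nothing to compare against word for word; here is an assessment of the proposal on its merits.

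Your overall strategy is the right one: use the Cartesian square to present $'\!\sfq_P$ as a base change of the twisted $\Bun_{U(P)}$-fibration $\Bun_P\to\Bun_M$ (via $\Bun_B\simeq\Bun_P\underset{\Bun_M}\times\Bun_{B(M)}$, which is correct), handle the $B\BG_a$-gerbe directions by unipotence, and reduce the rest to a descent statement checked in the level-$x$ model. Your alternative framing — that this is precisely the Kirillov-model avatar of the (likewise unproved) Whittaker assertion invoked in the proof of \propref{p:rest is Eis}, transported through the Fourier/Kirillov dictionary of \secref{ss:Kir} — matches the intent of the paper, which explicitly sets up \secref{ss:Kir} as a character-free replacement for the Whittaker apparatus used when the exponential sheaf exists.

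The one step that, as written, does not work is ``the affine directions are absorbed by $\BG_a^J$-translations.'' The residual $\BG_a^J$ acting on the Kirillov quotient $\CY=\CY'/\overset{\circ}{N}{}'$ is only $|J|$-dimensional, whereas the affine part of the fiber of $'\!\sfq_P$ — a twisted $\Bun_{U(P)}$ — has one $\BA^1$-direction for each element of $H^1(X,\fu_\alpha)$ as $\alpha$ runs over the roots in $\fu(P)$, and this is in general much larger than $|J|$. A $|J|$-dimensional group cannot transitively translate these directions. What actually provides the descent is that $\BG_a^J$-equivariance on $\CY$ unwinds, by the very definition of the Kirillov quotient by $\overset{\circ}{N}{}'=\ker(\chi^I_x|_{N'})$, to invariance on the level-$x$ stack under the full preimage of $\BG_a^J$ in $N'$, i.e.\ under $\ker(\chi^{I\setminus J}_x|_{N'})$. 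This is a large loop subgroup: since $\chi^{I\setminus J}$ is supported on the simple roots of $M$ and not those of $U(P)$, it contains $\fL(U(P))_x\cap N'$, whose modification action uniformizes the entire twisted $\Bun_{U(P)}$-fiber once $N'$ is taken sufficiently large. So the absorption is effected by the loop subgroup implicitly encoded in the Kirillov quotient, not by the residual finite-dimensional $\BG_a^J$. You flag a version of this concern in your closing paragraph; the above is the precise point that needs to be supplied to make the direct argument go through.
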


\qed[\thmref{t:Poinc ! to *}] 

\section{Langlands functor and Eisenstein series} \label{s:L and Eis}

The goal of this section is to prove \thmref{t:L and Eis}. 

\begin{rem} \label{r:diff GLC3}

The proof that we will give applies in any sheaf-theoretic context (e.g., de Rham and Betti). We note, however,
that the construction of the isomorphism of functors stated in \thmref{t:L and Eis}
is a priori different from that in \cite[Theorem 14.2.2]{GLC3}. 

\medskip

The reason that we give a (different) proof here is that some ingredients of the proof in \cite{GLC3}
(specifically, the semi-infinite geometric Satake) have only been developed in the de Rham context.  

\medskip

It is a good (but potentially quite involved) exercise to show that the two isomorphisms actually
agree. 

\end{rem} 

\ssec{Reducing to a statement about \texorpdfstring{$\BL^{\on{restr}}_{G,\on{coarse}}$}{L}}

This step is parallel to \cite[Sect. 14.2.3]{GLC3}. 

\sssec{}

Denote
$$\Eis^{-,\on{spec}}_{\on{coarse}}:=\Upsilon^\vee_{\LS^{\on{restr}}_\cG}\circ \Eis^{-,\on{spec}}_{\on{coarse}}\circ \Xi_{\LS^{\on{restr}}_\cM},\quad
\QCoh(\LS^{\on{restr}}_\cM)\to \QCoh(\LS^{\on{restr}}_\cG).$$

In other words, 
$$\Eis^{-,\on{spec}}_{\on{coarse}}=(\sfp^{-,\on{spec}})_*\circ (\sfq^{-,\on{spec}})^*;$$
note that map $\sfp^{-,\on{spec}}$ is schematic, so the functor $(\sfp^{-,\on{spec}})_*$ is well-behaved. 

\sssec{}

We claim that in order to construct the datum of commutativity for the diagram \eqref{e:L and Eis}, it is enough to 
do so for the diagram
\begin{equation} \label{e:L and Eis coarse}
\CD
\Shv_\Nilp(\Bun_M) @>{\BL^{\on{restr}}_{M,\on{coarse}}}>> \QCoh(\LS^{\on{restr}}_\cM) \\
@V{\Eis^-_{!,\rho_P(\omega_X)}[\delta_{(N^-_P)_{\rho_P(\omega_X)}}]}VV  @VV{\Eis^{-,\on{spec}}_{\on{coarse}}}V \\
 \Shv_\Nilp(\Bun_G) @>>{\BL^{\on{restr}}_{G,\on{coarse}}}> \QCoh(\LS^{\on{restr}}_\cG).
\endCD
\end{equation} 

Indeed, to prove the  commutativity of \eqref{e:L and Eis}, it is enough to show
that the two circuits are isomorphic when evaluated on compact objects of $\Shv_\Nilp(\Bun_G)$.

\medskip

Note that diagram \eqref{e:L and Eis coarse} is obtained from diagram \eqref{e:L and Eis} by composing both
circuits with
$$\Upsilon^\vee_{\LS^{\on{restr}}_\cG}:\IndCoh_\Nilp(\LS^{\on{restr}}_\cG)\to \QCoh(\LS^{\on{restr}}_\cG).$$

Since the functor $\Upsilon^\vee_{\LS^{\on{restr}}_\cG}$ is fully faithful on the eventually coconnective subcategory,
it suffices to show that both circuits in \eqref{e:L and Eis} send compact objects in $\Shv_\Nilp(\Bun_G)$
to eventually coconnective objects in $\IndCoh_\Nilp(\LS^{\on{restr}}_\cG)$.

\sssec{}

We first show this for the anti-clockwise circuit. 

\medskip

The functor $\Eis^-_!$ preserves compactness, and hence so does its translated version 
$\Eis^-_{!,\rho_P(\omega_X)}$. Hence, the required assertion follows from \thmref{t:L sends comp to even coconn}. 

\sssec{}

We now consider the clockwise circuit. The top horizontal arrow sends compact objects to eventually coconnective objects
again by \thmref{t:L sends comp to even coconn} (applied to $M$).

\medskip

Finally, the functor $\Eis^{-,\on{spec}}_{\on{coarse}}$ has a bounded cohomological amplitude on the left,
since the morphism $\sfq^{-,\on{spec}}$ is quasi-smooth.

\ssec{Method of proof}

The proof will be a geometric counterpart of the computation of Whittaker coefficients of
Eisenstein series, coupled with (the parabolic version of) the theory developed in \cite{BG2} and \cite{FH}. 

\sssec{}

Recall that the functor $\Eis^-_!$ extends to a $\QCoh(\LS^{\on{restr}}_\cG)$-linear functor
\begin{equation} \label{e:Eis part enh}
\QCoh(\LS^{\on{restr}}_{\cP^-})\underset{\QCoh(\LS^{\on{restr}}_\cM)}\otimes \Shv_\Nilp(\Bun_M)\to \Shv(\Bun_G),
\end{equation}
see the proof of \propref{p:Eis preserve Nilp}. We will denote the functor in \eqref{e:Eis part enh} by $\Eis_!^{-,\on{part.enh}}$.

\medskip

We will denote by  $\Eis_{!,\rho_P(\omega_X)}^{-,\on{part.enh}}$ the precomposition of $\Eis_!^{-,\on{part.enh}}$
with the corresponding translation functor (this is well-defined as the action of $\QCoh(\LS^{\on{restr}}_\cM)$
on $\Shv_\Nilp(\Bun_M)$ commutes with central translations). 

\begin{rem}

The functor $\Eis_{!,\rho_P(\omega_X)}^{-,\on{part.enh}}$ corresponds under the Langlands functor 
to the functor $\Eis^{-,\on{spec,part.enh}}$ studied in \cite[Sect. 12.6-7]{GLC3}. 

\end{rem} 

\sssec{}

In order to construct the commutativity datum for \eqref{e:L and Eis coarse}, it suffices to do so for the diagram
\begin{equation} \label{e:L and Eis enh}
\CD
\QCoh(\LS^{\on{restr}}_{\cP^-})\underset{\QCoh(\LS^{\on{restr}}_\cM)}\otimes \Shv_\Nilp(\Bun_M) 
@>{\on{Id}\otimes \BL^{\on{restr}}_{M,\on{coarse}}}>> \QCoh(\LS^{\on{restr}}_{\cP^-}) \\
@V{\Eis^{-,\on{part.enh}}_{!,\rho_P(\omega_X)}[\delta_{(N^-_P)_{\rho_P(\omega_X)}}]}VV  @VV{(\sfq^{-,\on{spec}})_*}V \\
 \Shv_\Nilp(\Bun_G) @>>{\BL^{\on{restr}}_{G,\on{coarse}}}> \QCoh(\LS^{\on{restr}}_\cG).
\endCD
\end{equation} 

Since both circuits in \eqref{e:L and Eis enh} are $\QCoh(\LS^{\on{restr}}_\cG)$-linear, in order to construct 
the commutativity datum for \eqref{e:L and Eis enh}, it is enough to do so for its composition with the
functor $\Gamma_!(\LS^{\on{restr}}_\cG,-)$. 

\medskip

Thus, we need to construct the datum of commutativity for the diagram
\begin{equation} \label{e:L and Eis enh Sect}
\CD
\QCoh(\LS^{\on{restr}}_{\cP^-})\underset{\QCoh(\LS^{\on{restr}}_\cM)}\otimes \Shv_\Nilp(\Bun_M) 
@>{\on{Id}\otimes \BL^{\on{restr}}_{M,\on{coarse}}}>> \QCoh(\LS^{\on{restr}}_{\cP^-}) \\
@V{\Eis^{-,\on{part.enh}}_{!,\rho_P(\omega_X)}[\delta_{(N^-_P)_{\rho_P(\omega_X)}}]}VV  @VV{\Gamma_!(\LS^{\on{restr}}_{\cP^-},-)}V \\
 \Shv_\Nilp(\Bun_G) @>>{\on{coeff}_G^{\on{Vac}}}> \Vect.
\endCD
\end{equation} 

\sssec{}

Denote
$$\Omega^{\on{glob}}:=\sfq^{-,\on{spec}}_*(\CO_{\LS^{\on{restr}}_{\cP^-}})\in \on{ComAlg}(\QCoh(\LS^{\on{restr}}_\cM)).$$

The morphism $\sfq^{-,\on{spec}}$ is ``as good as affine" (see \cite[Sect. 12.7.5]{GLC2} for what this means). Hence, we
can identify
$$\QCoh(\LS^{\on{restr}}_{\cP^-})\underset{\QCoh(\LS^{\on{restr}}_\cM)}\otimes \Shv_\Nilp(\Bun_M) 
\simeq \Omega^{\on{glob}}\mod(\Shv_\Nilp(\Bun_M)).$$

\medskip

The datum of the functor $\Eis_!^{-,\on{part.enh}}$ can be interpreted as the action of $\Omega^{\on{glob}}$,
viewed as a monad on $\Shv_\Nilp(\Bun_M)$, on the functor $\Eis^-_!$.

\medskip

Thus, the datum of commutativity of \eqref{e:L and Eis enh Sect} can be stated as an isomorphism of functors
\begin{equation} \label{e:Whit of Eis}
\on{coeff}_G^{\on{Vac}}\circ \Eis^-_{!,\rho_P(\omega_X)}[\delta_{(N^-_P)_{\rho_P(\omega_X)}}]\simeq
\on{coeff}_M^{\on{Vac}}\circ (\Omega^{\on{glob}}\star -)
\end{equation} 
as functors
$$\Shv_\Nilp(\Bun_M) \to \Vect,$$
acted on by the monad $\Omega^{\on{glob}}$. In the above formula, $(-)\star (-)$ denotes the action of
$\QCoh(\LS^{\on{restr}}_\cM)$ on $\Shv_\Nilp(\Bun_M)$. 

\medskip

In what follows we will construct an isomorphism \eqref{e:Whit of Eis}; its compatibility with the action
of $\Omega^{\on{glob}}$ would follow by unwinding the construction of \cite{BG2} 
and \cite{FH}\footnote{More precisely, this follows by combining the Koszul duality statement in the proof of 
Corollary 6.4.1.2 with the proof of Proposition 4.5.4.1 in \cite{FH}.}, see Remark \ref{r:action on Omega}. 

\sssec{}

Recall the category $\Rep(\cM)_\Ran$, which acts on $\Shv(\Bun_M)$. Its action of $\Shv_\Nilp(\Bun_M)$
factors through the localization functor 
$$\Loc_\cM^{\on{spec,restr}}:\Rep(\cM)_\Ran\to \QCoh(\LS^{\on{restr}}_\cM),$$
see \cite[Sect. 12.7.1]{AGKRRV1}.

\medskip

Let $\Omega^{\on{loc}}\in \Rep(\cM)$ be the commutative algebra 
$$\on{C}^\cdot_{\on{Chev}}(\cn^-_P).$$

We attach to it the commutative factorization algebra
$$\on{Fact}(\Omega^{\on{loc}})\in \Rep(\cM)_\Ran,$$
which we view as a (commutative) algebra object with respect to the (symmetric) monoidal 
structure on $\Rep(\cM)_\Ran$, see \cite[Sect. B.10.4]{GLC2}.

\medskip

By \cite[Sect. 12.3.5]{GLC3}, we have
$$\Omega^{\on{glob}}\simeq \Loc_\cG^{\on{spec,restr}}(\on{Fact}(\Omega^{\on{loc}})),$$
as commutative algebra objects in $\QCoh(\LS^{\on{restr}}_\cM)$.

\sssec{}

We can view $\on{Fact}(\Omega^{\on{loc}})$ as a monad acting on $\Shv(\Bun_M)$. We will prove
the following generalization of \eqref{e:Whit of Eis}:

\begin{thm} \label{t:Whit of Eis}
There exists a canonical isomorphism 
$$\on{coeff}_G^{\on{Vac}}\circ \Eis^-_{!,\rho_P(\omega_X)}[\delta_{(N^-_P)_{\rho_P(\omega_X)}}]\simeq
\on{coeff}_M^{\on{Vac}}\circ (\on{Fact}(\Omega^{\on{loc}})\star (-))$$
as functors $\Shv(\Bun_M) \to \Vect$, where $(-)\star (-)$ denotes the monoidal action of $\Rep(\cM)_\Ran$ on $\Shv(\Bun_G)$.
\end{thm}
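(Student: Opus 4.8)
\textbf{Proof plan for \thmref{t:Whit of Eis}.}
The plan is to compute both sides via the geometry of the (parabolic) Drinfeld compactification $\BunPtm$, whose combinatorics is controlled by the parabolic Zastava spaces. First I would unwind the left-hand side: by definition $\on{coeff}_G^{\on{Vac}}$ is the functor co-represented by $\on{Poinc}^{\on{Vac}}_!=(\sfp/T)_!(\on{exp}_{\chi^I}/T)$, so $\on{coeff}_G^{\on{Vac}}\circ \Eis^-_{!,\rho_P(\omega_X)}$ can be written as the $!$-pushforward to a point of a sheaf living on the fiber product of $\Bun_{N,\rho(\omega_X)}/T$ with $\Bun_{P^-}$ over $\Bun_G$. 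The key point is that this fiber product is (up to harmless twists and the translation by $\rho_P(\omega_X)$) exactly the locus inside $\BunPtm \times_{\Bun_M}\Bun_{M}$ that Zastava theory describes; I would use the factorization of $\sfp$ through $\BunPtm$ and the computation of the intersection of the generic $N$-reduction with the generic $P^-$-reduction. This matches, stratum by stratum (the strata indexed by $\Lambda^{\on{pos}}_{G,P}$ as in the proof of \thmref{t:IC acycl}), the contributions of the Chevalley complex $\on{C}^\cdot_{\on{Chev}}(\cn^-_P)$ spread over the Ran space.

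\smallskip

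Concretely, the second step is to identify the sheaf-theoretic input. On $\Bun_{N,\rho(\omega_X)}/T$ the relevant object is $\on{exp}_{\chi^I}/T$ (or, in the setting with no exponential sheaf, the Kirillov-model object $\on{exp}_{\chi^I,!}/T$ of \secref{sss:Kir on AI}); restricting/corestricting this along the $\Lambda^{\on{pos}}_{G,P}$-stratification of $\BunPtm$ and pushing forward produces, on each stratum $X^{(\lambda)}$-worth of defect, a copy of a local factor. The cohomology of these local factors is precisely $\on{C}^\cdot_{\on{Chev}}(\cn^-_P)$ — this is the classical computation of Whittaker/IC stalks on Zastava spaces from \cite{BFGM}, adapted to the negative parabolic. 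Assembling over all $\lambda$, i.e. summing the symmetric-power contributions, is exactly the passage from $\Omega^{\on{loc}}\in\Rep(\cM)$ to its factorization algebra $\on{Fact}(\Omega^{\on{loc}})\in\Rep(\cM)_\Ran$, whose action on $\Shv(\Bun_M)$ is the monad appearing on the right. I would organize this as: (i) present the Cartesian diagram relating the fiber product to Zastava; (ii) apply base change to rewrite $\on{coeff}_G^{\on{Vac}}\circ \Eis^-_!$ as a pushforward from Zastava; (iii) use the stratified structure and the local computation to identify the result with $\on{coeff}_M^{\on{Vac}}\circ(\on{Fact}(\Omega^{\on{loc}})\star -)$.

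\smallskip

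A subtlety to handle carefully is the cohomological shift $[\delta_{(N^-_P)_{\rho_P(\omega_X)}}]$ and the twist by $\rho_P(\omega_X)\in \Bun_{Z_M}$: these are exactly the normalizations needed so that the dimensions of the Zastava strata line up with the grading on the Chevalley complex, and I would track them via \cite[Theorem 10.1.2]{GLC3} and \cite[Corollary 10.1.8]{GLC3} as in the statement. The compatibility with the monad structure — that the isomorphism intertwines the $\Omega^{\on{glob}}$-action coming from $\Eis_{!}^{-,\on{part.enh}}$ with the $\on{Fact}(\Omega^{\on{loc}})$-action via $\Loc^{\on{spec,restr}}$ — I would not verify by hand but deduce by unwinding the constructions of \cite{BG2} and \cite{FH} (combining their Koszul-duality statement with the compatibility of the enhanced Eisenstein functor with the factorization structure), exactly as indicated in the excerpt.

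\smallskip

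\textbf{Main obstacle.} The hard part will be the local computation in step (ii)–(iii): showing that the $!$-restriction of the Whittaker (or Kirillov) sheaf to the Zastava strata, pushed forward along the relevant maps, yields precisely the factorization algebra of $\on{C}^\cdot_{\on{Chev}}(\cn^-_P)$ with the correct factorization and equivariance structure — and doing so uniformly in the sheaf-theoretic context (including the no-exponential-sheaf case, where one must work with the Kirillov model and the formalism of \secref{ss:Kir}). This is where the ULA inputs (\thmref{t:IC acycl}) and the contraction principle (\propref{p:preserve ULA}) do the real work, guaranteeing that the relevant $!$- and $*$-pushforwards behave well and that the stratumwise contributions assemble correctly.
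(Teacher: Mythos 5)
Your proposal follows essentially the same path as the paper: express $\on{coeff}_G^{\on{Vac}}\circ\Eis^-_!$ via base change to the fiber product $\Bun_{N,\rho(\omega_X)}\underset{\Bun_G}\times\BunPtm$ (using the ULA property of $j_!(\ul\sfe_{\Bun_{P^-}})$ to rewrite $\Eis^-_!$ via the compactification), pass to the transversal locus which is $\Bun_{N(M),\rho(\omega_X)}\underset{\Bun_M}\times\on{Zast}$, then reduce to a single computation on Zastava identifying the resulting kernel with $\Sat^{\on{nv}}_M\circ\on{Fact}(\Omega^{\on{loc}})$, and finally defer the monad compatibility to \cite{BG2} and \cite{FH}.

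One refinement worth flagging: for the local Zastava computation you propose to ``match stratum by stratum with the contributions of the Chevalley complex'' by adapting the IC-stalk computation of \cite{BFGM}, but what is actually needed is the identification of the pushforward of the \emph{Whittaker} sheaf from (open) Zastava with the factorization Chevalley complex, which is not directly in \cite{BFGM}; the paper instead quotes \cite[Sect. 4.6.1]{Ra1} for $P=B$ and \cite[Theorem 1.4.3.1]{FH} for general $P$. Before that quotation can be applied, the paper inserts an additional geometric lemma (from \cite[Lemma 4.1.10]{Lin}) identifying $(\,'\sfp^{\on{tr}})^!\circ j_!(\ul\sfe_{\Bun_{P^-}})$ with the $!$-extension of the constant sheaf from the open Zastava, up to shift; this cleanness-type statement is the step that removes the dependence on the full parabolic Drinfeld compactification and lets one work only with the open Zastava, and it is not quite the same as the ULA/contraction inputs you list. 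With that substitution your argument becomes the paper's proof.
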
 

\begin{rem}

One can view formula \eqref{e:Whit of Eis} as a geometric counterpart of the classical 
computation of the Whittaker coefficient of Eisenstein series. 

\medskip

The latter says that 
the Whittaker coefficient of Eisenstein series of an automorphic function on $M$
equals the Whittaker coefficient of a particular Hecke operator applied to that
automorphic function. 

\medskip

The Hecke functor $\on{Fact}(\Omega^{\on{loc}})$ is the geometric counterpart of
that classical Hecke functor. 

\end{rem}

\begin{rem}

\thmref{t:Whit of Eis} is equivalent to a (particular case of) \cite[Corollary 10.1.5]{GLC3},
and its proof is parallel to that of \cite[Theorem 10.1.2]{GLC3} in Sect. 10.3 of {\it loc. cit.}

\medskip

The essential difference\footnote{Alluded to in Remark \ref{r:diff GLC3}.}, however, is 
that our key computational ingredient here is \propref{p:Omega}, whereas in \cite{GLC3}
it is Proposition 10.6.8 of {\it loc. cit.}, which uses the local theory developed in that paper,
specifically Corollary 2.5.2 in {\it loc. cit.}.

\end{rem}

\ssec{Proof of \thmref{t:Whit of Eis}}

We will give a proof when $\on{char}(k)$ is positive, i.e., when the Artin-Schreier sheaf exists; 
the proof in the characteristic zero case is completely parallel: one replaces the Artin-Schreier sheaf 
by the procedure of \cite[Sect. 3.3]{GLC1}.

\sssec{} \label{sss:coeff expl}

Denote
$$\Bun_{N,\rho(\omega_X)}:=\Bun_B\underset{\Bun_T}\times \{\rho(\omega_X)\},$$
and denote by 
$$\chi:\Bun_{N,\rho(\omega_X)}\to \BG_a$$
that map
$$\Bun_{N,\rho(\omega_X)}\overset{\chi^I}\to \BG_a^I\overset{\on{sum}}\to \BG_a.$$

\medskip 

Let $\sfp$ denote the projection  
$$\Bun_{N,\rho(\omega_X)}\to \Bun_G.$$

Recall that the functor $\on{coeff}^{\on{Vac}}$ is by defintion
\begin{equation} \label{e:coeff formula}
\on{C}^\cdot(\Bun_{N,\rho(\omega_X)},\sfp^!(-)\sotimes \chi^!(\on{exp}_\omega)),
\end{equation} 
where $\on{exp}_\omega$ denotes the Artin-Schreier sheaf on $\BG_a$, normalized so that
it behaves multiplicatively with respect to the !-pullback. 

\sssec{}

Recall that the functor $\Eis^-_!$ is defined by
\begin{equation} \label{e:Eis formula}
(\sfp^-)_!\circ (\sfq^-)^*(-)[\on{dim.rel}],
\end{equation}
where:

\begin{itemize}

\item $\sfp^-:\Bun_{P^-}\to \Bun_G$;

\smallskip

\item $\sfq^-:\Bun_{P^-}\to \Bun_M$;

\smallskip

\item $\on{dim.rel}$ is the dimension of $\Bun_{P^-}$ over $\Bun_M$ (it depends on the connected
component of $\Bun_M$).

\end{itemize} 

\sssec{}

Let 
$$
\CD
\BunPtm @<{j}<< \Bun_{P^-} \\
@V{\wt\sfp^-}VV \\
\Bun_G
\endCD
$$
denote Drinfeld's relative compactification of $\Bun_{P^-}$ along $\sfp^-$. 
Denote by $\wt\sfq^-$ the map $\BunPtm\to \Bun_M$. 

\medskip

Note that the ULA property of the object $j_!(\ul\sfe_{\Bun_{P^-}})\in \Shv(\BunPtm)$ 
with respect to $\wt\sfq^-$ (see \cite[Theorem 5.1.5]{BG1}) implies that the natural transformation
$$j_!\circ (\wt\sfq^-)^*(-)\simeq  j_!(\ul\sfe_{\Bun_{P^-}})\overset{*}\otimes (\wt\sfq^-)^*(-)\to
j_!(\ul\sfe_{\Bun_{P^-}})\sotimes (\wt\sfq^-)^!(-)[2\dim(\Bun_M)]$$
is an isomorphism.

\medskip

Hence, we can rewrite
\begin{equation} \label{e:Eis via compact}
\Eis^-_!(-)\simeq (\wt\sfp^-)_*\left((\wt\sfq^-)^!(-)\sotimes j_!(\ul\sfe_{\Bun_{P^-}})\right)[2\dim(\Bun_M)+\on{dim.rel}].
\end{equation} 

\sssec{}

Consider the diagram
$$
\xy
(20,0)*+{\Bun_G,}="X";
(20,40)*+{\Bun_{N,\rho(\omega_X)}\underset{\Bun_G}\times \BunPtm}="Y";
(0,20)*+{\Bun_{N,\rho(\omega_X)}}="Z";
(40,20)*+{\BunPtm}="W";
{\ar@{->}_{'\wt\sfp^-} "Y";"Z"};
{\ar@{->}^{'\sfp} "Y";"W"};
{\ar@{->}_{\sfp} "Z";"X"};
{\ar@{->}^{\wt\sfp^-} "W";"X"};
\endxy
$$

By base change and projection formula, we obtain:
\begin{multline} \label{e:coeff of Eis 1}
\on{coeff}_G^{\on{Vac}}\circ \Eis^-_!(-)\simeq \\
\simeq \on{C}^\cdot\left(\Bun_{N,\rho(\omega_X)}\underset{\Bun_G}\times \BunPtm,
\bigl((\,'\sfp)^!\circ ((\wt\sfq^-)^!(-)\sotimes j_!(\ul\sfe_{\Bun_{P^-}}))\bigr)\sotimes \left((\,'\wt\sfp^-)^!\circ \chi^!(\on{exp}_\omega)\right)\right) \\
[2\dim(\Bun_M)+\on{dim.rel}].
\end{multline} 

\sssec{}

Let
$$(\Bun_{N,\rho(\omega_X)}\underset{\Bun_G}\times \BunPtm)^{\on{tr}}\subset \Bun_{N,\rho(\omega_X)}\underset{\Bun_G}\times \BunPtm$$
be the open substack corresponding to the condition that the $N$-reduction and the generalized $P^-$-reduction of the G-bundle are transversal
at the generic point of the curve:

$$
\xy
(20,40)*+{\Bun_{N,\rho(\omega_X)}\underset{\Bun_G}\times \BunPtm}="Y";
(0,20)*+{\Bun_{N,\rho(\omega_X)}}="Z";
(40,20)*+{\BunPtm}="W";
(20,70)*+{(\Bun_{N,\rho(\omega_X)}\underset{\Bun_G}\times \BunPtm)^{\on{tr}}}="U";
{\ar@{->}_{'\wt\sfp^-} "Y";"Z"};
{\ar@{->}^{'\sfp} "Y";"W"};
{\ar@{_{(}->} "U";"Y"};  
{\ar@/_3pc/@{->}_{\,'\wt\sfp^{-,\on{tr}}} "U";"Z"};
{\ar@/^3pc/@{->}^{\,'\sfp^{\on{tr}}} "U";"W"};
\endxy
$$

As in \cite[Lemma 10.6.3]{GLC3}, we have:

\begin{lem}
The natural transformation 
\begin{multline*}
\on{C}^\cdot\left(\Bun_{N,\rho(\omega_X)}\underset{\Bun_G}\times \BunPtm,
\bigl((\,'\sfp)^!\circ ((\wt\sfq^-)^!(-)\sotimes j_!(\ul\sfe_{\Bun_{P^-}}))\bigr)\sotimes \left((\,'\wt\sfp^-)^!\circ \chi^!(\on{exp}_\omega)\right)\right)  \to \\ 
\on{C}^\cdot\left((\Bun_{N,\rho(\omega_X)}\underset{\Bun_G}\times \BunPtm)^{\on{tr}},
\bigl((\,'\sfp^{\on{tr}})^!\circ ((\wt\sfq^-)^!(-)\sotimes j_!(\ul\sfe_{\Bun_{P^-}}))\bigr)\sotimes 
\left((\,'\wt\sfp^{-,\on{tr}})^!\circ \chi^!(\on{exp}_\omega)\right)\right) 
\end{multline*}
is an isomorphism.
\end{lem}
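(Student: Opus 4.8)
The statement to prove is that restricting the cohomology computation from $\Bun_{N,\rho(\omega_X)}\underset{\Bun_G}\times \BunPtm$ to its open ``generic transversality'' locus induces an isomorphism. The plan is to show that the cone of this natural transformation is the cohomology of the \emph{complementary closed substack}
$$Z := \Bun_{N,\rho(\omega_X)}\underset{\Bun_G}\times \BunPtm\, -\, (\Bun_{N,\rho(\omega_X)}\underset{\Bun_G}\times \BunPtm)^{\on{tr}},$$
with coefficients in the (shifted $!$-restriction of the) same complex, and then to prove that this cohomology vanishes. The vanishing will come from a fiberwise argument: over $Z$ the $N$-reduction at $\rho(\omega_X)$ and the generalized $P^-$-reduction fail to be transversal at the generic point, and one exploits the interaction of the nontrivial character $\chi^!(\on{exp}_\omega)$ with the unipotent directions along which the two reductions collide.

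First I would set up the excision triangle: letting $\imath\colon Z \hookrightarrow \Bun_{N,\rho(\omega_X)}\underset{\Bun_G}\times \BunPtm$ be the closed embedding and $\jmath$ the open embedding of the transversal locus, the standard triangle $\jmath_!\jmath^! \to \on{id} \to \imath_*\imath^!$ applied to the complex
$$\bigl((\,'\sfp)^!\circ ((\wt\sfq^-)^!(-)\sotimes j_!(\ul\sfe_{\Bun_{P^-}}))\bigr)\sotimes \left((\,'\wt\sfp^-)^!\circ \chi^!(\on{exp}_\omega)\right)$$
and then taking $\on{C}^\cdot$ reduces us to showing that $\on{C}^\cdot(Z, \imath^!(-)) = 0$ for this complex. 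Next I would stratify $Z$ according to the locus in $X$ where transversality fails (equivalently, by the combinatorics of the defect divisor of the two reductions, along the lines of \cite[Sect. 1.3.3]{BG1} and the Zastava-space stratifications used in \secref{ss:IC acycl}). On each stratum the key point is that the fiber of $'\wt\sfp^-$ contains a copy of a unipotent group (a piece of $N$ that is ``absorbed'' by the failure of transversality), and that the restriction of $\chi$ to this group is a nontrivial additive character. Then $\on{C}^\cdot$ along this unipotent direction of the tensor factor $\chi^!(\on{exp}_\omega)$ against anything monodromic-along-the-fiber vanishes, by the basic fact that cohomology of $\BG_a$ against a nontrivial Artin–Schreier sheaf (resp.\ the characteristic-zero analogue from \cite[Sect. 3.3]{GLC1}) is zero — the same mechanism already used repeatedly in the cleanness statements for $\on{exp}_{\chi^I}$.

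The main obstacle, as in the analogous \cite[Lemma 10.6.3]{GLC3}, is bookkeeping: one must verify that the character $\chi$ really is nontrivial on the relevant unipotent subgroup on \emph{every} stratum of $Z$ — i.e.\ that the defect of transversality always produces a simple-root direction $i\in I$ on which $\chi^I$ (hence $\chi = \on{sum}\circ\chi^I$) is nonzero. This is exactly where the genericity/non-degeneracy of the $N$-reduction at $\rho(\omega_X)$ is used, and it requires identifying, for each stratum, which simple-root coordinate of $\chi^I$ is ``switched on.'' I expect the cleanest route is to reduce to a rank-one computation along each simple root by the usual factorization of $\Bun_{N,\rho(\omega_X)}$ over the curve, and then cite the elementary $\BG_a$-vanishing. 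I would close by noting that since $j_!(\ul\sfe_{\Bun_{P^-}})$ is ULA over $\wt\sfq^-$ (\cite[Theorem 5.1.5]{BG1}, or \thmref{t:IC acycl} in the relative setting), all the $\sotimes$ and $(-)^!$ operations above are well-behaved and commute with the base changes, so the stratumwise vanishing assembles to the desired global isomorphism.
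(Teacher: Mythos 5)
The paper itself does not supply a proof of this lemma; it cites \cite[Lemma 10.6.3]{GLC3}, which establishes the analogous statement by exactly the mechanism you describe: excision reduces the claim to the vanishing of cohomology of the complementary closed locus $Z$, and that vanishing follows from the presence of a nontrivial additive character along a unipotent direction on each stratum of $Z$. So your approach is the right one, and you have correctly identified the crux (verifying the character is genuinely nontrivial stratum-by-stratum) as well as the role of the ULA property in making the $\sotimes$ and $!$-pullbacks behave.

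One small imprecision worth flagging: you say ``the fiber of $'\wt\sfp^-$ contains a copy of a unipotent group ... and the restriction of $\chi$ to this group is a nontrivial additive character.'' Since $'\wt\sfp^-$ projects \emph{to} $\Bun_{N,\rho(\omega_X)}$, the character $\chi$ is constant along its fibers, so the sentence as written does not quite parse. What one actually exhibits is a unipotent group $U_J$ acting on the relevant stratum $Z_J$ of $Z$ so that (a) both projections ($'\wt\sfp^-$ and the composite to $\Bun_G$) are $U_J$-equivariant in a compatible way, (b) the tensor factor $(\,'\sfp)^!((\wt\sfq^-)^!(-)\sotimes j_!(\ul\sfe_{\Bun_{P^-}}))$ is $U_J$-equivariant for the trivial character, while (c) the restriction of $\chi\circ{}'\wt\sfp^-$ along the $U_J$-orbit directions is a nontrivial character. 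Then integration over a $\BG_a$-orbit kills the cohomology. This is the ``piece of $N$ absorbed by the failure of transversality'' you allude to, but the nontriviality claim is about the pullback $\chi\circ{}'\wt\sfp^-$ along a group action on the fiber product, not about a restriction of $\chi$ to a fiber of $'\wt\sfp^-$. With that adjustment the argument is the expected one.
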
 

\medskip

Hence, we obtain that the expression in \eqref{e:coeff of Eis 1} can be rewritten as 
\begin{multline} \label{e:coeff of Eis 2}
\on{C}^\cdot\left((\Bun_{N,\rho(\omega_X)}\underset{\Bun_G}\times \BunPtm)^{\on{tr}},
\bigl((\,'\sfp^{\on{tr}})^!\circ ((\wt\sfq^-)^!(-)\sotimes j_!(\ul\sfe_{\Bun_{P^-}}))\bigr)\sotimes 
\left((\,'\wt\sfp^{-,\on{tr}})^!\circ \chi^!(\on{exp}_\omega)\right)\right) \\
[2\dim(\Bun_M)+\on{dim.rel}].
\end{multline} 

\sssec{} \label{sss:Zast}

Recall the (parabolic) Zastava space 
$$\on{Zast}:=(\Bun_P\underset{\Bun_G}\times \BunPtm)^{\on{tr}},$$
which is the open substack of $\Bun_P\underset{\Bun_G}\times \BunPtm$, corresponding to
the condition that the $P$-reduction and the generalized $P^-$-reduction are transversal 
at the generic point of the curve, see \cite[Sect. 2.2 and Proposition 3.2]{BFGM}. 

\medskip

The stack $\on{Zast}$ is endowed with a map 
$$\fs:\on{Zast}\to \on{Hecke}(M)_{\Ran'},$$
where:

\begin{itemize}

\item $\Ran'$ is the sheafification of the Ran space in the topology of finite surjective maps;

\smallskip

\item $\on{Hecke}(M)_{\Ran'}$ is the corresponding version of the Hecke stack for $M$;

\end{itemize} 

\begin{rem}

In the formulation in \cite{BFGM}, the map $\fs$ rather goes to a version of $\on{Hecke}(M)$ over
the space of colored divisors on $X$, which is the union of schemes of the form
$$X^{(\ul{n})}:=\underset{i}\Pi\, X^{(n_i)}, \quad n_i\in \BZ^{\geq 0},$$
where $i$ runs over (a subset of) the Dynkin diagram of $G$.

\medskip

There is no map from $X^{(\ul{n})}$ to $\Ran$, however, there is one to $\Ran'$; namely it comes 
from the map
$$X^{\ul{n}}:=\underset{i}\Pi\, X^{n_i}\to \Ran,$$
which is invariant with respect to 
$$\Sigma_{\ul{n}}:=\underset{i}\Pi\, \Sigma_{n_i},$$
where $\Sigma_n$ is the symmetric group on $n$ letters.

\end{rem}

\sssec{}

Note that the stack $(\Bun_{N,\rho(\omega_X)}\underset{\Bun_G}\times \BunPtm)^{\on{tr}}$ that appears
in \eqref{e:coeff of Eis 2} is canonically isomorphic to
$$\Bun_{N(M),\rho(\omega_X)}\underset{\Bun_M}\times \on{Zast},$$
where:

\begin{itemize}

\item $N(M)$ is the maximal unipotent subgroup of the Levi $M$;

\smallskip

\item The map $\on{Zast}\to \Bun_M$ is the composition
$$\on{Zast} \overset{\fs}\to \on{Hecke}(M)_{\Ran'} \overset{\hl}\to \Bun_M.$$

\end{itemize} 

\medskip

Under this identification, the map
$$(\Bun_{N,\rho(\omega_X)}\underset{\Bun_G}\times \BunPtm)^{\on{tr}}
\overset{\,'\sfp^{\on{tr}}}\longrightarrow \BunPtm\overset{\wt\sfq^-}\to \Bun_M$$
corresponds to
\begin{multline*} 
\Bun_{N(M),\rho(\omega_X)}\underset{\Bun_M}\times \on{Zast} \overset{\on{id}\times \fs}\longrightarrow \\
\to \Bun_{N(M),\rho(\omega_X)}\underset{\Bun_M,\hl}\times \on{Hecke}(M)_{\Ran'}\to \on{Hecke}(M)_{\Ran'} \overset{\hr}\to \Bun_M.
\end{multline*}


\sssec{}

Let us denote by 
$$\Omega_\chi\in \Shv\Bigl( \Bun_{N(M),\rho(\omega_X)}\underset{\Bun_M,\hl}\times \on{Hecke}(M)_{\Ran'}\Bigr)$$
the object equal to the *-direct image along 
\begin{multline*} 
(\Bun_{N,\rho(\omega_X)}\underset{\Bun_G}\times \BunPtm)^{\on{tr}} \simeq 
\Bun_{N(M),\rho(\omega_X)}\underset{\Bun_M}\times \on{Zast}\overset{\on{id}\times \fs}\longrightarrow \\
\to \Bun_{N(M),\rho(\omega_X)}\underset{\Bun_M,\hl}\times \on{Hecke}(M)_{\Ran'}
\end{multline*} 
of 
$$\left((\,'\sfp^{\on{tr}})^!\circ j_!(\ul\sfe_{\Bun_{P^-}})\right)\sotimes 
\left((\,'\wt\sfp^{-,\on{tr}})^!\circ \chi^!(\on{exp}_\omega)\right)\in \Shv\Bigl((\Bun_{N,\rho(\omega_X)}\underset{\Bun_G}\times \BunPtm)^{\on{tr}}\Bigr).$$

\medskip

Let us denote by $r_2$ the projection
$$\Bun_{N(M),\rho(\omega_X)}\underset{\Bun_M,\hl}\times \on{Hecke}(M)_{\Ran'}\to \on{Hecke}(M)_{\Ran'}.$$

\medskip

Applying the projection formula, we obtain that the expression in \eqref{e:coeff of Eis 2} identifies with
\begin{multline} \label{e:coeff of Eis 3}
\on{C}^\cdot\left(\Bun_{N(M),\rho(\omega_X)}\underset{\Bun_M,\hl}\times \on{Hecke}(M)_{\Ran'},(r_2^!\circ \hr^!(-))\sotimes \Omega_\chi\right) \\
[2\dim(\Bun_M)+\on{dim.rel}].
\end{multline} 

\sssec{}

Consider the map 
\begin{multline} \label{e:trans rhoP}
\on{transl}_{\rho_P(\omega_X)}:\Bun_{N(M),\rho_M(\omega_X)}\underset{\Bun_M,\hl}\times \on{Hecke}(M)_{\Ran'}\to \\
\to \Bun_{N(M),\rho(\omega_X)}\underset{\Bun_M,\hl}\times \on{Hecke}(M)_{\Ran'},
\end{multline} 
given by (central) translation by $\rho_P(\omega_X)$. 

\medskip

Let $r_1$ denote the projection 
$$\Bun_{N(M),\rho_M(\omega_X)}\underset{\Bun_M,\hl}\times \on{Hecke}(M)_{\Ran'}\to \Bun_{N(M),\rho_M(\omega_X)}.$$

\medskip

Here is the key computational input in the proof of \thmref{t:L and Eis}:

\begin{prop} \label{p:Omega}
The pullback of $\Omega_\chi$ along \eqref{e:trans rhoP}, viewed as an object of
$$\Shv(\Bun_{N(M),\rho_M(\omega_X)}\underset{\Bun_M,\hl}\times \on{Hecke}(M)_{\Ran'}),$$
shifted cohomologically by
$$[2\dim(\Bun_M)+\on{dim.rel}+\delta_{(N^-_P)_{\rho_P(\omega_X)}}],$$
identifies canonically with
$$(r_1^!\circ \chi_M^!(\on{exp}_\omega))\sotimes (\Sat^{\on{nv}}_M\circ \on{Fact}(\Omega^{\on{loc}})),$$
where:

\begin{itemize} 

\item $\chi_M:\Bun_{N(M),\rho_M(\omega_X)}\to \BG_a$ is the counterpart of the map $\chi$ for $M$.

\smallskip

\item $\Sat^{\on{nv}}_M:\Rep(\cM)\to \Sph_M$ is the naive geometric Satake functor;

\smallskip

\item By a slight abuse of notation, we denote by $\Sat^{\on{nv}}_M\circ \on{Fact}(\Omega^{\on{loc}}))$ the image
of the corresponding object under the equivalence 
$$\Shv(\on{Hecke}(M)_{\Ran})\simeq \Shv(\on{Hecke}(M)_{\Ran'}).$$

\end{itemize}

\end{prop}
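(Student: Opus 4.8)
The statement is a local-global (factorization) computation: the object $\Omega_\chi$ is built from the Zastava space, and \propref{p:Omega} asserts that after the central twist by $\rho_P(\omega_X)$ and the appropriate shift, it becomes the external $!$-product of the Whittaker sheaf $\chi_M^!(\on{exp}_\omega)$ on $\Bun_{N(M),\rho_M(\omega_X)}$ with the Hecke kernel $\Sat^{\on{nv}}_M(\on{Fact}(\Omega^{\on{loc}}))$. The plan is to follow the line of argument of \cite[Sect. 10.3]{GLC3} but with the local input replaced as indicated in the last remark before the statement. First I would reduce the computation to a purely local one over the Ran space $\Ran'$: by the factorization structure on the parabolic Zastava space $\on{Zast}$ (see \cite[Sect. 2.2, Prop. 3.2]{BFGM}) and the compatibility of the maps $\fs$, $'\wt\sfp^{-,\on{tr}}$ and the character $\chi$ with disjoint unions of points, both sides of the claimed identity are factorization sheaves over $\Ran'$, so it suffices to identify their fibers over a single point $x\in X$ (equivalently, their $\star$-factors), and then invoke uniqueness of factorization structures.

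\textbf{Key steps.} Second, over one point the local Zastava space degenerates to (a version of) the scheme of based maps from the formal disk into $N^-_P\backslash G/P$, equivalently to the $\overset{\circ}{\on{Zast}}$-model used in \cite[Sect. 5]{BFGM}; here the map $\fs$ becomes the map to the affine Grassmannian $\Gr_M$ recording the $M$-modification. I would then compute the $*$-pushforward of $\big(('\sfp^{\on{tr}})^!\circ j_!(\ul\sfe_{\Bun_{P^-}})\big)\overset{!}\otimes\big(('\wt\sfp^{-,\on{tr}})^!\circ\chi^!(\on{exp}_\omega)\big)$ along this map. The key computational identity is that this pushforward, on each connected component indexed by $\theta\in\Lambda^{\on{pos}}_{G,P}$, computes the $\theta$-graded piece of the Chevalley complex $\on{C}^\cdot_{\on{Chev}}(\cn^-_P)=\Omega^{\on{loc}}$ placed on the corresponding $M$-Schubert cell of $\Gr_M$, i.e. exactly $\Sat^{\on{nv}}_M(\Omega^{\on{loc}})$ in its factorization incarnation $\on{Fact}(\Omega^{\on{loc}})$; the Whittaker character on the $N$-direction on $M$ survives as the tensor factor $\chi_M^!(\on{exp}_\omega)$, while the $N^-_P$-direction of the character, after the shift by $\delta_{(N^-_P)_{\rho_P(\omega_X)}}$ and the twist by $\rho_P(\omega_X)$, contributes trivially (this twist is precisely what trivializes the genericity/nondegeneracy condition and produces the shift $[2\dim(\Bun_M)+\on{dim.rel}+\delta_{(N^-_P)_{\rho_P(\omega_X)}}]$). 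The ULA property of $j_!(\ul\sfe_{\Bun_{P^-,\sR_0}})$ over $\wt\sfq^-$, i.e. \thmref{t:IC acycl}, together with the computation of IC stalks in \cite{BFGM}, is what lets me carry out this stalk computation uniformly (and, over $\sR_0$, in a way compatible with nearby cycles).

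\textbf{Main obstacle.} The hard part will be matching the cohomological shifts and the identification of the answer with the Chevalley complex $\on{C}^\cdot_{\on{Chev}}(\cn^-_P)$ on the nose, rather than just up to a twist: one must track the dimension of $\Bun_{(N^-_P)_{\rho_P(\omega_X)}}$ through the central translation $\on{transl}_{\rho_P(\omega_X)}$ and verify that the Euler-characteristic bookkeeping in the Zastava stratification reproduces $\delta_{(N^-_P)_{\rho_P(\omega_X)}}$ exactly as in \cite[Theorem 10.1.2, Corollary 10.1.8]{GLC3}. A secondary subtlety, flagged in the excerpt, is the compatibility of the resulting isomorphism with the monad $\Omega^{\on{glob}}$ (equivalently $\on{Fact}(\Omega^{\on{loc}})$) action --- but this is not needed for \propref{p:Omega} itself and is deferred (via \remref{r:action on Omega}) to the combination of the Koszul-duality statement and Proposition 4.5.4.1 in \cite{FH}. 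Once \propref{p:Omega} is in hand, substituting into \eqref{e:coeff of Eis 3} and using the projection formula together with the definition \eqref{e:coeff formula} of $\on{coeff}_M^{\on{Vac}}$ and the fact that $\Sat^{\on{nv}}_M\circ\on{Fact}(\Omega^{\on{loc}})$ is the Hecke kernel implementing $\on{Fact}(\Omega^{\on{loc}})\star(-)$ yields the isomorphism of \thmref{t:Whit of Eis}.
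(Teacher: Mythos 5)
Your proposal takes a genuinely different route from the paper's, which is worth spelling out. The paper's proof is much more economical and is essentially a reduction to known results. It proceeds in three steps: (1) split the character $\chi = \chi^M + \chi^{N_P}$ according to whether the simple root lies inside or outside the Dynkin diagram of $M$, and observe that the $\chi^M$-piece factors through $r_1$ composed with $\chi_M$, which immediately peels off the desired tensor factor $r_1^!\circ\chi_M^!(\on{exp}_\omega)$ and reduces the statement to an identity for $\Omega_{\chi^{N_P}}$; (2) use \lemref{l:Kevin} (which is [Lin, Lemma~4.1.10]) to replace $('\sfp^{\on{tr}})^!\circ j_!(\ul\sfe_{\Bun_{P^-}})$ by the $!$-extension of the (shifted) constant sheaf on $\overset\circ{\on{Zast}}$; (3) cite [Ra1, Sect.~4.6.1] (for $P=B$) and [FH, Theorem~1.4.3.1] (for general $P$) for the identification of the resulting pushforward along $\on{id}\times\fs$ with $\Sat^{\on{nv}}_M\circ\on{Fact}(\Omega^{\on{loc}})$. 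You instead propose a factorization reduction to a single point in $\Ran'$ followed by a direct stalk computation on the local Zastava — i.e., roughly the argument one would give to prove the cited results of [Ra1] and [FH] from scratch, which is far more work than the paper actually does.

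There is also a concrete gap in the tools you propose to run the stalk computation. You invoke \thmref{t:IC acycl} (ULA of $j_!(\ul\sfe_{\Bun_{P^-}})$ over $\wt\sfq^-$) together with the IC-stalk formulae of [BFGM]. Neither is quite the right input. The ULA property of \thmref{t:IC acycl} is relative to the projection to $\Bun_M$, not to the map $\fs$ whose pushforward you need to compute. And the [BFGM] formulae concern $\IC_{\BunPtm}$, which is a different object from $j_!(\ul\sfe_{\Bun_{P^-}})$: the $!$-extension has restrictions to the Zastava strata that must be determined separately. The ingredient that makes the pushforward tractable is exactly \lemref{l:Kevin}, which identifies $('\sfp^{\on{tr}})^!\circ j_!(\ul\sfe_{\Bun_{P^-}})$ with the clean $!$-extension of the constant sheaf on the open Zastava; without this (or an equivalent) in hand, the stalk computation you describe does not go through as stated. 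You also conflate the roots outside $M$ with the ``$N^-_P$-direction'' when they in fact index the character on $N_P\subset N$; the twist by $\rho_P(\omega_X)$ and the shift $\delta_{(N^-_P)_{\rho_P(\omega_X)}}$ normalize the answer but the non-$M$ part of the character contributes nontrivially, producing the Chevalley complex $\Omega^{\on{loc}}$ rather than dropping out.
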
 

\begin{rem} \label{r:action on Omega}

Recall that in order to deduce \thmref{t:L and Eis} from \thmref{t:Whit of Eis} we also need to know
that the isomorphism \eqref{e:Whit of Eis} is compatible with $\Omega^{\on{glob}}$-actions.

\medskip

This compatibility follows from the corresponding property of the isomorphism of  \propref{p:Omega}.

\end{rem}  

\sssec{}

Let us assume \propref{p:Omega} for a moment and finish the proof of \thmref{t:Whit of Eis}.

\medskip

Indeed, combining \eqref{e:coeff of Eis 3} with \propref{p:Omega}, we obtain that the functor
$$\on{coeff}_G^{\on{Vac}}\circ \Eis^-_{!,\rho_P(\omega_X)}[\delta_{(N^-_P)_{\rho_P(\omega_X)}}]$$
is isomorphic to
$$\on{C}^\cdot\Bigl(\Bun_{N(M),\rho_M(\omega_X)}\underset{\Bun_M,\hl}\times \on{Hecke}(M)_{\Ran'},
(r_2^!\circ \hr^!(-))\sotimes (r_1^!\circ \chi_M^!(\on{exp}_\omega))\sotimes (\Sat^{\on{nv}}_M\circ \on{Fact}(\Omega^{\on{loc}}))\Bigr)$$

By base change and projection formula, the letter expression identifies with
\begin{equation} \label{e:coeff of Eis 4}
\on{C}^\cdot\left(\Bun_{N(M),\rho_M(\omega_X)},\sfp_M^!
\Bigl(\hl_*\bigl(\hr^!(-)\sotimes (\Sat^{\on{nv}}_M\circ \on{Fact}(\Omega^{\on{loc}}))\bigr)\Bigr)
\sotimes \chi_M^!(\on{exp}_\omega)\right),
\end{equation} 
where 
$$\sfp_M:\Bun_{N(M),\rho_M(\omega_X)}\to \Bun_M.$$

\medskip

We have, by definition:
$$\hl_*\left(\hr^!(-)\sotimes (\Sat^{\on{nv}}_M\circ \on{Fact}(\Omega))\right)\simeq  \on{Fact}(\Omega^{\on{loc}})\star (-),$$
as endofunctors of $\Shv(\Bun_G)$.

\medskip

Thus, the expression in \eqref{e:coeff of Eis 4} identifies with
$$\on{C}^\cdot\left(\Bun_{N(M),\rho_M(\omega_X)}, \sfp_M^!\left(\on{Fact}(\Omega^{\on{loc}})\star (-)\right)\sotimes \chi_M^!(\on{exp}_\omega)\right)
\simeq \on{coeff}_M^{\on{Vac}}\left(\on{Fact}(\Omega^{\on{loc}})\star (-)\right),$$
as required.

\qed[\thmref{t:Whit of Eis}]

\ssec{Proof of \propref{p:Omega}}

\sssec{}

Note that the map $\chi$ is a sum of two maps $\chi^M$ and $\chi^{N_P}$, where the former
is the sum over the simple roots in the Dynkin diagram of $M$ and the latter 
is the sum over the other simple roots.

\medskip

Note that the composition
$$\Bun_{N(M),\rho(\omega_X)}\underset{\Bun_M}\times \on{Zast}\simeq 
\Bun_{N,\rho(\omega_X)}\underset{\Bun_G}\times \BunPtm\overset{'\wt\sfp^{-,\on{tr}}}\longrightarrow 
\Bun_{N,\rho(\omega_X)} \overset{\chi^M}\to \BG_a$$
identifies with
$$\Bun_{N(M),\rho(\omega_X)}\underset{\Bun_M}\times \on{Zast}\overset{r_1}\to 
\Bun_{N(M),\rho(\omega_X)}\overset{\chi_M}\to \BG_a.$$

\medskip

Denote by $\Omega_{\chi^{N_P}}$ the object of $\Shv\Bigl( \Bun_{N(M),\rho(\omega_X)}\underset{\Bun_M,\hl}\times \on{Hecke}(M)_{\Ran'}\Bigr)$
defined in the same way as $\Omega_\chi$, but with $\chi$ replaced by $\chi_{N^P}$. 

\medskip

By base change \propref{p:Omega} is equivalent to the isomorphism 
\begin{equation} \label{e:Omega 1}
(\on{transl}_{\rho_P(\omega_X)})^!(\Omega_{\chi^{N_P}})[2\dim(\Bun_M)+\on{dim.rel}+\delta_{(N^-_P)_{\rho_P(\omega_X)}}]\simeq 
\Sat^{\on{nv}}_M\circ \on{Fact}(\Omega^{\on{loc}})
\end{equation}
as objects in $$\Shv(\Bun_{N(M),\rho_M(\omega_X)}\underset{\Bun_M,\hl}\times \on{Hecke}(M)_{\Ran'}).$$

\sssec{}

Let
$$\overset{\circ}{\on{Zast}}\overset{j_{\on{Zast}}}\hookrightarrow \on{Zast}$$
be the open Zastava, i.e., the corresponding open 
$$(\Bun_P\underset{\Bun_G}\times \Bun_{P^-})^{\on{tr}} \subset \Bun_P\underset{\Bun_G}\times \Bun_{P^-}.$$

\medskip

By a slight abuse of notation, let us denote by $'\sfp^{\on{tr}}$ the map
$\on{Zast}\to \BunPtm$ and by $'\overset{\circ}\sfp{}^{\on{tr}}$ the map
$$\overset{\circ}{\on{Zast}}\to  \Bun_{P^-}.$$

\medskip

Since the stacks below involved are smooth, we have 
$$(\,'\overset{\circ}\sfp{}^{\on{tr}})^!(\ul\sfe_{\Bun_{P^-}})\simeq \ul\sfe_{\overset{\circ}{\on{Zast}}}
[2(\dim(\overset{\circ}{\on{Zast}})-\dim(\Bun_{P^-}))].$$

From here we obtain a map
\begin{equation} \label{e:acycl comp}
(j_{\on{Zast}})_!(\ul\sfe_{\overset{\circ}{\on{Zast}}})[2(\dim(\overset{\circ}{\on{Zast}})-\dim(\Bun_{P^-}))]\to
(\,'\sfp^{\on{tr}})^!\circ j_!(\ul\sfe_{\Bun_{P^-}}).
\end{equation}

The next assertion is \cite[Lemma 4.1.10]{Lin}:

\begin{lem} \label{l:Kevin}
The map \eqref{e:acycl comp} is an isomorphism.
\end{lem}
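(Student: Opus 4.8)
Looking at this, I need to prove that the map \eqref{e:acycl comp} comparing $(j_{\on{Zast}})_!(\ul\sfe_{\overset{\circ}{\on{Zast}}})$ (up to shift) with $(\,'\sfp^{\on{tr}})^!\circ j_!(\ul\sfe_{\Bun_{P^-}})$ is an isomorphism. This is essentially a base-change/smooth-pullback statement for the $!$-extension across Drinfeld's compactification, restricted to the transversal locus.

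\begin{proof}[Proof sketch]
The strategy is to reduce the claim to the base-change compatibility of $!$-extensions along the open embedding $j:\Bun_{P^-}\hookrightarrow \BunPtm$ with the map $'\sfp^{\on{tr}}$, and to identify the resulting pullback square as Cartesian. First I would observe that by definition $\on{Zast} = (\Bun_P\underset{\Bun_G}\times \BunPtm)^{\on{tr}}$ and $\overset{\circ}{\on{Zast}} = (\Bun_P\underset{\Bun_G}\times \Bun_{P^-})^{\on{tr}}$, so that the open embedding $j_{\on{Zast}}:\overset{\circ}{\on{Zast}}\hookrightarrow \on{Zast}$ is precisely the base change of $j:\Bun_{P^-}\hookrightarrow \BunPtm$ along $'\sfp^{\on{tr}}:\on{Zast}\to \BunPtm$; i.e. there is a Cartesian square with vertical maps $'\sfp^{\on{tr}}$ and $'\overset{\circ}\sfp{}^{\on{tr}}$ and horizontal maps $j_{\on{Zast}}$ and $j$. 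The key geometric input, extracted from \cite[Sect.~2.2, Prop.~3.2]{BFGM}, is that the map $'\sfp^{\on{tr}}:\on{Zast}\to \BunPtm$ is smooth (the transversality condition at the generic point together with the fact that $\Bun_P\to \Bun_G$ is smooth forces this): restricting to the transversal locus precisely kills the obstruction coming from $\BunPtm$ being singular, and the fibers of $'\sfp^{\on{tr}}$ over $\BunPtm$ are of the same nature as those over $\Bun_{P^-}$.

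Granting smoothness of $'\sfp^{\on{tr}}$, the functor $(\,'\sfp^{\on{tr}})^!$ agrees with $(\,'\sfp^{\on{tr}})^*$ up to the shift by twice the relative dimension, and smooth ($*$-)pullback commutes with $!$-pushforward along the open embedding in the Cartesian square above. Concretely, $(\,'\sfp^{\on{tr}})^! \circ j_! \simeq (j_{\on{Zast}})_! \circ (\,'\overset{\circ}\sfp{}^{\on{tr}})^!$, and since both $\overset{\circ}{\on{Zast}}$ and $\Bun_{P^-}$ are smooth, $(\,'\overset{\circ}\sfp{}^{\on{tr}})^!(\ul\sfe_{\Bun_{P^-}}) \simeq \ul\sfe_{\overset{\circ}{\on{Zast}}}[2(\dim(\overset{\circ}{\on{Zast}})-\dim(\Bun_{P^-}))]$, exactly as recorded just before the statement. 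Chasing the shift through the smooth base change identity shows that the map \eqref{e:acycl comp}, which is the canonical adjunction-unit-induced comparison, is precisely the base-change isomorphism, hence an isomorphism.

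The main obstacle is verifying the smoothness of $'\sfp^{\on{tr}}:\on{Zast}\to \BunPtm$ (equivalently, that $\on{Zast}$ is a ``relative compactification'' whose projection to $\BunPtm$ remains smooth after imposing generic transversality). This is the content of the cited results of \cite{BFGM}: the transversal locus inside $\Bun_P \underset{\Bun_G}\times \BunPtm$ maps smoothly to $\BunPtm$ because, generically, the $P$-structure splits off complementarily to the degenerate $P^-$-structure, so the local model identifies $\on{Zast}$ with a bundle over $\BunPtm$ modeled on sections of a vector bundle (the local Zastava picture). Since this is exactly \cite[Lemma~4.1.10]{Lin}, I would simply invoke that result, noting that its proof is the smooth-base-change argument outlined above; no further computation is needed on our part beyond assembling the Cartesian square and tracking the cohomological shift.
\end{proof}
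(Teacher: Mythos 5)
Your argument hinges on the claim that $'\sfp^{\on{tr}}:\on{Zast}\to\BunPtm$ is smooth, which you reduce to the parenthetical claim that $\Bun_P\to\Bun_G$ is smooth. The latter is false. The fiber of $\Bun_P\to\Bun_G$ over a $G$-bundle $\CP_G$ is the stack $\on{Sect}(X,\CP_G/P)$ of $P$-reductions; its deformation theory at a section $\CP_P$ is governed by $\on{C}^\cdot(X,(\fg/\fp)_{\CP_P})$, and the obstruction space $H^1(X,(\fg/\fp)_{\CP_P})\simeq H^1(X,(\fn^-_P)_{\CP_M})$ is generically nonzero. Already for $G=SL_2$, $P=B$, the fiber consists of sub-line bundles $\CL\hookrightarrow\CE$ and the obstruction is $H^1(X,\CL^{-1}\otimes(\CE/\CL))=H^1(X,\CL^{-2})$, which is nonzero once $\deg\CL>0$; such $\CL$ do occur over points of $\BunPtm$ (take $\CE=\CL_0\oplus\CL_0^{-1}$, $\CM=\CL_0(D)$, $\CL=\CL_0$ with $\deg\CL_0\gg 0$), so the transversality condition does not rescue smoothness. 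The citation \cite[Sect. 2.2, Prop. 3.2]{BFGM} you invoke is quoted in the paper only as the source for the \emph{definition} of the parabolic Zastava space, not for smoothness of this projection.

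Without smoothness of $'\sfp^{\on{tr}}$, your base-change step collapses. What is always true, and is what the paper records just before \eqref{e:acycl comp}, is that $(\,'\overset{\circ}\sfp{}^{\on{tr}})^!$ takes the constant sheaf to the constant sheaf up to shift, simply because both $\overset{\circ}{\on{Zast}}$ and $\Bun_{P^-}$ are smooth; but that only constructs the \emph{map} \eqref{e:acycl comp}, not its isomorphism property. The content of \lemref{l:Kevin} is precisely that $!$-pullback along $'\sfp^{\on{tr}}$ commutes with $j_!$-extension even though $'\sfp^{\on{tr}}$ is not smooth, which is a genuinely nontrivial assertion. The paper itself gives no argument: it quotes \cite[Lemma 4.1.10]{Lin}. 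The actual proof has to use the special geometry of Zastava spaces (the factorization/local-model structure and the $\BG_m$-contraction onto the closed stratum) together with the ULA property of $j_!(\ul\sfe_{\Bun_{P^-}})$ over $\Bun_M$ from \cite[Theorem 5.1.5]{BG1} --- none of which appears in your proposal.
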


\sssec{}

From \lemref{l:Kevin}, we obtain that the object
$$\Omega_{\chi^{N_P}}[2\dim(\Bun_M)+\on{dim.rel}+\delta_{(N^-_P)_{\rho_P(\omega_X)}}]$$
is isomorphic to 
\begin{equation} \label{e:Omega 2}
(\on{id}\times \fs)_*
\left(\Bigl(r_2^!\circ (j_{\on{Zast}})_!(\ul\sfe_{\overset{\circ}{\on{Zast}}})\Bigr)\sotimes 
\Bigl((\,\wt\sfp^{-,\on{tr}})^!\circ (\chi^{N_P})^!(\on{exp}_\omega)\Bigr)\right)[\dim(\overset{\circ}{\on{Zast}})].
\end{equation} 

The required isomorphism between the pullback of \eqref{e:Omega 2} along $\on{transl}_{\rho_P(\omega_X)}$
and the object $\Sat^{\on{nv}}_M\circ \on{Fact}(\Omega^{\on{loc}})$ follows from \cite[Sect. 4.6.1]{Ra1} (for $P=B$)
and \cite[Theoren 1.4.3.1]{FH} (for an arbitrary parabolic).

\end{document}